\definecolor{darkblue}{rgb}{0.1,0.1,0.9}%%%COLOR FOR SIDE COMMENT
\newcommand{\hdoubtz}[1]{\marginpar{\raggedright\scriptsize{\textcolor{red}{#1}}}}
\newcommand\dela[1]{}
\newcommand\cnew[1]{\textcolor{purple}{#1}}
\newcommand\deln[1]{}
\newcommand{\embed}{\hookrightarrow }
\numberwithin{equation}{section}
\newcommand{\me}{\mathbb{E}}
\newcommand\del[1]{}
\newcommand{\rA}{\mathrm{A}}
\newcommand{\rrA}{\mathrm{A}_1}
\newcommand{\hrrA}{\hat{\mathrm{A}}_1}
\newcommand{\trrA}{\tilde{\mathrm{A}}_1}
\newcommand{\rH}{\mathrm{ H}}
\newcommand{\bH}{\mathbf{H}}
\newcommand{\rK}{\mathrm{K}}
\newcommand{\divv}{\mathrm{div }\;}
\newcommand{\err}{\mathbb{R}}
\newcommand\toup{\nearrow}
\renewcommand{\v}{\bv}
\newcommand{\ba}{\mathbf{a}}
\newcommand{\bx}{\mathbf{x}}
\newcommand{\bX}{\mathbf{X}}
\newcommand{\db}{\bar{\d}}
\newcommand{\A}{\mathbf{A}}
\newcommand{\f}{\mathbf{F}}
\newcommand{\bw}{\mathbb{W}}
\newcommand{\bW}{\mathbf{W}}
\newcommand{\y}{\mathbf{y}}
\newcommand{\z}{\mathbf{z}}
\renewcommand{\d}{\mathbf{n}}
\newcommand{\MO}{\mathcal{O}}
\newcommand{\w}{\mathbf{w}}
\newcommand{\bv}{\mathbf{v}}
\newcommand{\bd}{\mathbf{n}}
\newcommand{\el}{\mathrm{L}}
\newcommand{\elb}{\mathbf{L}}
\newcommand{\ve}{\mathrm{V}}
\newcommand{\h}{\mathrm{H}}
\newcommand{\mo}{\mathcal{O}}
\newcommand{\bn}{\boldsymbol{\nu}}
\newcommand{\bu}{\mathbf{u}}
\newcommand{\eps}{\varepsilon}
\newcommand{\rve}{\rVert}
\newcommand{\lve}{\lVert}
\newcommand{\E}{\mathbb{E}}
\newcommand{\bh}{\mathbf{h}}
\DeclareMathOperator{\Div}{div}
\theoremstyle{plain}
\newtheorem{condition}{Condition}[section]
\newtheorem{assum}[condition]{Assumption}
\newtheorem{lem}{Lemma}[section]
\newtheorem{thm}[lem]{Theorem}
\newtheorem{prop}[lem]{Proposition}
\newtheorem{cor}[lem]{Corollary}
\theoremstyle{definition}
\newtheorem{Def}[lem]{Definition}
\newtheorem{Rem}[lem]{Remark}
\title[Stochastic nematic liquid crystals with multiplicative noise]{Strong solution to stochastic  penalised nematic liquid crystals model driven by multiplicative Gaussian noise}%{Local and global strong solution to a stochastic PDEs  for penalised nematic liquid crystals driven by multiplicative noise}
\author[Z. Brze\'zniak]{Zdzis{\l}aw Brze{\'z}niak}
\address{Department of Mathematics \\
	University of York, Heslington, York YO10
	5DD, UK} \email{zdzislaw.brzezniak@york.ac.uk}
\author[E. Hausenblas]{Erika Hausenblas}
\address{Department of Mathematics and Information Technology\\  Montanuniversity of Leoben,
	Franz Josef Strasse 18, 8700 Leoben, Austria} \email{
	erika.hausenblas@unileoben.ac.at}
\author[P. Razafimandimby]{Paul Andr\'e  {Razafimandimby}}
\address{Department of Mathematics\\University of York, Heslington, York YO10
	5DD, UK} \email{paul.razafimandimby@york.ac.uk}
\thanks{This article is part of a project that is currently funded by the  European Union's Horizon 2020 research and innovation programme under the Marie Sk\l{}odowska-Curie grant agreement No. 791735 ``SELEs".}
\begin{document}
	\maketitle
	\begin{abstract}
		In this paper, we prove the existence of a unique maximal local strong solutions to a stochastic system for both 2D and 3D penalised nematic liquid crystals driven by multiplicative Gaussian noise.   In the 2D case, we show that this solution is global.
		%%%%%
%	\red{	In contrast to several works in the deterministic setting we replace the Ginzburg-Landau function
%		$\mathds{1}_{\lvert \bd\rvert\le 1}(\lvert \d\rvert^2-1)\d$ by an appropriate polynomial $f(\d)$ and we give sufficient conditions on the polynomial $f$ for the aforementioned results to hold.}
	%%%%
		As a by-product of our investigation, but of independent interest,  we present a general method based on fixed point arguments to establish the existence and uniqueness of a maximal local solution of an abstract stochastic evolution equations with coefficients satisfying local Lipschitz condition involving the norms of two different Banach spaces.
	\end{abstract}
	\section{Introduction}
	%%%%%%%%%%%%%%%%%%%%%%%%%%%%%%%%%%%%%%%%%%%%%%%%%%%%%%%%%%%%%%%%%%%%%%
	%%%%%%%%%%%%%%%%%%%%%%%%%%%%%%%%%%%%%%%%%%%%%%%%%%%%%%%%%%%%%%%%%%%%%%%%%
	Nematic liquid crystal (NLC) is a liquid crystal phase  with has rod-shaped molecules which tend to align along a particular direction denoted by a unit vector $\mathbf{n}$, called the optical director axis. In addition to $\bd:\mathbb{R}^d \to \mathbb{R}^3$ , the hydrodynamic of an isothermal and incompressible NLC is also described by its pressure $p:\mathbb{R}^d\to \mathbb{R}$ and velocity $\bv:\mathbb{R}^d\to \mathbb{R}^d$.
We refer to \cite{Chandrasekhar}
and \cite{Gennes} for a comprehensive treatment of the physics of
liquid crystals.
	
Using the  Ericksen and Leslie continuum theory for liquid crystals, see \cite{Ericksen} and
Leslie \cite{Leslie},  F. Lin and C. Liu
\cite{Lin-Liu} derived the most basic and simplest form of the dynamical system  modeling the motion of a nematic liquid crystal (NLC)
flowing in $\mathbb{R}^d (d=2,3)$. This system is given by 
		\begin{align}
	&	d\bv+\biggl[(\bv\cdot\nabla)\bv-\Delta \bv+\nabla p\biggr]dt=  -\nabla\cdot(\nabla \bd \odot \nabla \bd)dt+ \mathbf{f} dt,\label{eqn-SELE-v} \\
	&	\divv \bv=  0,\label{eqn-SELE-div} \\
	&	d\bd+(\bv\cdot\nabla)\bd dt =  \biggl[\Delta
	\bd+ \lvert \nabla \d \rvert^2 \d \biggr]dt+\mathbf{g} dt , \label{eqn-SELE-d}\\
	& \lvert \bd \rvert^2= 1,\label{eqn-SELE-sphere}
	\end{align}
	where $\mathbf{f}$ and $\mathbf{g}$ are forcings acting on the system.
	 The entries of the matrix $\nabla \bd \odot 
\nabla \bd$ are defined by 
\begin{equation*}
[\nabla \bd \odot \nabla \bd]_{i,j}=\sum_{k=1}^3 \frac{\partial
	\bd^{(k)}}{\partial x_i}\frac{\partial \bd^{(k)}}{\partial x_j},\;\;
\mbox{ } i,j=1,\dots, d.
\end{equation*}
Before proceeding further, we should mention that \eqref{eqn-SELE-v}-\eqref{eqn-SELE-sphere} is obtained by neglecting several terms such as the viscous Leslie stress tensor in the equation for $\bv$, the stretching and rotational effects for $\mathbf{d}$. Thus, it is not known whether the models \eqref{eqn-SELE-v}-\eqref{eqn-SELE-sphere} and \eqref{eqn-SLQE-v}-\eqref{eqn-SLQE-d} are thermodynamically stable or consistent. However, these models still retain many mathematical and essential features of the dynamics for NLCs. In  the recent papers \cite{MH+JP-2017} \cite{MH+JP-2018}, \cite{MH et al-2014}, \cite{Lin+Wang-2014} and \cite{Sun+Liu} several thermodynamically consistent and stable models of NLC have been developed and analysed.
	
	In this paper, we fix a bounded domain $\MO\subset  \mathbb{R}^d$, $d=12,3$ with smooth boundary  and we consider the following stochastic system
	\begin{align}
	&	d\bv+\Bigl[(\bv\cdot\nabla)\bv-\Delta \bv+\nabla p\Bigr]dt=  -\nabla\cdot(\nabla \bd \odot \nabla \bd)dt+ S(\bv) dW_1,\label{eqn-SLQE-v} \\
	&	\divv \bv= 0,\label{eqn-SLQE-div} \\
	&	d\bd+(\bv\cdot\nabla)\bd dt = \bigl[\Delta
	\bd+f(\d)\bigr]dt+(\bd\times \bh)\circ
	dW_2, \label{eqn-SLQE-d}\\
	&	\bv=0 \text{ and } \frac{\partial \bd}{\partial \bn}=0 \text{ on }
	\partial \MO,\label{BC}\\
& \bv(0)=\bv_0 \text{ and } \bd(0)=\bd_0, \label{Init-Cond}
	\end{align}
	where $\bv_0: \MO \to \mathbb{R}^d$, $\bd_0:\MO\to \mathbb{R}^3$  are given mappings, $\bn$  is the unit outward normal to $\partial \MO$, $f$ is a polynomial function satisfying some conditions to be fixed later. Here, $W_1$ and $W_2$ are respectively independent cylindrical Wiener process and standard Brownian motion, $(\bd\times \bh)\circ dW_2$ is understood in
	the Stratonovich sense.

	The Fr\'eedericksz transition, which is produced by applying a sufficiently strong external perturbation (e.g. magnetic or electric fields) to  an undistorted NLC, and its behaviour under random perturbation have been extensively studied in several physics papers, see \cite{Horsthemke+Lefever-1984, San Miguel-1985,FS+MSanM}, all of which neglected the fluid velocity. However, it is pointed out in \cite[Chapter 5]{Gennes} that the fluid flow disturbs the alignment and conversely a change in the alignment will induce a flow in the nematic liquid crystal. It is this gap in knowledge that is the motivation for our mathematical study which was initiated in the old unpublished preprints \cite{BHP13} and \cite{BHP-arxiv}, see also the recent papers \cite{BHP18} and \cite{BHP19}.

	In this paper, we mainly prove the existence and uniqueness of a maximal local strong solution which is
understood in the sense of stochastic calculus and PDEs. This result is a corollary of several abstract results  which are proved in Section \ref{ABST-STRONG} and are of independent interest. 
 In the case $d=2$, we show the non-explosion of the maximal solution by an adaptation and combination of Khashminskii test for non-explosions and an idea of Schmalfu{\ss} elaborated in \cite{Bjorn}, see Section \ref{SLC-Sect4} for more details. Our novelty is the extension of the Schmalfu{\ss} idea, which has been used so far to prove the uniqueness of solutions of Stochastic Navier-Stokes equations and related problems, to the proof of the global existence of a strong solution to the problem  \eqref{eqn-SLQE-v}-\eqref{Init-Cond}.  In particular, we give another proof of the global existence of 2D stochastic Navier-Stokes equations with multiplicative noise and for initial data with finite enstrophy. Thus, our paper can also be seen as a generalization of the results for the existence and uniqueness of maximal local and global solutions of strong solutions of stochastic Navier-Stokes proved in \cite{Nathan1}, \cite{Nathan2} and \cite{Mikulevicius}.

We should notice that some of the arguments elaborated in Section \ref{ABST-STRONG} have been already used in  \cite{BHR-2014}  and \cite{BHP19} which respectively studied  the strong solution of some stochastic hydrodynamic equations (NSEs, MHD and 3D Leray $\alpha$-models) driven by L\'evy noise,  and  the existence and uniqueness of a maximal local smooth solution to the stochastic Ericksen-Leslie system \eqref{eqn-SELE-v}-\eqref{eqn-SELE-sphere} on the $d$-dimensional torus. We are also strongly convinced that with these general results it is possible, although it  has not been done in detail, to prove the existence of strong solution of several stochastic hydrodynamical models such as the NSEs, MHD equations, $\alpha$-models for Navier-Stokes and related problems.

	While the deterministic version of \eqref{eqn-SLQE-v}-\eqref{eqn-SLQE-d} has been the subject of intensive mathematical studies, see \cite{Rojas-Medar, Hong,Lin-Liu,Lin-Wang,Lin-Wang-2016, Shkoller,Dai+Schonbeck_2014} and \cite{Cavaterra,Hong-2014,Hong-2012,Huang-2014, Wang}, there are fewer results related to the stochastic system \eqref{eqn-SLQE-v}-\eqref{eqn-SLQE-d}. The unpublished paper \cite{BHP13} proved the existence and uniqueness of mxaimal local strong solution to the system \eqref{eqn-SLQE-v}-\eqref{eqn-SLQE-d} with a bounded nonlinear term $f(\d)=\mathds{1}_{\lvert \d \rvert\le 1} (1-\lvert \d\rvert^2)\d $.    The paper \cite{BHP18} deals only with weak (both in PDEs and stochastic calculus sense) solutions and the maximum principle. Some of the results  in \cite{BHP18} and the current paper have already been used in several papers such as  \cite{ZB+UM+AP}, \cite{ZB+UM+AP-2019}, \cite{RZ+GZ}, \cite{BGuo+GZhou-2019},  \cite{GZhou-2019} and  \cite{Wang+Wu+Zhou-2019}.
	%%%%%%%%%%%%%%%%%%%%%%%%%%%%%%
	 %%%%%%%%%%%%%%%%
	  Very recently we have become aware of a recent paper by Feireisl and Petcu \cite{FeirPetcu_2019}, in which they proved the existence of a \textit{dissipative martingale}, as well as the existence of a local strong solution and weak-strong uniqueness of the solution of the stochastic Navier-Stokes Allen-Cahn Equations. Note that in \cite{FeirPetcu_2019} the second unknown $\bd$ is a scalar field, the nonlinear term $f(\cdot)$ is globally Lipschitz and the derivative of a double-well potential $F(\cdot)$, and the coefficient of the noise entering the equations for $\bd$ is bounded.
	The paper \cite{BHP19} is the first paper to deal with the the stochastic counterpart of the  Ericksen-Leslie equations \eqref{eqn-SELE-v}-\eqref{eqn-SELE-sphere}. The results in present manuscript is not covered in \cite{BHP19} because in contrast to our framework which  considers initial condition  $(\v_0, \d_0)\in \h^1 \times \bH^{2}$,  the initial data in \cite{BHP19} satisfies $(\v_0, \d_0)\in \h^{\alpha}\times \h^{\alpha+1}$ for $\alpha>\frac d2$, where $d=2,3$ is the space dimension.  There is also the papers \cite{Medjo1}  which seeks for a special solution $(\bv,\bd)$ with the unknown $\bd$ is replaced by an angle $\theta$ such that $\bd=(\cos\theta, \sin\theta)$. This model reduction considerably simplify the mathematical analysis of \eqref{eqn-SELE-v}-\eqref{eqn-SELE-d}.

	To close this introduction, we emphasize that the analysis in the present paper might also be of great interest in the numerical study of stochastic Ericksen-Leslie system. In fact, on the one hand our assumptions on the polynomial $f(\d)$ enable us to consider the typical Ginzburg-Landau function $f_\eps(\d)=\frac{1}{\eps^2}(1 -\lvert \d \rvert^2) \d$, see Assumption \ref{eqn-f} and Remark \ref{Rem:Ginz-Land-General}. In the numerical context, handling the constraint $\lvert \d\rvert=1$ in the Ericksen-Leslie is a rather challenging task and to overcome this difficulty, one usually use the Ginzburg-Landau approximation, see \cite{Walkington}. On the other hand, to get convergence and a rate of convergence of a space discretization for parabolic SPDEs, one often has to consider a regular solution in favour of weak solutions. 
	%%%%%%%%%%%%%%%%%%%%%%%%%%%%%%%%%%%%%%%%%%%
	%%%%%%%%%%%%%%%%%%%%%%%%%%%%%%%%%%%%%%%%%%%%%%%%%%%%
	%%%%%%%%%%%%%%%%%%%%%%%%%%%%%%%%%%%%%%%%%%%%%%%%%%%%%
	%%%%%%%%%%%%%%%%%%%%%%%%%%%%%%%%%%%%%%%%%%%%%%%%%%%%%
	%%%%%%%%%%%%%%%%%%%%%%%%%%%%%%%%%%%%%%%%%%%%%%%%%%%%%
	%%%%%%%%%%%%%%%%%%%%%%%%%%%%%%%%%%%%%%%%%%%%%%%%%%%%%%%
	%%%%%%%%%%%%%%%%%%%%%%%%%%%%%%%%%%%%%%%%%%%%%%%%%%%%%%%%%%%
	%%%%%%%%%%%%%%%%%%%%%%%%%%%%%%%%%%%%%%%%%%%%%%%%%%%%%%%%%%% %%%%%%%%%%%%%%%%%%%%%%%%%%
	%%%%%%%%%%%%%%%%%%%%%%%%%%%%%%%%%%%%%%%%%%%%%%%%%%%%%%%%%%%%%%%%%%%%%%%%%%%%%%%%%%%%%%%
	%%%%%%%%%%%%%%%%%%%%%%%%%%%%%%%%%%%%%%%%%%%%%%%%%%%%%%%%%%%%%%%%%%%%%%%%%%%%%%%%%%%%%%%%%%%%
	%%%%%%%%%%%%%%%%%%%%%%%%%%%%%%%%%%%%%%%%%%%%%%%%%%%%%%%%%%%%%%%%%%%%%%%%%%%%%%%%%
	%%%%%%%%%%%%%%%%%%%%%%%%%%
	%%%%%%%%%%%%%%%%%%%%%%%%%%
	\section{Preliminary results and notations}\label{sec-spaces-2}
	%%%%%%%%%%%%%%%%%%%%%%%%%%%%%%%%%%%%%%%%%%%%%%%%%%%%%%%%%%%%%%%%%%%
	\subsection{Functional spaces and linear operators}
	Following  \cite{BHP18} we introduce in this section various notations and results that are frequently used in this paper.
	
	 For  two topological spaces $X$ and $Y$ the symbol $X\hookrightarrow Y$ means that the embedding $X$ is continuously embedded in  $Y$. 
	
	\noindent Let $d\in \{2,3\}$ and assume that $\MO \subset \mathbb{R}^d$ is a bounded domain with boundary $\partial \MO$ of class $\mathcal{C}^\infty$.
	For $p\in [1,\infty)$ and $k\in \mathbb{N}$ the symbols $\el^p(\MO)$ or by $\mathbb{W}^{k,p}(\MO)$ (resp. by $\elb^p(\MO)$   or by $\mathbf{W}^{k,p}(\MO)$) respectively denote the Lebesgue and Sobolev spaces of functions $\bv:\mathbb{R}^d\to \mathbb{R}^d$ (resp. $\bd:\mathbb{R}^d \to \mathbb{R}^3$)  
	For $p=2$ the function spaces $\mathbb{W}^{k,2}(\MO)$ and  $\mathbf{W}^{k,p}$  are respectively denoted by $\h^k$ ($\bH^k$) and their norms are 
	denoted by $\lve \cdot
	\rve_k$. The scalar products on $\el^2$ and $\elb^2$ are denoted by the same symbol $\langle
	u,v\rangle$ for $u,v\in \el^2$ (resp. $u,v\in \elb^2$ ) and their associated norms is denoted by $\lVert
	u\rVert$, $u\in \el^2$ (resp. $u\in \el^2$).
	By $\h^1_0$ and $\mathrm{W}^{1,r}_0$, $r>2$,   we mean the spaces of functions in $\h^1$ and $\mathrm{W}^{1,r}$
	that vanish on the boundary on $\MO$.  
	 It is well known that if $a=\frac{d}4$, then there exists a constants $c>0$ such that 
	\begin{equation} \label{GAG-l4}
	\lvert \bu\rvert_{\el^4}\le c \begin{cases}
 \lvert \bu\rvert_{\el^2}^{1-a} \lvert \nabla \bu
\rvert^a_{\el^2} 	\text{ if } \bu\in \mathbb{H}^{1}_{0}\\
\lvert \bu\rvert_{\el^2}^{1-a} \lvert  \bu
\rvert^a_{\mathbb{H}^1} 	\text{ if } \bu\in \mathbb{H}^{1},
	\end{cases}
	\end{equation}
		\begin{equation} \label{GAG-LInf}
	\lvert \bu\rvert_{\el^\infty}\le c \begin{cases}
	\lvert \bu\rvert_{\el^4}^{1-a} \lvert \nabla \bu
	\rvert^a_{\el^4} 	\text{ if } \bu\in \mathbb{W}^{1,4}_{0}\\
	\lvert \bu\rvert_{\el^4}^{1-a} \lvert  \bu
	\rvert^a_{\mathbb{W}^{1,4}} 	\text{ if } \bu\in \mathbb{W}^{1,4},
	\end{cases}
	\end{equation}
	and since $\mathrm{H}^{2} \hookrightarrow \mathrm{W}^{1,4}$, we have 
	\begin{equation}
	\lvert \bu \rvert_{\el^\infty}\le \lve \bu \rve_1 ^{1-a}\lve
	\bu \rve_{2}^a, \; \bu\in \h^2  .\label{GAG-LInf-2}
	\end{equation}
	
	We now introduce the following spaces
	\begin{align*}
	\mathcal{V}& =\left\{ \bu\in \mathcal{C}_{c}^{\infty }(\MO,\mathbb{R}^d)\,\,\text{such that}%
	\,\,\Div \bu=0\right\} \\
	\mathrm{V}& =\,\,\text{closure of $\mathcal{V}$ in }\,\,\mathrm{H}_0^{1}(\MO) \\
	\mathrm{H}& =\,\,\text{closure of $\mathcal{V}$ in
	}\,\,\mathrm{L}^{2}(\MO).
	\end{align*}
	As usual, we endow $\mathrm{H}$ with the scalar product and norm of $\el^2$, and  we equip the space $\ve$ with he scalar product
	$\langle \nabla \bu, \nabla \bv\rangle$ which is equivalent to the
	$\h^1(\MO)$-scalar product on $\ve$.
	
	Let $\Pi: \el^2 \rightarrow \h$ be the Helmholtz-Leray projection
	from $\el^2$ onto $\h$. We denote by $\rA=-\Pi\Delta$ the
	Stokes operator with domain $D(\rA)=\mathrm{V}\cap \h^2$, see for instance \cite[Chapter I, Section 2.6]{Temam} .  It is well-known that the spaces $\ve_\beta:=D(\rA^\beta)$, $\beta \in \mathbb{R}$, are Hilbert spaces   when endowed with the graph inner product and $\ve_\frac12 = \ve$.  It is also well-known that the map $\rA^\delta: \ve_\beta \to \ve_{\beta-\delta}$, $\beta,\delta \in \mathbb{R}$,  is a linear isomorphism. For all these facts we refer, for instance, to \cite{PC+CF}.

 The Neumann Laplacian acting on $\mathbb{R}^d$-valued
	function will be  denoted by $\rrA$, that is,
	\begin{equation*}
	\begin{split}
	D(\rrA)&:=\biggl\{\bu\in \bH^2: \frac{\partial \bu}{\partial \bn}=0 \text{ on } \partial \MO\biggr\},\\
	\rrA\bu&:=-\sum_{i=1}^d \frac{\partial^2 \bu}{\partial
		x_i^2},\;\;\; \bu \in D(\rrA).
	\end{split}
	\end{equation*}
	Notice that the Neumann Laplacian $\rrA$  can be viewed as a linear map $\rrA:\bH^1 \to (\bH^1)^\ast$  satisfying 	
	\begin{equation}\label{Eq:WeakNeumannLaplace}
	{}_{(\bH^1)^\ast}\langle \rrA \bu, \bd\rangle_{\bH^1}=\langle \nabla \bu, \nabla \bd \rangle,\text{ for all } \bu,\bd \in \bH^1.
	\end{equation}
	Thanks to \cite[Theorem 5.31]{Haroske} one can define and characterize in standard way the spaces $\bX_\alpha=D(\hrrA^{\alpha})$, $\alpha \in [0,\infty)$, where $\hrrA=I+\rrA$.  Also, it can be shown that  $\bX_\alpha \hookrightarrow \bH^{2\alpha}$, for all $\alpha \geq 0$  and
	$\bX:=\bX_{\frac12}=\bH^1$,  see, for instance, \cite[Sections 4.3.3 \& 4.9.2]{Triebel}.

	Now, let  $\bh\in\elb^\infty$  be fixed and define a linear bounded operator $G$ from $\elb^2$ into itself by
\begin{equation}\label{Eq:LinG}
	 G: \elb^2 \ni \bd \mapsto \bd\times \bh\in \elb^2.
\end{equation}It is straightforward to check that there exists a constant $C>0$ such that
	$$ \lve G(\bd)\rve \le C \lve \bh\rve_{\elb^\infty} \lvert \bd \rvert_{\elb^2}, \text{ for all } \bd \in \elb^2.$$
	
	\subsection{The nonlinear terms}
	
	Throughout this paper $\mathbf{B}^\ast$ denotes the dual space of
	a Banach space $\mathbf{B}$. We also denote by $\langle \Psi,
	\mathbf{b}\rangle_{\mathbf{B}^\ast,\mathbf{B}}$ the value of $\Psi\in \mathbf{B}^\ast$ on
	$\mathbf{b}\in \mathbf{B}$. Throughout,  $\partial_{x_i}=\frac{\partial}{\partial x_i} $ and {$\phi^{(i)}$}
	is the $i$-th entry of any vector-valued $\phi$.
	
	Let $p,q,r \in [1,\infty]$ such that $\frac 1p +\frac 1q +\frac 1r\le 1.$ Then,  we define a trilinear form $b(\cdot, \cdot,\cdot)$ by
	\begin{equation*}
	b(\bu,\bv,\w)=\sum_{i,j=1}^d \int_\mathcal{O}\bu^{(i)}\frac{\partial
		\bv^{(j)}}{\partial x_i}\w^{(j)} dx,\,\, \bu\in \el^p ,\bv\in
	\mathbb{W}^{1,q}, \text{ and } \w\in \el^r,
	\end{equation*}
	Note if  $\bv\in \mathbf{W}^{1,q}$ and $\w \in \elb^r$, then we have to take the sum over $j$ from $j=1$ to $j=3$.
	
	It is well known, see \cite[Section II.1.2]{Temam}, that there is a bilinear  map $B: \ve \times \ve \to \ve^\ast$ such that
	\begin{equation}\label{DEF-B1}
	\langle
	B(\bu,\bv),\w\rangle_{\ve^\ast,\ve}=b(\bu,\bv,\w)\text{ for }  \w\in \ve,\text{ and } \bu, \bv\in \ve.
	\end{equation}
	In a similar way,  there is also a bilinear map$\tilde{B}:\ve \times \bH^1\to (\bH^1)^\ast$
	such that
	\begin{equation}\label{DEF-B2}
	\langle
	\tilde{B}(\bu,\bv),\w\rangle_{(\bH^1)^\ast,\bH^1}=b(\bu,\bv,\w)\,\, \text{ for all }
	\bu \in \ve, \,\, \bv, \,\,\w\in \bH^1.
	\end{equation}
	The following lemma was proved in \cite[Section II.1.2]{Temam}. 
	\begin{lem}\label{LEM-B}
		The bilinear map $B(\cdot, \cdot)$ maps continuously  $\ve\times
		\h^1$ into $\ve^\ast$ and
		\begin{align}
		&\langle B(\bu,\bv),\w\rangle_{\ve^\ast,\ve}=b(\bu,\bv,\w), \text{ for all }
		\bu\in \ve, \bv\in \h^1,\w \in \ve,\label{B1}\\
		&\langle B(\bu,\bv),\w\rangle_{\ve^\ast,\ve}=-b(\bu,\w,\bv)
		\text{ for all } \bu\in \ve, \bv\in \h^1,\w \in \ve,\label{B2}\\
		&\langle B(\bu,\bv),\bv\rangle_{\ve^\ast,\ve}=0 \,\, \text{ for all } \bu\in \ve,
		\bv\in \ve,\label{B3}\\
		&\lvert B(\bu,\bv)\rvert_{\ve^\ast}\le C_0 \lvert \bu\rvert_{\el^2}^{1-\frac d4}
		\lvert\nabla \bu \rvert_{\el^2}^{\frac d4} \lvert \bv\rvert_{\el^2}^{1-\frac d4}\rvert_{\el^2}
		\nabla \bv\rvert_{\el^2}^\frac d4, \text{ for all } \bu\in \ve, \bv\in
		\h^1.\label{B4}
		\end{align}
	\end{lem}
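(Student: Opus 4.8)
The plan is to derive every assertion from elementary properties of the trilinear form $b$, transferring them to $B$ by duality through \eqref{DEF-B1}. The only analytic ingredients needed are an integration-by-parts identity for $b$ on divergence-free fields and the Gagliardo--Nirenberg inequality \eqref{GAG-l4}, which is at our disposal precisely because $d\in\{2,3\}$.

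First I would verify that $B(\bu,\bv)$ is a well-defined element of $\ve^\ast$ for $\bu\in\ve$ and $\bv\in\h^1$. Since $\ve\hookrightarrow\el^4$ and $\h^1\hookrightarrow\el^4$ when $d\le 4$, H\"older's inequality with exponents $(4,2,4)$ gives
\begin{equation*}
|b(\bu,\bv,\w)|\le\lvert\bu\rvert_{\el^4}\,\lvert\nabla\bv\rvert_{\el^2}\,\lvert\w\rvert_{\el^4}\le C\,\lvert\nabla\bu\rvert_{\el^2}\,\lvert\nabla\bv\rvert_{\el^2}\,\lvert\nabla\w\rvert_{\el^2}
\end{equation*}
for $\w\in\mathcal V$, so $b(\bu,\bv,\cdot)$ extends to a bounded functional on $\ve$; this defines $B(\bu,\bv)$ consistently with \eqref{DEF-B1} and yields \eqref{B1}. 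For the structural identities, take $\bu\in\mathcal V$ and $\bv,\w$ smooth: integrating by parts in $x_i$ and using $\Div\bu=0$ in $\MO$ together with the vanishing of $\w$ on $\partial\MO$ gives $b(\bu,\bv,\w)=-b(\bu,\w,\bv)$. Passing this to $\bu\in\ve$, $\bv\in\h^1$, $\w\in\ve$ by density (using the estimate above and that elements of $\ve$ have zero trace) gives \eqref{B2}, and substituting $\w=\bv\in\ve$ forces $b(\bu,\bv,\bv)=-b(\bu,\bv,\bv)=0$, which is \eqref{B3}.

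For the sharp estimate \eqref{B4} I would start from \eqref{B2}: for $\w\in\ve$, H\"older with exponents $(4,2,4)$ gives
\begin{equation*}
\bigl|\langle B(\bu,\bv),\w\rangle_{\ve^\ast,\ve}\bigr|=|b(\bu,\w,\bv)|\le\lvert\bu\rvert_{\el^4}\,\lvert\nabla\w\rvert_{\el^2}\,\lvert\bv\rvert_{\el^4},
\end{equation*}
and then \eqref{GAG-l4} applied with $a=\frac d4$ to $\bu$ and to $\bv$ yields
\begin{equation*}
\bigl|\langle B(\bu,\bv),\w\rangle_{\ve^\ast,\ve}\bigr|\le C_0\,\lvert\bu\rvert_{\el^2}^{1-\frac d4}\lvert\nabla\bu\rvert_{\el^2}^{\frac d4}\,\lvert\bv\rvert_{\el^2}^{1-\frac d4}\lvert\nabla\bv\rvert_{\el^2}^{\frac d4}\,\lvert\nabla\w\rvert_{\el^2};
\end{equation*}
dividing by $\lvert\nabla\w\rvert_{\el^2}$ and taking the supremum over $\w\in\ve$ is exactly \eqref{B4}. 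Combining \eqref{B4} with the Poincar\'e inequality $\lvert\bu\rvert_{\el^2}\le C\lvert\nabla\bu\rvert_{\el^2}$ on $\ve$ and bounding the $\bv$-factors by $\lVert\bv\rVert_1$ shows $\lvert B(\bu,\bv)\rvert_{\ve^\ast}\le C\lvert\nabla\bu\rvert_{\el^2}\lVert\bv\rVert_1$; since $B$ inherits bilinearity from $b$, this gives the continuity of $B:\ve\times\h^1\to\ve^\ast$.

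Everything above is routine; the step deserving most care is the density argument that carries the integration-by-parts identity from $\mathcal V$ to $\ve\times\h^1\times\ve$, so that the boundary term genuinely vanishes for $\w\in\ve$ while $\bv$ is asked only to lie in $\h^1$ (legitimate because $\h^1\hookrightarrow\el^4$), together with checking that the Gagliardo--Nirenberg exponent $a=\frac d4$ lies in $(0,1)$ --- which is exactly where the restriction $d\in\{2,3\}$ is used. When $\bv\notin\h^1_0$ one applies instead the second case of \eqref{GAG-l4}, which produces $\lVert\bv\rVert_1^{d/4}$ in place of $\lvert\nabla\bv\rvert_{\el^2}^{d/4}$; this still suffices for the continuity claim.
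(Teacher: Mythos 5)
The paper offers no proof of this lemma at all: it is quoted verbatim from Temam and justified by the citation to \cite[Section II.1.2]{Temam}. Your argument is precisely the standard one from that reference --- define $B(\bu,\bv)$ by duality through the trilinear form, bound $b$ by H\"older with exponents $(4,2,4)$ and the embeddings $\ve,\h^1\hookrightarrow\el^4$, obtain the antisymmetry \eqref{B2} by integration by parts on smooth fields with $\Div\bu=0$ and zero boundary trace followed by density, deduce \eqref{B3} by taking $\w=\bv$, and get the $\ve^\ast$-bound from \eqref{B2}, H\"older and \eqref{GAG-l4}. All of these steps are correct, and the restriction $d\in\{2,3\}$ enters exactly where you say it does.

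The one place where your write-up falls short of the statement is \eqref{B4} itself: for $\bv\in\h^1\setminus\h^1_0$ you invoke the second case of \eqref{GAG-l4} and end up with $\lVert\bv\rVert_1^{d/4}$ instead of the claimed $\lvert\nabla\bv\rvert_{\el^2}^{d/4}$, and you concede this explicitly. The stated form can be recovered with one extra observation: if $\bar{\bv}$ denotes the (componentwise) mean of $\bv$ over $\MO$, then $b(\bu,\bar{\bv},\w)=0$ because the middle argument is differentiated, so $B(\bu,\bv)=B(\bu,\bv-\bar{\bv})$; applying your H\"older bound to $\bv-\bar{\bv}$ together with the Gagliardo--Nirenberg--Poincar\'e inequality for mean-zero functions, $\lvert \bv-\bar{\bv}\rvert_{\el^4}\le c\,\lvert \bv-\bar{\bv}\rvert_{\el^2}^{1-\frac d4}\lvert\nabla\bv\rvert_{\el^2}^{\frac d4}\le c\,\lvert \bv\rvert_{\el^2}^{1-\frac d4}\lvert\nabla\bv\rvert_{\el^2}^{\frac d4}$, gives \eqref{B4} exactly as stated (up to the evident typographical garble in the displayed formula of the paper). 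With this amendment your proof is complete and self-contained.
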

	\begin{lem}\label{LEM-G1}
	There exists a constant $C_1>0$
		such that
		\begin{align}
	&	\lve \tilde{B} (\bv, \bd)\lve \le C_1  \lve \bv\rve_{\el^2}^{1-\frac
			d4}\rve \nabla \bv\rve_{\el^2}^\frac d4 \lve \bd\rve_{\mathbf{H}^1}^{1-\frac d4}
		\lve \bd \rve_{\mathbf{H}^2}^{\frac d4}, \text{ for all } \bv \in \ve,
		\bd\in \h^2,\label{EST-G1}\\
	&	\langle \tilde{B}(\bv,\bd),\bd\rangle=0,\text{
			for
			any } \bv\in \ve, \bd \in \h^2.\label{tild-b-0}
		\end{align}
	\end{lem}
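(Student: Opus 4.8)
The plan is to derive \eqref{EST-G1} directly from the definition \eqref{DEF-B2} of $\tilde B$ together with the interpolation inequalities \eqref{GAG-l4} and \eqref{GAG-LInf-2}, mimicking the proof of \eqref{B4} in Lemma \ref{LEM-B}. First I would fix $\bv\in\ve$, $\bd\in\h^2$, and an arbitrary test function $\w\in\bH^1$, and estimate the trilinear form
\[
b(\bv,\bd,\w)=\sum_{i=1}^d\sum_{j=1}^3\int_\MO \bv^{(i)}\,\frac{\partial\bd^{(j)}}{\partial x_i}\,\w^{(j)}\,dx .
\]
Applying H\"older's inequality with exponents $(4,2,4)$ gives $|b(\bv,\bd,\w)|\le C\,|\bv|_{\el^4}\,\|\nabla\bd\|_{\el^2}\,|\w|_{\elb^4}$. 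Since $\bv\in\ve\subset\mathbb H^1_0$ and $\w\in\bH^1$, the Gagliardo--Nirenberg inequality \eqref{GAG-l4} yields $|\bv|_{\el^4}\le c\,\|\bv\|_{\el^2}^{1-d/4}\,\|\nabla\bv\|_{\el^2}^{d/4}$ and $|\w|_{\elb^4}\le c\,\|\w\|_{\elb^2}^{1-d/4}\,\|\w\|_{\bH^1}^{d/4}\le c\,\|\w\|_{\bH^1}$. Also $\|\nabla\bd\|_{\el^2}\le\|\bd\|_{\bH^1}\le\|\bd\|_{\bH^1}^{1-d/4}\|\bd\|_{\bH^2}^{d/4}$ by interpolation (or simply by $\|\bd\|_{\bH^1}\le\|\bd\|_{\bH^2}$ when one is not chasing the sharp exponent). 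Combining these bounds and taking the supremum over $\w\in\bH^1$ with $\|\w\|_{\bH^1}\le1$ produces \eqref{EST-G1} with a suitable constant $C_1>0$.

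For \eqref{tild-b-0}, the idea is the classical cancellation property of the convective term. Taking $\w=\bd$ in \eqref{DEF-B2} is not immediately legitimate since \eqref{DEF-B2} requires $\w\in\bH^1$, which is fine here as $\bd\in\h^2\subset\bH^1$; so $\langle\tilde B(\bv,\bd),\bd\rangle=b(\bv,\bd,\bd)$. Now
\[
b(\bv,\bd,\bd)=\sum_{i=1}^d\sum_{j=1}^3\int_\MO \bv^{(i)}\,\frac{\partial\bd^{(j)}}{\partial x_i}\,\bd^{(j)}\,dx
=\tfrac12\sum_{i=1}^d\int_\MO \bv^{(i)}\,\frac{\partial}{\partial x_i}\bigl(|\bd|^2\bigr)\,dx .
\]
Integrating by parts, the boundary term vanishes because $\bv\in\ve$ has zero trace on $\partial\MO$, and the remaining term is $-\tfrac12\int_\MO(\divv\bv)\,|\bd|^2\,dx=0$ since $\divv\bv=0$. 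Hence $\langle\tilde B(\bv,\bd),\bd\rangle=0$. To make the integration by parts rigorous one approximates $\bv$ by elements of $\mathcal V$ (which is dense in $\ve$) and passes to the limit, using the continuity of $b$ established in the first part.

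The main obstacle, such as it is, lies in tracking the precise Gagliardo--Nirenberg exponents so that the final bound has exactly the form \eqref{EST-G1} with the $\bd$-factors interpolated between $\bH^1$ and $\bH^2$ (rather than the cruder $\|\bd\|_{\bH^1}$), and in checking that the extra spatial component ($j$ ranging to $3$) causes no difficulty — it does not, since all estimates are applied componentwise and the sum over the finitely many components is absorbed into the constant. The density argument needed to justify integration by parts in \eqref{tild-b-0} is standard and follows the template of \cite[Section II.1.2]{Temam}.
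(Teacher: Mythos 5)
Your handling of \eqref{tild-b-0} is fine: it is the same classical cancellation argument (integration by parts, zero trace of $\bv$, $\divv\bv=0$, plus density of $\mathcal V$ in $\ve$) that the paper invokes via Temam. The gap is in your proof of \eqref{EST-G1}. By the paper's notational conventions the unsubscripted norm on the left-hand side is the $\elb^2$-norm, and this is exactly how the lemma is used later: in the proof of Lemma \ref{Local-LIP-Lem-2} the estimate \eqref{EST-G1} is what controls the $\elb^2$-part of the $\bH^1$-norm of $\tilde B$. Your argument, however, tests $\tilde B(\bv,\bd)$ against $\w\in\bH^1$ with $\lve \w\rve_{\bH^1}\le 1$ and takes a supremum, which only bounds $\lvert \tilde B(\bv,\bd)\rvert_{(\bH^1)^\ast}$; since $\elb^2\hookrightarrow(\bH^1)^\ast$ and not conversely, this is strictly weaker than the claimed bound and would not suffice for the application. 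Relatedly, your H\"older split $(4,2,4)$ leaves $\nabla\bd$ in $\elb^2$, so the interpolated factor $\lve\bd\rve_1^{1-\frac d4}\lve\bd\rve_2^{\frac d4}$ enters only through the trivial inequality $\lve\bd\rve_1\le\lve\bd\rve_1^{1-\frac d4}\lve\bd\rve_2^{\frac d4}$; the substantive content of \eqref{EST-G1} --- that for $\bd\in\h^2$ the convection term is an honest $\elb^2$ function obeying that interpolated bound --- is never reached by this route.

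The repair is short and is, in substance, what the paper does by citing \cite[Lemma 6.2]{BHP18} together with \eqref{GAG-l4}: for $\bv\in\ve$ and $\bd\in\h^2$ one has $\tilde B(\bv,\bd)=\bv\cdot\nabla\bd$ as an $\elb^2$ function, and H\"older with exponents $(4,4)$ gives $\lvert\bv\cdot\nabla\bd\rvert_{\elb^2}\le \lvert\bv\rvert_{\el^4}\,\lvert\nabla\bd\rvert_{\elb^4}$. Now apply \eqref{GAG-l4} twice: to $\bv\in\mathbb{H}^1_0$ (giving $\lvert\bv\rvert_{\el^2}^{1-\frac d4}\lvert\nabla\bv\rvert_{\el^2}^{\frac d4}$) and to $\nabla\bd\in\bH^1$ (the non-vanishing-trace case, giving $\lvert\nabla\bd\rvert_{\elb^2}^{1-\frac d4}\lve\nabla\bd\rve_{\bH^1}^{\frac d4}\le \lve\bd\rve_1^{1-\frac d4}\lve\bd\rve_2^{\frac d4}$). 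Equivalently, if you prefer duality, test against $\w\in\elb^2$ with $\lvert\w\rvert_{\elb^2}\le1$ and use exponents $(4,4,2)$. Either way you obtain \eqref{EST-G1} in the correct norm with the interpolation exponents arising genuinely rather than by fiat.
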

	\begin{proof}
	The estimate \eqref{EST-G1} follows from \cite[Lemma 6.2]{BHP18}  and the Gagliardo-Nirenberg estimate \eqref{GAG-l4}. The proof of \eqref{tild-b-0} and \eqref{B3} are the same, see \cite[Section II.1.2]{Temam}. 
	\end{proof}

	Let $r,\,p,\, q\in(1,\infty)$ such that $ \frac 1p
	+\frac 1q+\frac 1r\le 1$. For \text{ $\d_1\in \bW^{1,p}$, $\d_2\in \bW^{1,q}$ and $\bu\in
		\bw^{1,r}$ } we set 
	\begin{equation}\label{INT-md}
	\mathfrak{m}(\d_1, \d_2,\bu)= -\sum_{i,j=1}^d\sum_{k=1}^3 \int_\MO
	\partial_{x_i}\bd_1^{(k)} \partial_{x_j}\bd_2^{(k)} \partial_{x_j}\bu^{(i)}\,dx. 
	\end{equation}
	Since $d\le 4$,
	the integral in \eqref{INT-md} is also well defined for $\bd_1,
	\d_2 \in \bH^2$ and $\bu\in \ve$.

	We recall the following proposition which can be found in \cite[Proposition 2.2 \& Remark 2.3]{BHP18}.
	\begin{prop}\label{LEM-M}
		Let $d\in [1,4]$. There exists a bilinear map $M:\bH^2\times \bH^2 \to \ve^\ast $ such that
		\begin{align}\label{def-Md}
		&\langle M(\bd_1,\d_2), \bu\rangle_{\ve^\ast,\ve}= \mathfrak{m}(\d_1,\d_2,\bu), \;\text{$\d_1, \, \d_2 \in \bH^2$, } \bu \in \ve,\\	
		&	\langle M(\mathbf{f}, \mathbf{g}), \bv \rangle_{\ve^\ast, \ve}= \langle \Pi[\Div (\nabla \mathbf{f}
		\odot \nabla \mathbf{g})], \bv \rangle \text{ for all } \mathbf{f}, \mathbf{g}\in \bX_{1} \text{ and } \bv \in \mathrm{H},\label{Eq:Identity-M-L2}\\
		&	\langle \tilde{B}(\bv,\bd), \rrA \bd\rangle+\langle
		M(\bd,\d), \bv\rangle_{\ve^\ast,\ve}=0, \text{ for all } \bv\in \ve, \bd \in \bX_1,\label{G1-eq-Md}.
		\end{align}
	\end{prop}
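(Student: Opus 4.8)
The plan is to construct the bilinear map $M$ directly from the trilinear form $\mathfrak{m}$, using the estimate on $\mathfrak{m}$ already implicit in the setup to show $\mathfrak{m}(\bd_1,\bd_2,\cdot)$ is a bounded linear functional on $\ve$. First I would fix $\bd_1,\bd_2\in\bH^2$ and observe that for $\bu\in\ve$ one has, by H\"older's inequality with exponents $4,4,2$, that $|\mathfrak{m}(\bd_1,\bd_2,\bu)|\le C\,|\nabla\bd_1|_{\elb^4}|\nabla\bd_2|_{\elb^4}|\nabla\bu|_{\el^2}$; since $\bH^2\hookrightarrow\bW^{1,4}$ in dimensions $d\le 4$ (indeed $d\le 3$ here) and $|\nabla\bu|_{\el^2}$ is the $\ve$-norm, the map $\bu\mapsto\mathfrak{m}(\bd_1,\bd_2,\bu)$ is a bounded linear functional on $\ve$, hence represented by a unique element $M(\bd_1,\bd_2)\in\ve^\ast$ satisfying \eqref{def-Md}. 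Bilinearity and the bound $\lVert M(\bd_1,\bd_2)\rVert_{\ve^\ast}\le C\lVert\bd_1\rVert_{\bH^2}\lVert\bd_2\rVert_{\bH^2}$ are then immediate from the formula \eqref{INT-md}.

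Next I would prove \eqref{Eq:Identity-M-L2}. For $\mathbf{f},\mathbf{g}\in\bX_1$ the field $\nabla\mathbf{f}\odot\nabla\mathbf{g}$ lies in $\el^2$ — here using $\bX_1\hookrightarrow\bH^2\hookrightarrow\bW^{1,4}$ so that the product of two $\bW^{1,4}$-gradients is in $\el^2$ — and $\Div(\nabla\mathbf{f}\odot\nabla\mathbf{g})\in\h^{-1}$, actually one needs a little more regularity to pair with $\bv\in\mathrm{H}$; the cleanest route is to integrate by parts. Writing $[\Div(\nabla\mathbf{f}\odot\nabla\mathbf{g})]^{(i)}=\sum_j\partial_{x_j}\bigl(\sum_k\partial_{x_i}\mathbf{f}^{(k)}\partial_{x_j}\mathbf{g}^{(k)}\bigr)$ and testing against a smooth divergence-free $\bu$, the Helmholtz projection $\Pi$ drops out and an integration by parts (the boundary term vanishes since $\bu$ is compactly supported, then one passes to the limit for general $\bu\in\ve$) gives exactly $-\sum_{i,j,k}\int_\MO\partial_{x_i}\mathbf{f}^{(k)}\partial_{x_j}\mathbf{g}^{(k)}\partial_{x_j}\bu^{(i)}\,dx=\mathfrak{m}(\mathbf{f},\mathbf{g},\bu)$. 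Since $\mathrm{H}\subset\ve^\ast$ by duality and the identity $\langle M(\mathbf{f},\mathbf{g}),\bu\rangle=\mathfrak{m}(\mathbf{f},\mathbf{g},\bu)$ already holds on $\ve$, density of $\ve$ in $\mathrm{H}$ upgrades the $\el^2$-pairing identity \eqref{Eq:Identity-M-L2} to all $\bv\in\mathrm{H}$ once we know $\Pi[\Div(\nabla\mathbf{f}\odot\nabla\mathbf{g})]\in\mathrm{H}$, which holds because $\nabla\mathbf{f}\odot\nabla\mathbf{g}\in\bW^{1,s}$ for suitable $s$ when $\mathbf{f},\mathbf{g}\in\bX_1$, or alternatively one simply reads \eqref{Eq:Identity-M-L2} as the definition of the left-hand side as an element of $\mathrm{H}$.

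For the cancellation identity \eqref{G1-eq-Md}, I would expand both pairings explicitly for $\bv\in\ve$, $\bd\in\bX_1$ (so $\rrA\bd=-\Delta\bd\in\elb^2$ and $\partial\bd/\partial\bn=0$ on $\partial\MO$). One has $\langle\tilde{B}(\bv,\bd),\rrA\bd\rangle=\langle(\bv\cdot\nabla)\bd,-\Delta\bd\rangle=\sum_{i,j,k}\int_\MO\bv^{(i)}\partial_{x_i}\bd^{(k)}(-\partial^2_{x_j}\bd^{(k)})\,dx$. Integrating by parts in $x_j$ (the Neumann condition kills the boundary contribution, and $\Div\bv=0$ is used to discard one of the resulting terms exactly as in the proof that $\langle(\bv\cdot\nabla)\bd,\bd\rangle=0$), this equals $\sum_{i,j,k}\int_\MO\partial_{x_j}\bv^{(i)}\partial_{x_i}\bd^{(k)}\partial_{x_j}\bd^{(k)}\,dx=-\mathfrak{m}(\bd,\bd,\bv)=-\langle M(\bd,\bd),\bv\rangle_{\ve^\ast,\ve}$, which is \eqref{G1-eq-Md}. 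The symmetry $\partial_{x_i}\bd^{(k)}\partial_{x_j}\bd^{(k)}=\partial_{x_j}\bd^{(k)}\partial_{x_i}\bd^{(k)}$ is what makes the index bookkeeping work out.

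The main obstacle I expect is the careful justification of the integration by parts in \eqref{Eq:Identity-M-L2} and \eqref{G1-eq-Md}: one must make sure the products of gradients have enough integrability for the boundary terms to be meaningful (and to vanish) and for the limiting argument from smooth test functions to genuine elements of $\ve$ or $\bX_1$ to go through. This is precisely why the hypothesis $d\le 4$ (so that $\bH^2\hookrightarrow\bW^{1,4}$ and products $\partial\bd_1\,\partial\bd_2\in\el^2$) and the extra regularity $\mathbf{f},\mathbf{g}\in\bX_1$ in \eqref{Eq:Identity-M-L2} are invoked; everything else is bilinearity and the Cauchy–Schwarz/Hölder bookkeeping already recalled in Lemma \ref{LEM-B} and Lemma \ref{LEM-G1}.
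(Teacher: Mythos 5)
Before comparing: the paper does not actually prove this proposition — it is quoted from \cite[Proposition 2.2 \& Remark 2.3]{BHP18} — so the only benchmark is the standard argument, which your first and third steps reproduce correctly. The H\"older $4$--$4$--$2$ estimate together with $\bH^2\hookrightarrow \bW^{1,4}$ (valid for $d\le 4$) does show that $\mathfrak{m}(\bd_1,\bd_2,\cdot)$ is a bounded functional on $\ve$, so $M(\bd_1,\bd_2)\in\ve^\ast$ is well defined, bilinear and bounded; and your derivation of \eqref{G1-eq-Md} — integrating by parts in $x_j$, killing the boundary term via the Neumann condition (or the vanishing trace of $\bv$), and using $\Div\bv=0$ to discard the term $\tfrac12\int_\MO \bv\cdot\nabla\lvert\nabla\bd\rvert^2\,dx$ — is exactly the computation behind the cited result and is correct for $\bv\in\ve$, $\bd\in\bX_1$.

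The genuine gap is in your treatment of \eqref{Eq:Identity-M-L2}. Your integration by parts gives the identity for $\bu\in\mathcal{V}$ and, by density, for $\bu\in\ve$ (in the $\el^{3/2}$--$\el^{3}$ duality), but the upgrade to all $\bv\in\mathrm{H}$ does not go through as you argue it. From $\mathbf{f},\mathbf{g}\in\bX_1\subset\bH^2$ one only gets $\nabla\mathbf{f}\odot\nabla\mathbf{g}\in \mathrm{W}^{1,s}$ with $s<2$ (namely $s=3/2$ for $d=3$, and any $s<2$ for $d=2$, since $\nabla^2\mathbf{f}\in\el^2$ while $\nabla\mathbf{g}$ is only in $\elb^6$, resp.\ $\elb^p$, $p<\infty$); hence $\Div(\nabla\mathbf{f}\odot\nabla\mathbf{g})$ lies only in $\el^s$, not in $\el^2$, and the asserted membership $\Pi[\Div(\nabla\mathbf{f}\odot\nabla\mathbf{g})]\in\mathrm{H}$ does not follow from ``$\mathrm{W}^{1,s}$ for suitable $s$''. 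Consequently the $\el^2$-pairing with a general $\bv\in\mathrm{H}$ is not even defined at this level of regularity, and your density step collapses; the fallback of ``reading \eqref{Eq:Identity-M-L2} as the definition of the left-hand side'' is not a proof of the stated identity. To close the gap you must either restrict $\bv$ to a space embedded in $\el^{s'}$ (e.g.\ $\bv\in\ve$, which is already covered by \eqref{def-Md}), or specify the generalized duality in which the pairing is meant, or assume enough extra regularity on $\mathbf{f},\mathbf{g}$ that $\Div(\nabla\mathbf{f}\odot\nabla\mathbf{g})\in\el^2$ — which is in fact how the identity is used later in the paper (in Lemma \ref{Lem:CouplingTermIto} the extra hypothesis on $\d$ forces $\d\in\bH^3$, hence $\nabla\d\in\elb^\infty$, and the $\el^2$ pairing with $\rA\bu\in\mathrm{H}$ is then legitimate).
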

	In some places in this manuscript we use the following shorthand notations:
	$$ B(\bu):= B(\bu,\bu) \text{ and } M(\bd):= M(\bd,\bd),$$ for all $\bu$ and $\bd$ such that the above quantities are meaningful.

	We now fix the standing assumptions on the function $f(\cdot)$.
	\begin{assum}\label{eqn-f}
		Let $I_d$ be the set defined by
		\begin{equation}\label{DEG-POL}
		I_d=\begin{cases}
		\mathbb{N}:=\{1, 2, 3, \ldots\} \text{ if } d=2,\\
		\{1\}, \text{ if } d=3.
		\end{cases}
		\end{equation}
		We fix $N\in I_d
		$  and  $a_k\in \mathbb{R}$, $k=0,\ldots, N$,
		with $a_N<0$.  We define a function $\tilde{f}:[0,\infty) \rightarrow \mathbb{R}$ by
		$$ \tilde{f}(r)=\sum_{k=0}^N a_k r^k, \text{ for all } r\in \err_+.$$
		We define a map $f:\mathbb{R}^3\rightarrow \mathbb{R}^3$ by $f(\d)=\tilde{f}(\vert \d\vert^2)\d$ where $\tilde{f}$ is as above.
		
		We now assume that there exists $F: \mathbb{R}^3 \rightarrow \mathbb{R}$  a Fr\'echet differentiable map such that  $$ F^\prime(\d)[\mathbf{g}]= f(\d)\cdot \mathbf{g},
	\text{ $\d\in \mathbb{R}^d$, $\mathbf{g}\in \mathbb{R}^d$}.$$ Note that if  $\tilde{F}$ is a map such that $\tilde{F}^\prime= \tilde{f}$ and  $\tilde{F}(0)=0$, then, there  is $U$ is a polynomial function  with $\text{deg}(U)\le N$  and $a_{N+1}<0$ such that 
	$ \tilde{F}(r)=a_{N+1}r^{N+1}+U(r).$
	\end{assum}

	\begin{Rem}\label{REM-H2}
		\begin{enumerate}[(i)]
		\item \label{Itemi:REM-H1 } There exists a constant $\ell_3>0$ such that
			\begin{align}
			\lvert \tilde{f}^{\prime\prime}(r) \rvert\le \ell_3(1+r^{N-2}), \,\, r>0.\label{ST6-B-2}
			\end{align}
		\item \label{Itemi:REM-H2}  From \eqref{ST6-B-2}, we infer that there exist $c_0,c_1,c_3>0$ such that
			$$ \lvert f(\bd)\rvert \le c_0 (1+\vert \d\vert^{2N+1}), \text{    } \lvert f^{\prime}(\d)\rvert\le
			c_1(1+\vert \bd \vert^{2N}) \text{ and }    \lvert f^{\prime \prime}(\d)\rvert\le c_2 (1+\vert \bd \vert^{2N-1}) \text{ for all } \bd\in \err^n.$$
			\item  Let $\tilde{q}=4N+2$. It is easy to show that there exists $C>0$ such that \mbox{   for all } $\d\in \h^2$
			\begin{align}
			\lve \rrA \d\rve^2=&\lve \rrA \d +f(\d)-f(\d)\rve^2
			\le  2 \lve \rrA \d -f(\d)\rve^2+2 \lve f(\d)\rve^2,\nonumber \\
			\le & 2 \lve \rrA \d -f(\d)\rve^2+C \lve \d\rve^{\tilde{q}}_{\mathbf{L}^{\tilde{q}}}+C.\label{bigdandel}
			\end{align}

			\item Since the norm $\lVert \cdot \rVert_2$ is equivalent to $\lVert \cdot \rVert + \lVert \rrA \cdot \rVert $ on $D(\rrA)$,  there exists $C>0$
			such that
			\begin{equation}\label{bigdanh2}
			\lve \d\rve^2_2\le C (\lve \rrA \d -f(\d)\rve^2
			+\lve \d\rve^{\tilde{q}}_{\mathbf{L}^{\tilde{q}}}+1), \mbox{   for all }\d\in D(\rrA).
			\end{equation}
			
			\item\label{Rem-Linf-H1Delta}
			Since $\bH^1\hookrightarrow \elb^{4N+2}$,  $N \in I_d$, we infer from \eqref{bigdanh2} that  $\bd\in \bH^2$ if $\bd \in \bH^1$ and $\rrA \bd -f(\bd)\in \elb^2$.
		\end{enumerate}
	\end{Rem}
		\begin{Rem}\label{Rem:Ginz-Land-General}
		Let $\eps>0$ and $\tilde{f}_\eps(r):=\frac{1}{\eps^2}(-r+1)$, $r\in [0,\infty) $. Examples of maps $f$ and $F$ satisfying Assumption \ref{eqn-f} are the following
		$$f(\d):=\tilde{f}(\vert \d \vert^2)\d=\frac{1}{\eps^2}(1-\lvert \d \rvert^2)\d \text{ and } F(\d):=\frac1{4\eps^2} [\tilde{f}(\vert \d \vert^2) ]^2, \; \d\in \mathbb{R}^d.$$
	\end{Rem}
	\subsection{The assumption on the coefficients of the noise}
	
	\begin{assum}\label{Assum:Usual hypotheses}
		Throughout this paper we are given a complete filtered probability space $(\Omega,
		\mathcal{F}, \mathbb{P})$
		with the filtration $\mathbb{F}=\{\mathcal{F}_t: t\geq 0\}$
		satisfying the usual hypothesis, \textit{i.e.},
		the filtration is right-continuous and all null sets of $\mathcal{F}$ are elements of $\mathcal{F}_0$.
	\end{assum}
	Throughout, let $\rK_1$ be a separable Hilbert space, and $W_1=(W_1(t))_{t\geq 0}$ and 
	$W_2=(W_2(t))_{t\geq 0}$ be  independent $\rK_1$-cylindrical
	Wiener process and  standard Brownian motion on $(\Omega,
	\mathcal{F},\mathbb{F}, \mathbb{P})$.  If $\rK=\rK_1\times \mathbb{R}$ then  we can assume that $W=(W_1(t),W_2(t))$ is $\rK$-cylindrical
	Wiener process. 
%	\begin{Rem}\label{Wiener}
%		%We assume that $W_1$ is a cylindrical   Wiener process on separable Hilbert space  $\mathbb{K}_1$.
%		If $\ku_2$ is a Hilbert space such that the embedding $\ku_1\subset \ku_2$ is Hilbert-Schmidt, then $W_1$ can be viewed as a $\ku_2$-valued Wiener process. Moreover, there exists a trace class symmetric nonnegative operator $Q\in \mathcal{L}(\ku_2)$ such that $W_1$ has  covariance $Q$. This $\ku_2$-valued $\ku_1$-cylindrical  Wiener process is characterised by, for all $t\geq 0$,
%		\[
%		\mathbb{E} e^{i \;\fourIdx{}{}{}{\ku_2^\ast,\ku_2} {\lb x^\ast, W(t)\rb}  }= e^{-\frac{t}2 \vert x^\ast\vert_{\ku_1}^2}, \;\; x^\ast \in \ku_2^\ast,
%		\]
%		where $\ku_2^\ast$ is the dual space to $\ku_2$ such that identifying $\ku_1^\ast$ with $\ku_1$ we have
%		\[
%		\ku_2^\ast \embed \ku_1^\ast=\ku_1 \embed \ku_2.\]
%	\end{Rem}
	
	%	Let $\mathrm{K}$ be a separable Banach space, $\mathscr{L}(\mathscr{H}, \mathrm{K})$ be the space of all bounded linear $\mathrm{K}$-valued  operators defined on $\mathscr{H}$,
	Let $\tilde{\mathrm{K}}$ and $\tilde{\mathrm{H}}$ be a separable Hilbert and Banach spaces. We denote by $\gamma(\tilde{\rK}, \tilde{\rH})$ the space of $\gamma$-radonifying operators which generalises the space of Hilbert-Schmidt operators $\mathcal{T}_2(\tilde{\rK}, \tilde{\rH})$ if $\tilde{\rH}$ is a separable Hilbert space, see \cite{ZB-97}.   Let $\mathscr{M}^2(\Omega\times [0,T]; \mathcal{T}_2(\tilde{\mathrm{K}}, \tilde{\mathrm{H}} ))$ the space of all equivalence classes of progressively measurable processes $\Psi: \Omega\times [0,T]\to\mathcal{T}_2(\tilde{\mathrm{K}}, \tilde{\mathrm{H}} )$ satisfying
	$$ \E\int_0^T \Vert \Psi(s)\Vert^2_{\mathcal{T}_2(\tilde{\mathrm{K}}, \tilde{\mathrm{H}} )}ds <\infty.$$
		For a $\tilde{\rK}$-cylindrical Wiener process $\tilde{W}$ and   $\Psi \in\mathscr{M}^2(\Omega\times [0,T]; \mathcal{T}_2(\tilde{\mathrm{K}}, \tilde{\mathrm{H}} ))$ the process $M$ defined by
	$ M(t) =\int_0^t \Psi(s)d\tilde{W}(s), t\in [0,T],$ is a $\tilde{\mathrm{H}}$-valued martingale. 
	For more detail on the theory of stochastic integration we refer to \cite[Section 26 ]{Metivier_1982} and \cite[Chapter 4]{DP+JZ-14}. 
%	for all $\Psi \in\mathscr{M}^2(\Omega\times [0,T]; \mathcal{T}_2(\tilde{\mathrm{K}}, \tilde{\mathrm{H}} ))$ the process $M$ defined by
%	$$ M(t) =\int_0^t \Psi(s)dW(s), t\in [0,T],$$ is a $\tilde{\mathrm{H}}$-valued martingale. Moreover,  we have the following It\^o isometry
%	\begin{equation}
%	\E \biggl(\biggl\Vert \int_0^t \Psi(s) dW(s)\biggr\Vert^2_{\tilde{\mathrm{H}} }\biggr) =\E \biggl(\int_0^t \Vert \Psi(s) \Vert^2_{\mathcal{T}_2(\tilde{\mathrm{K}}, \tilde{\mathrm{H}})} ds \biggr), \forall t\in [0,T],
%	\end{equation}	
%	and the Burkholder-Davis-Gundy inequality
%	\begin{equation}
%	\E\biggl(\sup_{0\le s\le t}\biggl \Vert \int_0^s \Psi(s) dW(s)\biggr\Vert^q_{\tilde{\mathrm{H}} }\biggr) \le C_q \E \biggl( \int_0^t \Vert \Psi(s)  \Vert^2_{\mathcal{T}_2(\tilde{\mathrm{H}},\tilde{\mathrm{H}})}ds \biggr)^\frac q2, \forall t\in [0,T], \forall q\in (1,\infty).
%	\end{equation}

Let $G$ be the map defined in \eqref{Eq:LinG} and $G^2=G\circ G$. Then, we have the following relation identity
	Stratonovich and It\^o's integrals, see \cite{Brz+Elw_2000},
	\begin{equation*}
	G(\bd)\circ dW_2= \frac 12 G^2(\bd) \,dt +
	G(\bd)\,dW_2.
	\end{equation*}

	We now introduce the standing set of hypotheses on the function $S$.
	
	\begin{assum}\label{HYPO-ST}
		We assume that $S: \h\to \mathcal{T}_2(\rK_1,\ve)$ is a globally Lipschitz map. In particular,
		there exists $\ell_5\geq 0$
		such that
		\begin{equation}\label{Eq:Hypo-ST}
		\lve  S(\bu)\rve^2_{\mathcal{T}_2(\rK_1,\ve)}\leq \ell_5 (1+\lvert \bu \rvert_{\el^2}^2),\;\; \mbox{ for all } \bu \in \h.
		\end{equation}
	\end{assum}
	\begin{Rem}\label{Rem:HYPO-ST}
		Notice that the assumption \eqref{Eq:Hypo-ST} implies that there exists a constant $\ell_6>0$ such that
		\begin{equation}\label{Eq:Hypo-ST-Rem}
		\lve  S(\bu)\rve^2_{\mathcal{T}_2(\rK_1,\ve)}\leq \ell_5 (1+\lvert \nabla \bu \rvert_{\el^2}^2),\;\; \mbox{ for all } \bu \in \h.
		\end{equation}
	\end{Rem}
	\section{Existence and uniqueness of local and global strong Solution}\label{SLC-Sect4}
Using the notations of Section \ref{sec-spaces-2}, the system \eqref{eqn-SLQE-v}-\eqref{Init-Cond}  can be  written in  the abstract
	form
	\begin{align}
	&d\bv(t)+\biggl(\rA\bv(t)+B(\bv(t),
	\bv(t))+M(\bd(t))\biggr)dt=S(\bv(t))dW_1,\label{ABS-v1}\\
	&d\bd(t)+\biggl(\rA_1\bd(t)+ \tilde{B}(\bv(t),\bd(t))-
	f(\bd(t))-\frac 12 G^2(\bd(t))\biggr)dt=G(\bd(t))dW_2,\\
	\label{ABt-d1}
	& \v(0)=v_0
	\text{ and } \d(0)=d_0.
	\end{align}
In this section we  prove the existence and uniqueness of  the strong solution to problem \eqref{ABS-v1}-\eqref{ABt-d1}. 
	%%%%%%%%%%%%%%%%%%%%
%	\red{One of the main results, the existence of a maximal local solution, of this section is a corollary of a general framework that will be introduced in Section \ref{ABST-STRONG}.
%	In this section we will check that the system \eqref{ABS-v1}-\eqref{ABt-d1} fits into this general framework and hence establish the existence of maximal local
%	solution of the stochastic nematic liquid crystal. We also prove that the maximal solution turns out to be a global one in the two dimensional case.
%	For these ends we will introduce some additional notations and prove several key inequalities related to the nonlinear terms of the stochastic system
%	\eqref{ABS-v1}-\eqref{ABt-d1}.}
	\subsection{Definition of local solutions}
	Let  $(B_i, \lve \cdot \rve_{B_i})$, $i=1, 2$, be two Banach spaces. We endow $B_1\times B_2$ with the norm $
	\rve(b_1,b_2)\lve=\sqrt{\lve b_1\rve_{B_1}^2 +\lve b_2\rve_{{B}_2}^2}.$
	Henceforth, we put
	\begin{equation}\label{eqn-spaces}
	\mathscr{H}=\h\times \bX_{\frac12}, \;\; \mathscr{V}=\ve\times \bX_{1} \mbox{ and } \mathscr{E}=\ve_1 \times \bX_{\frac32}.
	\end{equation}
	Next,  we denote by $\{\mathbb{S}_1(t)\}_{t\geq 0}$ and $\{\mathbb{T}(t)\}_{t\geq 0}$  the analytic semigroups 
	generated by
	$-\rA$ on $\h$ and by  $\rrA$ on $\elb^2$, respectively. 
It is well-known that 
	the space $\bX_{\frac12}$ is invariant wrt $\{\mathbb{T}(t)\}_{t\geq 0}$.  The restriction of $\{\mathbb{T}(t)\}_{t\geq 0}$ to $\bX_{
		\frac12}$ is also an analytic semigroup which will be denoted by $\{\mathbb{S}_2(t)\}_{t\geq 0}$.
	The minus infinitesimal
	generator $\trrA$ of $\{\mathbb{S}_2(t)\}_{t\geq 0}$ is the part
	of $\rrA$ on $\bX_{\frac12}$, that is,
	\begin{align*}
	D(\trrA)=\{\bu \in D(\rrA): \rrA\bu\in \bX_{\frac12}\},\;\;
	\trrA\bu=\rrA\bu \text{ for all } \bu \in D(\trrA).
	\end{align*}
	Note that $\bX_{\frac32}\subset D(\trrA).$
	With all the above notation, the problem \eqref{ABS-v1}-\eqref{ABt-d1} can be rewritten as the following stochastic evolution equation in the space $\mathscr{H}$,
	\begin{equation}\label{ABSTRACT-LC}
	d\y(t) +\mathbf{A}\y(t) dt+\mathbf{F}(\y(t))
	dt+\mathbf{L}(\y(t))dt=\mathbf{G}(\y(t)) d{W}(t),
	\end{equation}
	where, for  $\y=(\bv, \d)\in E$ and $k=(k_1,k_2)\in \rK$,
	
	\begin{equation}\label{eqn-def-A-F}
	\A\y=\begin{pmatrix} \rA \bv  \\
	 \rrA \bd
	\end{pmatrix},\;\;
	\f(\y)=\begin{pmatrix} B(\bv,\bv)+M(\bd)\\
	\tilde{B}(\bv,\bd)-f(\bd)
	\end{pmatrix},
	\end{equation}
	\begin{equation}\label{eqn-def-L-G}
	\mathbf{L}(\y)=\begin{pmatrix} 0\\ -\frac 1 2 G^2(\bd)
	\end{pmatrix}, \mathbf{G}(\y)k=\begin{pmatrix}S(\bu)k_1\\
	G(\bd)k_2 \end{pmatrix}.
	\end{equation}
	The operator $-\mathbf{A}$ generates an  analytic semigroup  $\{\mathbb{S}(t)\}_{t\geq 0}$ on  $\mathscr{H}=\h\times \bX_{\frac12}$
	defined by
	\begin{equation*}
	\mathbb{S}(t)\begin{pmatrix}\bv\\\d
	\end{pmatrix}=\begin{pmatrix}\mathbb{S}_1(t)\bv\\
	\mathbb{S}_2(t)\d
	\end{pmatrix}, \;\; (\bv,\d)\in \mathscr{H}.
	\end{equation*}
	%Some properties of  will be given in Lemmata \ref{SEM-1}-\ref{SEM-3}.
	%%%%%%%%%%%%%%%%%%%%%%%%%%%%%%%%%%%%%%%%%%%%%%
	Important properties of $\{\mathbb{S}(t): t\ge 0\}$ are given in the next two lemma.
	\begin{lem}\label{SEM-1} \label{SEM-2}
		Let $T\in (0,\infty)$, $\mathbf{g}=\begin{pmatrix}
		\tilde{g} \\ g
		\end{pmatrix}\in L^2(0,T;  \h \times \bX_{\frac12})$ and
		$
\begin{pmatrix}
	\mathbf{v}(t)\\
	\mathbf{n}(t)
	\end{pmatrix}= \int_0^t\mathbb{S}(t-s) \mathbf{g}(s)\, ds, \;\; t\ge 0.$  Then, there exists  $c_1>0$ such that
		\begin{equation*}
		\left \lve \begin{pmatrix}
		\mathbf{v}\\\mathbf{n}
		\end{pmatrix} \right\rve_{C([0,T];\ve \times \bX_{1})}+\left \lve \begin{pmatrix}
		\mathbf{v}\\\mathbf{n}
		\end{pmatrix} \right\rve_{L^2(0,T;D(\rA) \times \bX_{\frac32}) }\le c_1 \left \lve \begin{pmatrix}
		\tilde{g} \\ g
		\end{pmatrix}\right \rve_{L^2(0,T;
			\h\times \bX_{\frac12})}.
		\end{equation*}
	\end{lem}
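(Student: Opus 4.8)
The plan is to prove the lemma by treating the two components of the system separately, since the semigroup $\{\mathbb{S}(t)\}_{t\geq 0}$ acts diagonally: $\mathbf{v}(t) = \int_0^t \mathbb{S}_1(t-s)\tilde g(s)\,ds$ depends only on $\tilde g \in L^2(0,T;\h)$, and $\mathbf{n}(t) = \int_0^t \mathbb{S}_2(t-s) g(s)\,ds$ depends only on $g \in L^2(0,T;\bX_{\frac12})$. For each component the claim is the classical maximal regularity estimate for analytic semigroups on Hilbert spaces: if $-\mathrm{C}$ generates an analytic semigroup on a Hilbert space $\mathscr{K}$, then the map $g\mapsto \int_0^\cdot e^{-(\cdot-s)\mathrm{C}}g(s)\,ds$ is bounded from $L^2(0,T;\mathscr{K})$ into $L^2(0,T;D(\mathrm{C}))\cap C([0,T];D(\mathrm{C}^{1/2}))$, with constant independent of $T$ (or at least locally bounded in $T$, which suffices here).

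First I would set up the $L^2(0,T;D(\mathrm{C}))$ bound. For the Stokes part, $\mathrm{C}=\rA$ on $\mathscr{K}=\h$ with $D(\rA)=\ve_1$; for the director part, $\mathrm{C}=\trrA$ on $\mathscr{K}=\bX_{\frac12}$, whose domain contains $\bX_{\frac32}$ (as noted in the excerpt, $\bX_{\frac32}\subset D(\trrA)$, and in fact $D(\trrA)=\bX_{\frac32}$ under the standard identification, so the $L^2$-in-time estimate in $\bX_{\frac32}$ follows from the one in $D(\trrA)$). The $L^2$-maximal-regularity estimate $\|\mathrm{C}\int_0^\cdot e^{-(\cdot-s)\mathrm{C}}g(s)\,ds\|_{L^2(0,T;\mathscr{K})}\le c\,\|g\|_{L^2(0,T;\mathscr{K})}$ is a theorem of de Simon (valid for analytic semigroups on Hilbert spaces, with constant independent of $T$), and one can cite it directly; alternatively, since both operators are self-adjoint and positive, it follows from Plancherel in time after extending $g$ by zero and using the spectral theorem, the multiplier $\lambda\int_0^t e^{-(t-s)\lambda}(\cdot)\,ds$ having $L^2\to L^2$ norm bounded uniformly in $\lambda>0$. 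Combining the two components and the elementary inequality $\|(a,b)\| \le \|a\|+\|b\|$ gives the $L^2(0,T;\ve_1\times\bX_{\frac32})$ half.

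Next I would handle the $C([0,T];\mathscr{V})$ bound, i.e. continuity with values in $\ve\times\bX_1 = D(\rA^{1/2})\times D(\trrA^{1/2})$ (here $\bX_1=D(\trrA^{1/2})$ since $\bX_{\frac12}=D(\trrA^{1/2}\text{-base space})$; concretely $\bX_1\hookrightarrow\bH^2$ and is the form domain). The standard route: having shown $\mathbf{v}\in L^2(0,T;D(\rA))$ and, directly from the equation $\dot{\mathbf{v}}=-\rA\mathbf{v}+\tilde g$, that $\dot{\mathbf{v}}\in L^2(0,T;\h)$, the Lions–Magenes trace/interpolation theorem gives $\mathbf{v}\in C([0,T];[D(\rA),\h]_{1/2})=C([0,T];\ve)$ together with the norm bound by $\|\tilde g\|_{L^2(0,T;\h)}$ (one may also invoke the energy identity $\frac{d}{dt}\|\rA^{1/2}\mathbf{v}\|^2 = -2\|\rA\mathbf{v}\|^2 + 2\langle \rA^{1/2}\mathbf{v},\rA^{1/2}\tilde g\rangle$ after a density/regularization argument, then Cauchy–Schwarz and Gronwall). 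The same argument applies verbatim to $\mathbf{n}$ with $\trrA$ in place of $\rA$ and base space $\bX_{\frac12}$.

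\textbf{The main obstacle} is bookkeeping rather than depth: one must be careful that the constant $c_1$ can be taken independent of $T$ (true for the de Simon estimate and for the $T=\infty$ version of the maximal regularity, but if one instead runs the energy-identity/Gronwall argument the constant a priori depends on $T$ — which is still acceptable for the statement as written since $T$ is fixed, but worth noting). A second mild subtlety is justifying the energy identities and the chain rule $\frac{d}{dt}\|\mathrm{C}^{1/2}u\|^2=2\langle \mathrm{C}^{1/2}u,\mathrm{C}^{1/2}\dot u\rangle$ for the a priori only weak solution $u=\mathbf{v}$ (resp. $\mathbf{n}$); this is handled by the classical lemma of Lions–Magenes / Temam that if $u\in L^2(0,T;D(\mathrm{C}))$ and $\dot u\in L^2(0,T;\mathscr{K})$ then $u\in C([0,T];D(\mathrm{C}^{1/2}))$ and the identity holds in the sense of distributions on $(0,T)$. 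Once those two points are dispatched, assembling the four pieces into the stated inequality is immediate.
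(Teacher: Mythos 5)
Your proposal is correct. Note, however, that the paper does not actually argue this lemma at all: its proof is a one-line citation of Pardoux's Lemma 1.2, which is exactly the classical statement that for an analytic semigroup on a Hilbert space the deterministic convolution maps $L^2(0,T;\mathscr{K})$ into $L^2(0,T;D(\mathrm{C}))\cap C([0,T];D(\mathrm{C}^{1/2}))$, applied componentwise to the diagonal semigroup $\mathbb{S}=\mathrm{diag}(\mathbb{S}_1,\mathbb{S}_2)$. What you have written is, in effect, a self-contained proof of that cited result: de Simon's maximal regularity (or the Plancherel/spectral-theorem argument, legitimate here since $\rA$ and $\rrA$ are self-adjoint and nonnegative) for the $L^2(0,T;D(\rA)\times \bX_{\frac32})$ half, and the Lions--Magenes trace theorem (equivalently the energy identity plus Gronwall) for the $C([0,T];\ve\times\bX_1)$ half. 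This buys you independence from the reference and makes visible where the constant comes from; the paper's route buys brevity. Two bookkeeping points you should make explicit if you write this up: (i) the generator on the second component is $\trrA$, the part of $\rrA$ (not of $\hrrA=I+\rrA$) in $\bX_{\frac12}$, so passing from the graph norm of $\trrA$ to the $\bX_{\frac32}=D(\hrrA^{3/2})$ norm uses the trivial bound on $\lvert \mathbf{n}\rvert_{L^2(0,T;\bX_{\frac12})}$ and introduces a $T$-dependent constant, which is harmless since $T$ is fixed; and (ii) since $0$ belongs to the spectrum of the Neumann Laplacian, one should not claim exponential decay anywhere, only boundedness of the semigroup, which is all your argument needs.
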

	\begin{proof}
		This result is well-known and is a special case of \cite[Lemma 1.2]{Pardoux}.
	\end{proof}
%	Similarly we have the following result.
%	\begin{lem}
%		Let $T\in (0,\infty)$, $\tilde{g}\in L^2(0,T; \h)$ and
%		\begin{align*}
%		\bv(t)=\int_0^T \mathbb{S}_1(t-s) \tilde{g}(s)\, ds.
%		\end{align*}
%		We have
%		\begin{equation*}
%		\lve \bv\rve_{C([0,T]; D(\rA^\frac12)}+\lve \bv\rve_{L^2(0,T;D(\rA))
%		}\le (1+\frac 1 \mu_1) \lve \tilde{g}\rve_{L^2(0,T;\h) },
%		\end{equation*}
%		where $\mu_1$ is the smallest eigenvalues of the Stokes operator
%		$\rA$.
%	\end{lem}
	\begin{lem}\label{SEM-3}
				Let $T\in (0,\infty)$, $\zeta= \begin{pmatrix}
				\zeta_1\\ \zeta_2
				\end{pmatrix}\in \mathscr{M}^2(0,T;\h\times \bX_1) $ and 
				$ \begin{pmatrix}
				\w_1(t)\\ \w_2(t)
				\end{pmatrix} = \int_0^t \mathbb{S}(t-s) \zeta(s) dW(s), t \ge 0. $ Then, 
		there exists $C>0$ such that 
		\begin{equation*} \mathbb{E} \left \lve
	\begin{pmatrix}
	\w_1 \\ \w_2
	\end{pmatrix}  \right \rve^2_{C([0,T];\ve \times \bX_1 )}+\mathbb{E}\left \lve
		\begin{pmatrix}
		\w_1 \\ \w_2
		\end{pmatrix}  \right \rve^2_{L^2(0,T;D(\rA) \times \bX_{\frac32})}\le C \mathbb{E}\left \lve
	\begin{pmatrix}
	\zeta_1\\ \zeta_2
	\end{pmatrix}\right \rve^2_{L^2(0,T;\ve \times \bX_1)},\;\; T\ge 0.
		\end{equation*}
	\end{lem}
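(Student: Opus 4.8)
The plan is to deduce the stochastic maximal regularity estimate for the convolution against $\{\mathbb S(t)\}_{t\ge 0}$ from the known deterministic maximal regularity of Lemma \ref{SEM-2}, using the factorisation method (Da Prato--Kwapień--Zabczyk) together with the stochastic Fubini theorem, and then bootstrapping to the sup-in-time bound via a Burkholder--Davis--Gundy argument. Since $\{\mathbb S(t)\}_{t\ge 0}$ splits as the diagonal semigroup generated by $-\rA$ on $\h$ and by $-\trrA$ on $\bX_{\frac12}$, it suffices to prove the two scalar estimates separately and add them; I will write the argument for one diagonal block, say the Stokes block on $\h$ with base space $\ve=\ve_{\frac12}$, the other being identical with $\bX_{\frac12}$ in place of $\ve$ and $\bX_{\frac32}$ in place of $D(\rA)$.

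First I would fix $\alpha\in(0,\tfrac12)$ and write, for $\zeta_1\in\mathscr M^2(0,T;\h)$,
\begin{equation*}
\w_1(t)=\int_0^t\mathbb S_1(t-s)\zeta_1(s)\,dW_1(s)=\frac{\sin\pi\alpha}{\pi}\int_0^t (t-s)^{\alpha-1}\mathbb S_1(t-s)\,Y_\alpha(s)\,ds,
\end{equation*}
where $Y_\alpha(s)=\int_0^s (s-\sigma)^{-\alpha}\mathbb S_1(s-\sigma)\zeta_1(\sigma)\,dW_1(\sigma)$; this identity is the stochastic Fubini theorem applied to the semigroup identity $\int_\sigma^t (t-s)^{\alpha-1}(s-\sigma)^{-\alpha}\,ds=\pi/\sin\pi\alpha$ together with the semigroup property. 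Since $\zeta_1(\sigma)\in\h$ and $\mathbb S_1$ is analytic, $(s-\sigma)^{-\alpha}\mathbb S_1(s-\sigma)$ maps $\h\to\ve$ with operator norm $\lesssim (s-\sigma)^{-\alpha-\frac12}$, and then the It\^o isometry plus Minkowski's inequality give, for any $p\ge 2$,
\begin{equation*}
\E\int_0^T \lvert Y_\alpha(s)\rvert_{\ve}^p\,ds\le C_p\,\E\int_0^T\Big(\int_0^s (s-\sigma)^{-2\alpha-1}\lVert\zeta_1(\sigma)\rVert^2_{\mathcal T_2(\rK_1,\h)}\,d\sigma\Big)^{p/2}ds,
\end{equation*}
which is finite and controlled by $\E\|\zeta_1\|^p_{L^2(0,T;\h)}$ once $2\alpha<$ the relevant Young exponent — this is where I choose $\alpha$ small. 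Actually for the present Hilbert-space $L^2$-in-time statement it is cleanest to take $p=2$ throughout and avoid stochastic maximal $L^p$-regularity machinery altogether.

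Next I would feed $Y_\alpha\in L^2(\Omega;L^2(0,T;\ve))$ into the \emph{deterministic} maximal regularity operator: the map $g\mapsto \int_0^\cdot (t-s)^{\alpha-1}\mathbb S_1(t-s)g(s)\,ds$ is bounded from $L^2(0,T;\ve)$ into $C([0,T];\ve_1=D(\rA))\cap L^2(0,T;\text{(dom)})$? — more precisely I invoke the singular-integral version of Lemma \ref{SEM-2} with the fractional kernel, i.e. the Da Prato--Grisvard / Pardoux estimate $\|g\mapsto \int_0^\cdot(t-s)^{\alpha-1}\mathbb S_1(t-s)g(s)ds\|_{L^2(0,T;\ve)\to L^2(0,T;D(\rA))}\le C$ valid for $\alpha<\frac12$, together with the Sobolev embedding in time $W^{\alpha,2}(0,T;\ve)\cap L^2(0,T;D(\rA))\hookrightarrow C([0,T];\ve)$ for $\alpha$ close enough to $\frac12$; this yields both the $C([0,T];\ve)$ and the $L^2(0,T;D(\rA))$ bounds for $\w_1$ in terms of $\|Y_\alpha\|_{L^2(0,T;\ve)}$, hence in terms of $\E\|\zeta_1\|^2_{L^2(0,T;\h)}$. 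For the supremum bound one may alternatively bypass the embedding subtlety by first establishing the $L^2(0,T;D(\rA))$ estimate as above and then running It\^o's formula for $\lvert\w_1(t)\rvert^2_{\ve}$ (equivalently $\lvert\rA^{1/2}\w_1(t)\rvert^2$), using $\langle \rA\w_1,\rA\w_1\rangle$ from the drift-free equation $d\w_1=-\rA\w_1\,dt+\zeta_1\,dW_1$ and the BDG inequality on the martingale term $\int_0^t\langle \rA\w_1,\zeta_1\,dW_1\rangle$, absorbing the resulting $\sup_t\lvert\w_1(t)\rvert_\ve$ into the left-hand side by Young's inequality. Summing the Stokes block and the Neumann-Laplacian block gives the claimed inequality with a constant $C$ depending only on $T$, the domain $\MO$ and the two semigroups.

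The main obstacle I anticipate is the sup-in-time ($C([0,T];\ve\times\bX_1)$) part: the factorisation gives $L^2$-in-time maximal regularity almost for free, but promoting this to a pathwise-continuous-in-time bound requires either the time-Sobolev embedding (which forces $\alpha$ into a narrow window and needs the fractional-kernel maximal regularity estimate, a mild but nontrivial extension of Lemma \ref{SEM-2}) or the It\^o-formula/BDG route (which needs $\w_1(t)\in D(\rA)$ for a.e. $t$ — supplied by the $L^2$ estimate — plus care that all the integrals are finite). I would present the It\^o/BDG route as the primary argument since it stays within the Hilbert-space framework already set up and uses only Lemma \ref{SEM-2} plus standard stochastic calculus, relegating the factorisation identity to the derivation of the a.e.-in-$D(\rA)$ regularity needed to start it.
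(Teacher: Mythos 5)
The paper offers no proof of this lemma: it is quoted verbatim from Pardoux (Lemma 1.4), whose argument is precisely the energy/It\^o route you name as your primary one, so that part of your plan is in the right spirit. Note, however, that both the lemma's right-hand side and your own It\^o/BDG step force the reading $\zeta\in\mathscr{M}^2(0,T;\ve\times \bX_1)$ (the ``$\h\times\bX_1$'' in the hypothesis is inconsistent with the norm on the right): to control the quadratic variation of $\int_0^t\langle \rA\w_1,\zeta_1\,dW_1\rangle$ you need $\zeta_1(s)\in\mathcal{T}_2(\rK_1,\ve)$, not merely $\mathcal{T}_2(\rK_1,\h)$.

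The genuine gap is in the factorisation step on which you rely for the a.e.\ $D(\rA)$ regularity needed to launch It\^o's formula. First, you measure $\zeta_1$ in $\mathcal{T}_2(\rK_1,\h)$ and try to gain the full smoothing $\h\to\ve$ inside the It\^o isometry: since $\Vert \rA^{1/2}\mathbb{S}_1(r)\Vert_{\h\to\h}\le C r^{-1/2}$, this produces the kernel $(s-\sigma)^{-2\alpha-1}$, which is not locally integrable for any $\alpha>0$ (and is borderline divergent already at $\alpha=0$); no ``small $\alpha$'' rescues this, so the bound on $\E\int_0^T\lvert Y_\alpha(s)\rvert^2_\ve\,ds$ is simply false as stated. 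Second, the fractional-kernel deterministic estimate you invoke, boundedness of $g\mapsto\int_0^{\cdot}(t-s)^{\alpha-1}\mathbb{S}_1(t-s)g(s)\,ds$ from $L^2(0,T;\ve)$ into $L^2(0,T;D(\rA))$, fails for $\alpha<\tfrac12$: on the transform side it is the boundedness of $\rA^{1/2}(i\xi+\rA)^{-\alpha}$, which requires $\alpha\ge\tfrac12$, so the window demanded by the stochastic step ($\alpha<\tfrac12$) and by the deterministic step ($\alpha\ge\tfrac12$) is empty. The repair is simple and makes factorisation unnecessary: with $\zeta_1\in\mathcal{T}_2(\rK_1,\ve)$, commute $\rA^{1/2}$ through the convolution and use the It\^o isometry, Fubini in time and the square-function identity $\int_0^\infty\lvert \rA^{1/2}\mathbb{S}_1(r)x\rvert^2\,dr=\tfrac12\lvert x\rvert^2$ to get
\begin{equation*}
\E\int_0^T\lvert \rA\w_1(t)\rvert^2_{\el^2}\,dt\le \tfrac12\,\E\int_0^T\Vert \zeta_1(s)\Vert^2_{\mathcal{T}_2(\rK_1,\ve)}\,ds ,
\end{equation*}
which supplies the a.e.-$D(\rA)$ regularity directly; your It\^o/BDG energy argument for $\lvert\rA^{1/2}\w_1(t)\rvert^2$ then yields the $C([0,T];\ve)$ bound, and the $\bX_{\frac12}$ block is handled identically with $\hrrA$ in place of $\rA$ (up to the harmless shift $I+\rrA$). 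With these corrections your argument does reproduce the cited result.
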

	\begin{proof}
		This result is also well-known, see \cite[Lemma 1.4]{Pardoux}.
	\end{proof}
	%%%%%%%%%%%%%%%%%%%%%%%%%%%%%%%%%%%%%%%%%%%%%%%%%
	Let us recall the following notations/definition which are borrowed
	from \cite{Kunita-90}, see also \cite{Brz+Elw_2000}.
	%%%%%%%%%%%%%%
	%%%%%%%%%%%%%%%%%%%%%%%%%%%%%%%%%%%%%
	%%%%%%%%%%%%%%%
	\begin{Def}\label{def-accessible stopping time}
 A random function $\tau:\Omega \to [0,\infty]$ is called a stopping time, see \cite[Definition I.2.1]{Kar-Shr-96}, \cite[Definition 4.1]{Metivier_1982} and \cite[section III.5]{Elw_1982}, iff for each $t\geq 0$, the set
$\{\omega \in \Omega: t< \tau(\omega)\} \in \mathcal{F}_t$ (or equivalently, $\{\omega \in \Omega: \tau(\omega) \leq t\} \in \mathcal{F}_t$).  A  stopping time
		$\tau:\Omega \to [0,\infty]$ is called accessible, see \cite[section 2.1, p. 45]{Kunita-90},  iff there exists an  increasing
		sequence\footnote{In the sense that for all $n\in \mathbb{N}$, $\tau_n \leq
		\tau_{n+1}$,  $\mathbb{P}$-a.s. } of stopping times  $\tau_n{:\Omega \to [0,\infty)}$ such that $\mathbb{P}$-a.s. 
	\begin{inparaenum}
		\item[(i)] for all $n\in \mathbb{N}$,  $\tau_n <
		\tau$;
		\item[(ii)] and $\lim_{n\to \infty} \tau_n =\tau$.
	\end{inparaenum}

The sequence   $(\tau_n)_{n\in \mathbb{N}}$   as above is usually called an announcing sequence for $\tau$.
	\end{Def}
%\begin{Rem}\label{rem-accessible stopping time}
%\begin{trivlist}
%\item[(i)] If $\tau$ and $\sigma$ accessible stopping times with announcing sequences $(\tau_n)_{n \in \mathbb{N}}$ and $(\sigma_n)_{n \in \mathbb{N}}$, then by \cite[Proposition 4.3]{Metivier_1982}, $\tau\vee \sigma$ is an accessible stopping time with announcing sequence $(\tau_n \vee \sigma_n)_{n \in \mathbb{N}}$.
%\end{trivlist}
%Under some additional assumptions,  this result is valid for countable suprema, see Remark \ref{rem-predictable stopping time}
%\end{Rem}

\begin{Rem}\label{rem-predictable stopping time}
Under the Assumption \ref{Assum:Usual hypotheses} we have the following facts.
\begin{trivlist}
	%	\begin{enumerate}
		\item[(i)]
%Because the filtration $\mathbb{F}=\{\mathcal{F}_t: t\geq 0\}$ is is right-continuous,
It follows from \cite[Proposition 6.6 (3)]{Metivier_1982} that a stopping time  is  accessible if and only if it is  predictable. Let us recall,  see \cite[Definition 4.9]{Metivier_1982}, that
a stopping time $\tau$ is predictable iff its  graph $[\tau]:=\{(t,\omega)\in [0,\infty)\times\Omega: t=\tau(\omega)\}$ is a predictable set.
The $\sigma$-field $\mathcal{P}$ of predictable sets is generated by the family $\mathcal{R}:=\{ (s,t]\times F: 0\leq s\leq t, F\in \mathcal{F}_s\}\cup \{\{0\} \times F: F\in \mathcal{F}_0\}$, see \cite[Theorem 3.3]{Metivier_1982}.
		\item[(ii)] If $(\tau_n)_{n \in \mathbb{N}}$ is a sequence of accessible stopping times,  then $\sup_{n\in \mathbb{N}} \tau_n$ is also an accessible stopping times, see \cite[Proposition 6.6]{Metivier_1982}. In particular,  If $\tau$ and $\sigma$ accessible stopping times with announcing sequences $(\tau_n)_{n \in \mathbb{N}}$ and $(\sigma_n)_{n \in \mathbb{N}}$, then  $\tau\vee \sigma$ is an accessible stopping time with announcing sequence $(\tau_n \vee \sigma_n)_{n \in \mathbb{N}}$. Furthermore, if $\mathcal{A}$ is an arbitrary  family of accessible stopping times then a family
		\[
		\mathcal{B}:=\{ \sup \mathcal{C}: \mathcal{C} \mbox{ is a finite subset of } \mathcal{A} \}
		\]
		is also a family of  accessible stopping times such that $\mathcal{A} \subset \mathcal{B}$ and the supremum of each finite subset of $\mathcal{B}$ belongs to $\mathcal{B}$. In particular, if $\Delta$
		is the family of all accessible stopping times, then the supremum of each finite subset of $\Delta$ belongs to $\Delta$.
%	\end{enumerate}
\end{trivlist}

\end{Rem}

	\textbf{Notation}.  For  a  stopping  time  $\tau$  we  set
	%\begin{eqnarray*}
	\[ \Omega_t(\tau) =%&=&
	\{ \omega \in \Omega : t < \tau(\omega)\},
	\]%\\
	\[
	[0,\tau)\times \Omega =%&=&
	\{ (t,\omega) \in [0,\infty)\times \Omega: 0\le t < \tau(\omega)
	\}.
	\]%\end{eqnarray*}
	
	\begin{Def}\label{def-adapted process}
 Assume that  $X$ is    a {topological space}.
		An $X$-valued process {local} process $\eta : [0,\tau) \times \Omega \to X$ (we will also write $ \eta(t)$, $t< \tau$)    is
		admissible iff
		\begin{inparaenum}
			\item[(i) ] it is adapted, i.e.  $\eta|_{\Omega_t(\tau)}: \Omega_t(\tau) \to
			X$ is $\mathcal{F}_t$ measurable, for all $t\ge 0$; \item[(ii)]
			for almost all $\omega \in \Omega$, the function $[0,
			\tau(\omega))\ni t \mapsto \eta(t, \omega) \in X$ is continuous.
			\end{inparaenum}
%		A {local} process $\eta : [0,\tau) \times \Omega \to X$ is  progressively
%		measurable  iff, for all $t> 0$,  the  map $$[0,t\wedge \tau)
%		{%\bar
%			\times}  \Omega \ni  (s,\omega) \mapsto  \eta(s,\omega) \in  X$$ is
%\hdoubtz{How do we define $\mathcal{B}_{t\wedge \tau}$?}		$\mathcal{B}_{t\wedge \tau} \times \mathcal{F}_{t\wedge \tau}$ measurable.\\
		%
		
		Two  {local} processes $\eta_i: [0,\tau_i)   \times \Omega  \to X$,
		$i=1,2$ are called equivalent (and we will write $(\eta_1,\tau_1) \sim
		(\eta_2,\tau_2)$)   iff $\tau_1=\tau_2$  $\mathbb{P}$-a.s. and,   for all  $t>0$,
		the  following condition holds
		\[
		\eta_1(\cdot,\omega)= \eta_2(\cdot,\omega) \mbox{ on } [0,t], ; \mbox{ 		for a.e. $\omega \in \Omega_t(\tau_1)\cap \Omega_t(\tau_2)$}.
		\]
		Note that if { two local} admissible processes  $\eta_i  :  [0,\tau_i)  \times  \Omega \to
		X$,  $i=1,2$ are such that  for all $t>0$
		$\eta_1(t)|_{\Omega_t(\tau_1)}= \eta_2(t)|_{\Omega_t(\tau_2)}$
		$\mathbb{P}$-a.s.,  then they are  equivalent.
	\end{Def}

\begin{Rem}\label{rem-adapted local process}
Let $\tau$ be an accessible stopping time with an announcing sequence $(\tau_n)_{n \in \mathbb{N}}$ and $\eta : [0,\tau) \times \Omega \to X$ is   a local process.
 Kunita \cite[section 2.1, p. 46]{Kunita-90} defined $\eta$ to be a  local adapted process iff
 the following  condition is satisfied for every $n$, the  stopped process $(\eta_{t\wedge \tau_n})_{t\geq 0}$ is adapted. We do not know how  condition (i) from Definition \ref{def-adapted process}  is related to Kunita's definition. 
\end{Rem}
	\begin{Def}\label{def-progrssively measurable process}
% Assume that  $X$ is    a {topological space}.
Let $\tau$ be an accessible stopping time with an announcing sequence $(\tau_n)_{n \in \mathbb{N}}$. Motivated by  \cite[Proposition 2.18]{Kar-Shr-96}, a local process $\eta : [0,\tau) \times \Omega \to X$  is  called progressively
measurable  iff   for every $n$, the  stopped process $(\eta_{t\wedge \tau_n})_{t\geq 0}$ is progressively measurable. 
\end{Def}
%\begin{Rem}\label{rem-progrssively measurable process} Definition \ref{def-progrssively measurable process} is motivated by Proposition 2.18 from \cite{Kar-Shr-96} according to which if $X=(X_t)_{t\geq 0}$ is a progressively measurable stopping time and $\tau$ is a finite stoping time, then the stopped process $(X_{t\wedge \tau})_{t\geq 0}$ is also  progressively measurable.
%\end{Rem}	
	We now define some concepts of solution to  \eqref{ABSTRACT-LC}, see \cite[Definition 4.2]{Brz+Millet_2012} or \cite[Definition
	2.1]{Mikulevicius}.
	\begin{Def}\label{def-local solution} Let $\y_0:\Omega \to V$ be $\mathcal{F}_0$-measurable random variable satisfying $\mathbb{E} \Vert \y_0\Vert^2_{\mathscr{V}}<\infty$. A local
		solution to problem  \eqref{ABSTRACT-LC}(with the initial time
		$0$) is a pair $(\y,\tau)$ such that
		\begin{enumerate}[(1)]
			\item $\tau$ is an accessible stopping time with an announcing sequence $(\tau_n)_{n\in \mathbb{N}}$,
			\item $\y: [0,\tau)\times \Omega \to V$ is an admissible process,
			\item for every $n\in \mathbb{N}$ and $t\in [0,\infty)$,
			we have
			\begin{align}
			\mathbb{E}\Big(  \sup_{s\in [0,t\wedge \tau_n]} \Vert
				\y(s)\Vert^2_{\mathscr{V}} +\int_0^{t\wedge \tau_n} \Vert \y(s)\Vert_\mathscr{E}^2 \,
				ds\Big)<\infty,\label{eq-locsol_01-a}
				\end{align}
				and $\mathbb{P}$-a.s.
				\begin{align}
				\y(t\wedge \tau_n)= \mathbb{S}(t\wedge
				\tau_n)\y_0-\int_0^{t\wedge \tau_n}
				\mathbb{S}(t\wedge\tau_n-s)[ \mathbf{F}(\y(s\cnew{\wedge \tau_n}))+\mathbf{L}(\y(s\cnew{\wedge \tau_n})] \,ds +I_{\tau_n}(t\wedge \tau_n), \label{eq-locsol_01-b}
			\end{align}
where $I_{\tau_n}$ is a continuous $V$-valued process process defined by
			\begin{equation}\label{eq-locsol_01-c}
			\begin{split}
I_{\tau_n}(t):= \int_0^t\mathds{1}_{[0,\tau_n)}(s)\mathbb{S}(t-s)\mathbf{G}(\y(s \wedge \tau_n))\,
			d{W}(s),\;\; t\in[0,\infty).
			\end{split}
			\end{equation}
		\end{enumerate}
		Along the lines of the paper \cite{Brz+Elw_2000}, we say that a
		local solution $\y(t)$, $t < \tau$ is  global iff
		$\tau=\infty$ $\mathbb{P}$-a.s.
	\end{Def}
Hereafter, we simply write local solution in place of local mild solution. 
\begin{Rem}\label{rem-stochastic interval}
\begin{trivlist}
\item[(i)]  Since $\tau_n$ is a stopping, the process $\mathds{1}_{[0,\tau_n)}(s)$, $s\in [0,\infty)$ is well-measurable, see \cite[Proposition 4.2]{Metivier_1982}. Therefore, since by \cite[Theorem 1.6]{Metivier_1982}, the $\sigma$-field of well measurable sets is smaller than the $\sigma$-field of progressively measurable sets, it follows
that the process $\mathds{1}_{[0,\tau_n)}(s)$, $s\in [0,\infty)$ is progressively measurable. In particular, the integrand in \eqref{eq-locsol_01-c} is  progressively measurable.
\item[(ii)] On the other hand, one could use in \eqref{eq-locsol_01-c} a process $\mathds{1}_{[0,\tau_n]}(s)$, $s\in [0,\infty)$, which according \cite[Proposition 4.4]{Metivier_1982}, is predictable. However, here we still stick to the processes as in (i).
\end{trivlist}
\end{Rem}
\begin{Rem}\label{rem-local solution} Suppose that $\tau:\Omega \to [0,\infty)$ is an accessible stopping time and $\y: [0,\tau]\times \Omega \to V$ is an admissible process such that
for every $t \in [0,\infty)$
			\begin{align}
		&	\mathbb{E}\Big(  \sup_{s\in [0,t\wedge \tau]} \Vert
				\y(s)\Vert^2_{\mathscr{V}} +\int_0^{t\wedge \tau} \Vert \y(s)\Vert_\mathscr{E}^2 \,
				ds\Big)<\infty,\label{eq-locsol_01-a2}\\ 
	&		\y(t\wedge \tau)= \mathbb{S}(t\wedge
			\tau)\y_0-\int_0^{t\wedge \tau}
			\mathbb{S}(t\wedge\tau-s)[ \mathbf{F}(\y(s\wedge \tau))+\mathbf{L}(\y(s \wedge \tau)]\, ds
 +I_{\tau}(t\wedge \tau), \;\;\;\mbox{ $\mathbb{P}$-a.s.,} \label{eq-locsol_01-d2}
			\end{align}
where $I_{\tau}$ is a continuous $V$-valued process process defined by
			\begin{equation}\label{eq-locsol_01-d}
			\begin{split}
I_{\tau}(t):= \int_0^t\mathds{1}_{[0,\tau)}(s)\mathbb{S}(t-s)\mathbf{G}(\y(s \wedge \tau))\,
			d{W}(s),\;\; t\in [0,\infty).
			\end{split}
			\end{equation}
Let us choose an announcing sequence $(\tau_n)_{n\in \mathbb{N}}$  for  $\tau$.  Then, by using \cite[Lemma A.1]{Brz+Masl+Seidler_2005}, we can show that for every $n$,  the conditions
\eqref{eq-locsol_01-a} and \eqref{eq-locsol_01-b} are satisfied with $I_{\tau_n}$ defined by \eqref{eq-locsol_01-c}. Therefore we infer that the restriction of the process
$\y$ to the open stochastic interval $[0,\tau)\times \Omega$ is a local  solution to problem \eqref{ABSTRACT-LC}.
\end{Rem}
	
	We now introduce the definition of a maximal local solution.
	
	\begin{Def}\label{Def-maxsol-0}
		Consider  a family  $\mathcal{ LS}$ of all local solution
		$(u,\tau)$ to  the problem \eqref{ABSTRACT-LC}. For two
		elements $(u,\tau), (v,\sigma) \in \mathcal{ LS} $ we write that
		$(u,\tau)\preceq (v,\sigma)$ iff $\tau \leq \sigma$ $\mathbb{P}$-a.s. and
		$v_{\vert [0,\tau)\times \Omega} \sim u$. Note that if
		$(u,\tau)\preceq (v,\sigma)$ and $(v,\sigma)\preceq (u,\tau)$,
		then  $(u,\tau)\sim (v,\sigma)$. We write $(u,\tau)\prec
		(v,\sigma)$ iff $(u,\tau)\preceq (v,\sigma)$ and $(u,\tau)\not\sim
		(v,\sigma)$. Then, the pair $(\mathcal{ LS},\preceq)$ is 
		partially ordered. 
		Each maximal element $(u,\tau)$ in the set $(\mathcal{
			LS},\preceq)$
		is called a maximal local solution to  the problem  \eqref{ABSTRACT-LC}. The existence of an upper bound of  every non-empty chain of  $(\mathcal{
			LS},\preceq)$ is justified by  Amalgamation
		Lemma \ref{lem-amalgamation}. \\
		If $(u, \tau)$ is a  maximal local solution to equation
	\eqref{ABSTRACT-LC}, the stopping time $\tau$ is called its
		lifetime.
	\end{Def}
%
%	A priori, there may be many maximal elements in $(\mathcal{
%		LS},\preceq)$ and hence many maximal local solutions    to
%	the problem \eqref{ABSTRACT-LC} . However, as we will see in Section \ref{ABST-STRONG}, if  the uniqueness of local solutions holds,  then the
%	uniqueness of the maximal local solution will follow.
	\subsection{Existence and uniqueness of a maximal local solution: 2D and 3D cases}
	By using Theorem \ref{thm_local} we will prove in this subsection that the problem \eqref{ABSTRACT-LC} has a unique maximal local solution. In order to do this, we need to  establish several auxiliary results. Throughout this subsection $d=2,3$ and $a=\frac{d}4$.
	\begin{lem}\label{Local-LIP-Lem}
		There exists $c_2>0$ such that for
		all $\bd_i\in \bH^3$, i$=1,2$,
		\begin{equation}\label{local-Lip-F-2}
		\begin{split}
		\lvert M(\bd_1)-M(\bd_2)\rvert_{\el^2} \le c_2\biggl(\lve \d_1-\d_2\rve_2
		\lve \d_1\rve_2^{1-a} \lve \d_1\rve_3^a+ \lve
		\d_1-\d_2\rve^{1-a}_2 \lve \d_1-\d_2\rve_3^a \lve \d_2\rve_2
		\biggr).
		\end{split}
		\end{equation}
	\end{lem}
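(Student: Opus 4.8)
The plan is to reduce \eqref{local-Lip-F-2} to a single bilinear bound for $M$ in the $\el^2$-norm, and to prove that bound by putting $M$ in divergence form and combining Hölder's inequality with the Gagliardo--Nirenberg inequalities \eqref{GAG-l4} and \eqref{GAG-LInf-2}. First I would invoke the bilinearity of $M$ (Proposition \ref{LEM-M}) to write, for $\bd_1,\bd_2\in\bH^3$,
\begin{equation*}
M(\bd_1)-M(\bd_2)=M(\bd_1,\bd_1)-M(\bd_2,\bd_2)=M(\bd_1-\bd_2,\bd_1)+M(\bd_2,\bd_1-\bd_2),
\end{equation*}
so that \eqref{local-Lip-F-2} follows at once (with $c_2$ the constant below) from the bilinear estimate
\begin{equation}\label{eq-plan-key}
\lvert M(\bu,\w)\rvert_{\el^2}\le C\,\lve \bu\rve_2\,\lve \w\rve_2^{1-a}\,\lve \w\rve_3^{a},\qquad \bu,\w\in\bH^3,
\end{equation}
applied with $(\bu,\w)=(\bd_1-\bd_2,\bd_1)$ and then with $(\bu,\w)=(\bd_2,\bd_1-\bd_2)$; note that this particular way of splitting is the one matching the two terms on the right-hand side of \eqref{local-Lip-F-2}, since $M$ is not symmetric and \eqref{eq-plan-key} treats its two arguments differently.

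To prove \eqref{eq-plan-key} I would first identify $M(\bu,\w)$, a priori only an element of $\ve^\ast$, with an $\el^2$ function. Integrating by parts in \eqref{INT-md} and using \eqref{def-Md} --- as in \cite[Remark 2.3]{BHP18}, where \eqref{Eq:Identity-M-L2} is derived --- one obtains $M(\bu,\w)=\Pi\bigl[\Div(\nabla\bu\odot\nabla\w)\bigr]\in\h$, with
\begin{equation*}
\bigl[\Div(\nabla\bu\odot\nabla\w)\bigr]^{(i)}=\sum_{j=1}^{d}\sum_{k=1}^{3}\bigl(\partial_{x_i}\partial_{x_j}\bu^{(k)}\bigr)\bigl(\partial_{x_j}\w^{(k)}\bigr)+\sum_{j=1}^{d}\sum_{k=1}^{3}\bigl(\partial_{x_i}\bu^{(k)}\bigr)\bigl(\partial_{x_j}^2\w^{(k)}\bigr),
\end{equation*}
which is legitimate since $\nabla\bu\odot\nabla\w\in\bH^1$ for $\bu,\w\in\bH^3$ and the boundary term produced by the integration by parts vanishes because the test functions in $\ve$ vanish on $\partial\MO$. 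As $\Pi$ has operator norm at most $1$ on $\el^2$, it remains to bound the $\el^2$-norms of the two sums above. The first is controlled by $\lvert\nabla^2\bu\rvert_{\el^2}\,\lvert\nabla\w\rvert_{\el^\infty}\le\lve\bu\rve_2\,\lvert\nabla\w\rvert_{\el^\infty}$, and \eqref{GAG-LInf-2} applied to $\nabla\w\in\bH^2$ gives $\lvert\nabla\w\rvert_{\el^\infty}\le c\,\lve\nabla\w\rve_1^{1-a}\lve\nabla\w\rve_2^{a}\le c\,\lve\w\rve_2^{1-a}\lve\w\rve_3^{a}$. The second is controlled by $\lvert\nabla\bu\rvert_{\el^4}\,\lvert\Delta\w\rvert_{\el^4}$, and \eqref{GAG-l4} applied to $\nabla\bu\in\bH^1$ and to $\Delta\w\in\bH^1$ gives $\lvert\nabla\bu\rvert_{\el^4}\le c\,\lvert\nabla\bu\rvert_{\el^2}^{1-a}\lve\nabla\bu\rve_1^{a}\le c\,\lve\bu\rve_2$ and $\lvert\Delta\w\rvert_{\el^4}\le c\,\lvert\Delta\w\rvert_{\el^2}^{1-a}\lve\Delta\w\rve_1^{a}\le c\,\lve\w\rve_2^{1-a}\lve\w\rve_3^{a}$. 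Adding the two contributions yields \eqref{eq-plan-key}.

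The step I expect to require the most care is the identification $M(\bu,\w)=\Pi[\Div(\nabla\bu\odot\nabla\w)]$ in $\el^2$ for fields $\bu,\w\in\bH^3$ carrying no boundary condition --- that is, checking that the integration by parts taking \eqref{def-Md}/\eqref{INT-md} into divergence form creates no boundary contribution, which is exactly where one uses that elements of $\ve$ vanish on $\partial\MO$ together with $\nabla\bu\odot\nabla\w\in\bH^1$. Everything else is bookkeeping, and in particular no separate treatment of $d=2$ and $d=3$ is needed: the exponents appearing in \eqref{GAG-l4} and \eqref{GAG-LInf-2} are precisely $a=d/4$, so the powers assemble into the right-hand side of \eqref{local-Lip-F-2} automatically.
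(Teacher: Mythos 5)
Your proposal is correct. The paper itself does not prove Lemma \ref{Local-LIP-Lem} but simply cites \cite[Lemma 6.4]{BHP18}, and your argument — splitting $M(\bd_1)-M(\bd_2)=M(\bd_1-\bd_2,\bd_1)+M(\bd_2,\bd_1-\bd_2)$ by bilinearity, identifying $M(\bu,\w)=\Pi[\Div(\nabla\bu\odot\nabla\w)]$ in $\el^2$ (where only the vanishing of test functions in $\ve$ on $\partial\MO$ is needed, consistent with \eqref{Eq:Identity-M-L2}), and then estimating the two resulting products by H\"older together with \eqref{GAG-l4} and \eqref{GAG-LInf-2} with $a=\tfrac d4$ — is exactly the kind of self-contained proof the citation stands in for, with the exponents assembling correctly into \eqref{local-Lip-F-2} in both $d=2$ and $d=3$.
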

	\begin{proof}
		The proof of this result can be found in \cite[Lemma 6.4]{BHP18}
	\end{proof}
	%%%%%%%%%%%%%%%%%%%%%%%%%%%%%%%%%%%%%%%%%%%%%%%%%%%%%%%%%%%%%%%%%%%%%%%%%%%%%%%%%%%%%%%%%%%%%%%%%%%%%%%%%%%%%%%%%%%%%%%%%%
	%%%%%%%%%%%%%%%%%%%%%%%%%%%%%%%%%%%%%%%%%%%%%%%%%%%%%%%%%%%%%%%%%%%%%%%%%%%%%%%%%%%%%%%%%%%%%%%%%%%%%%%%%%%%%%%%%%%%%%%%%%%%
	%%%%%%%%%%%%%%%%%%%%%%%%%%%%%%%%%%%%%%%%%%%%%%%%%%%%%%%%%%%%%%%%%%%%%%%%%%%%%%%%%%%%%%%%%%%%%%%%%%%%%%%%%%%%%%%%%%%%%%%%%%%%%%
	\begin{lem}\label{Local-LIP-Lem-2}
	There exist $c_3>0$ such that for all $(\bv_i,
		\bd_i)\in \mathscr{E}$, i$=1,2$,
		\begin{equation}\label{local-Lip-F-3}
		\begin{split}
		\lve \tilde{B}(\bv_1,\bd_1)-\tilde{B}(\bv_2,\bd_2)\rve_1 \le c_3
		\biggl(\lve \nabla(\bv_1-\bv_2)\rve \lve
		\bd_1\rve_2^{1-a}\lve \d_1\rve_3^a
		+\lve(\bd_1-\bd_2)\lve^{1-a}_ 2 \lve(\bd_1-\bd_2)\lve^{a}_3\lve
		\nabla\bv_2\lve\biggr).
		\end{split}
		\end{equation}
	\end{lem}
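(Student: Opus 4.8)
The plan is to exploit the bilinearity of $\tilde{B}$ and reduce the statement to a single ``base'' estimate. First I would write
\begin{equation*}
\tilde{B}(\bv_1,\bd_1)-\tilde{B}(\bv_2,\bd_2)=\tilde{B}(\bv_1-\bv_2,\bd_1)+\tilde{B}(\bv_2,\bd_1-\bd_2),
\end{equation*}
so that, by the triangle inequality in $\bH^1$, it is enough to prove the existence of $C>0$ such that
\begin{equation}\label{eq-plan-base-B}
\lve \tilde{B}(\bv,\bd)\rve_1 \le C\,\lve \nabla \bv\rve\,\lve \bd\rve_2^{1-a}\,\lve \bd\rve_3^{a},\qquad \bv\in\ve,\ \bd\in\bH^3,
\end{equation}
and then to apply \eqref{eq-plan-base-B} once to the pair $(\bv,\bd)=(\bv_1-\bv_2,\bd_1)$ and once to $(\bv,\bd)=(\bv_2,\bd_1-\bd_2)$; adding the two resulting bounds produces exactly the two terms on the right-hand side of \eqref{local-Lip-F-3}. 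Since $\mathscr{E}=\ve_1\times\bX_{\frac32}\hookrightarrow\ve\times\bH^3$, the arguments appearing here are admissible and the right-hand side of \eqref{local-Lip-F-3} is finite for every $(\bv_i,\bd_i)\in\mathscr{E}$.

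To prove \eqref{eq-plan-base-B} I would first note that, since $d\le3$, for $\bv\in\ve$ and $\bd\in\bH^3$ the functional $\tilde{B}(\bv,\bd)\in(\bH^1)^\ast$ is represented by the pointwise product $(\bv\cdot\nabla)\bd$, which lies in $\bH^1$ by the Sobolev multiplication $\bH^1\cdot\bH^2\hookrightarrow\bH^1$. Hence $\lve\tilde{B}(\bv,\bd)\rve_1^2$ is controlled by $\lve(\bv\cdot\nabla)\bd\rve^2+\lve\nabla\big[(\bv\cdot\nabla)\bd\big]\rve^2$, and I would estimate the two summands separately. Differentiating the product, $\nabla\big[(\bv\cdot\nabla)\bd\big]$ is a sum of terms of the types $(\nabla\bv)(\nabla\bd)$ and $\bv\,(\nabla^2\bd)$. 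For the first type I would use H\"older with exponents $(2,\infty)$ and the Gagliardo--Nirenberg inequality \eqref{GAG-LInf-2} applied to $\nabla\bd$,
\begin{equation*}
\lve(\nabla\bv)(\nabla\bd)\rve\le\lve\nabla\bv\rve\,\lve\nabla\bd\rve_{\elb^\infty}\le C\,\lve\nabla\bv\rve\,\lve\bd\rve_2^{1-a}\,\lve\bd\rve_3^{a},
\end{equation*}
so that the velocity enters only through $\lve\nabla\bv\rve$. For the term $\bv\,(\nabla^2\bd)$ --- and, likewise, for the lower-order contribution $(\bv\cdot\nabla)\bd$ itself --- I would use H\"older with exponents $(4,4)$ together with $\lve\bv\rve_{\elb^4}\le C\,\lve\bv\rve^{1-a}\lve\nabla\bv\rve^{a}\le C\,\lve\nabla\bv\rve$ (from \eqref{GAG-l4} and the Poincar\'e inequality on $\ve$) and $\lve\nabla^2\bd\rve_{\elb^4}\le C\,\lve\bd\rve_2^{1-a}\lve\bd\rve_3^{a}$ (from \eqref{GAG-l4} applied to $\nabla^2\bd$, after bounding the lower-order Sobolev norms by $\lve\bd\rve_2$ and $\lve\bd\rve_3$); the $\elb^4$-norm of $\nabla\bd$ is treated by the same chain. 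Collecting the three contributions gives \eqref{eq-plan-base-B}. Alternatively, \eqref{eq-plan-base-B} can be deduced directly from \cite[Lemma 6.2]{BHP18} combined with \eqref{GAG-l4}, exactly as \eqref{EST-G1} was obtained in the proof of Lemma \ref{LEM-G1}.

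The step that needs the most care is the bookkeeping in the previous paragraph: the H\"older exponents must be chosen so that the velocity factor is always estimated by $\lve\nabla\bv\rve$ alone (equivalently, by the $\ve$-norm) and never by $\lve\bv\rve_2$. This is precisely why $\nabla\bv$ is paired with $\nabla\bd$ measured in $\elb^\infty$, while $\bv$ itself is paired with the second-order factor measured in $\elb^4$, and why Poincar\'e is used to absorb $\lve\bv\rve$ into $\lve\nabla\bv\rve$. Once \eqref{eq-plan-base-B} is available, applying it to $\tilde{B}(\bv_1-\bv_2,\bd_1)$ and $\tilde{B}(\bv_2,\bd_1-\bd_2)$ and summing yields \eqref{local-Lip-F-3} with an appropriate constant $c_3$.
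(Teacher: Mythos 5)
Your proposal is correct and follows essentially the same route as the paper: the same bilinear splitting $\tilde{B}(\bv_1,\bd_1)-\tilde{B}(\bv_2,\bd_2)=\tilde{B}(\bv_1-\bv_2,\bd_1)+\tilde{B}(\bv_2,\bd_1-\bd_2)$, with each piece estimated in $\bH^1$ via the Leibniz rule, H\"older pairings ($L^2\times L^\infty$ for $(\nabla\bv)(\nabla\bd)$ and $L^4\times L^4$ for $\bv\,\nabla^2\bd$), and the Gagliardo--Nirenberg inequalities \eqref{GAG-l4} and \eqref{GAG-LInf-2}. Packaging this as a single base estimate applied twice, and bounding the zeroth-order part directly instead of quoting \eqref{EST-G1}, are only cosmetic differences.
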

	\begin{proof}
		Throughout this proof $C>0$ is an universal constant which may change from one term to the other. 	
	Let $(\bv_i,
	\bd_i)\in \mathscr{E}$, i$=1,2$, and
	$(\w,
	\db)=(\bv_1-\bv_2, \d_1-\d_2). $
Since $\tilde{B}$ is bilinear,  we have
	\begin{equation}
	\tilde{B}(\bv_1,\bd_1)-\tilde{B}(\bv_2,\bd_2)=\tilde{B}(\w,\d_1)+\tilde{B}(\v_2, \db)=J_1 +J_2.
	\end{equation}
	In order to estimate $\lve J_i\rve_1$,$i=1,2,$, we only  focus on estimating $\lve \nabla J_i \rve$, because by \eqref{EST-G1}  estimating $\lve J_i\rve$
	is easy.  By using the Leibniz rule, the H\"older inequality, \eqref{GAG-l4} and\eqref{GAG-LInf-2} we infer that
		\begin{align*}
		\lve \nabla J_1\rve\le & C \biggl(\lve \nabla \w\rve \lve \nabla
		\d_1\rve_{\el^\infty}+\lve \w\rve_{\el^4} \lve \nabla^2
		\d_1\rve_{\el^4}\biggr)\\
		\le& C \lve \w \rve_1 \biggl(\lve \nabla \d_1\rve^{1-a}_1 \lve \nabla \d_1\rve^a_2 +
		\lve \nabla^2 \d_1\rve^{1-a}_{L^2} \lve \nabla^2 \d_1\rve_{1}^a \biggr)
%		\le & C \lve \w \rve_1 \biggl(\lve  \d_1\rve^{1-a}_2 \lve \d_1\rve^a_3 +
%		\lve  \d_1\rve^{1-a}_{2} \lve  \d_1\rve_{3}^a \biggr)\
\le  C \lve \bv_1-\bv_2 \rve_1 \lve \d_1\rve^{1-a}_2 \lve \d_1\rve^a_3.
		\end{align*}
		In a similar way, we can prove that
		\begin{equation*}
		\lve \nabla J_2\rve\le C \lvert \nabla
		\bv_2 \rvert_{\el^2} \lve \d_1-\d_2\rve^{1-a}_2 \lve \d_1-\d_2\rve^a_3.
		\end{equation*}
		The inequality \eqref{local-Lip-F-3} easily follows from \eqref{EST-G1} and the estimates for 	$\lve \nabla J_i\rve$, $i=1,2$, above.
	\end{proof}
	
	\begin{lem}\label{Local-LIP-Lem-3}
		Let Assumption \ref{eqn-f} be satisfied. Then, there exists $c_4>0$
		such that 
		\begin{equation}\label{local-Lip-F-4}
		\begin{split}
		\lve f(\bd_1)-f(\bd_2)\rve_1 \le c_4\Big[1+\lve \bd_1\rve_2^{2N}+\lve \bd_2\rve_2^{2N}\Big]\lve \bd_1-\bd_2\rve_2, \text{ for all $\bd_1, \bd_2\in \bX_{1}\cap \bX_{\frac32}$}.
		\end{split}
		\end{equation}
	\end{lem}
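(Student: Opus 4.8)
The plan is to estimate separately the two contributions to the $\bH^1$-norm on the left-hand side of \eqref{local-Lip-F-4}, namely $\lvert f(\bd_1)-f(\bd_2)\rvert_{\elb^2}$ and $\lvert\nabla\bigl(f(\bd_1)-f(\bd_2)\bigr)\rvert_{\elb^2}$, in each case reducing matters to a \emph{pointwise} inequality obtained from the mean value theorem together with the polynomial growth bounds $\lvert f(\bd)\rvert\le c_0(1+\lvert\bd\rvert^{2N+1})$, $\lvert f^\prime(\bd)\rvert\le c_1(1+\lvert\bd\rvert^{2N})$ and $\lvert f^{\prime\prime}(\bd)\rvert\le c_2(1+\lvert\bd\rvert^{2N-1})$ recorded in Remark \ref{REM-H2}. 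The structural fact that makes this work is that, since $d\le 3$, one has $\bH^2\hookrightarrow\elb^\infty$ with $\lvert\bu\rvert_{\elb^\infty}\le C\lve\bu\rve_2$ (a consequence of \eqref{GAG-LInf-2}); this lets one absorb all the high powers of $\bd_1$, $\bd_2$ and of $\bd_1-\bd_2$ into an $\elb^\infty$-norm, leaving only a single gradient factor to be controlled in $\elb^2$.

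For the zeroth-order term, I would write $f(\bd_1)-f(\bd_2)=\int_0^1 f^\prime\bigl(\bd_2+s(\bd_1-\bd_2)\bigr)(\bd_1-\bd_2)\,ds$ and use $\lvert\bd_2+s(\bd_1-\bd_2)\rvert\le\lvert\bd_1\rvert+\lvert\bd_2\rvert$ together with the growth bound on $f^\prime$ to obtain the pointwise inequality $\lvert f(\bd_1)-f(\bd_2)\rvert\le C(1+\lvert\bd_1\rvert^{2N}+\lvert\bd_2\rvert^{2N})\lvert\bd_1-\bd_2\rvert$. Taking the $\elb^\infty$-norm of the first factor and the $\elb^2$-norm of $\bd_1-\bd_2$, and invoking $\bH^2\hookrightarrow\elb^\infty$, yields $\lvert f(\bd_1)-f(\bd_2)\rvert_{\elb^2}\le C(1+\lve\bd_1\rve_2^{2N}+\lve\bd_2\rve_2^{2N})\lve\bd_1-\bd_2\rve_2$.

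For the gradient term, I would use $\partial_{x_i}f(\bd)=f^\prime(\bd)\,\partial_{x_i}\bd$ and split
\[
\partial_{x_i}\bigl(f(\bd_1)-f(\bd_2)\bigr)=\bigl(f^\prime(\bd_1)-f^\prime(\bd_2)\bigr)\partial_{x_i}\bd_1+f^\prime(\bd_2)\,\partial_{x_i}(\bd_1-\bd_2).
\]
For the first summand, the mean value theorem and the bound on $f^{\prime\prime}$ give $\lvert f^\prime(\bd_1)-f^\prime(\bd_2)\rvert\le C(1+\lvert\bd_1\rvert^{2N-1}+\lvert\bd_2\rvert^{2N-1})\lvert\bd_1-\bd_2\rvert$ pointwise, so—again placing the powers of $\bd_i$ and of $\bd_1-\bd_2$ in $\elb^\infty$ and $\nabla\bd_1$ in $\elb^2$—its $\elb^2$-norm is bounded by $C(1+\lve\bd_1\rve_2^{2N-1}+\lve\bd_2\rve_2^{2N-1})\lve\bd_1-\bd_2\rve_2\lve\bd_1\rve_2$; since $2N\ge 1$, two applications of Young's inequality (with exponents $\tfrac{2N}{2N-1}$ and $2N$) turn this into $C(1+\lve\bd_1\rve_2^{2N}+\lve\bd_2\rve_2^{2N})\lve\bd_1-\bd_2\rve_2$. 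For the second summand, $\lvert f^\prime(\bd_2)\rvert\le c_1(1+\lvert\bd_2\rvert^{2N})$ and $\bH^2\hookrightarrow\elb^\infty$ give at once $\lvert f^\prime(\bd_2)\,\partial_{x_i}(\bd_1-\bd_2)\rvert_{\elb^2}\le C(1+\lve\bd_2\rve_2^{2N})\lve\bd_1-\bd_2\rve_2$. Adding the zeroth-order and gradient estimates and using that $\lve\cdot\rve_1$ is equivalent to $\lvert\cdot\rvert_{\elb^2}+\lvert\nabla\cdot\rvert_{\elb^2}$ then yields \eqref{local-Lip-F-4}.

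The computation is otherwise routine; the only point needing a little care is the bookkeeping in the last step—consolidating mixed-degree terms such as $\lve\bd_2\rve_2^{2N-1}\lve\bd_1\rve_2$ into the symmetric expression $1+\lve\bd_1\rve_2^{2N}+\lve\bd_2\rve_2^{2N}$—and checking that nothing degenerates when $N=1$, in which case $2N-1=1$, Young's inequality reduces to the elementary one, and all the growth bounds from Remark \ref{REM-H2} remain valid. Finally, note that only $\bH^2$-norms of $\bd_1$ and $\bd_2$ appear on the right-hand side, so the estimate is already meaningful under $\bd_i\in\bX_1$ (which embeds in $\bH^2$); the full hypothesis $\bd_i\in\bX_1\cap\bX_{\tfrac32}$ in the statement is simply the regularity under which the lemma will be applied elsewhere.
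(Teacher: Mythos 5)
Your argument is correct, but it follows a genuinely different route from the paper's. The paper proves \eqref{local-Lip-F-4} by reducing, via the estimate $\lve\,\lvert\bd_1\rvert^{2k}\bd_2\rve_2\le C\lve\bd_1\rve_2^{2k}\lve\bd_2\rve_2$ (a consequence of $\bH^2$ being an algebra), to the leading monomial $a_N\lvert\bd\rvert^{2N}\bd$, and then uses the algebraic factorization $\lvert\bd_1\rvert^{2N}\bd_1-\lvert\bd_2\rvert^{2N}\bd_2=\lvert\bd_1\rvert^{2N}(\bd_1-\bd_2)+\bd_2(\lvert\bd_1\rvert-\lvert\bd_2\rvert)\sum_{k=0}^{2N-1}\lvert\bd_1\rvert^{2N-1-k}\lvert\bd_2\rvert^{k}$, so the whole estimate is carried out at the level of $\bH^2$-norms of products (which in particular dominates the $\bH^1$-norm on the left). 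You instead estimate the $\elb^2$-norms of $f(\bd_1)-f(\bd_2)$ and of its gradient separately, using the mean value theorem together with the growth bounds on $f$, $f^\prime$, $f^{\prime\prime}$ from Remark \ref{REM-H2}, the chain/Leibniz rule, the embedding $\bH^2\hookrightarrow\elb^\infty$ and Young's inequality for the mixed-degree bookkeeping. Your approach is somewhat longer but more elementary and self-contained: it never manipulates products inside $\bH^2$ (so it sidesteps any regularity question about the non-smooth factor $\lvert\bd_1\rvert-\lvert\bd_2\rvert$ that appears in the paper's factorization), while the paper's algebra argument is shorter and in effect yields the stronger $\bH^2$-version of the Lipschitz bound. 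Your closing remarks about the $N=1$ case and about the hypotheses $\bd_i\in\bX_1\cap\bX_{\frac32}$ only entering through $\bX_1\hookrightarrow\bH^2$ are also accurate.
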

	\begin{proof}
Let	$N \in I_d$, $k\in \{0,\ldots, N\}$ and $f$ be as in Assumption \ref{eqn-f}.  By the Young inequality and the fact that $\bH^2 $ is an algebra, we infer that there exists $C>0$ such that for all $\bd_1,\bd_2\in \bH^2$
		\begin{equation}\label{POL-EST-0}
		\lve\lvert \bd_1 \rvert^{2k} \bd_2\rve_2\le C  \lve \bd_1 \rve_2^{2k} \lve \bd_2\rve_2\le C \lve \bd_2\rve_2(1+\lve \bd_1 \lve_2^{2N}.
		\end{equation}
		Thus, it is enough  to establish \eqref{local-Lip-F-4}  for the leading term $a_N \lvert \bd \rvert^{2N}\bd$. For doing so,  we have $$\lvert \bd_1\rvert^{2N}\bd_1-\lvert \bd_2\rvert^{2N}\bd_2= \vert \bd_1\vert^{2N}(\bd_1-\bd_2)+
		\bd_2(\vert\bd_1\vert-\vert\bd_2\vert)(\sum_{k=0}^{2N-1}\vert \bd_1\vert^{2N-1-k}\vert \bd_2\rvert^{k} ),$$
		from which along with \eqref{POL-EST-0} we infer that \eqref{local-Lip-F-4}  is true for the leading term  $a_N \lvert \bd \rvert^{2N}\bd$.
	\end{proof}
%	Gathering the results of Lemma \ref{Local-LIP-Lem}-\ref{Local-LIP-Lem-3} yields the following important result.
	\begin{prop}\label{SLC-ST}
	Let $\alpha=\frac d4$, $d=2,3$. If Assumption \ref{eqn-f} is satisfied, then there exists $C_0>0$ such that for all $\y_i=(\bv_i,\d_i)$, \,\,
		$i=1,2$
		\begin{equation*}
		\begin{split}
		\lve \f(\y_1) -\f(\y_2)\rve_{\mathscr{H}} \le C_0 \Vert \y_1-\y_2\Vert_{\mathscr{V}}^{1-\alpha}
		\Big[\Vert \y_1-\y_2\Vert^{\alpha}_{\mathscr{V} }  \Vert \y_1\Vert^{1-\alpha}_{\mathscr{V}} \Vert \y_1\Vert_{\mathscr{E}}^\alpha + \Vert
		\y_1-\y_2\Vert_{\mathscr{E}}^\alpha  \Vert
		\y_2\Vert_{\mathscr{V}}\Big]\\
		+c_4\lve \y_1-\y_2\rve_{\mathscr{V} }\Big[1+\lve \y_1\rve_{\mathscr{V}}^{2N}+\lve \y_2\rve_{\mathscr{V}}^{2N}\Big].
		\end{split}
		\end{equation*}
	\end{prop}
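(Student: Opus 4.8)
The plan is to estimate the two components of $\f(\y_1)-\f(\y_2)$ separately, matching the block structure of $\f$ in \eqref{eqn-def-A-F}. Recall that for $\y_i=(\bv_i,\d_i)$ we have
\[
\f(\y_1)-\f(\y_2)=\begin{pmatrix} \big(B(\bv_1)-B(\bv_2)\big)+\big(M(\d_1)-M(\d_2)\big)\\ \big(\tilde B(\bv_1,\d_1)-\tilde B(\bv_2,\d_2)\big)-\big(f(\d_1)-f(\d_2)\big)\end{pmatrix},
\]
and since $\mathscr{H}=\h\times\bX_{\frac12}=\h\times\bH^1$, the norm $\lve\f(\y_1)-\f(\y_2)\rve_{\mathscr{H}}$ is controlled by $\lvert B(\bv_1)-B(\bv_2)\rvert_{\el^2}+\lvert M(\d_1)-M(\d_2)\rvert_{\el^2}+\lve\tilde B(\bv_1,\d_1)-\tilde B(\bv_2,\d_2)\rve_1+\lve f(\d_1)-f(\d_2)\rve_1$ (first using that $\lve\bH^1\textrm{-norm}\rve$ bounds the $\el^2$-norm one needs on the first block, i.e.\ that $M$ maps into $\el^2$ by Lemma \ref{Local-LIP-Lem} and $B(\bv_1)-B(\bv_2)\in\el^2$ for $\bv_i\in\ve_1$). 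The $M$-term and the $\tilde B$-term are handled directly by Lemmas \ref{Local-LIP-Lem} and \ref{Local-LIP-Lem-2}, and the $f$-term by Lemma \ref{Local-LIP-Lem-3}, which already produces exactly the second line of the claimed inequality (with the constant $c_4$) once we note $\lve\d_1-\d_2\rve_2\le\lVert\y_1-\y_2\rVert_{\mathscr{V}}$ and $\lve\d_i\rve_2^{2N}\le\lVert\y_i\rVert_{\mathscr{V}}^{2N}$.

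The remaining task is to absorb the $B$, $M$ and $\tilde B$ contributions into the first line, i.e.\ into
\[
C_0\,\lVert\y_1-\y_2\rVert_{\mathscr{V}}^{1-\alpha}\Big[\lVert\y_1-\y_2\rVert_{\mathscr{V}}^{\alpha}\lVert\y_1\rVert_{\mathscr{V}}^{1-\alpha}\lVert\y_1\rVert_{\mathscr{E}}^{\alpha}+\lVert\y_1-\y_2\rVert_{\mathscr{E}}^{\alpha}\lVert\y_2\rVert_{\mathscr{V}}\Big].
\]
For the $M$-term I would feed Lemma \ref{Local-LIP-Lem} the bounds $\lve\d_1-\d_2\rve_2\le\lVert\y_1-\y_2\rVert_{\mathscr{V}}$ (and $\le\lVert\y_1-\y_2\rVert_{\mathscr{V}}^{1-\alpha}\lVert\y_1-\y_2\rVert_{\mathscr{V}}^{\alpha}$ trivially), $\lve\d_1\rve_2\le\lVert\y_1\rVert_{\mathscr{V}}$, $\lve\d_1\rve_3\le\lVert\y_1\rVert_{\mathscr{E}}$, $\lve\d_1-\d_2\rve_3\le\lVert\y_1-\y_2\rVert_{\mathscr{E}}$, $\lve\d_2\rve_2\le\lVert\y_2\rVert_{\mathscr{V}}$, using $\bX_1\hookrightarrow\bH^2$, $\bX_{\frac32}\hookrightarrow\bH^3$ and the definitions \eqref{eqn-spaces} of $\mathscr{V}$ and $\mathscr{E}$; this reproduces both bracketed terms of the first line. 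For the $\tilde B$-term I would do the same with Lemma \ref{Local-LIP-Lem-2}, noting $\lve\nabla(\bv_1-\bv_2)\rve\le\lVert\y_1-\y_2\rVert_{\mathscr{V}}$ and $\lve\nabla\bv_2\rve\le\lVert\y_2\rVert_{\mathscr{V}}$; again the structure matches. For the purely Navier--Stokes term $B(\bv_1)-B(\bv_2)=B(\w,\bv_1)+B(\bv_2,\w)$ with $\w=\bv_1-\bv_2$, the classical interpolation estimate (Lemma \ref{LEM-B}-type bounds in the stronger norms, e.g.\ $\lvert B(\bu,\bv)\rvert_{\el^2}\le C\lvert\bu\rvert_{\el^\infty}\lvert\nabla\bv\rvert_{\el^2}$ together with \eqref{GAG-LInf-2}) gives $\lvert B(\bv_1)-B(\bv_2)\rvert_{\el^2}\le C\big(\lve\w\rve_1\lve\bv_1\rve_1^{1-\alpha}\lve\bv_1\rve_2^{\alpha}+\lve\w\rve_1^{1-\alpha}\lve\w\rve_2^{\alpha}\lve\bv_2\rve_1\big)$, which again injects into the first line after passing to $\mathscr{V},\mathscr{E}$ norms.

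The one subtlety to be careful with is the exponent bookkeeping: the target's first factor is $\lVert\y_1-\y_2\rVert_{\mathscr{V}}^{1-\alpha}$ pulled out front, so in each of the pieces above I must peel off a factor $\lVert\y_1-\y_2\rVert_{\mathscr{V}}^{1-\alpha}$ and check that what remains is $\lVert\y_1-\y_2\rVert_{\mathscr{V}}^{\alpha}\lVert\y_1\rVert_{\mathscr{V}}^{1-\alpha}\lVert\y_1\rVert_{\mathscr{E}}^{\alpha}$ or $\lVert\y_1-\y_2\rVert_{\mathscr{E}}^{\alpha}\lVert\y_2\rVert_{\mathscr{V}}$ up to a constant. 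This works because in the first-type terms the total power of "$\y_1$-type difference" factors is $1$ and of "$\y_1$-regularity" factors splits as $(1-\alpha)$ in $\mathscr{V}$ and $\alpha$ in $\mathscr{E}$, while in the second-type terms the difference carries $(1-\alpha)$ in $\mathscr{V}$ plus $\alpha$ in $\mathscr{E}$ and $\y_2$ carries its single $\mathscr{V}$ power; since $\lVert\cdot\rVert_{\mathscr{V}}\le C\lVert\cdot\rVert_{\mathscr{E}}$ when needed, any mismatch is harmless. I expect the main (though still routine) obstacle to be exactly this exponent-matching and the verification that every norm appearing in the three lemmas is dominated by the appropriate $\mathscr{V}$- or $\mathscr{E}$-norm via the embeddings $\bX_1\hookrightarrow\bH^2$, $\bX_{\frac32}\hookrightarrow\bH^3$; once that is pinned down, taking $C_0$ to be the maximum of the constants produced by Lemmas \ref{Local-LIP-Lem}, \ref{Local-LIP-Lem-2} and the $B$-estimate finishes the proof, with the $f$-contribution supplying the additive $c_4$-term verbatim from Lemma \ref{Local-LIP-Lem-3}.
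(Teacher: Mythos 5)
Your proposal is correct and follows essentially the route the paper intends: the paper omits the proof, stating it is an easy consequence of Lemmata \ref{Local-LIP-Lem}--\ref{Local-LIP-Lem-3}, which is exactly how you treat the $M$, $\tilde{B}$ and $f$ contributions, with the norms passed to $\mathscr{V}$ and $\mathscr{E}$ via $\bX_1\hookrightarrow\bH^2$ and $\bX_{\frac32}\hookrightarrow\bH^3$. Your additional handling of the Navier--Stokes term $B(\bv_1)-B(\bv_2)$ by the standard $\el^2$-interpolation estimate (using \eqref{GAG-l4}/\eqref{GAG-LInf-2}) correctly supplies the one ingredient not literally contained in those three lemmata, and the exponent bookkeeping you describe does close.
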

	\begin{proof}
		The proposition is an easy consequence of Lemmata \ref{Local-LIP-Lem}-\ref{Local-LIP-Lem-3}. Thus, we omit  its proof.
	\end{proof}

	Using Theorems \ref{Thm:LocalUniqueness}, 
	\ref{thm_local} and \ref{thm_maximal-abstract} we obtain  our first main result.
	\begin{thm}\label{LC-Local-Sol}
		Let  $d=2,3$, $(\v_0,\d_0)\in \el^2(\Omega;\ve \times \bX_{1})$ be $\mathcal{F}_0$-measurable, $\bh\in \bH^2$. If Assumptions  \ref{eqn-f}, \ref{HYPO-ST} and \ref{Assum:Usual hypotheses} are satisfied,  then the problem  \eqref{ABSTRACT-LC}  has a unique maximal local solution $((\bv;\bd), \tilde{\tau}_\infty)$ satisfying the following properties.
		\begin{enumerate}[(1)]
%			\item\label{item-0} if, for all For all  $k\in \mathbb{N}$ and $(\bu, \mathbf{d})\in C([0,T]; \mathscr{V}) \bigcap L^2(0,T; \mathscr{E})$ we define
%			\begin{equation}\label{STOP}
%			\tau_k=\inf\{t\ge 0: \lvert \nabla \v(t)\rvert_{\el^2}^2 +\lve \d(t)\rve^2_2+\int_0^t \left(\lvert \rA\v(s)\rvert^2_{\el^2}+ \lve \d(s) \rve^2_3  \right)ds >
%			k^2\},
%			\end{equation}
%			then $\mathbb{P}$-a.s.
%			\begin{equation}
%			\tau_k \toup \tilde{\tau}_\infty.
%			\end{equation}
			\item\label{item-1} Given $R>0$ and  $\varepsilon >0$ there exists
			$\tau(\varepsilon,R)>0$ such that if $\mathbb{E}\Vert (\v_0,\d_0) \Vert^{2}_{\ve \times \bX_{1}} \leq R^{2}$, then 
			\[{\mathbb P}\big(\tilde{\tau}_\infty \geq \tau(\varepsilon,R)\big) \geq
			1-\varepsilon.\]
			\item \label{THM-ii} We also have
			\begin{align}
			&\mathbb{P}\left(\{ \tilde{\tau}_\infty <\infty \}\cap \{ \lvert \nabla \bv(t) \rvert_{\el^2} + \lve  \bd(t) \rve_2<\infty  \} \right)=0,\label{MAX-P1}\\
		&	\limsup_{t\toup\tilde{\tau}_\infty } \lvert \nabla \v(t)\rvert_{\el^2}^2 +\lve \d(t)\rve^2_2+\int_0^t \left(\lvert \rA\v(s)\rvert^2_{\el^2}+ \lve \d(s) \rve^2_3  \right)ds=\infty \text{ $\mathbb{P}$-a.s. on } \{\tilde{\tau}_\infty<\infty \}\label{MAX-P2}.
			\end{align}
		\end{enumerate}
		
	\end{thm}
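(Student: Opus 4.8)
The plan is to obtain Theorem~\ref{LC-Local-Sol} as a corollary of the abstract machinery developed (but not yet stated) in Section~\ref{ABST-STRONG}: namely the local existence/uniqueness theorem (Theorem~\ref{thm_local}), the local uniqueness statement (Theorem~\ref{Thm:LocalUniqueness}), and the maximality/blow-up theorem (Theorem~\ref{thm_maximal-abstract}). First I would check that the abstract hypotheses are satisfied for the triple of spaces $\mathscr{H}=\h\times\bX_{\frac12}$, $\mathscr{V}=\ve\times\bX_1$, $\mathscr{E}=\ve_1\times\bX_{\frac32}$ and the data $\A,\f,\elb,\g$ defined in \eqref{eqn-def-A-F}--\eqref{eqn-def-L-G}. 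The semigroup estimates required by the abstract theorem are exactly Lemmata~\ref{SEM-2} and \ref{SEM-3}, so those are immediate. The nonlinear drift $\f$ satisfies the two-norm local Lipschitz bound of Proposition~\ref{SLC-ST}, which is precisely the structural assumption the abstract theorem is built around; the linear correction $\elb(\y)=(0,-\tfrac12 G^2(\bd))$ is globally Lipschitz from $\mathscr{H}$ (hence from $\mathscr{V}$) to $\mathscr{H}$ by the boundedness of $G$ coming from \eqref{Eq:LinG} and $\bh\in\bH^2\hookrightarrow\elb^\infty$; and $\g$ is globally Lipschitz from $\mathscr{H}$ to $\gamma(\rK,\mathscr{V})$ by Assumption~\ref{HYPO-ST} on $S$ together with the boundedness of $G$. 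Feeding these verifications into Theorem~\ref{thm_local} yields a local solution, Theorem~\ref{Thm:LocalUniqueness} gives pathwise uniqueness, and Definition~\ref{Def-maxsol-0} together with the Amalgamation Lemma then produces the maximal local solution $((\bv,\bd),\tilde\tau_\infty)$.

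Next I would extract property~\eqref{item-1}: this is the standard ``the lifetime is bounded below in probability by a deterministic time depending only on the size of the data'' statement, which is part of the conclusion of the abstract local existence theorem (it comes from the contraction being set up on a random time interval whose length depends only on $\mathbb{E}\|\y_0\|^2_{\mathscr V}$ through the a priori bound \eqref{eq-locsol_01-a}). So \eqref{item-1} should require essentially no new argument beyond quoting Theorem~\ref{thm_local}. For property~\eqref{THM-ii}, the blow-up alternative \eqref{MAX-P2} is the abstract maximality statement of Theorem~\ref{thm_maximal-abstract} transcribed into the present spaces: on $\{\tilde\tau_\infty<\infty\}$ one has, almost surely,
\begin{equation*}
\limsup_{t\toup\tilde\tau_\infty}\Bigl(\|\y(t)\|_{\mathscr V}^2+\int_0^t\|\y(s)\|_{\mathscr E}^2\,ds\Bigr)=\infty,
\end{equation*}
and since $\|\y(t)\|_{\mathscr V}^2=|\nabla\bv(t)|_{\el^2}^2+\|\bd(t)\|_2^2$ (using $\bX_1\hookrightarrow\bH^2$ and the equivalence of norms) and $\|\y(s)\|_{\mathscr E}^2 \simeq |\rA\bv(s)|_{\el^2}^2+\|\bd(s)\|_3^2$, this is exactly \eqref{MAX-P2}. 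Statement \eqref{MAX-P1} is then a direct logical consequence of \eqref{MAX-P2}: if $|\nabla\bv(t)|_{\el^2}+\|\bd(t)\|_2$ stayed finite as $t\toup\tilde\tau_\infty$ on a set of positive probability intersected with $\{\tilde\tau_\infty<\infty\}$, the $\limsup$ in \eqref{MAX-P2} of the pointwise part would be finite there, and one argues that the integral term cannot blow up alone — by the mild formulation \eqref{eq-locsol_01-b} and the maximal regularity Lemmata~\ref{SEM-2}--\ref{SEM-3}, control of $\sup_{s\le t}\|\y(s)\|_{\mathscr V}$ on $[0,\tilde\tau_\infty)$ forces $\int_0^{\tilde\tau_\infty}\|\y(s)\|_{\mathscr E}^2\,ds<\infty$, contradicting \eqref{MAX-P2}.

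The genuinely new content here is small: almost everything is bookkeeping that translates the abstract results of Section~\ref{ABST-STRONG} into the concrete spaces \eqref{eqn-spaces}. I would therefore keep the proof short, organised as: (a) verify the abstract hypotheses (semigroup bounds from Lemmata~\ref{SEM-2}--\ref{SEM-3}; drift bound from Proposition~\ref{SLC-ST}; Lipschitz bounds on $\elb$ and $\g$ from the boundedness of $G$ and Assumption~\ref{HYPO-ST}), paying attention that $\bh\in\bH^2$ is exactly what is needed to make $G$ and $G^2$ map into $\mathscr V$ with the right continuity; (b) invoke Theorems~\ref{Thm:LocalUniqueness}, \ref{thm_local}, \ref{thm_maximal-abstract} to get the maximal local solution and properties \eqref{item-1} and the abstract blow-up alternative; (c) rewrite the blow-up alternative in the concrete norms to get \eqref{MAX-P2}, and deduce \eqref{MAX-P1} from it. The only place I expect to have to be slightly careful is the reduction \eqref{MAX-P2} $\Rightarrow$ \eqref{MAX-P1}, i.e. ruling out that the $\mathscr E$-integral blows up while the $\mathscr V$-norm stays bounded; this is handled by the maximal-regularity estimate for the mild solution but deserves an explicit sentence rather than being swept under the rug.
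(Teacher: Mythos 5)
Your proposal is correct and follows the same overall route as the paper: verify Assumption \ref{assum-01} for $\mathbb{S}$ on $\mathscr{H}$ via Lemmata \ref{SEM-1}--\ref{SEM-3}, verify the two-norm local Lipschitz structure of the drift via Proposition \ref{SLC-ST} (with the globally Lipschitz perturbations $\mathbf{L}$ and $\mathbf{G}$ handled by the boundedness of $G$ and Assumption \ref{HYPO-ST}), and then invoke Theorems \ref{Thm:LocalUniqueness}, \ref{thm_local} and \ref{thm_maximal-abstract}; item \eqref{item-1} and the blow-up statements are then just transcriptions into the concrete norms, exactly as the paper does. The one place you diverge is \eqref{MAX-P1}: the paper does not deduce it from \eqref{MAX-P2} at all, because the abstract Theorem \ref{thm_maximal-abstract} already contains both conclusions --- \eqref{eqn-t_infty} translates into \eqref{MAX-P2} and \eqref{eqn-t_infty-limit} translates directly into \eqref{MAX-P1} --- and the abstract proof of \eqref{eqn-t_infty-limit} goes by an extension argument (restart the equation near the blow-up time from data bounded by $R$ and use the uniform lower bound $T^\ast(\varepsilon,R)$ on the lifetime from part (II) of Theorem \ref{thm_local}, contradicting maximality), not by maximal regularity. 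Your alternative derivation of \eqref{MAX-P1} from \eqref{MAX-P2} is plausible but, as sketched, has a soft spot: the claim that a pathwise bound on $\sup_{t<\tilde\tau_\infty}\Vert\y(t)\Vert_{\mathscr{V}}$ forces $\int_0^{\tilde\tau_\infty}\Vert\y(s)\Vert^2_{\mathscr{E}}\,ds<\infty$ cannot be read off Lemma \ref{SEM-3} directly, since that estimate controls the stochastic convolution only in expectation; one would need to localise (stop at the first time the $\mathscr{V}$-norm exceeds a level $R$, obtain a bound on the expected $\mathscr{E}$-integral uniform in the announcing sequence, and then let $R\to\infty$), and also absorb the $\alpha$-power of the $\mathscr{E}$-integral coming from the drift bound via Young's inequality. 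So your route for \eqref{MAX-P1} is repairable but strictly more work than the paper's, which simply quotes the second abstract blow-up statement.
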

	\begin{proof}
		Lemma \ref{SEM-1}-\ref{SEM-3} show that  $\{\mathbb{S}(t)\}_{t\geq 0}$ on
		$\mathscr{H}=\h\times \bX_{0}$ satisfies Assumption \ref{assum-01}.
		Thanks to Proposition \ref{SLC-ST} we can infer by applying Theorems \ref{Thm:LocalUniqueness},
		\ref{thm_local} and \ref{thm_maximal-abstract} that problem
		\eqref{ABSTRACT-LC} has a unique maximal local solution satisfying items \eqref{item-1} and \eqref{THM-ii} of Theorem \ref{LC-Local-Sol}.
	\end{proof}
	\subsection{Existence and uniquness of global strong solution: 2D case}
By using the Khashminskii test for non-explosions, see \cite[Theorem
III.4.1]{Kh_1980},  and some arguments from \cite{Brz+Masl+Seidler_2005}, we prove in this section that if $d=2$ then the problem \eqref{ABSTRACT-LC} has a unique global solution. 
	
	For all $(\bu, \mathbf{d}) \in C([0,T]; \h\times \bH^1) \bigcap L^2(0,T; \ve\times \bH^2)$ and $t\in [0,T]$  we put 
	\begin{align}
	& \mathcal{E}[\bu,\mathbf{d}](t)= \frac{1}{2}\left(\lvert \bu (t) \rvert^2_{\el^2}+ \lvert \mathbf{d}(t)\rvert_{\elb^2} + \lvert \nabla \mathbf{d}(t) \rvert^2_{\elb^2} + \int_\MO F( \mathbf{d}(t,x)) dx \right)\\
	& \mathscr{D}[\bu, \mathbf{d} ](t) = \lvert \rA^\frac12 \bu (t) \rvert^2_{\el^2}+ \lvert \mathrm{A}_1\mathbf{d}(t) - f(\mathbf{d}(t)  ) \rvert^2_{\elb^2}.
	\end{align}

	\begin{thm}\label{GLOBAL-ST}
		Let $d=2$, $N\in \mathbb{N}$, $\bh \in \bH^2$ and $(\v_0, \d_0) \in \el^2(\Omega; \ve \times  \bX_{1})$ such that 
		\begin{equation}\label{E_0}
		\mathbb{E} \lvert \mathcal{E}[\v_0,d_0]\rvert^{2(4N+2)})= \mathbb{E} \biggl(\vert \bv_0\vert^2_{\el^2} + \vert \bd_0\vert^2_{\elb^2}+\vert \nabla \bd_0 \vert^2_{\elb^2} + \int_{\mo}F(\bd_0(x)) dx \biggr)^{2(4N+2)}<\infty.
		\end{equation}
	If Assumptions  \ref{eqn-f}, \ref{Assum:Usual hypotheses} and \ref{HYPO-ST}, 
		then  the problem  \eqref{ABSTRACT-LC}  has a unique global strong solution.
	\end{thm}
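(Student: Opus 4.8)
The plan is to derive Theorem~\ref{GLOBAL-ST} from the local theory of Theorem~\ref{LC-Local-Sol} via the Khasminskii test for non-explosion (\cite[Theorem~III.4.1]{Kh_1980}), the energy structure of the system, and the exponential-weight trick of Schmalfu{\ss} used in \cite{Brz+Masl+Seidler_2005} (see also \cite{Bjorn}). By Theorem~\ref{LC-Local-Sol} we already have a unique maximal local solution $((\bv,\bd),\tilde{\tau}_\infty)$, so it suffices to show that $\tilde{\tau}_\infty=\infty$ $\mathbb{P}$-a.s. In view of the blow-up alternative \eqref{MAX-P2} this follows once one proves that, for every fixed $T>0$,
\[
\sup_{0\le t<\tilde{\tau}_\infty\wedge T}\Big(\lvert\nabla\bv(t)\rvert_{\el^2}^2+\lve\bd(t)\rve_2^2\Big)+\int_0^{\tilde{\tau}_\infty\wedge T}\Big(\lvert\rA\bv(s)\rvert_{\el^2}^2+\lve\bd(s)\rve_3^2\Big)\,ds<\infty\quad\mathbb{P}\text{-a.s.},
\]
since this contradicts \eqref{MAX-P2} on $\{\tilde{\tau}_\infty\le T\}$ unless $\mathbb{P}(\tilde{\tau}_\infty\le T)=0$, and then $T\uparrow\infty$ gives the claim. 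All It\^o's-formula computations are first performed along an announcing sequence $(\tau_n)$ of $\tilde{\tau}_\infty$ --- on each $[0,\tau_n]$ the local solution has, by \eqref{eq-locsol_01-a}, enough regularity to be a variational solution, so It\^o's formula is licit --- and then one passes to the limit $n\to\infty$ by monotone convergence.

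\emph{Step 1 (basic energy estimate).} I would apply It\^o's formula to the energy functional $\mathcal{E}[\bv,\bd]$ introduced before the statement. The nonlinear and coupling terms all cancel: $\langle B(\bv,\bv),\bv\rangle=0$ and $\langle\tilde{B}(\bv,\bd),\bd\rangle=0$ by \eqref{B3}--\eqref{tild-b-0}, the coupling $\langle M(\bd),\bv\rangle$ is removed against the $\bd$-dissipation by the structural identity \eqref{G1-eq-Md}, the term $\langle\tilde{B}(\bv,\bd),f(\bd)\rangle$ vanishes because $f(\cdot)$ is a gradient in $\bd$ and $\bv$ is divergence free, and the Stratonovich noise $(\bd\times\bh)\circ dW_2$ is energy-neutral since $\bd\times\bh\perp\bd$ pointwise and the It\^o--Stratonovich corrector $\tfrac12 G^2(\bd)$ exactly compensates the quadratic-variation term (cf.~\cite{Brz+Elw_2000}). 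Using Assumption~\ref{HYPO-ST} via \eqref{Eq:Hypo-ST} for the $S(\bv)\,dW_1$ term, one gets $d\mathcal{E}[\bv,\bd]+\mathscr{D}[\bv,\bd]\,dt\le c(1+\mathcal{E}[\bv,\bd])\,dt+dM_t$ with $M$ a martingale whose quadratic variation is dominated by $\mathcal{E}[\bv,\bd]^2$. Applying It\^o once more to $(\mathcal{E}[\bv,\bd]+C)^q$, the Burkholder--Davis--Gundy inequality and Gronwall's lemma then yield, for every $q\le 2(4N+2)$,
\[
\mathbb{E}\sup_{0\le t<\tilde{\tau}_\infty\wedge T}\mathcal{E}[\bv,\bd](t)^q+\mathbb{E}\int_0^{\tilde{\tau}_\infty\wedge T}\mathcal{E}[\bv,\bd](s)^{q-1}\mathscr{D}[\bv,\bd](s)\,ds\le C_{T,q}\big(1+\mathbb{E}\,\mathcal{E}[\bv_0,\bd_0]^q\big)<\infty,
\]
the right-hand side being finite precisely by hypothesis \eqref{E_0}. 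Combined with \eqref{bigdanh2} and the two-dimensional Gagliardo--Nirenberg bound $\lve\bd\rve_{\elb^{4N+2}}^{4N+2}\le c\,\lvert\bd\rvert_{\elb^2}^2\lve\bd\rve_{\bH^1}^{4N}$, this also controls $\mathbb{E}\int_0^{\tilde{\tau}_\infty\wedge T}\lve\bd\rve_2^2\,ds$ and $\mathbb{E}\int_0^{\tilde{\tau}_\infty\wedge T}\lvert\nabla\bv\rvert_{\el^2}^2\,ds$.

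\emph{Step 2 (weighted higher-order estimate and conclusion).} Set $\Psi(t):=1+\lvert\nabla\bv(t)\rvert_{\el^2}^2+\lve\bd(t)\rve_2^2$ and $\mathscr{D}'(t):=\lvert\rA\bv(t)\rvert_{\el^2}^2+\lve\bd(t)\rve_3^2$. Testing the $\bv$-equation with $\rA\bv$ and the $\bd$-equation with $\rrA^2\bd$, and using the 2D estimates for $B$, the Lemmata~\ref{Local-LIP-Lem}--\ref{Local-LIP-Lem-3} (with one argument set to $0$), the identity \eqref{G1-eq-Md}, \eqref{bigdanh2} and the growth bound \eqref{ST6-B-2}, every contribution that is super-linear in $\Psi$ can be bounded by $\tfrac14\mathscr{D}'(t)+\phi(t)\Psi(t)$, where $\phi(t)$ is built only from quantities controlled in Step~1 --- schematically $\phi(t)=c\big(1+\lvert\bv(t)\rvert_{\el^2}^2\lvert\nabla\bv(t)\rvert_{\el^2}^2+\lve\bd(t)\rve_{\bH^1}^2\lvert\nabla\bv(t)\rvert_{\el^2}^2+\mathcal{E}[\bv,\bd](t)^{\theta}\mathscr{D}[\bv,\bd](t)+\mathcal{E}[\bv,\bd](t)^{\theta'}\big)$ --- so that $\int_0^{\tilde{\tau}_\infty\wedge T}\phi(s)\,ds<\infty$ $\mathbb{P}$-a.s. by Step~1. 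I then introduce the Schmalfu{\ss} weight $r(t):=\exp\big(-\int_0^t\phi(s)\,ds\big)\in(0,1]$ and apply It\^o's formula to $r(t)\Psi(t)$: the term $-r\phi\Psi$ coming from $dr$ cancels the bad super-linear term, leaving $r(t)\Psi(t)+\tfrac34\int_0^t r(s)\mathscr{D}'(s)\,ds\le\Psi(0)+\int_0^t r(s)g(s)\,ds+\widetilde{M}_t$ with $g\in L^1$ and $\widetilde{M}$ a martingale, all of whose relevant moments are finite by Step~1 and Assumption~\ref{HYPO-ST}. Taking expectations (using $0<r\le1$), BDG for $\widetilde{M}$, and Gronwall's lemma gives $\mathbb{E}\big[\sup_{0\le t<\tau_n\wedge T}r(t)\Psi(t)\big]+\mathbb{E}\int_0^{\tau_n\wedge T}r\,\mathscr{D}'\le C<\infty$ uniformly in $n$; letting $n\to\infty$ yields $\sup_{0\le t<\tilde{\tau}_\infty\wedge T}r(t)\Psi(t)<\infty$ and $\int_0^{\tilde{\tau}_\infty\wedge T}r\,\mathscr{D}'<\infty$ $\mathbb{P}$-a.s. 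On $\{\tilde{\tau}_\infty\le T\}$ we have $\int_0^{\tilde{\tau}_\infty}\phi\,ds<\infty$ $\mathbb{P}$-a.s. by Step~1, hence $r(\cdot)\ge r(\tilde{\tau}_\infty)>0$ on $[0,\tilde{\tau}_\infty)$, so $\sup_{t<\tilde{\tau}_\infty}\Psi(t)<\infty$ and $\int_0^{\tilde{\tau}_\infty}\mathscr{D}'<\infty$ $\mathbb{P}$-a.s. there. This contradicts \eqref{MAX-P2} unless $\mathbb{P}(\tilde{\tau}_\infty\le T)=0$; since $T>0$ was arbitrary, $\tilde{\tau}_\infty=\infty$ $\mathbb{P}$-a.s., i.e. the solution is global. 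Uniqueness is already part of Theorem~\ref{LC-Local-Sol}.

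\emph{On the main difficulty.} The delicate point is Step~2: one must verify that \emph{every} term produced by the quadratic convection $B(\bv,\bv)$, by the couplings $M(\bd)$ and $\tilde{B}(\bv,\bd)$, and --- crucially, because $N$ may be arbitrarily large when $d=2$ --- by the polynomial reaction $f(\bd)$ of degree $2N+1$, splits as $\tfrac14\mathscr{D}'+\phi\Psi$ with a weight $\phi$ that is \emph{linear} in the unknown quantities (one factor absorbed into $\Psi$, the other into $\phi$) and whose time integral possesses the moments supplied by Step~1. Achieving this forces one to exploit \emph{both} dissipation terms $\lvert\rA\bv\rvert_{\el^2}^2$ and $\lve\bd\rve_3^2$ simultaneously, together with sharp 2D Gagliardo--Nirenberg inequalities and the identity \eqref{G1-eq-Md}; and it is exactly the propagation of the energy moments of Step~1 through these Young/Gagliardo--Nirenberg estimates of the reaction term that dictates the power $2(4N+2)$ in hypothesis \eqref{E_0}.
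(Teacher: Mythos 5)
Your overall architecture is the one the paper uses: a basic energy estimate for $\mathcal{E}[\v,\d]$ with moments of order up to $2(4N+2)$ (this is Proposition \ref{EST1}), a higher-order estimate for $|\nabla\v|^2_{\el^2}$ plus an $\bH^2$-level quantity with the Schmalfu{\ss} exponential weight built exactly from the terms you list (this is Proposition \ref{STRONGER-NORM} with the weight \eqref{Eq:WeightToCancelbadterms}), and then non-explosion via the blow-up alternative \eqref{MAX-P2} along the stopping times \eqref{STOP}. Your concluding step differs only in packaging: you pass to the limit in $n$ and argue pathwise that the weight is bounded below because $\int_0^{\tilde{\tau}_\infty\wedge T}\phi\,ds<\infty$ a.s., whereas the paper runs a quantitative Khashminskii-type splitting, bounding $\mathbb{P}(\tau_k<t)$ by a term of order $k^{-2}$ (using the weighted estimate) plus a term of order $(\log k)^{-1}$ (using Markov's inequality and the Step-1 moments). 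Both deductions rest on the same two estimates and both are valid, so this variation is harmless.

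There is, however, one genuine gap in your Step 2 as written: you propose to "test the $\d$-equation with $\rrA^{2}\d$'' and apply It\^o's formula to $\Psi(t)=1+|\nabla\v(t)|^2_{\el^2}+\lVert\d(t)\rVert_2^2$. At the regularity of the maximal solution ($\d\in C([0,\tau_n];\bX_1)\cap L^2(0,\tau_n;\bX_{3/2})$ only), neither the pairing with $\rrA^2\d$ nor a direct It\^o formula for $t\mapsto\lVert\d(t)\rVert_2^2$ is available; this is precisely the technical obstruction the paper must circumvent. It does so by working not with $\lVert\d\rVert_2^2$ but with $\Psi_1(\d)=\tfrac12|\rA_1\d-f(\d)|^2_{\elb^2}$ (see \eqref{Eq:DefPsi1forD}): one first derives, via the abstract transformation Lemma \ref{Lem:Abstract-Lem-L(x)} and Lemma \ref{Lem:IdentityforDd+f(d)}, the equation satisfied by $y=\rA_1\d-f(\d)$ in $(\bH^1)^\ast$, then applies Pardoux's It\^o formula to $|y|^2_{\elb^2}$, and only afterwards recovers $\lVert\d\rVert_2$ through \eqref{bigdanh2} and the 2D embedding $\bH^1\hookrightarrow\elb^{4N+2}$. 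The choice of $\Psi_1$ (and of $\mathscr{D}$ in terms of $|\rA_1\d-f(\d)|^2$) is also what makes the coupling and reaction terms close with weights that are integrable by Step 1 (Lemmata \ref{Lem:FirstDerAppliedtoVdotNabalaD}--\ref{Lem:CouplingTermIto}); estimating the same terms against $\lVert\d\rVert_2^2$ directly is not merely a cosmetic change. A smaller inaccuracy: the Stratonovich noise is not exactly energy-neutral for the full energy, since $\bh$ is non-constant; the $|\nabla\d|^2$ part produces the martingale $\langle\nabla\d,\d\times\nabla\bh\rangle\,dW_2$ and a correction which is only bounded by $C\lVert\d\rVert_1^2$ (see \eqref{Eq:EstimateItoStratoCorrect}), which is compatible with your $c(1+\mathcal{E})\,dt+dM_t$ form but not with "exact compensation''.
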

	The proof of this theorem is given at the end of this subsection.
	%One of the crucial proposition we mentioned above is about the following uniform estimates.
	\begin{prop}\label{EST1}
		Let  all the assumptions of Theorem \ref{GLOBAL-ST} be satisfied and $p \in [2, 2(4N+1)]$.  Also, let $(\tau_k)_{k \in \mathbb{N}}$ be the sequence of stopping times defined by
			\begin{equation}\label{STOP}
					\tau_k=\inf\{t \in [0,\infty) : \lvert \nabla \v(t)\rvert_{\el^2}^2 +\lve \d(t)\rve^2_2+\int_0^t \left(\lvert \rA\v(s)\rvert^2_{\el^2}+ \lve \d(s) \rve^2_3  \right)ds >
					k^2\}, \; k\in \mathbb{N}.
					\end{equation}
		Then, there exist an increasing function $\varphi: [0,\infty) \to (0,\infty)$  and $\kappa_0=\kappa_0(p,\lvert \bh \rvert_{\bW^{1,4}})>0$ such that for all $k \in \mathbb{N}$
		\begin{equation}\label{Eq:ESTofVDinWeakerNorm}
		\me \sup_{t\in [0,T]}\left( \mathcal{E}[\v,\d] (t\wedge \tau_k)   \right)^p + \me \left[\int_0^{T\wedge \tau_k} \left(\mathscr{D}[\v,\d](s) -\frac{a_{N+1}}{2}  \lvert \d(s) \rvert^{2N+2}_{\elb^{2N+2}} \right)\, ds  \right] \le \kappa_0 \varphi(T) \Big( 1 + \me\lvert \mathcal{E}[\v,\d](0)\rvert^p    \Big).
		\end{equation}
	\end{prop}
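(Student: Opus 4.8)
The plan is to derive an It\^o energy identity for the functional $\mathcal{E}[\v,\d]$ along the solution, stopped at $\tau_k$, and then bootstrap it to an $L^p$-estimate via Gr\"onwall and the Burkholder--Davis--Gundy inequality. First I would apply the It\^o formula to $t\mapsto \mathcal{E}[\v,\d](t\wedge\tau_k)$; this requires justifying that the solution is regular enough on $[0,\tau_k]$, which is exactly what \eqref{eq-locsol_01-a} with $\mathscr{V}=\ve\times\bX_1$, $\mathscr{E}=\ve_1\times\bX_{3/2}$ guarantees. The deterministic drift terms produce: from the $\v$-equation, $-\lvert\rA^{1/2}\v\rvert^2$ (diffusion), the trilinear term $b(\v,\v,\cdot)$ which vanishes on $\v$ by \eqref{B3}, and the coupling $\langle M(\d),\v\rangle$; from the $\d$-equation, after testing with $\rrA\d-f(\d)$ (which is the natural pairing given the structure of $\mathscr{D}$), one gets $-\lvert\rrA\d-f(\d)\rvert^2$, the transport term $\langle\tilde B(\v,\d),\rrA\d-f(\d)\rangle$, and the identity \eqref{G1-eq-Md} shows $\langle\tilde B(\v,\d),\rrA\d\rangle+\langle M(\d),\v\rangle=0$, so the coupling terms cancel up to the lower-order piece $\langle\tilde B(\v,\d),f(\d)\rangle=\tilde{\mathfrak m}$-type term. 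The Stratonovich-to-It\^o correction $\tfrac12 G^2(\d)$ together with the It\^o term from $G(\d)\,dW_2$ should combine so that $G$ contributes nothing to the $\lvert\d\rvert^2$ and $\lvert\nabla\d\rvert^2$ balance (this is the classical fact that $\bd\mapsto\bd\times\bh$ preserves $\lvert\bd\rvert$), leaving only a contribution controlled by $\lvert\bh\rvert_{\bW^{1,4}}$ to the $\lvert\nabla\d\rvert^2$ part, which explains the dependence of $\kappa_0$ on $\lvert\bh\rvert_{\bW^{1,4}}$. The noise term $S(\v)\,dW_1$ contributes a martingale plus, via It\^o, $\tfrac12\lVert S(\v)\rVert^2_{\mathcal T_2(\rK_1,\ve)}$, bounded by $\ell_5(1+\lvert\nabla\v\rvert^2)$ using \eqref{Eq:Hypo-ST-Rem}.

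Next I would assemble these into the differential inequality
\[
d\,\mathcal{E}[\v,\d](t) + \Big(\mathscr{D}[\v,\d](t) - \tfrac{a_{N+1}}{2}\lvert\d(t)\rvert^{2N+2}_{\elb^{2N+2}}\Big)\,dt \le C\big(1+\mathcal{E}[\v,\d](t)\big)\,dt + dM(t),
\]
where the negative $\lvert\d\rvert^{2N+2}$ term arises because $\int_\MO F(\d)\,dx$ in $\mathcal E$ has leading coefficient $a_{N+1}<0$ times $\lvert\d\rvert^{2N+2}$, and when we differentiate $\int F(\d)$ along the flow we pick up $\langle f(\d),\partial_t\d\rangle$ which pairs against the $-f(\d)$ in $\mathscr D$; careful bookkeeping of signs shows the surviving bad term is exactly $-\tfrac{a_{N+1}}{2}\lvert\d\rvert^{2N+2}$ on the good side, i.e. it is actually nonnegative and can be kept on the left. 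The term $\langle\tilde B(\v,\d),f(\d)\rangle$ and the lower-order pieces of $\mathscr D$ must be absorbed: using \eqref{EST-G1}, Remark \ref{REM-H2}\eqref{Itemi:REM-H2}, the Gagliardo--Nirenberg inequalities \eqref{GAG-l4}--\eqref{GAG-LInf-2}, and Young's inequality, I would dominate them by $\epsilon(\lvert\rA^{1/2}\v\rvert^2+\lvert\rrA\d-f(\d)\rvert^2) + C(1+\mathcal{E}[\v,\d])^{q}$ for a suitable power $q$; the restriction $d=2$ is essential here because the relevant Sobolev exponents only close in two dimensions (this is why $N\in\mathbb N$ is allowed when $d=2$). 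Raising to the power $p$ via It\^o's formula applied to $x\mapsto x^p$, taking $\sup_{t\le T\wedge\tau_k}$, using BDG to estimate $\E\sup\lvert\int\langle\mathcal E^{p-1}\ldots, S(\v)\,dW_1\rangle\rvert$ and the $W_2$-martingale by $\tfrac12\E\sup\mathcal E^p + C\E\int(\cdots)$, and then applying Gr\"onwall, yields \eqref{Eq:ESTofVDinWeakerNorm} with $\varphi(T)=e^{CT}$ (up to a polynomial factor), provided the exponent $p$ stays in the stated range $[2,2(4N+1)]$ so that all the $\mathcal E^q$-type terms on the right remain integrable by the hypothesis \eqref{E_0} that $\mathcal E[\v_0,\d_0]\in L^{2(4N+2)}$.

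The main obstacle I anticipate is the bookkeeping around the nonlinear potential term $\int_\MO F(\d)\,dx$: one must show (i) that $t\mapsto\int F(\d(t))$ is itself an It\^o process with the right quadratic variation contribution from $G(\d)\,dW_2$ — here the Stratonovich form is what makes the chain rule work cleanly, since $F'(\d)[\d\times\bh]$ integrates to zero pointwise because $\d\times\bh\perp\d$ and $F'(\d)=f(\d)=\tilde f(\lvert\d\rvert^2)\d$ is parallel to $\d$; and (ii) that the resulting cross term with $-f(\d)$ in $\mathscr D$ produces the claimed $-\tfrac{a_{N+1}}{2}\lvert\d\rvert^{2N+2}$ and not something uncontrollable. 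A secondary technical point is making the formal It\^o computation rigorous: I would either work with the Galerkin approximations used to construct the solution and pass to the limit, or invoke an It\^o formula in the variational/Gelfand-triple setting $\mathscr{E}\hookrightarrow\mathscr{V}\hookrightarrow\mathscr{V}^*$ valid under \eqref{eq-locsol_01-a}, citing the appropriate reference; the stopping times $\tau_k$ are precisely designed so that all norms appearing are finite and the stochastic integrals are genuine martingales on $[0,T\wedge\tau_k]$.
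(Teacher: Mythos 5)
Your overall architecture is the same as the paper's: apply the It\^o formula to the energy functional (the paper works with $\frac12\lvert\bv\rvert^2$ plus a functional $\Lambda(\bd)$ equivalent to the $\bd$-part of $\mathcal{E}$, justified via Fr\'echet differentiability and Pardoux's It\^o formula rather than Galerkin), use the cancellations \eqref{B3}, \eqref{tild-b-0} and \eqref{G1-eq-Md}, observe that for $G(\bd)=\bd\times\bh$ the Stratonovich correction and the It\^o quadratic-variation term cancel completely in the $\lvert\bd\rvert^2$-balance and leave in the $\lvert\nabla\bd\rvert^2$-balance only terms containing $\nabla\bh$, bounded by $C(\lvert\bh\rvert_{\bW^{1,4}})\lVert\bd\rVert_1^2$ (the paper's estimate \eqref{Eq:EstimateItoStratoCorrect}), keep the nonnegative term $-\frac{a_{N+1}}{2}\lvert\bd\rvert^{2N+2}_{\elb^{2N+2}}$ on the left, and close with supremum, $p$-th power, BDG and Gronwall on $[0,T\wedge\tau_k]$.

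There is, however, one step that would fail as you propose it. You keep $\langle\tilde{B}(\bv,\bd),f(\bd)\rangle$ as a surviving ``lower-order'' coupling term and plan to absorb it by Gagliardo--Nirenberg and Young into $\epsilon\,\mathscr{D}+C(1+\mathcal{E})^{q}$. Since $f(\bd)$ grows like $\lvert\bd\rvert^{2N+1}$, any such absorption leaves a factor which is polynomial of degree at least $2N+1$ in $\lVert\bd\rVert_1$ multiplying energy-level quantities, i.e.\ a superlinear power $q>1$ of $\mathcal{E}$; a superlinear drift cannot be closed by Gronwall uniformly in $k$, so the claimed bound $\kappa_0\varphi(T)\bigl(1+\mathbb{E}\lvert\mathcal{E}[\bv,\bd](0)\rvert^p\bigr)$ would not follow. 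The paper avoids this entirely by an exact cancellation: since $f(\bd)=\tilde f(\lvert\bd\rvert^2)\bd$, one has $\bu\cdot\nabla\bd\cdot f(\bd)=\frac12\,\bu\cdot\nabla\bigl(\tilde F(\lvert\bd\rvert^2)\bigr)$, hence $\langle\bu\cdot\nabla\bd,f(\bd)\rangle=0$ for divergence-free $\bu$, which is identity \eqref{Eq:Divvimply}; the term vanishes identically and nothing needs to be absorbed. Relatedly, your attribution of the good term $-\frac{a_{N+1}}{2}\lvert\bd\rvert^{2N+2}$ to the pairing of $F'(\bd)$ against $-f(\bd)$ in $\mathscr{D}$ is not how it arises: in the paper it comes from the pairing $\langle\bd,f(\bd)\rangle$ of the drift $f(\bd)$ with the $\elb^2$-part of the energy, bounded below through the polynomial structure of $\tilde f$ (via \cite[Lemma 8.7]{Brz+Millet_2012}). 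This second point is only a bookkeeping slip, but together with the first it is exactly the bookkeeping needed to see that no superlinear term survives.
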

	\begin{proof}
		The proof of this proposition will be given in Section \ref{AppB}.
	\end{proof}
	Hereafter, we set
	\begin{equation}\label{Eq:ConstantFrakC0}
	\mathfrak{C}_0= \kappa_0 \varphi(T)(1 + \me \lvert \mathcal{E}(\v,\d)(0)\rvert^{2(4N+2)} )
	\end{equation}
	\begin{cor}
		Let all the assumptions of Proposition \ref{EST1} be satisfied. Then, there exists $C>0$  such that for all $k \in \mathbb{N}$
		\begin{equation}\label{Eq:EstIntegralOfH2Norm}
		\me \left[\int_0^{T\wedge \tau_k}\lve \d(s) \rve^2_2   \right]^2 \le C (\mathfrak{C}_0 +1).
		\end{equation}
	\end{cor}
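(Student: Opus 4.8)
The plan is to estimate $\lve\d(s)\rve_2^2$ pointwise in $s\in[0,T\wedge\tau_k]$ by quantities already controlled in Proposition \ref{EST1}, then integrate over $[0,T\wedge\tau_k]$, square, and take the expectation. Put $\tilde{\mathscr D}(s):=\mathscr D[\v,\d](s)-\tfrac{a_{N+1}}2\lvert\d(s)\rvert^{2N+2}_{\elb^{2N+2}}$; since $a_{N+1}<0$ we have $\tilde{\mathscr D}(s)\ge\mathscr D[\v,\d](s)\ge0$ and $\lvert\d(s)\rvert^{2N+2}_{\elb^{2N+2}}\le\tfrac2{\lvert a_{N+1}\rvert}\tilde{\mathscr D}(s)$. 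By Remark \ref{REM-H2} (estimate \eqref{bigdanh2} with $\tilde q=4N+2$) together with $\lvert\rrA\d(s)-f(\d(s))\rvert_{\elb^2}^2\le\mathscr D[\v,\d](s)$,
\begin{equation*}
\lve\d(s)\rve_2^2\le C\bigl(\tilde{\mathscr D}(s)+\lvert\d(s)\rvert_{\elb^{4N+2}}^{4N+2}+1\bigr),\qquad s\in[0,T\wedge\tau_k],
\end{equation*}
so, by $(a+b+c)^2\le 3(a^2+b^2+c^2)$, it suffices to bound $\me\bigl(\int_0^{T\wedge\tau_k}\tilde{\mathscr D}(s)\,ds\bigr)^2$ and $\me\bigl(\int_0^{T\wedge\tau_k}\lvert\d(s)\rvert_{\elb^{4N+2}}^{4N+2}\,ds\bigr)^2$ by $C(\mathfrak C_0+1)$, uniformly in $k$.

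For the first quantity I would appeal to the $\bH^1$-energy estimate behind Proposition \ref{EST1}: It\^o's formula applied to $\mathcal E[\v,\d]$ along \eqref{ABSTRACT-LC} gives, $\mathbb P$-a.s., $\int_0^{t\wedge\tau_k}\tilde{\mathscr D}(s)\,ds=\mathcal E[\v,\d](0)-\mathcal E[\v,\d](t\wedge\tau_k)+\int_0^{t\wedge\tau_k}\mathcal R(s)\,ds+\mathcal M(t\wedge\tau_k)$, where, by \eqref{Eq:Hypo-ST}, \eqref{Eq:LinG} and $\bh\in\bH^2$, $\lvert\mathcal R(s)\rvert\le C(1+\lvert\mathcal E[\v,\d](s)\rvert)$ and $\mathcal M$ is a continuous local martingale with bracket density $\le C(1+\lvert\mathcal E[\v,\d](s)\rvert)^2$. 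Transposing the dissipation integral to the left, squaring, and using the Burkholder--Davis--Gundy and the Cauchy--Schwarz (in time) inequalities yields $\me\bigl(\int_0^{T\wedge\tau_k}\tilde{\mathscr D}(s)\,ds\bigr)^2\le C\bigl(1+\me\sup_{[0,T\wedge\tau_k]}\lvert\mathcal E[\v,\d]\rvert^2\bigr)$, which by Proposition \ref{EST1} (with $p=2$) and Jensen's inequality — $\mathfrak C_0$ carries the moment $2(4N+2)\ge2$ of $\mathcal E[\v,\d](0)$, see \eqref{Eq:ConstantFrakC0} — is $\le C(\mathfrak C_0+1)$, uniformly in $k$. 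The same computation delivers $\me\bigl(\int_0^{T\wedge\tau_k}\tilde{\mathscr D}(s)\,ds\bigr)^q\le C(\mathfrak C_0+1)$ for every $q\le2(4N+1)$, which I will also use below.

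For the second quantity I work with $d=2$. First, the $\elb^2$-norm of $\d$ admits the pathwise bound $\sup_{t\in[0,T]}\lvert\d(t)\rvert_{\elb^2}^2+2\int_0^{T}\lvert\nabla\d(s)\rvert_{\elb^2}^2\,ds\le\lvert\d_0\rvert_{\elb^2}^2+C\,T$, because in the It\^o form of the $\elb^2$-energy of $\d$ the Stratonovich term is neutral (a rotation preserves $\lvert\d\rvert$ pointwise), $\langle\d,G(\d)\rangle=0$, $\langle\tilde B(\v,\d),\d\rangle=0$ (Lemma \ref{LEM-G1}) and $\langle f(\d),\d\rangle\le C$ pointwise since $a_N<0$ (Assumption \ref{eqn-f}). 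Next, the two-dimensional Gagliardo--Nirenberg inequality gives $\lvert\d\rvert_{\elb^{4N+2}}^{4N+2}\le C\lvert\d\rvert_{\elb^{2N+2}}^{2N+2}\lvert\nabla\d\rvert_{\elb^2}^{2N}$; combining it with the lower bound on $F$ from Assumption \ref{eqn-f} (which, together with \eqref{bigdanh2} and the pathwise bound above, lets one dominate powers of $\lvert\nabla\d\rvert_{\elb^2}$ by $\mathcal E[\v,\d]$ and $\tilde{\mathscr D}$) and $\lvert\d\rvert_{\elb^{2N+2}}^{2N+2}\le\tfrac2{\lvert a_{N+1}\rvert}\tilde{\mathscr D}$, one reduces $\int_0^{T\wedge\tau_k}\lvert\d(s)\rvert_{\elb^{4N+2}}^{4N+2}\,ds$ to a product of a power of $\sup_{[0,T\wedge\tau_k]}(1+\lvert\mathcal E[\v,\d]\rvert)$ and a power of $\int_0^{T\wedge\tau_k}\tilde{\mathscr D}(s)\,ds$; squaring, applying H\"older's inequality in $\Omega$ to decouple the two factors, and invoking Proposition \ref{EST1} and the previous paragraph with exponents $\le2(4N+1)$ — precisely the available range, and exactly what the moment assumption \eqref{E_0} supplies — bounds it by $C(\mathfrak C_0+1)$. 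Combining with the reduction in the first paragraph yields \eqref{Eq:EstIntegralOfH2Norm}.

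The step I expect to be the true obstacle is the $\elb^{4N+2}$-term: in two dimensions with a nonlinearity $f$ of arbitrary odd degree $2N+1$ this is a borderline-critical quantity, so the naive interpolation is not enough and one must simultaneously exploit the dissipative reaction in the $\elb^2$- and $\bH^1$-energies, the extra $\lvert\d\rvert^{2N+2}_{\elb^{2N+2}}$ built into \eqref{Eq:ESTofVDinWeakerNorm}, and the full moment budget $p\le2(4N+1)$ of Proposition \ref{EST1}. A secondary point, already needed for the first quantity, is that the statement of Proposition \ref{EST1} only records the first moment of the dissipation integral, so the higher moments used here have to be read off from its proof in Section \ref{AppB} rather than merely quoted.
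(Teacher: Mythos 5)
Your opening reduction is the same as the paper's, but at the decisive step you diverge: the paper does not keep the $\elb^{4N+2}$-norm at all. Its proof is one line: since $d=2$, $\bH^1\hookrightarrow\elb^{4N+2}$ for every $N$, so \eqref{bigdanh2} becomes $\lve\d\rve_2^2\le C(\lvert\rrA\d-f(\d)\rvert^2_{\elb^2}+\lve\d\rve_1^{4N+2}+1)$ (this is exactly \eqref{Eq:Bigdanh2withH1norm}, cf.\ Remark \ref{REM-H2}\eqref{Rem-Linf-H1Delta}); integrating, squaring and taking expectations, the term $\me\bigl[\int_0^{T\wedge\tau_k}\lve\d\rve_1^{4N+2}ds\bigr]^2\le T^2\,\me\sup_s(1+\mathcal{E}[\v,\d](s\wedge\tau_k))^{4N+2}$ is covered by \eqref{Eq:ESTofVDinWeakerNorm} with $p=4N+2\le 2(4N+1)$, and the dissipation term needs $\me\bigl[\int_0^{T\wedge\tau_k}\mathscr{D}\,ds\bigr]^2$, i.e.\ \eqref{Eq:ESTofVDinWeakerNorm} with the dissipation integral raised to the power $p=2$. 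So the ``true obstacle'' you identify is illusory: there is no borderline criticality because $4N+2<\infty$ and the two-dimensional Sobolev embedding disposes of the $\elb^{4N+2}$-term instantly; your Gagliardo--Nirenberg/pathwise-$\elb^2$/H\"older-in-$\Omega$ machinery is not wrong in spirit (the exponents it calls for do fit inside the range $p\le 2(4N+1)$), but as written it is only a sketch, and it is the most fragile and least necessary part of your argument.

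Two of your observations are accurate and worth keeping. First, your secondary remark is correct and applies to the paper's own proof as well: the statement of Proposition \ref{EST1} records only the first moment of the dissipation integral, whereas the corollary (by either route) needs its second moment; this is supplied by the proof of Proposition \ref{EST1}, where the dissipation integral is carried with the exponent $p$ (see \eqref{Eq:PowerPofNRJ}), so one quotes the proof rather than the statement -- your proposed re-derivation via It\^o, BDG and Gronwall is essentially a repetition of that argument for $p=2$ and is fine. Second, your use of $\mathfrak{C}_0$ via Jensen to absorb the lower moments of $\mathcal{E}[\v,\d](0)$ matches how the constant in \eqref{Eq:ConstantFrakC0} is meant to be used. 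With the Sobolev-embedding shortcut replacing your third paragraph, your argument collapses to the paper's proof.
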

	\begin{proof}
		By \eqref{bigdanh2} and $\bH^1 \hookrightarrow \elb^{4N+2}$, which is valid for $d=2$, there exists a constant $C>0$ such that
		\begin{equation}\label{Eq:Bigdanh2withH1norm}
		\lve \d \rve^2_2 \le C( \lvert \rrA \d -f(\d)\rvert^2_{\elb^2}+ \lve \d \rve^{4N+2}_1+1  ),
		\end{equation}
		from which along with \eqref{Eq:ESTofVDinWeakerNorm} we conclude the proof of the corollary.
	\end{proof}
	Let  $\Psi_1: \bH^2 \to [0, \infty)$, $\Psi_2: D(\rA) \to [0,\infty)$ and $\Psi: D(\rA)\times \bH^2 \to 0,\infty) $ be defined by
	\begin{align}
	& \Psi_1(\mathbf{d})= \frac12 \lvert \rA_1 \mathbf{d} -f(\mathbf{d}) \rvert^2_{\elb^2}, \;\; \mathbf{d}\in \bH^2,\label{Eq:DefPsi1forD} \\
	& \Psi_2(\mathbf{u})= \frac12 \lvert \nabla \mathbf{u} \rvert^2_{\el^2},\;\; \mathbf{u}\in D(\rA),\label{Eq:DefPsi2forv}\\
	& \Psi(\mathbf{u},\mathbf{d})=\Psi_1(\mathbf{d}) + \Psi_2(\mathbf{u}), \text{ } (\mathbf{u},\mathbf{d})\in D(\rA) \times \bH^2.\label{Eq:DefFunctionForIto}
	\end{align}
	Hereafter, $\Psi_i^\prime$ and $\Psi_i^{\prime\prime}$, $i=1,2$, are the first and second Fr\'echet derivatives of $\Psi_i$, $i=1,2$.
	%%%%%%%%%%%%%%%%%%%%%%%%%%%%%%%%%%%%%%%%%%%%%%%%%%%%%%%%%%%%%%%%%%%%%%%%%%%%%%%%%
	\begin{lem}\label{Lem:FirstDerAppliedtoVdotNabalaD}
		There exists $\kappa_1>0$ such that for all $\mathbf{d}\in \bH^3$ and $\mathbf{u}\in D(\rA)$ we have
		\begin{equation}\label{Eq:FirstDerAppliedtoVdotNabalaD}
		-	\Psi_1^\prime(\mathbf{d})[\mathbf{u}\cdot \nabla \mathbf{d}] \le \kappa_1\Psi(\mathbf{u},\mathbf{d}) \left[\lve \mathbf{d} \rve^2_{1} + 1\right] \lVert \mathbf{d} \rVert^2_{2}   + \frac14 \lvert \rA \mathbf{u} \rvert^2_{\el^2} + \frac16 \lvert \nabla (\rA_1 \mathbf{d} -f(\mathbf{d})) \rvert^2_{\elb^2}.
		\end{equation}
	\end{lem}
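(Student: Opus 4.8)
Write $w:=\rA_1\mathbf{d}-f(\mathbf{d})$, so that $\Psi_1(\mathbf{d})=\tfrac12\lvert w\rvert_{\elb^2}^2$ and $\Psi_1'(\mathbf{d})[\mathbf{h}]=\langle w,\rA_1\mathbf{h}-f'(\mathbf{d})\mathbf{h}\rangle$. The plan is to first turn $-\Psi_1'(\mathbf{d})[\mathbf{u}\cdot\nabla\mathbf{d}]$ into an explicit commutator expression and then close it by interpolation. For $\mathbf{d}\in\bH^3$ and $\mathbf{u}\in D(\rA)$ the field $\mathbf{u}\cdot\nabla\mathbf{d}$ lies in $\bH^2$ and $w$ in $\bH^1$, so all the pairings below are genuine $\elb^2$–inner products. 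First I would record the chain rule $f'(\mathbf{d})[\mathbf{u}\cdot\nabla\mathbf{d}]=(\mathbf{u}\cdot\nabla)f(\mathbf{d})$ and, expanding $\rA_1=-\Delta$ by the Leibniz rule, the identity $\rA_1(\mathbf{u}\cdot\nabla\mathbf{d})=(\mathbf{u}\cdot\nabla)(\rA_1\mathbf{d})+\sum_i(-\Delta u^{(i)})\,\partial_i\mathbf{d}-2\sum_i\nabla u^{(i)}\cdot\nabla(\partial_i\mathbf{d})$. Subtracting and using $(\mathbf{u}\cdot\nabla)(\rA_1\mathbf{d})-(\mathbf{u}\cdot\nabla)f(\mathbf{d})=(\mathbf{u}\cdot\nabla)w$ gives $\rA_1(\mathbf{u}\cdot\nabla\mathbf{d})-f'(\mathbf{d})(\mathbf{u}\cdot\nabla\mathbf{d})=(\mathbf{u}\cdot\nabla)w+\sum_i(-\Delta u^{(i)})\partial_i\mathbf{d}-2\sum_i\nabla u^{(i)}\cdot\nabla(\partial_i\mathbf{d})$. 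The decisive cancellation is $\langle w,(\mathbf{u}\cdot\nabla)w\rangle=\tfrac12\int_\MO(\mathbf{u}\cdot\nabla)\lvert w\rvert^2\,dx=0$, which holds because $\Div\mathbf{u}=0$ and $\mathbf{u}$ vanishes on $\partial\MO$. Hence $-\Psi_1'(\mathbf{d})[\mathbf{u}\cdot\nabla\mathbf{d}]=Q_1+Q_2$ with $Q_1:=\sum_i\langle w,(\Delta u^{(i)})\partial_i\mathbf{d}\rangle$ and $Q_2:=2\sum_i\langle w,\nabla u^{(i)}\cdot\nabla(\partial_i\mathbf{d})\rangle$, and the whole task is to bound $Q_1+Q_2$ by the stated right-hand side.

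For $Q_1$ I would apply H\"older in the form $\lvert w\rvert_{\el^4}\lvert\Delta\mathbf{u}\rvert_{\el^2}\lvert\nabla\mathbf{d}\rvert_{\el^4}$, then the two-dimensional Ladyzhenskaya/Gagliardo--Nirenberg inequality $\lvert g\rvert_{\el^4}^2\le c\,\lvert g\rvert_{\el^2}\lVert g\rVert_{\bH^1}$ applied to $w$ (this is what lets a factor $\lvert\nabla w\rvert_{\el^2}$ surface, to be absorbed) and to $\nabla\mathbf{d}$, the equivalence $\lVert\mathbf{u}\rVert_{\bH^2}\simeq\lvert\rA\mathbf{u}\rvert_{\el^2}$ on $D(\rA)$, and Young's inequality; this absorbs a small multiple of $\lvert\rA\mathbf{u}\rvert_{\el^2}^2$ and of $\lvert\nabla w\rvert_{\el^2}^2$ and leaves terms $\lesssim \lvert w\rvert_{\el^2}^2[\lVert\mathbf{d}\rVert_1^2+1]\lVert\mathbf{d}\rVert_2^2\lesssim \Psi_1(\mathbf{d})[\lVert\mathbf{d}\rVert_1^2+1]\lVert\mathbf{d}\rVert_2^2$. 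For $Q_2$ the key move is to integrate by parts once more, shifting the derivative $\partial_j$ off $u^{(i)}$: the boundary term vanishes since $\mathbf{u}|_{\partial\MO}=0$, a $\Div\mathbf{u}$ term drops out, and one re-encounters the harmless term $\langle w,(\mathbf{u}\cdot\nabla)w\rangle=0$ and a term of the $Q_1$ type, so that $Q_2$ reduces to $-2\sum_{i,j}\langle\partial_j w,\,u^{(i)}\partial_i\partial_j\mathbf{d}\rangle$. This one is estimated by $\lvert\nabla w\rvert_{\el^2}\lvert\mathbf{u}\rvert_{\el^\infty}\lVert\mathbf{d}\rVert_2$, Agmon's inequality $\lvert\mathbf{u}\rvert_{\el^\infty}\le c\,\lvert\mathbf{u}\rvert_{\el^2}^{1/2}\lVert\mathbf{u}\rVert_{\bH^2}^{1/2}$, Poincar\'e, and Young, again absorbing $\lvert\rA\mathbf{u}\rvert^2$ and $\lvert\nabla w\rvert^2$ into the reserved $\tfrac14$ and $\tfrac16$.

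The hard part is this last step of bookkeeping: the H\"older exponents and the Young splits must be chosen so that every residual term genuinely has the shape $\kappa_1\Psi(\mathbf{u},\mathbf{d})[\lVert\mathbf{d}\rVert_1^2+1]\lVert\mathbf{d}\rVert_2^2$ — no power of $\lvert\rA\mathbf{u}\rvert$ higher than two, no $\lVert\mathbf{d}\rVert_3$, and nothing quadratic in $\Psi$. This is exactly what forces one to keep $w$ in $\el^4$ rather than pairing $\lvert w\rvert_{\el^2}$ with $\lvert\nabla\mathbf{d}\rvert_{\el^\infty}$, and in $Q_2$ to transfer the surplus derivative onto $w$ (producing $\lvert\nabla w\rvert$) rather than onto $\mathbf{d}$ (which would produce $\lVert\mathbf{d}\rVert_3$). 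All of this is two-dimensional: it rests on $\bH^1\hookrightarrow\el^q$ for every finite $q$ and on the value $a=\tfrac12$ in the Gagliardo--Nirenberg estimates \eqref{GAG-l4}--\eqref{GAG-LInf-2} of Section \ref{sec-spaces-2}.
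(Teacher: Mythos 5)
Your reduction of $-\Psi_1^\prime(\mathbf{d})[\mathbf{u}\cdot\nabla\mathbf{d}]$ to the two commutator terms $Q_1$ and $Q_2$ (via the cancellation $\langle w,(\mathbf{u}\cdot\nabla)w\rangle=0$, $w:=\rA_1\mathbf{d}-f(\mathbf{d})$), and your estimate of $Q_1$, coincide with the paper's proof. The gap is in $Q_2$. If you integrate by parts in $x_j$ to remove the derivative from $u^{(i)}$, no $\Div\mathbf{u}$ term appears at all; what appears, besides $-2\sum_{i,j}\langle\partial_j w,\,u^{(i)}\partial_i\partial_j\mathbf{d}\rangle$, is $-2\langle w,(\mathbf{u}\cdot\nabla)\Delta\mathbf{d}\rangle=2\langle w,(\mathbf{u}\cdot\nabla)w\rangle+2\langle w,f^\prime(\mathbf{d})[\mathbf{u}\cdot\nabla\mathbf{d}]\rangle$. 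The first piece vanishes, but the second is \emph{not} of the $Q_1$ type: it carries the polynomial nonlinearity, and its natural bound (via $\lvert f^\prime(\mathbf{d})\rvert\le c(1+\lvert\mathbf{d}\rvert^{2N})$ and the 2D embeddings) is of order $\Psi(\mathbf{u},\mathbf{d})\,(1+\lve\mathbf{d}\rve_1^{2N})\lve\mathbf{d}\rve_2$, which is not dominated by the right-hand side of \eqref{Eq:FirstDerAppliedtoVdotNabalaD} for general $N$. (If instead you meant to move the derivative $\partial_i$ off the Hessian of $\mathbf{d}$ — which is where a $\partial_j\Div\mathbf{u}=0$ cancellation would occur — then a boundary integral $\int_{\partial\MO}\nu_i\,(w\cdot\partial_j\mathbf{d})\,\partial_j u^{(i)}\,dS$ survives: only the trace of $\mathbf{u}$, not of $\nabla\mathbf{u}$, and only the normal derivative of $\mathbf{d}$ vanish, and the resulting bulk term is not the one you wrote.) Moreover, even the term you keep does not close: estimating it by $\lvert\nabla w\rvert_{\elb^2}\lvert\mathbf{u}\rvert_{\el^\infty}\lve\mathbf{d}\rve_2$ and using Agmon plus Young leaves, after absorbing $\tfrac16\lvert\nabla w\rvert^2_{\elb^2}$ and $\tfrac14\lvert\rA\mathbf{u}\rvert^2_{\el^2}$, a residual of order $\lvert\mathbf{u}\rvert^2_{\el^2}\lve\mathbf{d}\rve_2^4\lesssim\Psi(\mathbf{u},\mathbf{d})\lve\mathbf{d}\rve_2^4$, which again is not of the form $\Psi(\mathbf{u},\mathbf{d})[\lve\mathbf{d}\rve_1^2+1]\lve\mathbf{d}\rve_2^2$; it would also spoil the later use of the lemma, since the weight $\Phi$ and Proposition \ref{EST1} control $\int(1+\lve\mathbf{d}\rve^2_1)\lve\mathbf{d}\rve^2_2\,ds$ in expectation but not $\int\lve\mathbf{d}\rve_2^4\,ds$.

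The extra integration by parts is the wrong move; the surplus derivative should be spent on $w$ only through an interpolation factor, not as a full $\lvert\nabla w\rvert$ against $\lvert\mathbf{u}\rvert_{\el^\infty}$. As in the paper, estimate $Q_2$ directly by H\"older, $Q_2\le C\lvert w\rvert_{\elb^4}\lvert\nabla\mathbf{u}\rvert_{\el^4}\lvert\nabla^2\mathbf{d}\rvert_{\elb^2}$, apply \eqref{GAG-l4} to both $w$ and $\nabla\mathbf{u}$ (using $\lVert\mathbf{u}\rVert_{\bH^2}\simeq\lvert\rA\mathbf{u}\rvert_{\el^2}$ on $D(\rA)$), and then Young: the factors $\lVert w\rVert_{\bH^1}^{1/2}\lvert\rA\mathbf{u}\rvert^{1/2}_{\el^2}$ are absorbed into the reserved $\tfrac16$ and $\tfrac14$, and the residual $C\lvert w\rvert_{\elb^2}\lvert\nabla\mathbf{u}\rvert_{\el^2}\lve\mathbf{d}\rve_2^2\lesssim\Psi(\mathbf{u},\mathbf{d})\lve\mathbf{d}\rve_2^2$ fits the stated bound, exactly as in the treatment of $Q_1$ which you already have right.
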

	\begin{proof}
		In this proof $C>0$ is an universal constant which may change from one term to the other.
		Let $\mathbf{d}\in \bH^3\cap D(\rrA)$ and $\mathbf{u}\in D(\rA)$. 
		Observe that 
		\begin{align}
		\Psi_1^\prime(\mathbf{d})[\mathbf{g}]=\langle \Delta \mathbf{d} -f(\mathbf{d}), \rA_1 \mathbf{g} + f^\prime(\mathbf{d})[\mathbf{g}] \rangle \text{ for all } \mathbf{g}\in \bH^2, \label{Eq:FirstDeriveOfPsi}
		\end{align}
		and 
		\begin{equation*}
		\begin{split}
		-\Delta(\mathbf{u}\cdot \nabla \mathbf{d})=-(\Delta \mathbf{u} \cdot \nabla) \mathbf{d} - (\mathbf{u}\cdot \nabla) \Delta \mathbf{d} -2 \mathrm{tr}(\nabla \mathbf{u} \nabla^2) \mathbf{d}\\
		=-(\Delta \mathbf{u} \cdot \nabla) \mathbf{d} - 2 \mathrm{tr}(\nabla \mathbf{u} \nabla^2) \mathbf{d}+ (\mathbf{u}\cdot \nabla)[\Delta \mathbf{d} -f(\mathbf{d})] - f^\prime(\mathbf{d})[\mathbf{u}\cdot \nabla\mathbf{d}]
		\end{split}
		\end{equation*}
		Hence, by using the identities \eqref{tild-b-0} and \eqref{Eq:FirstDeriveOfPsi} we obtain
		\begin{equation}
		\begin{split}
		-	\Psi_1^\prime(\mathbf{d})[ \mathbf{u}\cdot \nabla \mathbf{d}]=\langle \rA_1\mathbf{d}-f(\mathbf{d}),  (\Delta \mathbf{u}\cdot \nabla) \mathbf{d} \rangle +2\langle \rA_1\mathbf{d}-f(\mathbf{d}), \mathrm{tr}(\nabla \mathbf{u} \nabla^2)\mathbf{d}\rangle,
		\end{split}
		\end{equation}
		which along with \eqref{GAG-l4} and the H\"older and Young inequalities imply that
		\begin{equation}
		\begin{split}
		-	\Psi_1^\prime(\mathbf{d})[ \mathbf{u}\cdot \nabla \mathbf{d}]& \le C \lvert \rA_1 \mathbf{d}-f(\mathbf{d}) \rvert_{\elb^4} [\lvert \rA \mathbf{u} \rvert_{\el^2} \lvert \nabla \mathbf{d} \rvert_{\elb^4} +  \lvert \nabla \mathbf{u} \rvert_{\el^4} \lvert \nabla^2 \mathbf{d} \rvert_{\elb^2} ]\\
%		& 	\le C \biggl(\lvert \rA_1 \mathbf{d}-f(\mathbf{d})\rvert_{\elb^2}^\frac12 \lvert \nabla (\rA_1 \mathbf{d} -f(\mathbf{d}) ) \rvert^\frac12_{\elb^2} + \lvert \rA_1 \mathbf{d}-f(\mathbf{d})\rvert_{\elb^2} \Biggr)\lvert \rA \mathbf{u} \rvert_{\el^2} [\lvert \nabla \mathbf{d} \rvert_{\elb^4} +   \lvert \nabla^2 \mathbf{d} \rvert_{\elb^2} ]
%		\\
		& \le \frac16 \lvert \nabla (\rA_1 \mathbf{d} -f(\mathbf{d}) ) \rvert^2_{\elb^2}+ \frac14 \lvert \rA \mathbf{u} \rvert^2_{\el^2}+ C \lvert \rA_1 \mathbf{d}-f(\mathbf{d})\rvert_{\elb^2}^2 [ \lvert \nabla \mathbf{d} \rvert_{\elb^4}^4 +  \lvert \nabla^2 \mathbf{d} \rvert_{\elb^2} ].
		\end{split}
		\end{equation}	
		Now, \eqref{Eq:FirstDerAppliedtoVdotNabalaD} easily follows from the last line of the the above chain of inequalities.
	\end{proof}
	\begin{lem}\label{Lem:FirstDerAppliedtoDeltaD+f(D)}
		There exists  $\kappa_2>0$ such that for all $\mathbf{d}\in \bH^3\cap D(\rrA)$ and $\mathbf{u}\in D(\rA)$  we have
		\begin{equation}\label{Eq:FirstDerAppliedtoDeltaD+f(D)}
	\langle	f ^\prime(\mathbf{d})[\rA_1\mathbf{d}-f(\mathbf{d})], \rA_1\mathbf{d}-f(\mathbf{d}) \rangle\le \frac16 \lvert \nabla(\rA_1  \mathbf{d} -f(\mathbf{d})) \rvert^2_{\elb^2} + \kappa_2 \Psi(\mathbf{u},\mathbf{d}) (1+ \lve \mathbf{d} \rve^{4N}_1 ).
		\end{equation}
	\end{lem}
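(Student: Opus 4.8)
The plan is to introduce the shorthand $\mathbf{w}:=\rA_1\mathbf{d}-f(\mathbf{d})$ and estimate the quadratic form $\langle f^\prime(\mathbf{d})[\mathbf{w}],\mathbf{w}\rangle$ directly. I note first that, since $d=2$ and $\mathbf{d}\in\bH^3\cap D(\rrA)$, the embedding $\bH^2\hookrightarrow\elb^\infty$ together with the identity $\nabla f(\mathbf{d})=f^\prime(\mathbf{d})\nabla\mathbf{d}$ and the polynomial growth of $f^\prime$ guarantee that $f(\mathbf{d})\in\bH^1$; hence $\mathbf{w}\in\bH^1$ and in particular $\nabla\mathbf{w}\in\elb^2$, so every quantity appearing below is finite. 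Observe also that $\mathbf{u}$ enters the claimed inequality only through $\Psi(\mathbf{u},\mathbf{d})$, so it plays no active role: at the very end I will simply use $\Psi_1(\mathbf{d})\le\Psi(\mathbf{u},\mathbf{d})$, and it suffices to produce the bound with $\Psi_1(\mathbf{d})$ on the right-hand side.

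First I would invoke the pointwise estimate $|f^\prime(\mathbf{d})|\le c_1(1+|\mathbf{d}|^{2N})$ from Remark~\ref{REM-H2} to get
\[
\langle f^\prime(\mathbf{d})[\mathbf{w}],\mathbf{w}\rangle\le c_1|\mathbf{w}|^2_{\elb^2}+c_1\int_\MO|\mathbf{d}|^{2N}|\mathbf{w}|^2\,dx ,
\]
where the first term equals exactly $2c_1\Psi_1(\mathbf{d})$. For the integral I would apply the Cauchy--Schwarz inequality in the form $\int_\MO|\mathbf{d}|^{2N}|\mathbf{w}|^2\,dx\le|\mathbf{d}|^{2N}_{\elb^{4N}}\,|\mathbf{w}|^2_{\elb^{4}}$; the key point is that in dimension two $\bH^1\hookrightarrow\elb^{p}$ for every finite $p$, so $|\mathbf{d}|^{2N}_{\elb^{4N}}\le C\lVert\mathbf{d}\rVert_1^{2N}$, while the two-dimensional Gagliardo--Nirenberg inequality \eqref{GAG-l4} applied to $\mathbf{w}\in\bH^1$ (with $a=\tfrac12$) yields $|\mathbf{w}|^2_{\elb^{4}}\le c\,|\mathbf{w}|_{\elb^2}\bigl(|\mathbf{w}|_{\elb^2}+|\nabla\mathbf{w}|_{\elb^2}\bigr)$. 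Combining these two bounds gives
\[
c_1\int_\MO|\mathbf{d}|^{2N}|\mathbf{w}|^2\,dx\le C\lVert\mathbf{d}\rVert_1^{2N}|\mathbf{w}|^2_{\elb^2}+C\lVert\mathbf{d}\rVert_1^{2N}|\mathbf{w}|_{\elb^2}|\nabla\mathbf{w}|_{\elb^2}.
\]

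The remaining step is bookkeeping. For the first term on the right I would use $\lVert\mathbf{d}\rVert_1^{2N}\le1+\lVert\mathbf{d}\rVert_1^{4N}$ and $|\mathbf{w}|^2_{\elb^2}=2\Psi_1(\mathbf{d})$; for the second term I would apply Young's inequality, splitting off $\frac16|\nabla\mathbf{w}|^2_{\elb^2}=\frac16|\nabla(\rA_1\mathbf{d}-f(\mathbf{d}))|^2_{\elb^2}$ to be absorbed into the left-hand side of the claim, the complementary contribution being $C\lVert\mathbf{d}\rVert_1^{4N}|\mathbf{w}|^2_{\elb^2}=2C\lVert\mathbf{d}\rVert_1^{4N}\Psi_1(\mathbf{d})\le2C(1+\lVert\mathbf{d}\rVert_1^{4N})\Psi_1(\mathbf{d})$. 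Adding the $2c_1\Psi_1(\mathbf{d})$ coming from the first term and finally bounding $\Psi_1(\mathbf{d})\le\Psi(\mathbf{u},\mathbf{d})$ produces \eqref{Eq:FirstDerAppliedtoDeltaD+f(D)} with a suitable $\kappa_2>0$.

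The only genuinely delicate point is the choice of exponents in the Hölder/Cauchy--Schwarz split: it must be made so that the power of $|\nabla\mathbf{w}|_{\elb^2}$ produced by the interpolation inequality \eqref{GAG-l4} equals exactly $1$ (so that Young's inequality absorbs it into $\frac16|\nabla\mathbf{w}|^2_{\elb^2}$), while at the same time the power of $\lVert\mathbf{d}\rVert_1$ does not exceed $4N$. In two space dimensions this balance is attainable precisely because $\bH^1$ embeds into every $\elb^{p}$, $p<\infty$, which is exactly the reason this lemma---and hence the global existence result in this subsection---is confined to $d=2$.
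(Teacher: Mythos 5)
Your proposal is correct and follows essentially the same route as the paper: the pointwise bound $\lvert f^\prime(\mathbf{d})\rvert\le c_1(1+\lvert\mathbf{d}\rvert^{2N})$, a H\"older/Cauchy--Schwarz split producing $\lvert \rA_1\mathbf{d}-f(\mathbf{d})\rvert^2_{\elb^4}$, the two-dimensional Gagliardo--Nirenberg estimate \eqref{GAG-l4}, Young's inequality to absorb $\tfrac16\lvert\nabla(\rA_1\mathbf{d}-f(\mathbf{d}))\rvert^2_{\elb^2}$, the embedding $\bH^1\hookrightarrow\elb^{4N}$, and finally $\Psi_1\le\Psi$. Separating the constant and the $\lvert\mathbf{d}\rvert^{2N}$ parts before applying Cauchy--Schwarz is only a cosmetic variation of the paper's treatment of $(1+\lvert\mathbf{d}\rvert^{2N})$ as a single $\elb^2$ factor.
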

	\begin{proof}
		Using part \eqref{Itemi:REM-H2} of  Remark \ref{REM-H2},  \eqref{GAG-l4}, the H\"older and  Young  inequalities, and by $\bH^1\hookrightarrow \el^{4N}$ (that is valid for $d=2$) we infer that  there exist  $C>0$ and $\kappa_2>0$ such that
		\begin{equation}
		\begin{split}
			\langle	f ^\prime(\mathbf{d})[\rA_1\mathbf{d}-f(\mathbf{d})], \rA_1\mathbf{d} -f(\mathbf{d}) \rangle
			&\le c_1 \int_\mo (1+ \lvert \mathbf{d} \rvert^{2N}) \lvert \rA_1\mathbf{d} -f(\mathbf{d})\rvert^2 dx\\
			&\le C \lvert \rA_1\mathbf{d}-f(\mathbf{d}) \rvert_{\el^4}^2 \lvert (1+ \lvert \mathbf{d} \rvert^{2N}) \rvert^2_{\elb^2}\\
%		&\le C \biggl( \lvert \rA_1\mathbf{d}-f(\mathbf{d}) \rvert_{\el^2} \lvert \nabla (\rA_1 \mathbf{d}+f(\mathbf{d})) \rvert_{\elb^2}+ \lvert \rA_1\mathbf{d}-f(\mathbf{d}) \rvert_{\el^2}^2 \biggr) (1 + \lvert\mathbf{d}\rvert^{2N}_{\elb^{4N}}) \\
		&\le \frac16 \lvert \nabla (\rA_1\mathbf{d}-f(\mathbf{d})) \rvert^2_{\elb^2}+ \kappa_2 \lvert \rA_1\mathbf{d}-f(\mathbf{d}) \rvert_{\el^2}^2 (1 + \lVert\mathbf{d}\rVert^{4N}_{1}),
		\end{split}
		\end{equation}	
		for all $\mathbf{d}\in \bH^3\cap D(\rrA)$ and $\mathbf{u}\in D(\rA)$. This completes the proof of the lemma.
	\end{proof}
	\begin{lem}\label{Lem:SecondDerofPsiAppliedtoGG}
		Let $\bh\in \bH^2$. Then, there exists $\kappa_5=\kappa_5(\lVert \bh \rVert_2)>0$ such that 
		\begin{equation}
		\lvert	\Psi_1^{\prime \prime}(\mathbf{d})[\mathbf{d}\times \bh, \mathbf{d}\times \bh] \rvert+\lvert	\Psi_1^\prime(\mathbf{d})[(\mathbf{d}\times\bh)\times \bh]  \rvert \le \Psi_1(\mathbf{d})+ \kappa_5 (1+ \lve \mathbf{d} \rve^{4N}_{1})   \lve \mathbf{d} \rve^2_{2},\text{ for all $ \mathbf{d}  D(\rrA)$.}
		\label{Eq:SecondDerofPsiAppliedtoGG}
		\end{equation}
	\end{lem}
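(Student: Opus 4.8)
The plan is to differentiate $\Psi_1$ explicitly and reduce the whole statement to a handful of $\elb^2$-bounds. Since $\Psi_1(\mathbf d)=\frac12\lvert \rrA\mathbf d-f(\mathbf d)\rvert_{\elb^2}^2$, one has for $\mathbf g,\mathbf k\in\bH^2$
\[
\Psi_1'(\mathbf d)[\mathbf g]=\big\langle \rrA\mathbf d-f(\mathbf d),\ \rrA\mathbf g-f'(\mathbf d)[\mathbf g]\big\rangle,
\]
\[
\Psi_1''(\mathbf d)[\mathbf g,\mathbf k]=\big\langle \rrA\mathbf g-f'(\mathbf d)[\mathbf g],\ \rrA\mathbf k-f'(\mathbf d)[\mathbf k]\big\rangle-\big\langle \rrA\mathbf d-f(\mathbf d),\ f''(\mathbf d)[\mathbf g,\mathbf k]\big\rangle .
\]
Because $d=2$, $\bH^2$ is a Banach algebra and $\bh\in\bH^2$, so $\mathbf w:=\mathbf d\times\bh$ and $(\mathbf d\times\bh)\times\bh$ lie in $\bH^2$ for every $\mathbf d\in D(\rrA)$; hence both identities make sense for these test directions, and it suffices to estimate in $\elb^2$ the five quantities $\rrA(\mathbf d\times\bh)$, $\rrA\big((\mathbf d\times\bh)\times\bh\big)$, $f'(\mathbf d)[\mathbf w]$, $f''(\mathbf d)[\mathbf w,\mathbf w]$ and $f'(\mathbf d)\big[(\mathbf d\times\bh)\times\bh\big]$, and then recombine.

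For the two ``second-order'' quantities I would expand by the Leibniz rule, e.g.
\[
\rrA(\mathbf d\times\bh)=(\rrA\mathbf d)\times\bh+\mathbf d\times(\rrA\bh)-2\sum_{i=1}^{2}(\partial_{x_i}\mathbf d)\times(\partial_{x_i}\bh),
\]
and bound each summand by Hölder's inequality and the two-dimensional estimates $\bH^2\hookrightarrow\elb^\infty$ (inequality \eqref{GAG-LInf-2}) and $\bH^2\hookrightarrow\mathbf W^{1,4}$ (recalled just after \eqref{GAG-LInf}); this gives $\lvert\rrA(\mathbf d\times\bh)\rvert_{\elb^2}\le C\lve\bh\rve_2\lve\mathbf d\rve_2$ and, after one further expansion, $\lvert\rrA\big((\mathbf d\times\bh)\times\bh\big)\rvert_{\elb^2}\le C\lve\bh\rve_2^2\lve\mathbf d\rve_2$. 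For the polynomial quantities I would use the growth estimates $\lvert f'(\mathbf d)\rvert\le c_1(1+\lvert\mathbf d\rvert^{2N})$ and $\lvert f''(\mathbf d)\rvert\le c_2(1+\lvert\mathbf d\rvert^{2N-1})$ of Remark \ref{REM-H2}, together with $\lvert\mathbf d\times\bh\rvert\le\lvert\mathbf d\rvert\lvert\bh\rvert$ and $\lvert(\mathbf d\times\bh)\times\bh\rvert\le\lvert\mathbf d\rvert\lvert\bh\rvert^2$, which reduces everything to $\elb^2$-norms of powers $\lvert\mathbf d\rvert^{m}$, $m\le2N+2$. Each such power is controlled by a Hölder/interpolation splitting such as $\big\lvert\,\lvert\mathbf d\rvert^{2N+1}\big\rvert_{\elb^2}^2=\lvert\mathbf d\rvert_{\elb^{4N+2}}^{4N+2}\le\lvert\mathbf d\rvert_{\elb^\infty}^{2}\,\lvert\mathbf d\rvert_{\elb^{4N}}^{4N}\le C\,\lve\mathbf d\rve_2^2\,\lve\mathbf d\rve_1^{4N}$ and, for the lower powers, $\lvert\mathbf d\rvert_{\elb^4}^2\le C\lve\mathbf d\rve_1^2$, using \eqref{GAG-LInf-2}, the Gagliardo--Nirenberg inequality \eqref{GAG-l4}, and the two-dimensional embedding $\bH^1\hookrightarrow\elb^{q}$ valid for every finite $q$. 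The net outcome is
\[
\lvert f'(\mathbf d)[\mathbf w]\rvert_{\elb^2}^2+\lvert f''(\mathbf d)[\mathbf w,\mathbf w]\rvert_{\elb^2}^2+\lvert f'(\mathbf d)\big[(\mathbf d\times\bh)\times\bh\big]\rvert_{\elb^2}^2\le C\big(1+\lve\bh\rve_2^{4}\big)\big(1+\lve\mathbf d\rve_1^{4N}\big)\lve\mathbf d\rve_2^2 .
\]

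Finally I would plug these bounds into the two derivative formulas, apply the Cauchy--Schwarz inequality to the two inner products containing the factor $\rrA\mathbf d-f(\mathbf d)$, and then Young's inequality in the form $ab\le\frac14 a^2+b^2$, so that each of those two inner products contributes at most $\frac14\lvert\rrA\mathbf d-f(\mathbf d)\rvert_{\elb^2}^2=\frac12\Psi_1(\mathbf d)$; the two together give precisely the coefficient-one term $\Psi_1(\mathbf d)$ in \eqref{Eq:SecondDerofPsiAppliedtoGG}, while the purely quadratic term $\lvert\rrA(\mathbf d\times\bh)-f'(\mathbf d)[\mathbf d\times\bh]\rvert_{\elb^2}^2$ coming from $\Psi_1''$ goes directly into the remainder. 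To homogenise the powers of $\lve\mathbf d\rve_1$ one uses $\lve\mathbf d\rve_1^{2k}\le 1+\lve\mathbf d\rve_1^{4N}$ for $0\le k\le 2N$ (here $N\ge1$), which produces the stated bound with $\kappa_5=\kappa_5(\lve\bh\rve_2)$. I expect the only genuine difficulty to be the bookkeeping: verifying that the highest-order derivative of $\mathbf d$ appearing anywhere is a single $\rrA\mathbf d\in\elb^2$ (so that nothing worse than $\lve\mathbf d\rve_2$, and in particular no gradient of $\rrA\mathbf d-f(\mathbf d)$, ever enters), that the highest power of $\mathbf d$ generated by $f''$ is $\lvert\mathbf d\rvert^{2N+1}$ (harmless in $\elb^2$ when $d=2$), and that the accumulated coefficient in front of $\Psi_1(\mathbf d)$ remains $\le1$ — the last point being exactly what fixes the constants in the Young inequalities above.
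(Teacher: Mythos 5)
Your proposal is correct and follows essentially the same route as the paper: the same explicit formulas for $\Psi_1'$ and $\Psi_1''$, the same Young/Cauchy--Schwarz splitting that yields twice $\frac12\Psi_1(\mathbf d)$ from the terms containing $\rrA\mathbf d-f(\mathbf d)$, the same Leibniz expansion of $\rrA(\mathbf d\times\bh)$, and the same combination of the growth bounds on $f',f''$ with the two-dimensional embeddings $\bH^2\hookrightarrow\elb^\infty$, $\bH^2\hookrightarrow\mathbf W^{1,4}$ and $\bH^1\hookrightarrow\elb^{q}$ to absorb the polynomial terms into $(1+\lve\mathbf d\rve_1^{4N})\lve\mathbf d\rve_2^2$. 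Nothing essential is missing.
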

	\begin{proof}
		Let \text{ $(\bu, \mathbf{d})\in D(\rA) \times D(\rrA)$.}
		We firstly recall that 
		\begin{equation}
		\Psi_1^{\prime \prime}(\mathbf{d})[\mathbf{g}, \mathbf{p}]=\langle \rA_1 \mathbf{d}-f(\mathbf{d}),  -f^{\prime \prime}(\mathbf{d})[\mathbf{g}, \mathbf{p}] \rangle + \langle \rA_1 \mathbf{p} - f^{\prime}(\mathbf{d})[\mathbf{p}], \rA_1 \mathbf{g}- f^\prime(\mathbf{d})[\mathbf{g}]\rangle,\; \mathbf{p}, \mathbf{g} \in D(\rrA).
		\end{equation}
		Hence, by recalling that $G(\mathbf{d})=\mathbf{d}\times \bh$ we have 
		\begin{align}
		\Psi_1^{\prime \prime}(\mathbf{d})[G(\mathbf{d}),G(\mathbf{d})]&= \lvert \rA_1(G(\mathbf{d}))- f^\prime(\mathbf{d})[G(\mathbf{d})]\rvert^2_{\elb^2}+ \langle \rA_1\mathbf{d} -f(\mathbf{d}), \rA_1(G(\mathbf{d}))- f^{\prime\prime}(\mathbf{d})[G(\mathbf{d}), G(\mathbf{d})]\rangle
	\nonumber 	\\
		&\le \frac14 \left\lvert\rA_1 \mathbf{d}-f(\mathbf{d})\right\rvert^2_{\elb^2}+\left\lvert \rA_1(G(\mathbf{d}))-f^{\prime\prime}(\mathbf{d})[G(\mathbf{d}), G(\mathbf{d})]\right\rvert^2_{\elb^2} + \lvert \rA_1(G(\mathbf{d}))-f^\prime(\mathbf{d})[G(\mathbf{d})]\rvert^2_{\elb^2}\nonumber \\
		& =\mathrm{I}_1+ \mathrm{I}_2+\mathrm{I}_3\label{Eq:EstDerA}.
		\end{align}
		Secondly, by the part \eqref{Itemi:REM-H2} of Remark \ref{REM-H2}, $\bH^2\hookrightarrow\elb^\infty$, $\bH^1 \hookrightarrow \el^4, \elb^{4N}$ and the H\"older and Young inequalities we infer that there exists a constant $C=C(\lVert \bh \rVert_2 )>0$ such that 
		\begin{equation}\label{Eq:EstDerB}
		\begin{split}
		\mathrm{I}_2 \le & \lvert \Delta(\mathbf{d}\times \bh) \rvert^2_{\elb^2} + \lvert \lvert \mathbf{d} \times \bh \rvert^2 \lvert f^{\prime\prime}(\mathbf{d})\rvert \rvert^2_{\elb^2}\\
		&\le 4 \left(\lvert \Delta \mathbf{d}\times \bh \rvert^2_{\elb^2}+ 2 \lvert \nabla \mathbf{d} \times \nabla \bh \rvert^2_{\elb^2} + \lvert \mathbf{d} \times \Delta \bh \rvert^2_{\elb^2}\right) + C (1+ \lvert \mathbf{d} \rvert^{4N}_{\elb^{4N}} )\lvert \mathbf{d}\times \bh\rvert_{\elb^\infty}^2 \\
		&\le C \lve \mathbf{d} \rve^2_2(1+ \lve \mathbf{d}\rve^{4N}_1 ).
		\end{split}
		\end{equation}
		In a similar way, we prove that  there exists a constant $C_7=C(\lVert \bh \rVert_2 )>0$ such that 
		\begin{equation}
		\begin{split}
		\mathrm{I}_3  \le C \lve \mathbf{d} \rve^2_2(1+ \lve \mathbf{d}\rve^{4N}_1 ).
		\end{split}
		\end{equation}
		Combining this last inequality with \eqref{Eq:EstDerA} and \eqref{Eq:EstDerB}  proves that $$\lvert \Psi_1^{\prime \prime}(\mathbf{d})[G(\mathbf{d}),G(\mathbf{d})]\rvert \le \frac12 \Psi_1(\mathbf{d}) +  C_7 \lve \mathbf{d} \rve^2_2(1+ \lve \mathbf{d}\rve^{4N}_1 ).$$ In a similar way, we can also show that 
		$\lvert \Psi_1^{\prime}(\mathbf{d})[(\mathbf{d}\times \bh)\times \bh]\rvert \le \frac12 \Psi_1(\mathbf{d}) +  C_7 \lve \mathbf{d} \rve^2_2(1+ \lve \mathbf{d}\rve^{4N}_1 ).$  One easily conclude the proof of the lemma from the last two estimates.
	\end{proof}
%	The last result related to the first derivative $\Psi^\prime_1$ of $\Psi_1$ is given in the following lemma.
	\begin{lem}\label{Lem:Psi1BDGNeeded}
		Let $\bh\in \bH^2$. Then, there exists $\kappa_5=\kappa_5(\lVert \bh \rVert_2)>0$ such that  for all $\mathbf{d} \in D(\rrA)$ 
		\begin{equation}\label{Eq:Psi1BDGNeeded}
		\lvert \Psi_1^{\prime}(\mathbf{d})[\mathbf{d}\times \bh] \rvert\le \kappa_6 \left[1+ \Psi_1(\mathbf{d})    + \lve \mathbf{d} \rve^{4N+1}_{1} +\lve \mathbf{d} \rve_1 \lve \mathbf{d}\rve_2 \right].
		\end{equation}
	\end{lem}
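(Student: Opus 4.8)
The plan is to differentiate $\Psi_1$ at $\mathbf{d}$, evaluate the derivative on the diffusion coefficient $G(\mathbf{d})=\mathbf{d}\times\bh$, and then estimate the resulting scalar products by H\"older's and Young's inequalities combined with the polynomial growth of $f$ (Remark~\ref{REM-H2}) and the two-dimensional embeddings $\bH^2\hookrightarrow\elb^\infty$, $\bH^1\hookrightarrow\elb^4$ and $\bH^1\hookrightarrow\elb^{4N+2}$, exactly as in the proofs of Lemmas~\ref{Lem:FirstDerAppliedtoVdotNabalaD}--\ref{Lem:SecondDerofPsiAppliedtoGG}. Since both sides of \eqref{Eq:Psi1BDGNeeded} are continuous functions of $\mathbf{d}$ for the $D(\rrA)$-topology, it suffices to prove the estimate for $\mathbf{d}\in\bH^3\cap D(\rrA)$ and then invoke density. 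Using \eqref{Eq:FirstDeriveOfPsi} with $\mathbf{g}=\mathbf{d}\times\bh$ and splitting the inner product one writes
\begin{equation*}
\Psi_1^\prime(\mathbf{d})[\mathbf{d}\times\bh]=\langle\rA_1\mathbf{d},\,\rA_1(\mathbf{d}\times\bh)\rangle-\langle f(\mathbf{d}),\,\rA_1(\mathbf{d}\times\bh)\rangle-\langle\rA_1\mathbf{d}-f(\mathbf{d}),\,f^\prime(\mathbf{d})[\mathbf{d}\times\bh]\rangle=:\mathrm{I}+\mathrm{II}+\mathrm{III}.
\end{equation*}

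In $\mathrm{I}$ and $\mathrm{II}$ I would expand $\rA_1(\mathbf{d}\times\bh)=-\Delta(\mathbf{d}\times\bh)=-\Delta\mathbf{d}\times\bh-2\sum_i\partial_i\mathbf{d}\times\partial_i\bh-\mathbf{d}\times\Delta\bh$ by the Leibniz rule, as already done in \eqref{Eq:EstDerB}. The crucial point is that the two worst contributions disappear for purely algebraic reasons: since $\rA_1\mathbf{d}=-\Delta\mathbf{d}$ and $f(\mathbf{d})=\tilde f(\lvert\mathbf{d}\rvert^2)\mathbf{d}$, the pointwise triple-product identities $\Delta\mathbf{d}\cdot(\Delta\mathbf{d}\times\bh)=0$ and $\mathbf{d}\cdot(\mathbf{d}\times\Delta\bh)=0$ give $\langle\rA_1\mathbf{d},\Delta\mathbf{d}\times\bh\rangle=0$ and $\langle f(\mathbf{d}),\mathbf{d}\times\Delta\bh\rangle=0$, so that no term quadratic in the full $\bH^2$-norm of $\mathbf{d}$ survives. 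What is left of $\mathrm{I}$ and $\mathrm{II}$ is then controlled by H\"older's inequality together with $\bH^2\hookrightarrow\elb^\infty$, $\bH^1\hookrightarrow\elb^4$, the Gagliardo--Nirenberg inequalities \eqref{GAG-l4}--\eqref{GAG-LInf-2} (with $a=\tfrac12$), the growth bound $\lvert f(\mathbf{d})\rvert\le c_0(1+\lvert\mathbf{d}\rvert^{2N+1})$ from Remark~\ref{REM-H2}\eqref{Itemi:REM-H2} and $\bH^1\hookrightarrow\elb^{4N+2}$; this gives bounds of the schematic form $C(\lve\bh\rve_2)\,(1+\lve\mathbf{d}\rve_1^{2N+1})\,\lve\mathbf{d}\rve_1^{1/2}\lve\mathbf{d}\rve_2^{1/2}$, and, where one additionally integrates by parts to move a derivative off $\Delta\mathbf{d}$, of the form $C(\lve\bh\rve_2)\int(1+\lvert\mathbf{d}\rvert^{2N})\lvert\nabla\mathbf{d}\rvert^2\,dx$ handled via $\lvert\nabla\mathbf{d}\rvert_{\elb^4}^2\le C\lve\mathbf{d}\rve_1\lve\mathbf{d}\rve_2$.

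In $\mathrm{III}$ the orthogonality $\mathbf{d}\cdot(\mathbf{d}\times\bh)=0$ kills the rank-one part of $f^\prime(\mathbf{d})$: writing $f^\prime(\mathbf{d})[\mathbf{g}]=\tilde f(\lvert\mathbf{d}\rvert^2)\mathbf{g}+2\tilde f^\prime(\lvert\mathbf{d}\rvert^2)(\mathbf{d}\cdot\mathbf{g})\mathbf{d}$, for $\mathbf{g}=\mathbf{d}\times\bh$ only the first summand remains, so $f^\prime(\mathbf{d})[\mathbf{d}\times\bh]=\tilde f(\lvert\mathbf{d}\rvert^2)(\mathbf{d}\times\bh)$. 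Since $\tilde f$ is a polynomial of degree $N$ and $\bh\in\elb^\infty$, the embedding $\bH^1\hookrightarrow\elb^{4N+2}$ yields $\lvert f^\prime(\mathbf{d})[\mathbf{d}\times\bh]\rvert_{\elb^2}\le C(\lve\bh\rve_2)(1+\lve\mathbf{d}\rve_1^{2N+1})$, and Cauchy--Schwarz (noting $\lvert\rA_1\mathbf{d}-f(\mathbf{d})\rvert_{\elb^2}=\sqrt{2\Psi_1(\mathbf{d})}$) followed by Young's inequality gives $\lvert\mathrm{III}\rvert\le C\big(\Psi_1(\mathbf{d})+1+\lve\mathbf{d}\rve_1^{4N+1}\big)$. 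Collecting the bounds for $\mathrm{I}$, $\mathrm{II}$, $\mathrm{III}$, applying Young's inequality once more and using \eqref{bigdanh2} together with $\bH^1\hookrightarrow\elb^{4N+2}$ to re-express any residual $\bH^2$-order term in terms of $\Psi_1(\mathbf{d})$ and powers of $\lve\mathbf{d}\rve_1$, one arrives at \eqref{Eq:Psi1BDGNeeded}; the density argument then removes the auxiliary restriction $\mathbf{d}\in\bH^3$.

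The hard part is $\mathrm{I}$ and $\mathrm{II}$, which contain the second-order quantity $\rA_1(\mathbf{d}\times\bh)$: these can only be kept within the right-hand side of \eqref{Eq:Psi1BDGNeeded} by exploiting the cancellations $\Delta\mathbf{d}\perp(\Delta\mathbf{d}\times\bh)$ and $\mathbf{d}\perp(\mathbf{d}\times\Delta\bh)$ (which eliminate the contributions that would be quadratic in $\lve\mathbf{d}\rve_2$) and then by re-expressing the remaining $\bH^2$-order norms through \eqref{bigdanh2}, rather than via a crude Cauchy--Schwarz. The orthogonality $\mathbf{d}\perp(\mathbf{d}\times\bh)$ used in $\mathrm{III}$, the polynomial growth of Remark~\ref{REM-H2} and the 2D Sobolev embeddings are otherwise routine bookkeeping.
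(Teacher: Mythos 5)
Your argument is correct, but it is organised differently from the paper's. The paper keeps the pairing $\langle \rA_1\mathbf{d}-f(\mathbf{d}),\,\rA_1(\mathbf{d}\times\bh)-f^\prime(\mathbf{d})[\mathbf{d}\times\bh]\rangle$ intact and rewrites the second factor, via the Leibniz expansion of $\Delta(\mathbf{d}\times\bh)$ and the identity $f^\prime(\mathbf{d})[\mathbf{d}\times\bh]=f(\mathbf{d})\times\bh$, as $[\rA_1\mathbf{d}-f(\mathbf{d})]\times\bh$ plus terms involving only $\nabla\mathbf{d}\times\nabla\bh$, $\mathbf{d}\times\Delta\bh$ and $f(\mathbf{d})\times\bh$; a single Cauchy--Schwarz and Young step then yields $\Psi_1(\mathbf{d})+\lve\mathbf{d}\rve_1\lve\mathbf{d}\rve_2+\lve\mathbf{d}\rve_1^{4N+2}+1$ directly, without ever producing a term quadratic in $\lve\mathbf{d}\rve_2$ and without invoking \eqref{bigdanh2}. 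You instead split the pairing into $\mathrm{I}+\mathrm{II}+\mathrm{III}$, kill the worst contributions by the pointwise orthogonalities $\Delta\mathbf{d}\perp(\Delta\mathbf{d}\times\bh)$, $\mathbf{d}\perp(\mathbf{d}\times\Delta\bh)$, $\mathbf{d}\perp(\mathbf{d}\times\bh)$, and then convert the residual $\bH^2$-order factors back into $\Psi_1(\mathbf{d})$ and powers of $\lve\mathbf{d}\rve_1$ through \eqref{bigdanh2}; all of these steps are valid (the rank-one cancellation in $\mathrm{III}$ is exactly the paper's $f^\prime(\mathbf{d})[\mathbf{d}\times\bh]=f(\mathbf{d})\times\bh$), and your detour through $\mathbf{d}\in\bH^3$ plus density is harmless, if unnecessary, since every quantity already makes sense for $\mathbf{d}\in D(\rrA)$ in two dimensions. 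Two small remarks. First, your closing claim that the estimate ``can only'' be kept within the right-hand side by these cancellations, ``rather than via a crude Cauchy--Schwarz'', is overstated: since \eqref{Eq:Psi1BDGNeeded} allows an arbitrary constant $\kappa_6$ in front of $\Psi_1(\mathbf{d})$, even the crude bound $\lvert\Psi_1^\prime(\mathbf{d})[\mathbf{d}\times\bh]\rvert\le\sqrt{2\Psi_1(\mathbf{d})}\,\bigl(\lvert\rA_1(\mathbf{d}\times\bh)\rvert_{\elb^2}+\lvert f^\prime(\mathbf{d})[\mathbf{d}\times\bh]\rvert_{\elb^2}\bigr)$ followed by Young and \eqref{bigdanh2} lands in the admissible form, so the cancellations simplify but are not indispensable. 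Second, your route (like the paper's own final display) produces the exponent $4N+2$ rather than the $4N+1$ written in the statement; this discrepancy is in the paper itself and is immaterial for the way the lemma is used in Proposition \ref{STRONGER-NORM}.
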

	\begin{proof}
		By part \eqref{Itemi:REM-H2} of Remark \ref{REM-H2},  \eqref{GAG-l4}, \eqref{GAG-LInf}, \eqref{GAG-LInf-2}, the H\"older and Young inequalities and  $\bH^1\hookrightarrow \elb^{4N+2}, \elb^4$, we infer that there exists $\kappa_6=\kappa_6(\lVert \bh \rVert_2)>0$ such that  for all $\mathbf{d} \in D(\rrA)$ 
		\begin{equation*}
		\begin{split}
		\lvert \Psi_1^{\prime}(\mathbf{d})[\mathbf{d}\times \bh] \rvert&\le
		\lvert  \rA_1 \mathbf{d} -f(\mathbf{d})\rvert_{\elb^2} \left(\lvert [\rA_1\mathbf{d} -f(\mathbf{d})]\times \bh + 2  \nabla \mathbf{d} \times \nabla \bh  + \mathbf{d} \times \Delta \bh  -f(\mathbf{d}) \times \bh \rvert_{\elb^2}\right)\\
		&	\le \kappa_6 \left(  \Psi_1(\mathbf{d})+ \lvert \nabla \mathbf{d} \rvert^2_{\elb^4} + \lvert \mathbf{d} \rvert^2_{\elb^\infty} + \lvert \mathbf{d} \rvert^{4N+2}_{\elb^{4N+2}} +1\right),\\
%			&	\le \kappa_6 \left(  \Psi_1(\mathbf{d})+ \lvert \nabla \mathbf{d} \rvert_{\elb^2} \lvert \nabla \mathbf{d} \rvert_{\mathbf{H}^1} + \lVert \mathbf{d} \rVert_{1} \lvert\nabla  \mathbf{d} \rvert_{2}   + \lvert \mathbf{d} \rvert^{4N+2}_{1} +1\right)\\
		&	\le \kappa_6 \left(  \Psi_1(\mathbf{d})+ \lve \mathbf{d} \rve_{1} \lve \mathbf{d} \rve_{2}   + \lvert \mathbf{d} \rvert^{4N+2}_{1} +1 \right).
		\end{split}
		\end{equation*}
%		for a constant $C>0$ depending only on the $\bH^2$-norm of $\bh$.
%		Now, using Gagliardo-Nirenberg inequalities \eqref{GAG-l4}, \eqref{GAG-LInf} and \eqref{GAG-LInf-2},  and the Sobolev embeddings $\bH^1\subset \elb^{4N+2}, \elb^4$ yields that there exists a constant $\kappa_6$ depending only on $\lve \bh \rve_2$ such that
%		\begin{equation}
%		\begin{split}
%		\lvert \Psi_1^{\prime}(\mathbf{d})[\mathbf{d}\times \bh] \rvert
%		&	\le \kappa_6 \left(  \Psi_1(\mathbf{d})+ \lvert \nabla \mathbf{d} \rvert_{\elb^2} \lvert \nabla \mathbf{d} \rvert_{\mathbf{H}^1} + \lvert \mathbf{d} \rvert_{1} \lvert\nabla  \mathbf{d} \rvert_{2}   + \lvert \mathbf{d} \rvert^{4N+2}_{1} +1\right)\\
%		&	\le \kappa_6 \left(  \Psi_1(\mathbf{d})+ \lve \mathbf{d} \rve_{1} \lve \mathbf{d} \rve_{2}   + \lvert \mathbf{d} \rvert^{4N+2}_{1} +1 \right).
%		\end{split}
%		\end{equation}
		This completes the proof of the Lemma \ref{Lem:Psi1BDGNeeded}.
	\end{proof}
	We will also need to the following results.
	\begin{lem}\label{Lem:NonlienarNSeItoV}
		There exists $\kappa_3>0$ such that for all $\mathbf{u}\in D(\rA)$ and $\mathbf{d}\in D(\rrA)$ 
		\begin{equation}
		-\Psi^\prime_2(\bu)(B(\bu,\bu))=	-\langle B(\bu,\bu), \rA \mathbf{u}\rangle \le \frac14 \lvert \rA \mathbf{u} \rvert^2_{\el^2} + \kappa_3 ( \lvert \mathbf{u}\rvert^2_{\el^2} \lvert \nabla \mathbf{u} \rvert^2_{\el^2}) \Psi(\mathbf{u},\mathbf{d}).
		\end{equation}
	\end{lem}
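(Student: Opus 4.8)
The plan is to reduce the estimate to the classical bound on the Navier--Stokes trilinear form evaluated against the Stokes operator in dimension $d=2$. First I would recall the identity
\[
-\Psi_2^\prime(\bu)(B(\bu,\bu))=-\langle B(\bu,\bu),\rA\bu\rangle = -b(\bu,\bu,\rA\bu),
\]
which follows immediately from the definition \eqref{Eq:DefPsi2forv} of $\Psi_2$, the chain rule, and \eqref{DEF-B1}; note that $\rA\bu\in\el^2$ for $\bu\in D(\rA)$ so the pairing is an honest $\el^2$ inner product. The key analytic input is the standard estimate
\[
\lvert b(\bu,\bu,\rA\bu)\rvert \le C\,\lvert\bu\rvert_{\el^4}\,\lvert\nabla\bu\rvert_{\el^4}\,\lvert\rA\bu\rvert_{\el^2},
\]
obtained by the H\"older inequality (with exponents $4,4,2$) applied entrywise to $\bu^{(i)}\partial_{x_i}\bu^{(j)}(\rA\bu)^{(j)}$; here one uses that for divergence-free $\bu$ vanishing on $\partial\MO$ the full gradient $\nabla^2\bu$ is controlled by $\rA\bu$ in $\el^2$, so in particular $\lvert\nabla\bu\rvert_{\el^4}$ can be interpolated via \eqref{GAG-l4} applied to $\nabla\bu$.

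Next I would invoke the Gagliardo--Nirenberg inequalities \eqref{GAG-l4}. With $d=2$, so $a=\tfrac12$, we have $\lvert\bu\rvert_{\el^4}\le c\,\lvert\bu\rvert_{\el^2}^{1/2}\lvert\nabla\bu\rvert_{\el^2}^{1/2}$ for $\bu\in\h$ (using $\bu\in\ve\subset\mathbb{H}^1_0$), and applying the same inequality to $\nabla\bu$ gives $\lvert\nabla\bu\rvert_{\el^4}\le c\,\lvert\nabla\bu\rvert_{\el^2}^{1/2}\lvert\nabla^2\bu\rvert_{\el^2}^{1/2}\le c^\prime\,\lvert\nabla\bu\rvert_{\el^2}^{1/2}\lvert\rA\bu\rvert_{\el^2}^{1/2}$, where the last step uses the equivalence of $\lvert\rA\bu\rvert_{\el^2}$ with the $\h^2$-seminorm on $D(\rA)$ (see \cite[Chapter I, Section 2.6]{Temam}). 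Substituting these two bounds into the trilinear estimate yields
\[
\lvert b(\bu,\bu,\rA\bu)\rvert \le C\,\lvert\bu\rvert_{\el^2}^{1/2}\,\lvert\nabla\bu\rvert_{\el^2}\,\lvert\rA\bu\rvert_{\el^2}^{3/2}.
\]

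Finally I would absorb the factor $\lvert\rA\bu\rvert_{\el^2}^{3/2}$ using Young's inequality with exponents $\tfrac43$ and $4$: for any $\eps>0$,
\[
C\,\lvert\bu\rvert_{\el^2}^{1/2}\,\lvert\nabla\bu\rvert_{\el^2}\,\lvert\rA\bu\rvert_{\el^2}^{3/2}
\le \eps\,\lvert\rA\bu\rvert_{\el^2}^2 + C_\eps\,\lvert\bu\rvert_{\el^2}^{2}\,\lvert\nabla\bu\rvert_{\el^2}^{4}.
\]
Choosing $\eps=\tfrac14$ gives the claimed inequality, since $\lvert\nabla\bu\rvert_{\el^2}^{4}=(\lvert\bu\rvert_{\el^2}^0)\,\lvert\nabla\bu\rvert_{\el^2}^2\cdot\lvert\nabla\bu\rvert_{\el^2}^2$, and $\tfrac12\lvert\nabla\bu\rvert_{\el^2}^2=\Psi_2(\bu)\le\Psi(\bu,\bd)$ for every $\bd\in D(\rrA)$; thus $C_\eps\,\lvert\bu\rvert_{\el^2}^{2}\lvert\nabla\bu\rvert_{\el^2}^{4}\le \kappa_3\,(\lvert\bu\rvert_{\el^2}^2\lvert\nabla\bu\rvert_{\el^2}^2)\,\Psi(\bu,\bd)$ with $\kappa_3=2C_\eps$. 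The main (and essentially only) obstacle is bookkeeping: making sure the interpolation of $\lvert\nabla\bu\rvert_{\el^4}$ is legitimate, i.e. that $\nabla^2\bu\in\el^2$ is genuinely dominated by $\rA\bu$ for $\bu\in D(\rA)=\ve\cap\h^2$ with the no-slip boundary condition — this is where $d=2$ is used to keep the power of $\lvert\rA\bu\rvert_{\el^2}$ strictly below $2$ so that the absorption step works; in $d=3$ the same computation produces $\lvert\rA\bu\rvert_{\el^2}^{7/4}$ times $\lvert\bu\rvert_{\el^2}^{1/4}\lvert\nabla\bu\rvert_{\el^2}^{1/2}$, which still absorbs but with a different power of $\Psi$, consistent with the restriction $N=1$ for $d=3$ elsewhere in the paper.
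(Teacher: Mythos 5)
Your proposal is correct and follows essentially the same route as the paper's proof: H\"older with exponents $4,4,2$, the Gagliardo--Nirenberg inequality \eqref{GAG-l4} with $a=\tfrac12$ applied to $\bu$ and $\nabla\bu$ (plus the standard bound $\lVert \bu \rVert_2 \le C\lvert \rA\bu\rvert_{\el^2}$ on $D(\rA)$), and Young's inequality with exponents $\tfrac43,4$ to absorb $\lvert \rA\bu\rvert^{3/2}_{\el^2}$, finishing by writing $\lvert\nabla\bu\rvert^2_{\el^2}\le 2\Psi(\bu,\bd)$. Your write-up in fact states the intermediate exponents more carefully than the paper's displayed computation, so nothing further is needed.
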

	\begin{proof}
		Using \eqref{GAG-l4}, the H\"older and Young inequalities we infer that $C>0$ such that
		\begin{equation}
		\begin{split}
		\langle B(\mathbf{u},\mathbf{u}) , \rA\mathbf{u} \rangle &=\langle \mathbf{u}\cdot \nabla \mathbf{u}, \rA\mathbf{u}\rangle
		\le \lvert \rA \mathbf{u} \rvert \lvert \mathbf{u} \rvert_{\el^4} \lvert \nabla \mathbf{u} \rvert_{\el^4}\\
		&	\le \lvert \rA \mathbf{u} \rvert^\frac32 \lvert \mathbf{u} \rvert^\frac12_{\el^2} \lvert \nabla \mathbf{u} \rvert^\frac12_{\el^2}
			\le \frac14 \lvert \rA \mathbf{u} \rvert^2_{\el^2} + C \lvert \mathbf{u} \rvert^2_{\el^2} \lvert \nabla \mathbf{u}\rvert^2_{\el^2} \Psi_1(\mathbf{u}),
		\end{split}
		\end{equation}
	 for all $(\mathbf{u}, \mathbf{d})\in D(\rA)\times  D(\rrA)$. We easily conclude the proof of Lemma  \ref{Lem:NonlienarNSeItoV} from last line.
	\end{proof}
	\begin{lem}\label{Lem:CouplingTermIto}
		There exists $\kappa_4>0$ such that 
		\begin{equation}
		-\Psi^\prime_2(\bu)(M(\mathbf{d},\mathbf{d}))=	-\langle M(\mathbf{d},\mathbf{d}), \rA\mathbf{u}\rangle \le \frac14 \lvert \rA \mathbf{u}\rvert^2_{\el^2} + \frac16 \lvert \nabla (\rA_1 \mathbf{d} -f(\mathbf{d})) \rvert^2_{\elb^2} + \kappa_4 \Psi(\mathbf{u},\mathbf{d}) \lve \mathbf{d} \rve^2_1 \lve \mathbf{d} \rve^2_2,
		\end{equation}
		for all $\mathbf{d}\in D(\rrA)$ satisfying $\nabla (\rA_1 \mathbf{d} +f(\mathbf{d})) \in \el^2$, and  $\mathbf{u}\in D(\rA)$.
	\end{lem}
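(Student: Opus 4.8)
The plan is to estimate the coupling term $\langle M(\mathbf{d},\mathbf{d}),\rA\mathbf{u}\rangle$ directly in $\el^2$ — which is possible since $\rA\mathbf{u}\in\h$ — and to ``pay'' for the top-order derivative that appears by splitting $\rA_1\mathbf{d}=(\rA_1\mathbf{d}-f(\mathbf{d}))+f(\mathbf{d})$. First observe that the hypothesis ($\mathbf{d}\in D(\rrA)$ and $\nabla(\rA_1\mathbf{d}-f(\mathbf{d}))\in\elb^2$), together with $f(\mathbf{d})\in\bH^1$ — which holds because $\bH^2\hookrightarrow\elb^\infty$ for $d=2$ — forces $\rA_1\mathbf{d}\in\bH^1$, hence $\mathbf{d}\in\bH^3$ by elliptic regularity for the Neumann Laplacian; so all the manipulations below are licit. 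By \eqref{Eq:Identity-M-L2} we have $M(\mathbf{d},\mathbf{d})=\Pi[\Div(\nabla\mathbf{d}\odot\nabla\mathbf{d})]\in\elb^2$, and since $\Pi$ is the orthogonal projection of $\el^2$ onto $\h$ and $\rA\mathbf{u}\in\h$,
\[
-\langle M(\mathbf{d},\mathbf{d}),\rA\mathbf{u}\rangle=-\langle\Div(\nabla\mathbf{d}\odot\nabla\mathbf{d}),\rA\mathbf{u}\rangle .
\]
Writing out $\Div(\nabla\mathbf{d}\odot\nabla\mathbf{d})$, its ``gradient part'' $\tfrac12\nabla\lvert\nabla\mathbf{d}\rvert^2$ is $\el^2$-orthogonal to $\rA\mathbf{u}\in\h$ (equivalently, one may use the identity \eqref{G1-eq-Md}, which extends to $\mathbf{v}\in\h$ when $\mathbf{d}\in\bH^3$), so only the term whose $i$-th component is $\sum_k\Delta\mathbf{d}^{(k)}\,\partial_{x_i}\mathbf{d}^{(k)}$ survives, and H\"older's inequality gives
\[
\lvert\langle M(\mathbf{d},\mathbf{d}),\rA\mathbf{u}\rangle\rvert\le C\,\lvert\Delta\mathbf{d}\rvert_{\elb^4}\,\lvert\nabla\mathbf{d}\rvert_{\elb^4}\,\lvert\rA\mathbf{u}\rvert_{\el^2}
\]
(one may equally well skip the integration by parts and bound $\lvert\Div(\nabla\mathbf{d}\odot\nabla\mathbf{d})\rvert_{\elb^2}$ directly via $L^4$-elliptic regularity).

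Next I would set $\mathbf{z}=\rA_1\mathbf{d}-f(\mathbf{d})$, use $\Delta\mathbf{d}=-\mathbf{z}-f(\mathbf{d})$ to get $\lvert\Delta\mathbf{d}\rvert_{\elb^4}\le\lvert\mathbf{z}\rvert_{\elb^4}+\lvert f(\mathbf{d})\rvert_{\elb^4}$, and then apply the Gagliardo--Nirenberg inequality \eqref{GAG-l4} with $a=\tfrac d4=\tfrac12$: this yields $\lvert\nabla\mathbf{d}\rvert_{\elb^4}\le c\lve\mathbf{d}\rve_1^{1/2}\lve\mathbf{d}\rve_2^{1/2}$ and $\lvert\mathbf{z}\rvert_{\elb^4}\le c\,\Psi_1(\mathbf{d})^{1/4}\bigl(\lvert\nabla\mathbf{z}\rvert_{\elb^2}^{1/2}+\Psi_1(\mathbf{d})^{1/4}\bigr)$ (recall $\Psi_1(\mathbf{d})=\tfrac12\lvert\mathbf{z}\rvert_{\elb^2}^2$), while part \eqref{Itemi:REM-H2} of Remark~\ref{REM-H2} together with the subcritical embeddings $\bH^1\hookrightarrow\elb^q$, $q<\infty$ (valid for $d=2$), gives $\lvert f(\mathbf{d})\rvert_{\elb^4}\le C(1+\lve\mathbf{d}\rve_1^{2N+1})$. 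Substituting these bounds and applying the weighted Young inequality — organised so that $\lvert\rA\mathbf{u}\rvert_{\el^2}$ and $\lvert\nabla\mathbf{z}\rvert_{\elb^2}=\lvert\nabla(\rA_1\mathbf{d}-f(\mathbf{d}))\rvert_{\elb^2}$ each enter the product only to the first power and are split off with weights $\le\tfrac14$ and $\le\tfrac16$ respectively — produces the first two terms of the asserted bound, up to a remainder controlled by $\Psi_1(\mathbf{d})$ (and by $1$) times lower-order norms in $\lve\mathbf{d}\rve_1$ and $\lve\mathbf{d}\rve_2$. Finally I would use \eqref{Eq:Bigdanh2withH1norm} (which for $d=2$ reads $\lve\mathbf{d}\rve_2^2\le C(\Psi_1(\mathbf{d})+\lve\mathbf{d}\rve_1^{4N+2}+1)$) to re-express the pure $\lve\mathbf{d}\rve_2$-terms and the trivial bound $\Psi_1(\mathbf{d})\le\Psi(\mathbf{u},\mathbf{d})$ to collect everything into $\kappa_4\,\Psi(\mathbf{u},\mathbf{d})\lve\mathbf{d}\rve_1^2\lve\mathbf{d}\rve_2^2$, with $\kappa_4$ depending only on $N$ and the fixed data.

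The computation is entirely parallel to that of Lemmata~\ref{Lem:FirstDerAppliedtoVdotNabalaD} and \ref{Lem:FirstDerAppliedtoDeltaD+f(D)}, and the only genuinely delicate point is, as there, the bookkeeping of the two ``dangerous'' top-order quantities $\lvert\rA\mathbf{u}\rvert_{\el^2}$ and $\lvert\nabla(\rA_1\mathbf{d}-f(\mathbf{d}))\rvert_{\elb^2}$: the chain of H\"older and Gagliardo--Nirenberg estimates must be arranged so that each of them occurs to the first power only, multiplied by a prefactor built from the weaker norms $\lve\mathbf{d}\rve_1$, $\lve\mathbf{d}\rve_2$, $\Psi_1(\mathbf{d})$, after which the Young exponents must be chosen to land exactly on the coefficients $\tfrac14$ and $\tfrac16$ prescribed by the subsequent Gronwall argument. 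The polynomial nonlinearity $f$ causes no trouble here because in dimension $d=2$ — the only case where arbitrarily large $N$ is allowed — all the required Sobolev embeddings $\bH^1\hookrightarrow\elb^q$ are at our disposal, so that $f(\mathbf{d})$ (and, in the preliminary verification that $\mathbf{d}\in\bH^3$, also $\nabla f(\mathbf{d})=f^\prime(\mathbf{d})[\nabla\mathbf{d}]$) is dominated by powers of $\lve\mathbf{d}\rve_1$ times at most a single factor of $\lve\mathbf{d}\rve_2$.
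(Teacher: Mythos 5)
Your reduction of $-\langle M(\mathbf{d},\mathbf{d}),\rA\mathbf{u}\rangle$ (dropping the gradient part $\tfrac12\nabla\lvert\nabla\mathbf{d}\rvert^2$ against the divergence-free $\rA\mathbf{u}\in\h$, then H\"older with exponents $2,4,4$ and Gagliardo--Nirenberg \eqref{GAG-l4}) starts the same way as the paper, but the treatment of the remaining factor $\Delta\mathbf{d}$ contains a genuine gap. You split $\lvert\Delta\mathbf{d}\rvert_{\elb^4}\le\lvert \rA_1\mathbf{d}-f(\mathbf{d})\rvert_{\elb^4}+\lvert f(\mathbf{d})\rvert_{\elb^4}$ and bound $\lvert f(\mathbf{d})\rvert_{\elb^4}\le C(1+\lVert\mathbf{d}\rVert_1^{2N+1})$. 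After the weighted Young inequality this produces an additive remainder of the type
\begin{equation*}
C\bigl(1+\lVert\mathbf{d}\rVert_1^{4N+2}\bigr)\,\lVert\mathbf{d}\rVert_1\,\lVert\mathbf{d}\rVert_2 ,
\end{equation*}
which carries no factor of $\Psi(\mathbf{u},\mathbf{d})$ (nor of $\lvert\rA_1\mathbf{d}-f(\mathbf{d})\rvert_{\elb^2}$), and such a term cannot be absorbed into $\kappa_4\,\Psi(\mathbf{u},\mathbf{d})\lVert\mathbf{d}\rVert_1^2\lVert\mathbf{d}\rVert_2^2$: take $\mathbf{u}=0$ and $\mathbf{d}$ a nonzero constant with $f(\mathbf{d})=0$ (e.g.\ $\lvert\mathbf{d}\rvert=1$ in the Ginzburg--Landau case), so the right-hand side of the asserted bound vanishes while your remainder does not. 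Your final step, invoking \eqref{Eq:Bigdanh2withH1norm}, goes in the wrong direction: that inequality bounds $\lVert\mathbf{d}\rVert_2^2$ \emph{from above} by $\Psi_1(\mathbf{d})+\lVert\mathbf{d}\rVert_1^{4N+2}+1$ and cannot manufacture a multiplicative $\Psi$ in front of a term that lacks one. This is not a cosmetic issue, because the exact product structure $\Psi\times(\text{integrable weight})$ is precisely what the exponential weight $\Phi$ in \eqref{Eq:WeightToCancelbadterms} is designed to cancel in the Khasminskii/Gronwall argument; a loose polynomial remainder in $\lVert\mathbf{d}\rVert_1,\lVert\mathbf{d}\rVert_2$ would change the statement being proved.

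The paper avoids this by never estimating $f(\mathbf{d})$ on its own in this lemma. It uses the structural cancellation: since $\nabla\mathbf{d}\,f(\mathbf{d})=\nabla F(\mathbf{d})$ is an exact gradient and $\rA\mathbf{u}\in\h$, one has $\langle\rA\mathbf{u},\nabla F(\mathbf{d})\rangle=0$, so (using \eqref{Eq:Identity-M-L2} and that $\Pi$ is self-adjoint)
\begin{equation*}
\langle M(\mathbf{d},\mathbf{d}),\rA\mathbf{u}\rangle=-\bigl\langle \rA\mathbf{u}\cdot\nabla\mathbf{d},\,\rA_1\mathbf{d}-f(\mathbf{d})\bigr\rangle ,
\end{equation*}
i.e.\ only the combination $\mathbf{z}=\rA_1\mathbf{d}-f(\mathbf{d})$ ever appears. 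Then H\"older gives $\lvert\rA\mathbf{u}\rvert_{\el^2}\lvert\mathbf{z}\rvert_{\elb^4}\lvert\nabla\mathbf{d}\rvert_{\elb^4}$, and Gagliardo--Nirenberg plus Young yield exactly $\tfrac14\lvert\rA\mathbf{u}\rvert_{\el^2}^2+\tfrac16\lvert\nabla\mathbf{z}\rvert_{\elb^2}^2+C\lvert\mathbf{z}\rvert_{\elb^2}^2\lVert\mathbf{d}\rVert_1^2\lVert\mathbf{d}\rVert_2^2$, whose last term is of the required form since $\lvert\mathbf{z}\rvert_{\elb^2}^2=2\Psi_1(\mathbf{d})\le 2\Psi(\mathbf{u},\mathbf{d})$. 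To repair your argument you should perform this absorption of $f(\mathbf{d})$ \emph{before} applying H\"older, exactly as in the analogous cancellations \eqref{tild-b-0} and \eqref{G1-eq-Md}; the rest of your computation (the $(2,4,4)$ H\"older/GN/Young bookkeeping keeping $\lvert\rA\mathbf{u}\rvert_{\el^2}$ and $\lvert\nabla\mathbf{z}\rvert_{\elb^2}$ to first power) then goes through as in the paper.
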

	\begin{proof}
		In this proof $C>0$ is an universal constant.
		Let $\mathbf{d}\in D(\rrA)$ be such that $\nabla (\rA_1 \mathbf{d} +f(\mathbf{d})) \in \el^2$, and  $\mathbf{u}\in D(\rA)$.
		Firstly, since $\Pi: \el^2 \to \h$  is self-adjoint, $\nabla F(\mathbf{d})=\nabla \mathbf{d} f(\mathbf{d})$ and $\Div \rA \mathbf{u}=0$, we infer that
		\begin{equation}
		\begin{split}
		\langle M(\mathbf{d},\mathbf{d}) , \rA\mathbf{u} \rangle=& \frac12\langle \rA\bu, \nabla \lvert \nabla \mathbf{d}\rvert^2\rangle -\langle \rA \bu, \nabla \mathbf{d} \Delta \mathbf{d}\rangle \\
		=&- \langle \rA \mathbf{u} \cdot\nabla  \mathbf{d}, \rA_1 \mathbf{d}+f(\mathbf{d})\rangle + \langle \rA \mathbf{u} , \nabla F(\mathbf{d})\rangle.
		%=&-\langle \rA \mathbf{u} \cdot\nabla  \mathbf{d}, \rA_1 \mathbf{d}+f(\mathbf{d})\rangle.
		\end{split}
		\end{equation}
		Secondly, applying the H\"older, the Gagliardo-Nirenberg and the Young inequality yields
		\begin{equation}
		\begin{split}
		\langle M(\mathbf{d},\mathbf{d}) , \rA\mathbf{u} \rangle=& -\langle \rA \mathbf{u} \cdot\nabla  \mathbf{d}, \rA_1 \mathbf{d}+f(\mathbf{d})\rangle
		\le \lvert \rA \mathbf{u} \rvert_{\el^2} \lvert \rA_1 \mathbf{d} -f(\mathbf{d}) \rvert_{\elb^4} \lvert \nabla \mathbf{d} \rvert_{\el^4}\\
		\le &\frac14 \lvert \rA \mathbf{u} \rvert^2_{\el^2}+ \frac16 \lvert \nabla (\rA_1 \mathbf{d} -f(\mathbf{d})) \rvert^2_{\elb^2} + C \lvert \rA_1 \mathbf{d} -f(\mathbf{d}) \rvert^2_{\elb^2} \lve \mathbf{d} \rve^2_1 \lve \mathbf{d} \rve^2_2.
		\end{split}
		\end{equation}
		This completes the proof of Lemma \ref{Lem:CouplingTermIto}
	\end{proof}
Now, let $\kappa_i$, $i=1,\ldots,4$ be the constants from Lemmata \ref{Lem:FirstDerAppliedtoVdotNabalaD}, \ref{Lem:FirstDerAppliedtoDeltaD+f(D)}, \ref{Lem:NonlienarNSeItoV} and \ref{Lem:CouplingTermIto}. For all $t\ge 0 $ we set
	\begin{equation}\label{Eq:WeightToCancelbadterms}
	\begin{split}
	\Phi(t)=\exp\left({-\int_0^{t}\left[ (\kappa_1 +\kappa_4)(1+ \lve \d(s) \rve^2_{1}) \lve \d(s) \rve^2_2 + \kappa_2 (1+ \lve \d(s) \rve^{4N}_1) + \kappa_3 \lvert \v(s) \rvert^2_{\el^2}   \lvert \nabla \v(s) \rvert^2_{\el^2}     \right] ds            }\right).
	\end{split}
	\end{equation}
	% and
	% \begin{equation}
	% \Upsilon(t)= \Phi(t) \Psi(\v(t), \d(t)) = \Phi(t)\left[\Psi_1(\d(t))  + \Psi_2(\v(t)) \right].
	% \end{equation}
	Let $\mathfrak{C}_0>0$ be the constant defined in \eqref{Eq:ConstantFrakC0} and $\mathfrak{C}_1>0$ the constant defined by
	\begin{equation}\label{Eq:ConstantFrakC1}
	\mathfrak{C}_1= \Psi(\v_0,\d_0) + \mathfrak{C}_0+1
	\end{equation}
	\begin{prop}\label{STRONGER-NORM}
		Let $\Psi_1$, $\Phi$, $\mathfrak{C}_0$ and $\mathfrak{C}_1$ be defined  in \eqref{Eq:DefPsi1forD}, \eqref{Eq:WeightToCancelbadterms}, \eqref{Eq:ConstantFrakC0} and \eqref{Eq:ConstantFrakC1}, respectively. Let 	$(\tau_k)_{k \in \mathbb{N}}$ be the sequence of stopping times defined in \eqref{STOP}. 	Let $d=2$, $N\in \mathbb{N}$ and $\bh \in \bH^2$. 
		
		If  all the other assumptions of Theorem \ref{GLOBAL-ST} are satisfied, then there exists an increasing function $\psi:[0,\infty) \to (0,\infty)$ and $\kappa_9=\kappa_9(N, \lVert \bh \rVert_2)>0$ such that for all $k \in \mathbb{N}$
		\begin{align}
		&	\me \sup_{s\in[0,T]} \Phi(s\wedge \tau_k)\left(\lvert \nabla \v(s\wedge \tau_k) \rvert^2_{\el^2}+\Psi_1(\d(s\wedge \tau_k))  \right)\le \kappa_9 \psi(T) \mathfrak{C}_1,\label{Eq:EstSupofDel+F}\\
		& \me \int_0^{T\wedge \tau_k} \Phi(s) \left(  \lvert \rA\v(s)\rvert^2_{\el^2} +\lvert \nabla (\rA_1\d(s)+f(\d(s)   )\rvert^2_{\elb^2}  \right)\, ds \le \kappa_9 \psi(T) \mathfrak{C}_1. \label{Eq:EstInteofNablaDelta+F}
		\end{align}
	\end{prop}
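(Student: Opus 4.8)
The plan is to apply It\^o's formula to the real-valued process $t\mapsto\Phi(t)\,\Psi\big(\v(t),\d(t)\big)$ stopped at $\tau_k$, where $\Psi=\Psi_1+\Psi_2$ is the functional of \eqref{Eq:DefPsi1forD}--\eqref{Eq:DefFunctionForIto}, and then to exploit the exponential weight $\Phi$ of \eqref{Eq:WeightToCancelbadterms} to absorb exactly the most singular terms of the resulting drift. By Theorem \ref{LC-Local-Sol} the unique maximal local solution of \eqref{ABSTRACT-LC} satisfies $(\v,\d)\in C\big([0,\tau_k];\ve\times\bX_{1}\big)$, $\v\in L^2(0,\tau_k;D(\rA))$ and $\d\in L^2(0,\tau_k;\bX_{\frac32})$, so that for a.e.\ $s$ one has $\v(s)\in D(\rA)$, $\d(s)\in\bH^3\cap D(\rrA)$, and every term below is meaningful. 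The It\^o formula for $\Phi\Psi$ is justified in the standard way by carrying out the computation on the finite-dimensional Galerkin approximations of \eqref{ABSTRACT-LC} built from the eigenfunctions of $\rA$ and of $\rrA$ (these satisfy the Neumann condition, so $f(\d_m)\in D(\rrA)$ and the quadratic form produces the full gain $\langle\rrA\d_m-f(\d_m),\rrA(\rrA\d_m-f(\d_m))\rangle=\lvert\nabla(\rrA\d_m-f(\d_m))\rvert^2_{\elb^2}$), and then passing to the limit, all the functionals and estimates below being continuous in the norms of $\mathscr{V}$ and $\mathscr{E}$.

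Write $g(s)$ for the integrand in the exponent of $\Phi$, so $\Phi(0)=1$, $0<\Phi\le1$, $\Phi'(s)=-\Phi(s)g(s)$ and $\int_0^t\Phi(s)g(s)\,ds=1-\Phi(t)\le1$. Applying It\^o's formula and using $\Psi_2'(\v)[\,\cdot\,]=\langle\rA\v,\cdot\rangle$, $\Psi_1'(\d)[\mathbf{g}]=\langle\rrA\d-f(\d),\rrA\mathbf{g}-f'(\d)[\mathbf{g}]\rangle$, the identities \eqref{tild-b-0} and \eqref{Eq:Identity-M-L2}, and the Stratonovich--It\^o correction in the $\d$-equation, the drift of $\Psi(\v(t),\d(t))$ splits as the dissipation $-\lvert\rA\v\rvert^2_{\el^2}-\lvert\nabla(\rrA\d-f(\d))\rvert^2_{\elb^2}$, the It\^o corrections $\tfrac12\lVert S(\v)\rVert^2_{\mathcal{T}_2(\rK_1,\ve)}+\tfrac12\Psi_1''(\d)[\d\times\bh,\d\times\bh]+\tfrac12\Psi_1'(\d)[(\d\times\bh)\times\bh]$, and the four terms $-\langle B(\v,\v),\rA\v\rangle$, $-\langle M(\d,\d),\rA\v\rangle$, $-\Psi_1'(\d)[\v\cdot\nabla\d]$ and $\langle f'(\d)[\rrA\d-f(\d)],\rrA\d-f(\d)\rangle$. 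Estimating these, respectively, by Remark \ref{Rem:HYPO-ST}, Lemma \ref{Lem:SecondDerofPsiAppliedtoGG}, Lemma \ref{Lem:NonlienarNSeItoV}, Lemma \ref{Lem:CouplingTermIto}, Lemma \ref{Lem:FirstDerAppliedtoVdotNabalaD} and Lemma \ref{Lem:FirstDerAppliedtoDeltaD+f(D)}, the three contributions $\tfrac14\lvert\rA\v\rvert^2_{\el^2}$ and the three contributions $\tfrac16\lvert\nabla(\rrA\d-f(\d))\rvert^2_{\elb^2}$ are absorbed into the dissipation, the coefficients multiplying $\Psi(\v,\d)$ add up to at most $g(t)$ and so are killed by $\Phi'\Psi=-\Phi g\Psi$, and what remains (using $\Psi_1\le\Psi$) is
\begin{align*}
d\big[\Phi(t)\Psi(\v(t),\d(t))\big]+\Phi(t)\Big[\tfrac14\lvert\rA\v\rvert^2_{\el^2}+\tfrac12\lvert\nabla(\rrA\d-f(\d))\rvert^2_{\elb^2}\Big]dt
&\le\Phi(t)\Big[c_0\Psi(\v,\d)+\kappa_5(1+\lve\d\rve_1^{4N})\lve\d\rve_2^2+c_0\Big]dt\\
&\quad+\Phi(t)\,dM(t),
\end{align*}
with $c_0=c_0(\ell_5,\lVert\bh\rVert_2)>0$ and $M$ the local martingale $dM(t)=\langle\rA\v(t),S(\v(t))\,dW_1(t)\rangle+\Psi_1'(\d(t))[\d(t)\times\bh]\,dW_2(t)$.

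Next I would integrate over $[0,t\wedge\tau_k]$, take $\sup_{t\le T}$ and then expectations. The martingale term is controlled by the Burkholder--Davis--Gundy inequality: its quadratic-variation density is $\le\Phi^2\big(\lvert\rA\v\rvert^2_{\el^2}\lVert S(\v)\rVert^2_{\mathcal{T}_2(\rK_1,\ve)}+\lvert\Psi_1'(\d)[\d\times\bh]\rvert^2\big)$, so by Remark \ref{Rem:HYPO-ST}, Lemma \ref{Lem:Psi1BDGNeeded} and Young's inequality it reduces, modulo a small multiple of $\me\sup_{[0,T]}\Phi(s\wedge\tau_k)\Psi(\v,\d)(s\wedge\tau_k)$ and of $\me\int_0^{T\wedge\tau_k}\Phi\lvert\rA\v\rvert^2_{\el^2}\,ds$ (both absorbed on the left), to terms of the type $\me\int_0^{T\wedge\tau_k}\Phi\big(1+\Psi_1^2+\lve\d\rve_1^{8N+2}+\lve\d\rve_1^2\lve\d\rve_2^2\big)\,ds$. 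Keeping $c_0\int_0^T\me[\Phi(s\wedge\tau_k)\Psi(\v,\d)(s\wedge\tau_k)]\,ds$ for Gronwall, I would estimate each remaining integral by writing $(1+\lve\d\rve_1^{4N})\lve\d\rve_2^2\le(1+\lve\d\rve_1^{4N-2})(1+\lve\d\rve_1^2)\lve\d\rve_2^2$ (and similarly $\lve\d\rve_1^2\lve\d\rve_2^2\le(1+\lve\d\rve_1^2)\lve\d\rve_2^2$), pulling $\sup_{s\le T\wedge\tau_k}(1+\lve\d(s)\rve_1^{4N-2})$ out of the time integral, and using on one hand the pathwise bound $\int_0^{T\wedge\tau_k}\Phi(s)(1+\lve\d(s)\rve_1^2)\lve\d(s)\rve_2^2\,ds\le(\kappa_1+\kappa_4)^{-1}\int_0^T\Phi g\,ds\le(\kappa_1+\kappa_4)^{-1}$, and on the other hand the a priori estimate \eqref{Eq:ESTofVDinWeakerNorm} of Proposition \ref{EST1} together with its Corollary (and the two-dimensional embeddings $\bH^1\hookrightarrow\elb^q$, $q<\infty$) to bound, uniformly in $k$, the expectations of $\sup_{s\le T\wedge\tau_k}\lve\d(s)\rve_1^{4N-2}$, of $\int_0^{T\wedge\tau_k}\lve\d\rve_1^{8N+2}\,ds$ and of $\big[\int_0^{T\wedge\tau_k}\Psi_1\,ds\big]^2$ (for the last, write $\Phi^2\Psi_1^2\le\big(\sup_s\Phi\Psi_1\big)\Phi\Psi_1$ and use Young once more); all the moments of $\mathcal{E}[\v,\d]$ that arise stay within the admissible range $[2,2(4N+1)]$. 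Collecting, one obtains
\begin{multline*}
\me\sup_{t\le T}\Phi(t\wedge\tau_k)\Psi(\v,\d)(t\wedge\tau_k)+\me\int_0^{T\wedge\tau_k}\Phi\Big[\tfrac18\lvert\rA\v\rvert^2_{\el^2}+\tfrac12\lvert\nabla(\rrA\d-f(\d))\rvert^2_{\elb^2}\Big]ds\\
\le C\,\tilde\psi(T)\,\mathfrak{C}_1+c_0\int_0^T\me\big[\Phi(s\wedge\tau_k)\Psi(\v,\d)(s\wedge\tau_k)\big]ds
\end{multline*}
for an increasing $\tilde\psi$, and, since $t\mapsto\me[\Phi(t\wedge\tau_k)\Psi(\v,\d)(t\wedge\tau_k)]$ is finite by the definition of $\tau_k$, Gronwall's lemma closes the estimate and yields \eqref{Eq:EstSupofDel+F} and (after enlarging $\kappa_9$ to absorb the factor $8$) \eqref{Eq:EstInteofNablaDelta+F}, with $\psi(T)=C\,\tilde\psi(T)e^{c_0T}$ and $\kappa_9=\kappa_9(N,\lVert\bh\rVert_2)$.

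I expect the main obstacle to be twofold. First, the rigorous passage to the limit in the Galerkin scheme: one must verify that $\Phi\Psi$, the stochastic integral and the cut-off at $\tau_k$ are all stable under the approximation, and here the choice of $\Psi_1$ in terms of $\rrA\d-f(\d)$ rather than $\rrA\d$ alone is essential, since it makes the quadratic form $\langle\rrA\d-f(\d),\rrA(\,\cdot\,)\rangle$ produce the full parabolic gain $\lvert\nabla(\rrA\d-f(\d))\rvert^2_{\elb^2}$ that must dominate the cross terms. Second, and genuinely delicate, is keeping all constants uniform in $k$: the ``moderately bad'' remainders carry powers $\lve\d\rve_1^{4N-2}$, $\lve\d\rve_1^{8N+2}$ and $\Psi_1^2$ that the exponential weight alone does not tame, and they can be controlled only because Proposition \ref{EST1} supplies moments of the energy of order up to $2(4N+1)$ --- which is precisely why hypothesis \eqref{E_0} of Theorem \ref{GLOBAL-ST} asks for the $2(4N+2)$-th moment of $\mathcal{E}[\v_0,\d_0]$ and why the Corollary to Proposition \ref{EST1} is needed. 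Matching these exponents against the two-dimensional Gagliardo--Nirenberg inequalities \eqref{GAG-l4}--\eqref{GAG-LInf-2} is the technical heart of the proof.
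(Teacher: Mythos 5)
The core of your argument coincides with the paper's proof: It\^o's formula for $\Phi\,\Psi(\v,\d)$ stopped at $\tau_k$, cancellation of the dangerous drift contributions by the derivative of the exponential weight \eqref{Eq:WeightToCancelbadterms}, the estimates of Lemmata \ref{Lem:FirstDerAppliedtoVdotNabalaD}--\ref{Lem:CouplingTermIto} and \ref{Lem:SecondDerofPsiAppliedtoGG} together with Remark \ref{Rem:HYPO-ST}, control of the martingale by BDG via Lemma \ref{Lem:Psi1BDGNeeded} with absorption of $\tfrac14\me\sup\Upsilon$, Gronwall, and finally Proposition \ref{EST1} (and its corollary) to bound the surviving moments of $\lve\d\rve_1$ and $\lve\d\rve_2$ uniformly in $k$; your bookkeeping of the exponents against \eqref{E_0} is also consistent with the paper's.

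The one genuine gap is your justification of the It\^o formula for $\Psi_1(\bd)=\tfrac12\lvert \rA_1\bd-f(\bd)\rvert^2_{\elb^2}$ by ``Galerkin approximation and passage to the limit''. This is exactly where the paper does real work, and your sketch does not close it: the solution $(\bv,\bd)$ of Theorem \ref{LC-Local-Sol} is obtained by a Banach fixed-point argument in the mild formulation, not by a Galerkin scheme, so there is no approximating sequence at hand converging to this particular maximal solution in $C([0,\cdot];\bX_1)\cap L^2(\cdot;\bX_{\frac32})$ up to $\tau_k$; constructing one, identifying its limit with $(\bv,\bd)$, and proving convergence strong enough to pass to the limit in $\Phi\Psi$, in the stochastic integrals and in the stopping-time cut-off would be a substantial additional argument that you do not supply, and it is nontrivial precisely because $\Psi_1$ is a nonlinear functional of $\bd$ involving second derivatives. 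The paper circumvents this: the abstract transformation Lemma \ref{Lem:Abstract-Lem-L(x)} (after \cite{ZB+MO}) applied to $L(z)=\rA_1 z-f(z)$ shows in Lemma \ref{Lem:IdentityforDd+f(d)} that $y=\rA_1\bd-f(\bd)$ itself satisfies an evolution equation in $(\bH^1)^\ast$, namely \eqref{Eq:IdentityforDd+f(d)}, and then the classical variational It\^o formula of \cite{Pardoux} in the triple $\bH^1\subset\elb^2\subset(\bH^1)^\ast$ yields \eqref{Eq:Psi(d)Formnula-1} for $\tfrac12\lvert y\rvert^2_{\elb^2}=\Psi_1(\bd)$, the required integrability \eqref{Eq:DriftsatisfiesPardouxConditions} being guaranteed by the definition \eqref{STOP} of $\tau_k$. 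If you replace your Galerkin sketch by this two-step argument (or carry out the Galerkin construction and convergence in full), the remainder of your proof goes through as written.
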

	\begin{proof}
		The proof of this proposition will be given in Section \ref{AppB}.
	\end{proof}
	\begin{cor}
		Under all the assumptions of Proposition \ref{STRONGER-NORM}, there exists a $C>0$ such that for all $k \in \mathbb{N}$
		\begin{equation}\label{Eq:EstIntegrofH3NormofD}
		\me \int_0^{T\wedge \tau_k} \Phi(s) \lve \d(s) \rve^2_{3} ds\le C(\mathfrak{C}_0 + \mathfrak{C}_1+1).
		\end{equation}
	\end{cor}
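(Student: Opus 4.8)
The plan is to reduce the bound \eqref{Eq:EstIntegrofH3NormofD} to the three estimates already at our disposal --- \eqref{Eq:EstSupofDel+F}, \eqref{Eq:EstInteofNablaDelta+F} and the corollary \eqref{Eq:EstIntegralOfH2Norm} of Proposition~\ref{EST1} --- by means of a pointwise-in-time elliptic estimate for $\lve\d\rve_3$. On the event $\{s<\tau_k\}$ the solution satisfies $\d(s)\in\bX_{\frac32}$ and $g(s):=\rA_1\d(s)-f(\d(s))\in\bH^1$ for almost every $s$, and the first step is to prove that there is $C>0$ such that
\[
\lve\d\rve_3^2\le C\Bigl(\lvert\nabla(\rA_1\d-f(\d))\rvert_{\elb^2}^2+\Psi_1(\d)+\lve\d\rve_2^2+\lve\d\rve_1^{8N+2}+1\Bigr)
\]
for all such $\d$. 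Granting this, I would multiply by $\Phi(s)\in(0,1]$, integrate over $[0,T\wedge\tau_k]$, take expectations, and bound the four resulting terms separately.

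For the elliptic estimate I would use $\bX_{\frac12}=\bH^1$ (with equivalent norms) and $\bX_{\frac32}\hookrightarrow\bH^3$ to write $\lve\d\rve_3^2\le C\lve\d\rve_{\bX_{\frac32}}^2=C\,\lvert\hrrA^{\frac12}(\hrrA\d)\rvert_{\elb^2}^2\le C\lve\hrrA\d\rve_{\bH^1}^2\le C\bigl(\lve\d\rve_1^2+\lvert\nabla(\rA_1\d)\rvert_{\elb^2}^2+\lvert\rA_1\d\rvert_{\elb^2}^2\bigr)$; substituting $\rA_1\d=(\rA_1\d-f(\d))+f(\d)$ produces, besides $\lvert\nabla(\rA_1\d-f(\d))\rvert^2$ and $\lvert\rA_1\d-f(\d)\rvert^2=2\Psi_1(\d)$, the two nonlinear terms $\lvert f(\d)\rvert_{\elb^2}^2$ and $\lvert\nabla f(\d)\rvert_{\elb^2}^2$, plus $\lve\d\rve_1^2$. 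Using part~\eqref{Itemi:REM-H2} of Remark~\ref{REM-H2} together with the embedding $\bH^1\hookrightarrow\elb^{4N+2}$ (valid for $d=2$) one gets $\lvert f(\d)\rvert_{\elb^2}^2\le C(1+\lve\d\rve_1^{4N+2})$; and writing $\nabla f(\d)=f'(\d)\nabla\d$, estimating $\lvert f'(\d)\rvert\le c_1(1+\lvert\d\rvert^{2N})$, and applying the Cauchy--Schwarz inequality, $\bH^1\hookrightarrow\elb^{8N}$, the Gagliardo--Nirenberg inequality \eqref{GAG-l4} for $\nabla\d$ and Young's inequality one gets $\lvert\nabla f(\d)\rvert_{\elb^2}^2\le C(1+\lve\d\rve_1^{4N})\lve\d\rve_1\lve\d\rve_2\le\tfrac14\lve\d\rve_2^2+C(1+\lve\d\rve_1^{8N+2})$. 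Combining these inequalities gives the displayed elliptic estimate.

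It remains to bound the four contributions after integrating against $\Phi$. The term $\me\int_0^{T\wedge\tau_k}\Phi(s)\lvert\nabla(\rA_1\d-f(\d))\rvert^2\,ds$ is at most $\kappa_9\psi(T)\mathfrak{C}_1$ by \eqref{Eq:EstInteofNablaDelta+F}. Since $\Phi$ is non-increasing, $\Phi(s)\Psi_1(\d(s))\le\sup_{r\in[0,T]}\Phi(r\wedge\tau_k)\Psi_1(\d(r\wedge\tau_k))$ for $s\le T\wedge\tau_k$, so $\me\int_0^{T\wedge\tau_k}\Phi(s)\Psi_1(\d(s))\,ds\le T\kappa_9\psi(T)\mathfrak{C}_1$ by \eqref{Eq:EstSupofDel+F}; the term with $\lve\d\rve_2^2$ is at most $\me\int_0^{T\wedge\tau_k}\lve\d(s)\rve_2^2\,ds\le\bigl(\me[\int_0^{T\wedge\tau_k}\lve\d(s)\rve_2^2\,ds]^2\bigr)^{1/2}\le C(\mathfrak{C}_0+1)$ by Jensen's inequality and \eqref{Eq:EstIntegralOfH2Norm}; and $\me\int_0^{T\wedge\tau_k}\Phi(s)(\lve\d(s)\rve_1^{8N+2}+1)\,ds\le T\bigl(1+\me\sup_{s\in[0,T]}\lve\d(s\wedge\tau_k)\rve_1^{8N+2}\bigr)$, which Proposition~\ref{EST1}, the moment hypothesis \eqref{E_0} and the definition \eqref{Eq:ConstantFrakC0} of $\mathfrak{C}_0$ bound by $C\mathfrak{C}_0$, since $\mathcal{E}[\v,\d]$ dominates $\lve\d\rve_1^2$ up to an additive constant and the required power $8N+2=2(4N+1)$ lies in the admissible range of exponents in \eqref{Eq:ESTofVDinWeakerNorm}. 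Adding the four bounds and using $\mathfrak{C}_0\le\mathfrak{C}_1$ from \eqref{Eq:ConstantFrakC1} yields the claim. The delicate point is precisely the nonlinear term $\nabla f(\d)$: what makes the argument close is that, in dimension two, the Gagliardo--Nirenberg and Sobolev estimates produce from it only powers of $\lve\d\rve_1$ of order at most $2(4N+1)$ (so that the sup-moment estimate of Proposition~\ref{EST1} still applies, thanks to the moment assumption \eqref{E_0}) together with a single power $\tfrac14\lve\d\rve_2^2$ that is absorbed by the already proven \eqref{Eq:EstIntegralOfH2Norm}.
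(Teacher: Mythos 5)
Your proof is correct and follows essentially the same route as the paper: a pointwise bound of $\lve \d\rve_3^2$ by $\lve \d\rve_2^2$, $\lvert\nabla(\rA_1\d-f(\d))\rvert_{\elb^2}^2$ and $\lvert\nabla f(\d)\rvert_{\elb^2}^2$, the estimate $\lvert\nabla f(\d)\rvert_{\elb^2}^2\le C(\lve\d\rve_2^2+\lve\d\rve_1^{8N+2})$ via Remark \ref{REM-H2} and the 2D embeddings, then integration against $\Phi$ and the bounds \eqref{Eq:EstIntegralOfH2Norm}, \eqref{Eq:EstInteofNablaDelta+F} and Proposition \ref{EST1}. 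The only cosmetic difference is that you derive the elliptic inequality through the $\bX_{\frac32}$-norm, picking up an extra $\Psi_1(\d)$ term controlled by \eqref{Eq:EstSupofDel+F}, whereas the paper uses the direct triangle inequality $\lve\d\rve_3^2\le\lve\d\rve_2^2+2\lvert\nabla(\Delta\d+f(\d))\rvert_{\elb^2}^2+2\lvert\nabla f(\d)\rvert_{\elb^2}^2$.
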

	\begin{proof}
		By part \eqref{Itemi:REM-H2} of Remark \ref{REM-H2}, the H\"older inequality and $\bH^1\hookrightarrow \elb^{8N}\hookrightarrow \elb^4$ we infer that
		\begin{align*}
		\lvert \nabla f(\d) \rvert^2_{\elb^2} = & \lvert f^\prime(\d)[\nabla \d] \rvert^2_{\elb^2}\le C(( \lvert 1+ \lvert \d \rvert^{2N}) \lvert \nabla \d \rvert \rvert^2_{\elb^2}  )\\
		\le & \lvert \nabla \d\rvert^2_{\elb^4}+ \lvert \d \rvert^{4N}_{\elb^{8N}} \lvert \nabla \d \rvert^2_{\elb^4}
		\le C (\lve \d \rve^2_{2} + \lve \d \rve_1^{8N+2}).
		\end{align*}
		With this at hand we complete the proof by using  \eqref{Eq:EstIntegralOfH2Norm}, \eqref{Eq:EstInteofNablaDelta+F} and  the fact $$\lve \d \rve^2_3\le \lve \d\rve^2_2 + 2 \lvert \nabla (\Delta \d +f(\d)) \rvert^2_{\elb^2} +2 \lvert  \nabla f(\d) \rvert^2_{\elb^2}.$$
	\end{proof}
	After all these preparations we now proceed to the promised proof of Theorem \ref{GLOBAL-ST}.
	\begin{proof}[Proof of Theorem \ref{GLOBAL-ST}]
		By Theorem \ref{LC-Local-Sol} the problem \eqref{ABSTRACT-LC} has  a unique maximal local solution $((\bv,\bd); \tilde{\tau}_\infty)$. We shall prove that $\mathbb{P}\Big(\tilde{\tau}_\infty<\infty \Big)=0 $. For this aim, let $\{ \tau_k; k \in \mathbb{N}\}$ be the sequence of stopping times defined in \eqref{STOP}. 
		% % % % % % % % % % % % % % % % % % % % % % % % % % % % % % % % % % % % % % % % % % % % % % % % % % % % % % % % % % % % % % % % % % % % % % % % % %
	We first establish the following chain of inequalities
		\begin{equation*}
		\begin{split}
		\mathbb{P}\left(\tau_k<t\right)&\le  \mathbb{E}\left(1_{\{\tau_k<t\}} 1_{\{e^{\int_0^{t\wedge \tau_k}\phi(r)dr}\le k^2 \} } \right)+  \mathbb{E}\left(1_{\{\tau_k<t\}} 1_{\{e^{\int_0^{t\wedge \tau_k}\phi(r)\, dr}> k^2\} } \right),\\
		&\le \frac{1}{k^2}\mathbb{E}\biggl(1_{\{\tau_k<t\}} k^2 e^{-
			\int_0^{t\wedge \tau_k}\phi(s)ds }\biggr)  +\mathbb{E}\biggl(1_{\{\tau_k<t\}} 1_{\{\int_0^{t\wedge \tau_k} \phi(s)\, ds
			> 2 \log{k}\} } \biggr)=:\mathrm{I}+\mathrm{II}.
		\end{split}
		\end{equation*}
		Now, we estimate $\mathrm{I}$ and $\mathrm{II}$ separately.
		Firstly, from the definition of $\tau_k$ and $\Phi$ we have
		\begin{align*}
		\mathrm{I}\le &\frac{1}{k^2}\mathbb{E} \left[1_{\{\tau_k < t\}} \Phi(t\wedge \tau_k)\left( \lvert \nabla \bv(t\wedge \tau_k) \rvert^2_{\el^2} + \lve \bd(t\wedge \tau_k) \rve^2_2 \right) \right]\\
		&\qquad + \frac1{k^2}\mathbb{E} \left[1_{\{\tau_k < t\}} \Phi(t\wedge \tau_k)\int_0^{t\wedge \tau_k}\left( \lvert \rA \bv(s) \rvert^2_{\el^2} + \lve \bd(s) \rve^2_3 \right)\, ds \right]\\
		\le & \frac{1}{k^2}\mathbb{E} \left[\Phi(t\wedge \tau_k)\left( \lvert \nabla \bv(t\wedge \tau_k) \rvert^2_{\el^2}  + \lve \bd(t\wedge \tau_k) \rve^2_2\right) \right]+\frac{1}{k^2}\mathbb{E} \left[\int_0^{t\wedge \tau_k}\Phi(s)\left( \lvert \rA \bv(s) \rvert^2_{\el^2} + \lve \bd(s) \rve^2_3 \right)\, ds \right]
		\end{align*}
		From \eqref{Eq:Bigdanh2withH1norm},  \eqref{Eq:ESTofVDinWeakerNorm} , \eqref{Eq:EstInteofNablaDelta+F} and \eqref{Eq:EstIntegrofH3NormofD} we infer that there exists a constant $C>0$ such that for all $k \in \mathbb{N}$
		\begin{equation*}
		\mathrm{I}\le \frac{1}{k^2}C (\mathfrak{C}_0+\mathfrak{C}_1+1).
		\end{equation*}
		Secondly, we estimate $\mathrm{II}$ as follows
		\begin{align*}
		\mathrm{II}=& \mathbb{E}\left(1_{\{\tau_k < t\}} 1_{\{\int_0^{t\wedge \tau_k} \phi(r)\, dr > 2\log{k} \} }\right)
		\le  \int_{\{\tau_k<t\} \cap \{\int_0^{t\wedge \tau_k} \phi(r)\, dr>2\log{k} \}}\frac{ \int_0^{t\wedge \tau_k} \phi(r)\, dr}{2\log{k}} d\mathbb{P}\\
		\le & \frac1{2\log{k}} \int_{\{\tau_k < t\}} \int_0^{t\wedge \tau_k} \phi(r)\, dr\;\;\; d\mathbb{P}
		\le \frac{1}{2\log{k}} \mathbb{E} \int_0^{t\wedge \tau_k} \phi(r)\, dr.
		\end{align*}
		Now, from \eqref{Eq:ESTofVDinWeakerNorm} and \eqref{Eq:EstIntegralOfH2Norm} we infer that there exists a constant $C>0$ such that for all $k \in \mathbb{N}$
		\begin{align*}
		&\E\int_0^{t\wedge \tau_k} \lvert \v(s)\rvert^2_{\el^2}\lvert \nabla \v(s)\rvert_{\el^2}^2 ds
		\le \frac12 \E\sup_{0\le s\le t\wedge \tau_k}\lvert \v(s)\rvert^4_{\el^2} + \frac12 \E \biggl[\int_0^{t\wedge \tau_k}
		\lvert \nabla \v(s)\rvert_{\el^2}^2  ds
		\biggr]^2\le C (\mathfrak{C}_0+1),\\
		& \E\int_0^{t\wedge \tau_k}(1+1 \lve \d(s)\rve^{2N}_{\h^1})^2 ds\le C (\mathfrak{C}_0+1),\\
		& \E \int_0^{t\wedge \tau_k} \Vert \bd(s) \Vert^2_1 \Vert \bd (s) \Vert^2_2 ds \le \frac12 \E \sup_{0\le s\le t\wedge \tau_k} \Vert  \bd(s)\Vert^4_1 + \frac12 \E \biggl[\int_0^{t\wedge \tau_k} \Vert \bd(s) \Vert^2_2 ds \biggr]^2\le C (\mathfrak{C}_0+1).
		\end{align*}
		Thus, there exists a constant $C>0$ such that for all $k\in \mathbb{N}$
		\begin{equation*}
		\mathrm{II}  \le \frac1{2\log{k}} \E \int_0^{t\wedge \tau_k} \phi(s)ds\le \frac{C (\mathfrak{C}_0+1)}{\log{k}}.
		\end{equation*}
		Collecting the information about $\mathrm{I}$ and $\mathrm{II}$ together, we infer that
		\begin{equation*} %\label{Prob-tau}
		\begin{split}
		\lim_{k\rightarrow \infty} \mathbb{P}\left(\tau_k<t\right)&\le \lim_{k\rightarrow \infty} C (\mathfrak{C}_0+\mathfrak{C}_1+1) \left[\frac 1{k^2} +\frac{1}{\log{k}} \right]=0.
		\end{split}
		\end{equation*}
		% % % % % % % % % % % % % % % % % % % % % % % % % % % % % % % % % % % % % % % % % % % % % % %
		Now, we easily infer from the last estimate and part \eqref{THM-ii}  of Theorem \ref{LC-Local-Sol} that $\mathbb{P}\Big(\tilde{\tau}_\infty<\infty \Big)=0 $. This completes the proof of Theorem \ref{GLOBAL-ST}. 
	\end{proof}
	%%%%%%%%%%%%%%%%%%%%%%
	%%%%%%%%%%%%%%%%%%%%%%%%
	%%%%%%%%%%%%%%%%%%%%%%%%%%%
	\section{Basic estimates for the solution  $(\bv,\bd)$}\label{AppB}
	Throughout this section,   $((\bv,\bd);\tilde{\tau}_\infty)$ is the maximal local solution to problem \eqref{ABSTRACT-LC} from Theorem \ref{LC-Local-Sol} and $\{\tau_k: k \in \mathbb{N} \}$ is the sequence defined in \eqref{STOP}.  The first and second subsections are devoted to the proofs of Proposition \ref{EST1} and Proposition \ref{STRONGER-NORM}, respectively. 
	\subsection{Proof of Proposition \ref{EST1}}
	Before proceeding to the actual proof of  Proposition \ref{EST1} we state and prove the following result.
	\begin{lem}
		Let $\bh \in \bW^{1,4}$.Then,  there exists $C=C(\lVert \bh\rVert_{\bW^{1,4}})>0$ such that for all  $\mathbf{d}\in \rH^1$ 
		\begin{equation}\label{Eq:EstimateItoStratoCorrect}
		\lvert \nabla G(\mathbf{d}) \rvert^2_{\elb^2} + \langle \nabla \mathbf{d}, \nabla G^2(\mathbf{d}) \rvert \le C \lVert \mathbf{d} \rVert^2_{1}.
		\end{equation}
	\end{lem}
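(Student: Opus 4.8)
The plan is to reduce everything, via the product rule, to pointwise estimates on $G(\mathbf{d})=\mathbf{d}\times\bh$ and $G^2(\mathbf{d})=(\mathbf{d}\times\bh)\times\bh$ and then to integrate using Hölder's inequality together with the Gagliardo--Nirenberg/Sobolev embeddings $\rH^1\hookrightarrow\elb^4$ (coming from \eqref{GAG-l4}) and $\bW^{1,4}\hookrightarrow\elb^\infty$ (coming from \eqref{GAG-LInf}); the one structural point to exploit is that the top-order contribution to $\langle\nabla\mathbf{d},\nabla G^2(\mathbf{d})\rangle$ is non-positive. It is enough to argue for $\mathbf{d}\in\rH^2$ and then pass to the limit by density of $\rH^2$ in $\rH^1$, since both sides of \eqref{Eq:EstimateItoStratoCorrect} depend continuously on $\mathbf{d}\in\rH^1$ once $\bh$ is fixed; alternatively one simply observes that the distributional product rule applies because each factor lies in a suitable Lebesgue space.

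For the first summand I would write $\nabla G(\mathbf{d})=\nabla\mathbf{d}\times\bh+\mathbf{d}\times\nabla\bh$ and estimate, using $\lvert\mathbf{a}\times\mathbf{b}\rvert\le\lvert\mathbf{a}\rvert\lvert\mathbf{b}\rvert$, Hölder's inequality, and the two embeddings above,
\[
\lvert\nabla G(\mathbf{d})\rvert_{\elb^2}\le\lVert\bh\rVert_{\elb^\infty}\lvert\nabla\mathbf{d}\rvert_{\elb^2}+\lvert\mathbf{d}\rvert_{\elb^4}\lvert\nabla\bh\rvert_{\elb^4}\le C\bigl(\lVert\bh\rVert_{\bW^{1,4}}\bigr)\lVert\mathbf{d}\rVert_1 ,
\]
and squaring gives the required bound for $\lvert\nabla G(\mathbf{d})\rvert_{\elb^2}^2$.

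For the second summand I would expand $\nabla G^2(\mathbf{d})=(\nabla\mathbf{d}\times\bh)\times\bh+(\mathbf{d}\times\nabla\bh)\times\bh+(\mathbf{d}\times\bh)\times\nabla\bh$, so that $\langle\nabla\mathbf{d},\nabla G^2(\mathbf{d})\rangle$ becomes a sum of three terms. In the first term the identity $(\mathbf{a}\times\bh)\times\bh=(\mathbf{a}\cdot\bh)\bh-\lvert\bh\rvert^2\mathbf{a}$, applied to each partial derivative $\mathbf{a}=\partial_i\mathbf{d}$ and summed over $i$, gives $\sum_i\partial_i\mathbf{d}\cdot[(\partial_i\mathbf{d}\times\bh)\times\bh]=-\lvert\nabla\mathbf{d}\times\bh\rvert^2$ pointwise, hence $\langle\nabla\mathbf{d},(\nabla\mathbf{d}\times\bh)\times\bh\rangle=-\lvert\nabla\mathbf{d}\times\bh\rvert_{\elb^2}^2\le0$, so this term may simply be discarded (in fact it exactly cancels the contribution $\lvert\nabla\mathbf{d}\times\bh\rvert_{\elb^2}^2$ hidden inside $\lvert\nabla G(\mathbf{d})\rvert_{\elb^2}^2$, which is the analytic mechanism behind the Stratonovich correction, but this is not needed for the stated non-sharp bound). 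Each of the two remaining terms carries a factor $\nabla\bh$ and is controlled, exactly as above, by
\[
2\lVert\bh\rVert_{\elb^\infty}\lvert\nabla\bh\rvert_{\elb^4}\lvert\nabla\mathbf{d}\rvert_{\elb^2}\lvert\mathbf{d}\rvert_{\elb^4}\le C\bigl(\lVert\bh\rVert_{\bW^{1,4}}\bigr)\lVert\mathbf{d}\rVert_1^2 .
\]
Summing the bounds for the two summands yields \eqref{Eq:EstimateItoStratoCorrect}. I do not expect a genuine obstacle: the only care needed is in the sign computation for the leading term, where the matrix structure of $\nabla\mathbf{d}$ must be tracked correctly, and in noting that the embeddings invoked are valid in the relevant dimensions ($d\le3$ for $\bW^{1,4}\hookrightarrow\elb^\infty$ and $d\le4$ for $\rH^1\hookrightarrow\elb^4$).
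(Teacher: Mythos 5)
Your proof is correct and follows essentially the same route as the paper: expand $\nabla G(\mathbf{d})$ and $\nabla G^2(\mathbf{d})$ by the product rule, use the cross-product identity to handle the top-order term $\langle \partial_i\mathbf{d},(\partial_i\mathbf{d}\times\bh)\times\bh\rangle$, and close with H\"older together with $\bH^1\hookrightarrow\elb^4$ and $\bW^{1,4}\hookrightarrow\elb^\infty$. The only (harmless) difference is that the paper cancels the two top-order contributions against each other exactly, whereas you bound $\lvert\nabla\mathbf{d}\times\bh\rvert_{\elb^2}$ directly by $\lVert\bh\rVert_{\elb^\infty}\lvert\nabla\mathbf{d}\rvert_{\elb^2}$ and simply discard the non-positive Stratonovich term.
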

	\begin{proof}
	Let $\bh \in \bW^{1,4}$. Using the H\"older inequality, the embeddings $\bH^1 \hookrightarrow \elb^4$ and $\bW^{1,4}\hookrightarrow \el^\infty$, and
		\begin{equation}\label{Eq:MixedScalCrossProducts}
		\ba \cdot [(\mathbf{b} \times \mathbf{c}) \times \mathbf{d}   ]= - (\ba \times \mathbf{d}) \cdot (\mathbf{b} \times \mathbf{c}), \;\; \forall\;\; \ba, \mathbf{b}, \mathbf{c}, \mathbf{d}\in \err^3,
		\end{equation}
		we infer that for all $\mathbf{d}\in \rH^1$ and $i \in \{1,2\}$
%		Secondly, fixing $i \in \{1,2\}$, plugging  \eqref{Eq:MixedScalCrossProducts} in the expansion of $\lvert \partial_i (\mathbf{d}\times \bh ) \rvert^2_{\elb^2}+ \langle \partial_i \mathbf{d} , \partial_i [(\mathbf{d}\times \bh) \times \bh] \rangle$,  using the H\"older inequality and the Sobolev embeddings $\bH^1\subset \elb^4$ and $\bW^{1,4}\subset \el^\infty$ yield
		\begin{equation*}
		\begin{split}
		\lvert \partial_i (\mathbf{d}\times \bh) \rvert^2_{\elb^2} + \langle \partial_i \mathbf{d}, \partial_i  [(\mathbf{d} \times \bh) \times \bh ] \rangle =&  \lvert \mathbf{d} \times \partial_i \bh \rvert^2_{\elb^2}+ \langle \partial_i \mathbf{d} \times \bh, \mathbf{d} \times \partial_i \bh \rangle + \langle \partial_i \mathbf{d}, (\mathbf{d} \times \bh)\times \partial_i \bh \rangle \\
		&\le \lvert \partial_i \mathbf{d} \rvert^2_{\elb^4} \lvert \partial_i \bh \rvert^2_{\elb^4} + \lvert \partial_i \mathbf{d} \rvert^2_{\elb^2} \lvert \mathbf{d} \rvert^2_{\elb^4} \lvert \bh \rvert_{\elb^\infty} \lvert \partial_i \bh \rvert_{\elb^4} \le C \lVert \mathbf{d} \rVert^2_{1} \lvert \bh \rvert^2_{\bW^{1,4}}.
		\end{split}
		\end{equation*}
		Hence, summing over $i$ from $1$ to $2$  imply the desired inequality \eqref{Eq:EstimateItoStratoCorrect}.
	\end{proof}
	We now give the promised of the proposition.
	\begin{proof}[Proof of Proposition \ref{EST1} ]
		Without loss of generality (Wlog) we only give a proof for $p=2(4N+1)$. Let $\bh \in \bW^{1,4}$. Throughout this proof $C=C(\lVert \bh \rVert_{p, \bW^{1,4}})>0$ is a  constant which may change from one term to the next one.   Let $k \in \mathbb{N}$ be fixed and $\tau_k$ be defined by \eqref{STOP} . 
		
		Firstly, let $\Lambda:\bH^1 \to [0,\infty)$ be the map defined by
		\begin{equation}
		\Lambda(\mathbf{d})= \frac12 \lvert \mathbf{d} \rvert^2_{\elb^2} + \lvert \nabla \mathbf{d} \rvert^2_{\elb^2} +\frac12 \int_\MO F(\lvert \mathbf{d}(x) \rvert^2) dx, \;\; \mathbf{d}\in \bH^1.
		\end{equation}
		By Assumption \ref{eqn-f} and \cite[Lemma 8.10]{Brz+Millet_2012} the map $\Lambda(\cdot)$ is twice Fr\'echet differentiable. Moreover, elementary calculations and \eqref{Eq:MixedScalCrossProducts} imply 
%		 using we have  and its first and second derivatives satisfy
%		\begin{align}
%		& \Lambda^\prime(\mathbf{d})[\mathbf{g}]= \langle \mathbf{d}, \mathbf{g} \rangle +\langle \nabla \mathbf{d}, \nabla \mathbf{g} \rangle + \langle \mathbf{g}, f(\mathbf{d})\rangle,\\
%		& \Lambda^{\prime \prime}[\mathbf{g}, \mathbf{f}]= \langle \mathbf{g}, \mathbf{f} \rangle + \langle \nabla \mathbf{g} , \nabla \mathbf{f} \rangle + \langle \mathbf{g}, 2 \tilde{f}^\prime(\lvert \mathbf{d} \rvert^2) (\mathbf{d}\cdot \mathbf{f}) \mathbf{d}+ \tilde{f}(\lvert \mathbf{d} \rvert^2) \mathbf{f}\rangle.
%		\end{align}
%		These properties, the fact $f(\mathbf{d})=\tilde{f}(\lvert \mathbf{d} \rvert^2)\mathbf{d}$ and the identity \eqref{Eq:MixedScalCrossProducts}
%		imply that
		\begin{align}
		& \Lambda^\prime(\mathbf{d})[G(\mathbf{d})]= \langle \nabla \mathbf{d}, \mathbf{d}\times \nabla \mathbf{h} \rangle,\label{Eq:PsiDerGd}\\
		& \frac12 \Lambda^\prime (G^2(\mathbf{d})) + \frac12\Lambda^{\prime \prime}[G(\mathbf{d}), G(\mathbf{d}) ]=\frac12  \lvert \nabla G(\mathbf{d}) \rvert^2_{\elb^2} + \frac12 \langle \nabla \mathbf{d}, \nabla G^2(\mathbf{d}) \rangle. \label{Eq:ItoStrato-Correct}
		\end{align}
		We also observe that if $\bu \in \ve$ such that $\Div \bu=0$, then
		\begin{equation}
		\langle \bu \cdot \nabla \mathbf{d}, f(\mathbf{d}) \rangle = \frac12 \int_\MO \bu(x)\cdot \nabla F(\mathbf{d}(x)) dx=0.\label{Eq:Divvimply}
		\end{equation}
		
		Secondly, applying the It\^o formula to $\frac12 \lvert \v(t\wedge \tau_k)\rvert^2_{\el^2} + \Lambda(\d(t\wedge \tau_k)) $ and using \eqref{tild-b-0}, \eqref{B3}, \eqref{G1-eq-Md},  \eqref{Eq:PsiDerGd},  \eqref{Eq:ItoStrato-Correct}   and  \eqref{Eq:Divvimply} yield
		\begin{equation}\label{Eq:ResultIto}
		\begin{split}
		&\mathcal{E}[\v,\d](t\wedge \tau_k)-\mathcal{E}[\v,\d](0) + \int_0^{t\wedge \tau_k} \mathscr{D}[\v,\d] (s)\, ds - \int_0^{t\wedge \tau_k} \langle \d(s) , f(\d(s)) \rangle ds-\frac12 \int_0^{t\wedge \tau_k}   \lvert \nabla G(\d(s)) \rvert^2_{\elb^2}  ds \\
		& = \frac12 \int_0^{t\wedge \tau_k}  \langle \nabla \bd(s), \nabla G^2(\bd(s)) \rangle ds +
		\int_0^{t\wedge \tau_k} \langle \v(s), S(\v(s)) dW_1(s) \rangle + \int_0^{t\wedge \tau_k} \langle \nabla \d(s), \bd(s) \times \nabla \bh\rangle dW_2.
		\end{split}
		\end{equation}
		Before proceeding further, we should observe that thanks to Assumption \ref{eqn-f}  and \cite[Lemma 8.7]{Brz+Millet_2012}  we infer that there exists  exists $c>0$ independent of $k$  such that 
%		and one has
%		\begin{equation*}
%		-\langle \tilde{f}(\vert \d\vert^2)\d, \d\rangle= -\int_{\mathcal{O}} \tilde{f}(\vert \d(x)\vert^2)\vert \d(x)\vert^2 dx
%		=-\int_{\mathcal{O}} \sum_{l=1}^{N+1} a_{l-1} (\vert \d(x)\vert^2)^{l} dx,
%		\end{equation*}
%		which along with \cite[Lemma 8.7]{Brz+Millet_2012}  imply that there 
		\begin{equation*}
		\frac{-a_{N+1}}{2} \int_{\mathcal{O}} \lvert \d(x)\rvert^{2N+2} dx-c \int_{\mathcal{O}} \lvert \d(x)\rvert^2 dx \le \langle -f(\d), \d\rangle.
		\end{equation*}
		Hence,  plugging this inequality and \eqref{Eq:EstimateItoStratoCorrect} into \eqref{Eq:ResultIto} implies
		\begin{equation*}
		\begin{split}
		&\mathcal{E}[\v,\d](t\wedge \tau_k)-\mathcal{E}[\v,\d](0) + \int_0^{t\wedge \tau_k} \mathscr{D}[\v,\d] (s)\, ds -\frac{a_{N+1}}{2} \int_0^{t\wedge \tau_k} \lvert \d(s) \rvert^{2N+2}_{\el^{2N+2}} ds\\
		&  \le  \int_0^{t\wedge \tau_k} \langle \v(s), S(\v(s)) dW_1(s) \rangle + \int_0^{t\wedge \tau_k} \langle \nabla \d(s), \bd(s) \times \nabla \bh\rangle dW_2+ \frac12 \int_0^{t\wedge \tau_k}  \mathcal{E}[\v,\d](s)ds .
		\end{split}
		\end{equation*}
		Thirdly, by taking the supremum over $s\in [0,t]$, raising to the power $p$, taking the mathematical expectation to both sides of the above inequality and applying the H\"older inequality  we obtain
		\begin{equation}\label{Eq:PowerPofNRJ}
		\begin{split}
		& \me \sup_{s\in [0,t]}\lvert \mathcal{E}[\v,\d](s\wedge \tau_k)\rvert^p + \me \left[ \int_0^{t\wedge \tau_k}\left(\mathscr{D}[\v,\d](s) - \frac{a_{N+1}}{2} \lvert \bd(s) \rvert^{2N+2}_{\elb^{2N+2}} \right)ds     \right]^p\\
		& \le C\mathbb{E}\lvert \mathscr{E}[\v,\d] (0)\rvert^p+Ct^{p-1}\me \int_0^{t\wedge \tau_k} \rvert \mathcal{E}[\v,\bd](s)\lvert^p ds + C \me\sup_{s\in [0,t]} \biggl\lvert \int_0^{s\wedge \tau_k}\langle \v(s), S(\v(s)) dW_1(s)   \biggr\rvert^p\\
		& \qquad +
		C \me\sup_{s\in [0,t]} \biggl\lvert \int_0^{s\wedge \tau_k}\langle \nabla \d(s), \bd(s) \times \bh \rangle dW_2(s)   \biggr\rvert^p.
		\end{split}
		\end{equation}
		For the time being let us assume that there exists $C=C(p,\lVert \bh \rVert_{\bW^{1,4}})>0$ such that
		\begin{equation}
		\begin{split}
		& \mathscr{M}(t\wedge \tau_k):=C \me\sup_{s\in [0,t]} \biggl\lvert \int_0^{s\wedge \tau_k}\langle \v(s), S(\v(s)) dW_1(s)   \biggr\rvert^p+
		C \me\sup_{s\in [0,t]} \biggl\lvert \int_0^{s\wedge \tau_k}\langle \nabla \d(s), \bd(s) \times \bh \rangle dW_2(s)   \biggr\rvert^p\\
		&\qquad \qquad \le \frac12 \me \sup_{s\in [0,t]} \lvert \mathcal{E}[\v,\d](s\wedge \tau_k) \rvert^p + C_1t^\frac{p}2 + C_1 t^{\frac{p-2}{2}}\me \int_0^{t\wedge \tau_k} \lvert \mathscr{E}[\v,\d](s)\rvert^p ds,\label{Eq:EstPowerPofMart}
		\end{split}
		\end{equation}
		from which along with \eqref{Eq:PowerPofNRJ} we infer that there exists $C=C(p,\lVert \bh \rVert_{\bW^{1,4}})>0$  such that
		\begin{equation}%\label{Eq:PowerPofNRJ}
		\begin{split}
		& \me \sup_{s\in [0,t]}\lvert \mathcal{E}[\v,\d](s\wedge \tau_k)\rvert^p + 2 \me \left[ \int_0^{T\wedge \tau_k}\left(\mathscr{D}[\v,\d](s) - \frac{a_{N+1}}{2} \lvert \bd(s) \rvert^{2N+2}_{\elb^{2N+2}} \right)ds     \right]^p\\
		& \le C \mathbb{E}\lvert \mathscr{E}[\v,\d] (0)\rvert^p+ Ct^\frac{p}2  + C(t^{p-1}+ t^{\frac{p-2}{2}}) \me \int_0^{t\wedge \tau_k} \rvert \mathcal{E}[\v,\bd](s)\lvert^p ds.
		\end{split}
		\end{equation}
		We then apply the Gronwall lemma and obtain the desired \eqref{Eq:ESTofVDinWeakerNorm}.\\ 
		Thus, it remains to prove  \eqref{Eq:EstPowerPofMart}. For this purpose, by applying the Burkholder-Davis-Gundy (BDG), the H\"older and Young  inequalities, and Assumption \ref{HYPO-ST} (mainly \eqref{Eq:Hypo-ST})  we infer that 
		\begin{equation*}
		\begin{split}
		\mathscr{M}(t\wedge \tau_k)& \le C_4 \me \left[\int_0^{t\wedge \tau_k}  \lvert \v(s) \rvert^2_{\el^2} \lvert S(\v(s))  \rvert^2_{\mathcal{T}_2} \right]^\frac{p}2+C_4  \me \left[\int_0^{t\wedge \tau_k}  \lvert \nabla \d(s) \rvert^2_{\el^2} \lvert \d(s) \times \nabla \bh \rvert^2_{\elb^2} \right]^\frac{p}2\\
		& \le C_4  t^\frac{p-2}{2} \me \int_0^{t\wedge \tau_k} \rvert \mathcal{E}[\v,\d](s)\rvert^{\frac{p}{2}} [\lvert S(\v(s))  \rvert^2_{\mathcal{T}_2} + \lvert \d(s) \times \nabla \bh \rvert^2_{\elb^2}  ]^\frac{p}{2} ds\\
%		&\le \frac12 \me \sup_{s\in [0,t]} \me \rvert \mathcal{E}[\v,\d](s\wedge \tau_k)\rvert^{p} +C_5 t^{p-2} \me  \int_0^{t\wedge \tau_k} [\lvert S(\v(s))  \rvert^2_{\mathcal{T}_2} + \lvert \d(s) \times \nabla \bh \rvert^2_{\elb^2}  ]^p ds\\
		&\le \frac12 \me \sup_{s\in [0,t]} \me \rvert \mathcal{E}[\v,\d](s\wedge \tau_k)\rvert^{p} +C_5 t^{p-2} \me  \int_0^{t\wedge \tau_k} [1+ \lvert\v(s)\rvert^2_{\el^2} + \lvert \d(s) \rvert^2_{\elb^4} \lvert  \nabla \bh \rvert^2_{\elb^4}  ]^p ds.
		\end{split}
		\end{equation*}
		The last line and $\bH^1\hookrightarrow \elb^4$  imply \eqref{Eq:EstPowerPofMart}. This  completes the proof of Proposition \ref{EST1}.
	\end{proof}
	\subsection{Proof of Proposition \ref{STRONGER-NORM}}
	Before giving the promised proof we firstly state and prove two important lemmata. The first one is inspired  by  \cite[Lemma 6.3]{ZB+MO}.
	\begin{lem}\label{Lem:Abstract-Lem-L(x)}
		Let $V_1, H_1, \tilde{V}_1$ be there separable Hilbert spaces such that the embeddings $V_1\hookrightarrow H_1\hookrightarrow \tilde{V}_1$  are dense and continuous. Let $\mathcal{A}: V_1 \to \tilde{V}_1$ be a bounded linear map and  $\mathfrak{f}: [0,T] \to H_1$ and $g: [0,T] \to V_1$ measurable and progressively measurable respectively  such that
		\begin{equation}\label{Eq:Integrabilitoffandg}
		\mathbb{E}\int_0^T[ \lvert \mathfrak{f}(t) \rvert^2_{\tilde{V}_1} + \lvert g(t) \rvert^2_{H_1} ]dt <\infty.
		\end{equation}
Let $x: [0,T]\times \Omega \to V_1$ be a progressively measurable and $H_1$-continuous process  such that 
	\begin{align}
	&	\mathbb{E}\int_0^T \lvert x(s) \rvert^2_{V_1}ds<\infty,\label{Eq:Integrabilityofx}\\
&	x(t) =x(0) + \int_0^t \mathcal{A}x(s)\, ds + \int_0^t \mathfrak{f}(s)\, ds + \int_0^t g(s) dW_2(s)\, 	\text{ for all $t$  $\mathbb{P}$-a.s..} 	\label{Eq:Equationforx}
	\end{align}
	Now, let $V_2, H_2$ be two separable Hilbert spaces and $V_2^\ast$ the dual of $V_2$. We identify $H_2$ with its dual and we assume that the embeddings $V_2\hookrightarrow H_2\hookrightarrow V_2^\ast $ are continuous and dense. Let $\mathcal{B}: V_2 \to V_2^\ast$ be a bounded linear map. Let $L: \tilde{V}_1 \to V_2^\ast$ be a twice Fr\'echet differentiable map such that:
	\begin{enumerate}
		\item $L(V_1)\subset V_2$ and $L(H_1)\subset H_2$,
		\item there exists $\mathcal{H}: V_1 \to H_2$ such that for every $z\in V_1$
		\begin{equation}\label{Eq:LprimeAz=Bz+H(z)}
		L^\prime(z)[\mathcal{A}z]=\mathcal{B}L(z) +\mathcal{H}(z).
		\end{equation}
		\item The map $L^{\prime\prime}$ is  bounded on balls.
	\end{enumerate}
	
	Then for every $t\in [0,T]$, $\mathbb{P}$-a.s. the following identity holds in $V_2^\ast$
	\begin{equation}\label{Eq:IdentityforL(x)inDualV1}
\begin{split}
	L(x(t)) =L(x(0)) +\int_0^t\mathcal{B}L(x(s))\, ds + \int_0^t\Bigl( L^\prime(x(s))[\mathfrak{f}(s)] +\mathcal{H}(x(s)) \Bigr)\, ds \\+ \frac12 \int_0^t L^{\prime\prime}(x(s))[g(s), g(s)] ds+ \int_0^t L^\prime(x(s))[g(s)]dW_2(s).
\end{split}
	\end{equation}
	\end{lem}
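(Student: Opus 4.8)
The plan is to reduce the statement to a standard infinite-dimensional It\^o formula by a finite-dimensional (Galerkin) approximation together with a passage to the limit, using hypotheses (1)--(3) on $L$ to control all the nonlinear terms. First I would fix an orthonormal basis $(e_j)_{j\ge 1}$ of $H_1$ consisting of elements of $V_1$ and let $\pi_n$ denote the orthogonal projection of $H_1$ onto $H_1^n:=\mathrm{span}\{e_1,\dots,e_n\}$. Applying $\pi_n$ to \eqref{Eq:Equationforx} gives, for the finite-dimensional process $x_n:=\pi_n x$, the identity
\begin{equation*}
x_n(t)=x_n(0)+\int_0^t \pi_n\mathcal{A}x(s)\,ds+\int_0^t\pi_n\mathfrak{f}(s)\,ds+\int_0^t \pi_n g(s)\,dW_2(s),
\end{equation*}
which is an It\^o process in the finite-dimensional Hilbert space $H_1^n$. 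Since $L$ is twice Fr\'echet differentiable on $\tilde V_1\supset H_1^n$, the classical finite-dimensional It\^o formula applies to the real process $t\mapsto \langle L(x_n(t)),\phi\rangle$ for each $\phi\in V_2$, giving
\begin{equation*}
\begin{split}
L(x_n(t))=L(x_n(0))+\int_0^t L^\prime(x_n(s))[\pi_n\mathcal{A}x(s)]\,ds+\int_0^t L^\prime(x_n(s))[\pi_n\mathfrak{f}(s)]\,ds\\
+\frac12\int_0^t L^{\prime\prime}(x_n(s))[\pi_n g(s),\pi_n g(s)]\,ds+\int_0^t L^\prime(x_n(s))[\pi_n g(s)]\,dW_2(s),
\end{split}
\end{equation*}
interpreted weakly against test functions in $V_2$, hence as an identity in $V_2^\ast$.

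Next I would pass to the limit $n\to\infty$ in each term. From \eqref{Eq:Integrabilityofx} we have $x_n\to x$ in $L^2(\Omega\times[0,T];V_1)$ and, along a subsequence, $x_n(s)\to x(s)$ in $V_1$ for a.e.\ $(s,\omega)$; since $L$ and $L^\prime$ are continuous and, by (3), $L^{\prime\prime}$ is bounded on balls, the integrands $L^\prime(x_n)[\pi_n\mathfrak{f}]$ and $L^{\prime\prime}(x_n)[\pi_n g,\pi_n g]$ converge in $H_2$ (resp.\ $V_2^\ast$) for a.e.\ $(s,\omega)$, and one controls their size using \eqref{Eq:Integrabilitoffandg}, $\pi_n g\to g$ in $V_1$, and the boundedness on balls together with a stopping-time argument (stop the process when $\lvert x(s)\rvert_{V_1}$ exceeds $k$, pass to the limit, then let $k\to\infty$) so that dominated convergence gives convergence of the corresponding Bochner integrals. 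For the stochastic term, $L^\prime(x_n)[\pi_n g]\to L^\prime(x)[g]$ in $L^2(\Omega\times[0,T];H_2)$ by the same pointwise-plus-domination argument, whence the It\^o integrals converge in $L^2(\Omega;C([0,T];H_2))$ after the stopping-time localisation. The genuinely delicate term is $\int_0^t L^\prime(x_n(s))[\pi_n\mathcal{A}x(s)]\,ds$: here $\pi_n\mathcal{A}x(s)$ does \emph{not} converge in any space where $L^\prime$ is continuous, so one cannot pass to the limit directly. This is where hypothesis (2), the ``chain-rule'' identity \eqref{Eq:LprimeAz=Bz+H(z)}, is essential.

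The main obstacle, therefore, is handling the $\mathcal A$-term, and the plan to overcome it is the following. One first shows that \eqref{Eq:LprimeAz=Bz+H(z)}, which is assumed pointwise for $z\in V_1$, upgrades to the identity
\begin{equation*}
L^\prime(x_n(s))[\pi_n\mathcal{A}x(s)]=L^\prime(x_n(s))[\mathcal{A}x_n(s)]+L^\prime(x_n(s))[\pi_n\mathcal{A}x(s)-\mathcal{A}x_n(s)],
\end{equation*}
and applies \eqref{Eq:LprimeAz=Bz+H(z)} with $z=x_n(s)\in V_1$ to rewrite the first summand as $\mathcal{B}L(x_n(s))+\mathcal{H}(x_n(s))$; for the error term $\pi_n\mathcal{A}x-\mathcal{A}x_n$ one uses that $\mathcal A:V_1\to\tilde V_1$ is bounded linear and that $\pi_n$ is a uniformly bounded family commuting with $\mathcal A$ in the appropriate sense, so that $\pi_n\mathcal{A}x(s)-\mathcal{A}x_n(s)=(\pi_n\mathcal A-\mathcal A\pi_n)x(s)\to 0$ in $\tilde V_1$ for a.e.\ $(s,\omega)$, and $L^\prime(x_n(s))$, being a bounded family of operators $\tilde V_1\to V_2^\ast$ on balls, kills this error in $L^1(\Omega\times[0,T];V_2^\ast)$ after the usual stopping-time localisation. (Alternatively, and more robustly, one can avoid the commutator by testing \eqref{Eq:IdentityforL(x)inDualV1} against $\phi\in V_2$, integrating by parts to move $\mathcal A$ onto a smooth test function, and only then passing to the limit.) With the $\mathcal A$-term thus identified as $\mathcal{B}L(x_n(s))+\mathcal{H}(x_n(s))$ plus a vanishing error, and $\mathcal B L(x_n)\to\mathcal B L(x)$ in $V_2^\ast$ since $L(x_n)\to L(x)$ in $V_2$ (using $L(V_1)\subset V_2$, continuity of $L$, and $\mathcal B$ bounded) and $\mathcal H(x_n)\to\mathcal H(x)$ in $H_2$, one collects all the limits and obtains \eqref{Eq:IdentityforL(x)inDualV1}, first for each fixed $t$ $\mathbb P$-a.s.\ and then, since both sides have $V_2^\ast$-continuous versions, simultaneously for all $t\in[0,T]$.
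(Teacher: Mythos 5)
Your overall strategy (finite-dimensional approximation plus a limit passage) is not the route the paper takes, and as written it has a genuine gap at exactly the point you flag as delicate: the drift term involving $\mathcal{A}$. First, $\pi_n$ is the orthogonal projection of $H_1$, while \eqref{Eq:Equationforx} is an identity in $\tilde V_1$ and $\mathcal{A}x(s)$ in general lies only in $\tilde V_1$, not in $H_1$; so $\pi_n\mathcal{A}x(s)$ is not even defined unless you extend $\pi_n$ to $\tilde V_1$, and nothing in the hypotheses guarantees such an extension exists, is uniformly bounded on $\tilde V_1$, or converges strongly to the identity there. Second, even granting an extension, your key claim that $(\pi_n\mathcal{A}-\mathcal{A}\pi_n)x(s)\to 0$ in $\tilde V_1$ has no justification: an $H_1$-orthogonal projection has no reason to commute (even asymptotically) with a bounded operator $\mathcal{A}:V_1\to\tilde V_1$, and $\mathcal{A}\pi_n x(s)\to\mathcal{A}x(s)$ would require $\pi_n x(s)\to x(s)$ in the $V_1$-norm, which an $H_1$-orthonormal basis does not provide. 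Your parenthetical fallback (moving $\mathcal{A}$ onto a test function by integration by parts) is not carried out and would need adjoint/symmetry information about $\mathcal{A}$ that the lemma does not supply. There are also secondary unjustified steps in the limit passage: the lemma assumes no continuity of $\mathcal{H}$, so $\mathcal{H}(x_n)\to\mathcal{H}(x)$ is not available, and $L$ is continuous only as a map into $V_2^\ast$, so ``$L(x_n)\to L(x)$ in $V_2$'' does not follow from the hypotheses.

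The paper's proof avoids the approximation entirely and thereby all of these issues: fix $\varphi\in V_2$, consider the scalar functional $L_\varphi(z)={}_{V_2^\ast}\langle L(z),\varphi\rangle_{V_2}$ on $H_1$, note that the hypotheses on $L$ make $L_\varphi$, $L_\varphi^\prime$, $L_\varphi^{\prime\prime}$ continuous and locally bounded, and apply the variational It\^o formula of \cite[Theorem 3.2]{Pardoux} to $L_\varphi(x(t))$, using \eqref{Eq:Integrabilityofx} and the integrability of $\mathfrak{f}$ and $g$. The structural identity \eqref{Eq:LprimeAz=Bz+H(z)} is then used directly with $z=x(s)\in V_1$ (no approximation of $z$ is needed, which is why no continuity of $\mathcal{H}$ is required), and \eqref{Eq:IdentityforL(x)inDualV1} follows by varying $\varphi$. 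If you want to salvage a Galerkin argument you would essentially be reproving Pardoux's theorem and would need to add hypotheses (a basis simultaneously adapted to $V_1$, $H_1$, $\tilde V_1$, plus continuity of $\mathcal{H}$ or of $L:V_1\to V_2$); the testing-plus-known-It\^o-formula route is both shorter and matches the stated assumptions.
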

\begin{proof}
	Let $t\in [0,T]$ and $\varphi \in V_2$. Let $z\ni L_\varphi:H_1 \mapsto L_\varphi(z):= {}_{V_2^\ast}\langle L(z), \varphi \rangle_{V_2}\in \mathbb{R}$.
	By the assumptions on $L$, $L_\varphi$ is twice Fr\'echet differentiable, $L_\varphi$ and $L^\prime_\varphi$ are continuous on $H_1$, $L_\varphi$, $L_\varphi^\prime$ and $L_\varphi^{\prime\prime}$ are locally bounded. Hence, by It\^o formula, see \cite[Theorem 3.2]{Pardoux}, we infer that  $\mathbb{P}$-a.s.
\begin{equation*}
\begin{split}
L_\varphi(x(t))=L_\varphi(x(0)) + \int_0^t {}_{V_2^\ast}\langle L^\prime(x(s))[\mathcal{A}(x(s)) +\mathfrak{f}(s)   ], \varphi \rangle_{V_2} + \int_0^t {}_{H_2}\langle L^\prime(x(s)) [g(s)],\varphi \rangle_{H_2} dW_2(s)\\+ \frac12\int_0^t {}_{H_2}\langle L^{\prime\prime}(x(s))[g(s), g(s)], \varphi\rangle_{H_2} ds
\end{split}
\end{equation*}
Using \eqref{Eq:LprimeAz=Bz+H(z)} yields
\begin{equation*}
\begin{split}
L_\varphi(x(t))=L_\varphi(x(0)) + \int_0^t {}_{V_2^\ast}\langle \mathcal{B}(x(s)), \varphi \rangle_{V_2} ds  +\int_0^t {}_{V_2^\ast}\langle L^\prime(x(s))[ \mathfrak{f}(s)]+H(x(s)) , \varphi \rangle_{V_2}ds \\+ \frac12\int_0^t {}_{H_2}\langle L^{\prime\prime}(x(s))[g(s), g(s)], \varphi\rangle_{H_2} ds +
\int_0^t {}_{H_2}\langle L^\prime(x(s)) [g(s)],\varphi \rangle_{H_2} dW_2(s) .
\end{split}
\end{equation*}
This completes the proof of \eqref{Eq:IdentityforL(x)inDualV1}.
\end{proof}
	\begin{lem}\label{Lem:IdentityforDd+f(d)}
	Let $\bh\in \bH^2$ and $\{y(t): t\in [0,\tilde{\tau}_\infty)\}$ be the local process defined by
	\begin{equation}\label{Eq:Def-Dd+f(d)}
	y(t):=
	 \rA_1 \bd(t) - f(\bd(t)), \;\text{  } t\in [0,\tilde{\tau}_\infty).
	\end{equation}
	Then, for all $t\in [0,T]$, $k \in \mathbb{N}$, $\mathbb{P}$-a.s. the following equation holds in $(\bH^1)^\ast$
	\begin{equation}\label{Eq:IdentityforDd+f(d)}
\begin{split}
&	y(t\wedge \tau_k)+ \int_0^{t\wedge \tau_k} \Bigl(\rA_1 y(r) + (\rA_1-f^\prime(\bd(r)))[\bv(r) \cdot \nabla \bd(r)] \Bigr)\, dr  -\int_0^{t\wedge \tau_k} (\rA_1-f^\prime(\bd(r)))[G(\bd(r))] dW_2\\
&= y(0) +\frac12 \int_0^{t\wedge \tau_k} \Bigl(2f^\prime(\bd(r))y(r) +(\rA_1-f^\prime(\bd(r)))[G^2(\d(r))] - f^{\prime \prime}(\d(r))[G(\d(r)), G(\d(r))] \Bigr)\, dr.
\end{split}
	\end{equation}
	\end{lem}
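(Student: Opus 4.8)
\emph{Strategy.} The identity \eqref{Eq:IdentityforDd+f(d)} will be deduced from the abstract It\^o formula, Lemma \ref{Lem:Abstract-Lem-L(x)}, applied to the process $\bd$ stopped at $\tau_k$ and to the map $L(\bd):=\rrA\bd-f(\bd)$, followed by a purely algebraic simplification of the resulting drift. Concretely, I would fix the two Hilbert-space triples required by Lemma \ref{Lem:Abstract-Lem-L(x)}: on the state side $V_1=\bX_{\frac32}$, $H_1=\bX_1$, $\tilde V_1=\bX_{\frac12}=\bH^1$, and on the target side $V_2=\bH^1$, $H_2=\elb^2$, $V_2^\ast=(\bH^1)^\ast$; the dense continuous embeddings hold because $\bX_\alpha\hookrightarrow\bH^{2\alpha}$ and because $\rrA$ is bounded from $\bH^1$ into $(\bH^1)^\ast$, see \eqref{Eq:WeakNeumannLaplace}. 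With $\mathcal A:=-\rrA$ (bounded from $\bX_{\frac32}$ into $\bH^1$), $g(\cdot):=G(\bd(\cdot))=\bd(\cdot)\times\bh$ and $\mathfrak f(\cdot):=-\tilde B(\bv(\cdot),\bd(\cdot))+f(\bd(\cdot))+\tfrac12 G^2(\bd(\cdot))$, the $\bd$-equation \eqref{ABt-d1}, stopped at $\tau_k$, is exactly of the form \eqref{Eq:Equationforx} (recall $\tilde B(\bv,\bd)=\bv\cdot\nabla\bd$ by \eqref{DEF-B2}), and its mild formulation from Theorem \ref{LC-Local-Sol} is, thanks to the available regularity, equivalent to this variational one. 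The integrability conditions \eqref{Eq:Integrabilitoffandg}--\eqref{Eq:Integrabilityofx} for the stopped process follow from the a~priori bounds \eqref{eq-locsol_01-a} built into the notion of local solution together with the fact that $\tau_k$ in \eqref{STOP} caps $|\nabla\bv|_{\elb^2}$, $\lve\bd\rve_2$ and $\int_0^\cdot(|\rA\bv|^2_{\elb^2}+\lve\bd\rve^2_3)$; the Sobolev structure of $d=2$ (so that $\bH^2,\bH^3$ are Banach algebras and $\bH^1\hookrightarrow\elb^p$ for all $p<\infty$) is what places $f(\bd)$, $\bv\cdot\nabla\bd$, $G(\bd)$ and $G^2(\bd)$ in the required spaces.

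Next I would verify the hypotheses on $L$. The map $L(\bd)=\rrA\bd-f(\bd)$ is twice Fr\'echet differentiable from $\bH^1$ into $(\bH^1)^\ast$, with $L^\prime(\bd)[\w]=\rrA\w-f^\prime(\bd)[\w]$ and $L^{\prime\prime}(\bd)[\w,\mathbf p]=-f^{\prime\prime}(\bd)[\w,\mathbf p]$; by Remark \ref{REM-H2} its second derivative is bounded on balls, and the algebra property gives $L(\bX_{\frac32})\subset\bH^1$ and $L(\bX_1)\subset\elb^2$, consistently with the definition \eqref{Eq:Def-Dd+f(d)} of $y$. For the compatibility identity \eqref{Eq:LprimeAz=Bz+H(z)} I would take $\mathcal B:=-\rrA:\bH^1\to(\bH^1)^\ast$ and $\mathcal H(\bd):=f^\prime(\bd)[\rrA\bd]-\rrA f(\bd)$, so that $L^\prime(\bd)[\mathcal A\bd]=-\rrA^2\bd+f^\prime(\bd)[\rrA\bd]=\mathcal B L(\bd)+\mathcal H(\bd)$; by the Leibniz rule $\mathcal H(\bd)=\sum_i f^{\prime\prime}(\bd)[\partial_{x_i}\bd,\partial_{x_i}\bd]$ is a lower-order term, so $\mathcal H$ maps $\bX_{\frac32}$ into $\elb^2$ (and, since $\bd(s)\in D(\rrA)$, the function $f(\bd(s))$ also satisfies the homogeneous Neumann condition, whence $\rrA f(\bd)=-\Delta f(\bd)$). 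Lemma \ref{Lem:Abstract-Lem-L(x)}, via \eqref{Eq:IdentityforL(x)inDualV1} and with $y=L(\bd)=\rrA\bd-f(\bd)$, then yields
\begin{equation*}
\begin{split}
y(t\wedge\tau_k)={}& y(0)-\int_0^{t\wedge\tau_k}\rrA y(r)\,dr-\frac12\int_0^{t\wedge\tau_k} f^{\prime\prime}(\bd(r))[G(\bd(r)),G(\bd(r))]\,dr\\
&+\int_0^{t\wedge\tau_k}(\rrA-f^\prime(\bd(r)))[G(\bd(r))]\,dW_2\\
&+\int_0^{t\wedge\tau_k}\Bigl[(\rrA-f^\prime(\bd(r)))\bigl[-\bv(r)\cdot\nabla\bd(r)+f(\bd(r))+\tfrac12 G^2(\bd(r))\bigr]+f^\prime(\bd(r))[\rrA\bd(r)]-\rrA f(\bd(r))\Bigr]\,dr.
\end{split}
\end{equation*}

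Finally I would simplify the drift. Since
\begin{equation*}
\begin{split}
(\rrA-f^\prime(\bd))[f(\bd)]+f^\prime(\bd)[\rrA\bd]-\rrA f(\bd)&=f^\prime(\bd)[\rrA\bd]-f^\prime(\bd)[f(\bd)]\\
&=f^\prime(\bd)[\rrA\bd-f(\bd)]=f^\prime(\bd)[y],
\end{split}
\end{equation*}
the bracket above collapses to $-(\rrA-f^\prime(\bd))[\bv\cdot\nabla\bd]+f^\prime(\bd)[y]+\tfrac12(\rrA-f^\prime(\bd))[G^2(\bd)]$; substituting this, writing $f^\prime(\bd)y=\tfrac12\cdot 2f^\prime(\bd)y$, and moving $\rrA y$, $(\rrA-f^\prime(\bd))[\bv\cdot\nabla\bd]$ and the stochastic integral to the left-hand side, one obtains exactly \eqref{Eq:IdentityforDd+f(d)}. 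The hard part is not this cancellation but the verification of the hypotheses of Lemma \ref{Lem:Abstract-Lem-L(x)} for the stopped maximal local solution---in particular $L(V_1)\subset V_2$, $\mathcal H(V_1)\subset H_2$, and the integrability \eqref{Eq:Integrabilitoffandg}--\eqref{Eq:Integrabilityofx}---which is precisely where the restriction $d=2$ and the choice of $\tau_k$ enter essentially, and which requires a careful (if routine) treatment of the behaviour of the process beyond time $\tau_k$.
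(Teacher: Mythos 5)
Your proposal is correct and follows essentially the same route as the paper: it invokes the abstract It\^o-type Lemma \ref{Lem:Abstract-Lem-L(x)} with the identical choice of spaces $V_1=\bX_{\frac32}$, $H_1=\bX_1$, $\tilde V_1=\bH^1$, $V_2=\bH^1$, $H_2=\elb^2$, the same map $L(z)=\rA_1 z-f(z)$ with $\mathcal{H}$ (up to the sign convention attached to $\mathcal{A}=-\rA_1$), the same $\mathfrak f$ and $g$ cut off at $\tau_k$, and the same drift cancellation yielding \eqref{Eq:IdentityforDd+f(d)}. The only detail treated more explicitly in the paper is the insertion of the indicator $\mathds{1}_{[0,\tau_k)}$ into $\mathfrak f$ and $g$ together with the bounds \eqref{Eq:Proof of IdDd+f(d)-1}--\eqref{Eq:Proof of IdDd+f(d)-2}, which is exactly the routine verification you flag at the end.
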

%\begin{Rem}
%	Since for all $k $  $\mathbb{P}$-a.s. $\{ \bd(t\wedge \tau_k): t\in [0,T]  \}\in C([0,T]; D(\rA_1)) \cap L^2(0,T; D(\rA_1^\frac32))$, the identity \eqref{Eq:IdentityforDd+f(d)} should be understood as the equality of two $(\bH^1)^\ast$-valued random variables.
%\end{Rem}
\begin{proof}[Proof of Lemma \ref{Lem:IdentityforDd+f(d)}]
	Let $\bh\in \bH^2$. 
	Let us put $V_1=D((I+\rA_1)^\frac32)$, $H_1=D(\rA_1)$, $\tilde{V}_1=\bH^1$ and $\mathcal{A}=\mathcal{B}=-\rA_1$. We also set $V_2=\bH^1$, $H_2=\elb^2$, $V_2^\ast=(\bH^1)^\ast$. The map
	$L: \bH^1 \ni z \mapsto L(z):= \rA_1z-f(z) \in (\bH^1)^\ast$satisfies the assumptions of Lemma \ref{Lem:Abstract-Lem-L(x)}. In particular, if we set $\mathcal{H}(z)= \rA_1f(z) -f^\prime(z)[\rA_1z], \,z \in V_1 $, then 
	$$ L^\prime(z)[\rA_1z]= \rA_1[ \rA_1 z - f(z)] +\rA_1f(z) -f^\prime(z)[\rA_1z] = \rA_1 L(z) +\mathcal{H}(z)\in (\bH^1)^\ast. $$
 Now, let $k\in \mathbb{N}$  and
 \begin{align}
 &\mathfrak{f}= \mathds{1}_{[0,\tau_k)} [ -\bv \cdot \nabla \bd + f(\bd) +\frac12G^2(\bd) ],\label{Eq:DefFrakf}\\
 &g= \mathds{1}_{[0,\tau_k)} G(\bd).\label{Eq:Defsmallg}
 \end{align}
By Lemmata \ref{Local-LIP-Lem-2} and\ref{Local-LIP-Lem-3}, and the definition of $\tau_k$ we infer that there exists $C>0$  such that
 \begin{align}
& \mathbb{E} \int_0^{t} \lvert \mathfrak{f}(r) \rvert^2_{\bH^1}\le  \mathbb{E}\int_0^{t\wedge \tau_k} \Bigl(\lvert \bv(r) \cdot \nabla \bd(r)\rvert^2_{\bH^1} +\lvert f(\bd(r))\rvert^2_{\bH^1}+\lvert \frac12 (\bd(r) \times \bh) \times \bh \rvert^2_{\bH^1}\Bigr)\, dr\le C,\label{Eq:Proof of IdDd+f(d)-1}\\
& \mathbb{E}\int_0^{t\wedge \tau_k} \lvert G(\bd(r)) \rvert^2_{\bH^2} dr =  	\mathbb{E}\int_0^{t\wedge \tau_k} \lvert \bd(r) \times \bh \rvert^2_{\bH^2} dr \le C.\label{Eq:Proof of IdDd+f(d)-2}
 \end{align}
 These mean that $\mathfrak{f}$ and $g$ satisfy \eqref{Eq:Integrabilitoffandg}.  Because the local strong solution $\y=(\bv,\bd)$ of \eqref{ABSTRACT-LC} satisfies  \eqref{eq-locsol_01-a}, the process $x(t)=\bd(t\wedge \tau_k)$ satisfies   \eqref{Eq:Integrabilityofx} and \eqref{Eq:Equationforx} with $\mathfrak{f}$ and $g$ as defined above.   By setting
 $y(t\wedge \tau_k)=L(\bd(t\wedge \tau_k)), t\ge 0,$  and applying Lemma \ref{Lem:Abstract-Lem-L(x)} we obtain
 \begin{equation}\label{Eq:AbsFormofAd+f(d)}
 \begin{split}
 y(t\wedge \tau_k)=y(0) +\int_0^{t\wedge \tau_k} \Bigl( -\rA_1y(r) + L^\prime(\bd(r)) [\mathfrak{f}(r)] + \mathcal{H}(\bn(r))   \Bigr)\, dr \\+ \frac12\int_0^{t\wedge \tau_k} L^{\prime \prime}(\bd(r))[g(r), g(r)] dr + \int_0^{t\wedge \tau_k} L^\prime(\bd(r)) [g(r) ]dW_2(r).
 \end{split}
 \end{equation}
We complete the proof of the lemma by  taking  into account the last line and the following identity 
\begin{equation}\label{Eq:1stDer+2ndDerofL}
  L^\prime(z)=\rA_1 -f^\prime(z) \text{ and }  L^{\prime\prime } (z)= - f^{\prime\prime}(z) \text{ for every } z\in \bH^1.
\end{equation}
 \end{proof}
	We now give the promised  proof of Proposition \ref{STRONGER-NORM}.
	\begin{proof}[Proof of Proposition \ref{STRONGER-NORM}]
		Throughout, $L(z)=\rA_1 z-f(z)$ and $\mathcal{H}(z)= \rA_1f(z) - f^\prime(z)[\rA_1 z]$ be defined as in the proof of Lemma \ref{Lem:IdentityforDd+f(d)}. 
	Keeping in mind the notations of  \ref{Lem:IdentityforDd+f(d)}, in particular \eqref{Eq:DefFrakf}, \eqref{Eq:Defsmallg} and \eqref{Eq:1stDer+2ndDerofL},
		we set
		\begin{equation}
		\mathfrak{v}= -\mathds{1}_{[0,\tau_k]} \rA_1 L(\bd) + L^\prime(\bd)[\mathfrak{f} ] +\mathds{1}_{[0,\tau_k]}\mathcal{H}(\bd) +\frac12 L^{\prime\prime}(\bd)[g, g], \, k \in \mathbb{N}.
		\end{equation}
Then,  for all $k \in \mathbb{N}$ and $F\in L^2(\Omega\times[0,t]; \elb^2)$, $t\ge 0$, 
			\begin{equation}\label{Eq:Proof of IdDd+f(d)-3-b}
		\mathbb{E} \int_0^{t\wedge \tau_k} \lvert f^\prime(\bd(r))[F(r)] \rvert^2_{\elb^2} dr \le c_1^2(1+k^2) \mathbb{E}\int_0^{t\wedge \tau_k } \lvert F(r) \rvert^2_{\elb^2}dr.
		\end{equation}
	In fact, from Remark \ref{REM-H2}\eqref{Itemi:REM-H2}, the embedding $\bH^2\hookrightarrow \elb^\infty$ and \eqref{Eq:Proof of IdDd+f(d)-1} we infer that
		\begin{align*}
		\mathbb{E} \int_0^{t\wedge \tau_k} \lvert f^\prime(\bd(r))[F(r)] \rvert^2_{\elb^2}\le&  \mathbb{E}\int_0^{t\wedge \tau_k} \int_\mo \lvert  f^\prime(\bd(r,x))[F(r,x)]\rvert^2 dxdr\\
		& \le c_1^2 \mathbb{E} \int_0^{t\wedge \tau_k} (1 + \lvert \bd(r,x) \rvert^{4N})\lvert F(r,x)\rvert^2 dxdr\\
		&\le c_1^2 \mathbb{E}\biggl([1+\sup_{t\in [0,T]} \lvert \bd(t\wedge \tau_k) \rvert^{4N}_{\el^\infty}]\int_0^{\tau_k} \lvert F(r) \rvert^2_{\elb^2} dr  \biggr) \\
		&	\le c_1^2(1+k^{2N}) \mathbb{E}\int_0^{t\wedge \tau_k } \lvert F(r) \rvert^2_{\elb^2}dr.
		\end{align*}
		In a similar way, we can prove that  for all $k \in \mathbb{N}$ and  $g=1_{[0,\tau_k]} \bd\times \bh $
		\begin{equation}\label{Eq:Proof of IdDd+f(d)-4}
		\mathbb{E} \int_0^{t\wedge \tau_k} \lvert f^{\prime\prime}(\bd(r))[g(r)   , g(r)    ]\rvert^2_{\elb^2} \le c^2_2 \lVert \bh \rVert^4_{2}(1+k^{2N} )\mathbb{E}\int_0^{t\wedge \tau_k } \lvert \bd(r) \rvert^2_{\elb^2}dr .
		\end{equation}
	From the continuity of the linear map $\rA_1:\bH^1 \to (\bH^1)^\ast $, the embedding $\bH^1\hookrightarrow \elb^2$,  \eqref{Eq:Proof of IdDd+f(d)-3-b}, \eqref{Eq:Proof of IdDd+f(d)-4} along with \eqref{Eq:Proof of IdDd+f(d)-1} and \eqref{Eq:Proof of IdDd+f(d)-2} we infer that there exits $C=C(k)>0$ such that
		\begin{equation}\label{Eq:DriftsatisfiesPardouxConditions}
\mathbb{E}\int_0^{t} \lvert \mathfrak{v}(r) \rvert^2_{(\bH^1)^\ast} \le C \mathbb{E}\int_0^{t\wedge \tau_k}\Bigl( \lvert L(\bd(r)) \rvert^2_{\el^2} + \lvert \mathfrak{f}(r) \rvert^2_{\bH^1} +\lvert f(\bd(r) ) \rvert^2_{\bH^1}
+ \lvert f^{\prime \prime}(\bd(r))[g(r), g(r) ] \rvert^2_{\elb^2} \Bigr)dr <\infty.
		\end{equation}
		
		Next,  let $\{ y(t): t\in [0,\tilde{\tau}_\infty)  \}$ be the process defined in \eqref{Eq:Def-Dd+f(d)}. The process $\{ y(t\wedge \tau_k): t \ge 0  \}$ is an $\elb^2$-valued process and satisfies the equivalent equations \eqref{Eq:IdentityforDd+f(d)} and \eqref{Eq:AbsFormofAd+f(d)}. Hence, by  \eqref{Eq:DriftsatisfiesPardouxConditions} we can apply the  It\^o formula to $ \frac12\lvert y(t\wedge \tau_k)\rvert^2_{\elb^2}=\frac12\lvert \rA_1 \mathbf{d} -f(\mathbf{d}) \rvert^2_{\elb^2}=\Psi_1(\mathbf{d})$ (see \cite[Theorem 3.2]{Pardoux}), and use the fact
		\begin{equation*}
		-(\rA_1-f^\prime(\bd))[\bv \cdot \nabla \bd]+ (f^\prime(\bd)y - (\rA_1-f^\prime(\bd))[\frac12G^2(\d)] - \frac12f^{\prime \prime}(\d)[G(\d), G(\d)] \in \elb^2,
		\end{equation*}
	to infer that for all $k \in \mathbb{N}$ and $t\ge 0$
%		\begin{equation}
%		\begin{split}
%		&	\frac12 \lvert y(t\wedge \tau_k)\rvert^2+ \int_0^{t\wedge \tau_k} {}_{(\bH^1)^\ast}\langle \rA_1 y(r), y(r) \rangle_{\bH^1} dr=	\frac12 \lvert y(t\wedge \tau_k)\rvert^2+ \int_0^{t\wedge \tau_k} \lvert \nabla y(r)\lvert^2_{\elb^2} dr\\	
%	&=\frac12 \lvert y(0) \rvert^2_{\elb^2}+ \int_0^{t\wedge \tau_k}
%		\Bigl\langle f^\prime(\bd(r))[y(r)] -(\rA_1- f^\prime(\d(r)))[G^2(\bd(r)) -\bv(r) \cdot \nabla \bd(r) ] , y(r)   \Bigr\rangle dr\\
%	&	\qquad +\frac12\int_0^{t\wedge \tau_k} \Bigl( \lvert (\rA_1- f^\prime(\bd(r)) )[G(\bd(r))] \rvert^2_{\el^2} - \langle f^{\prime\prime}[G(\bd(r)),G(\bd(r))], y(r) \rangle \Bigr)dr\\
%	& \qquad + \int_0^{t\wedge \tau_k} \Bigl \langle (\rA_1- f^\prime(\bd(r)) )[G(\bd(r))], y(r) \Bigr\rangle dW_2(r).
%		\end{split}
%		\end{equation}
%		Using the functional $=\frac12\lvert \rA_1 \mathbf{d} -f(\mathbf{d}) \rvert^2_{\elb^2}$ one can recast the last line on the following form 
		\begin{equation} \label{Eq:Psi(d)Formnula-1}
		\begin{split}
	&	\Psi_1(\bd(t\wedge \tau_k)) + \int_0^{t\wedge \tau_k} \lvert \nabla (\rA_1\bd(r) -f(\bd(r))  ) \rvert^2_{\elb^2} dr~+\int_0^{t\wedge \tau_k}\Psi^\prime_1(\d(r))[ \v(r)\cdot \nabla \d(r)]dr\\
	&	=
		\Psi_1(\bd_0) +\int_0^{t\wedge \tau_k}\Psi^\prime_1(\d(r))[ \frac12 G^2(\d(r))]dr + \frac12 \int_0^{t\wedge \tau_k} \Psi_1^{\prime\prime}(\bd(r))[G(\bd(r)), G(\bd(r)) ] dr\\
	&	- \int_0^{t\wedge \tau_k} \langle\rA_1\bd(r) -f(\bd(r)), f^\prime(\bd(r))[\rA_1\bd(r) -f(\bd(r))] \rangle dr+ \int_0^{t\wedge \tau_k} \Psi_1(\bd(r)) [G(\bd(r))]dW_2(r).
		\end{split}
		\end{equation}
		In preparation of our next step, let us set
		\begin{equation*}
		\mathrm{M}(t\wedge \tau_k)= \int_0^{t\wedge \tau_k} \Phi(s) \Psi_2(\v(s)) \circ S(\v(s)) dW_1(s) + \int_0^{t\wedge \tau_k} \Phi(s) \Psi_1(\d(s))[G(\d(s))]  dW_2(s),\, k \in \mathbb{N}, t\ge 0. 
		\end{equation*}
	With \eqref{Eq:Psi(d)Formnula-1} and the definitions of $\Psi_2, \Psi$ and $\Phi$ (see \eqref{Eq:DefPsi2forv}, \eqref{Eq:DefFunctionForIto} and \eqref{Eq:WeightToCancelbadterms}) in mind, we	apply the It\^o formula to $\Upsilon(t\wedge \tau_k) =\Phi(t\wedge \tau_k) \Psi(\bu,\bd)(t\wedge \tau_k) $ and obtain that for all $k \in \mathbb{N}$ and $t\ge 0$
		\begin{equation*}
		\begin{split}
		\Upsilon(t\wedge \tau_k) -\Upsilon(0)=& \mathrm{M}(t\wedge \tau_k)+ \int_0^{t\wedge \tau_k} \Phi(s) \Psi^\prime_2(\v(s)) [-B(\v(s),\v(s )  ) -M(\d(s), \d(s))    ] ds\\
		&+\int_0^{t\wedge \tau_k}  \Phi(s) \Psi^\prime_1(\d(s))[ -\v(s)\cdot \nabla \d(s) +\frac12 G^2(\d(s))   ]ds\\
		& +\frac12 \int_0^{t\wedge \tau_k} \Phi(s) \Psi_1^{\prime\prime}(\d(s))[G(\d(s)), G(\d(s))] ds+\int_0^{t\wedge \tau_k} \frac{d}{ds}\Phi(s) \Psi(\v(s), \d(s)))\, ds\\
		&- \int_0^{t\wedge \tau_k} \Phi(s) \langle\rA_1\bd(s) -f(\bd(s)), f^\prime(\bd(s))[\rA_1\bd(s) -f(\bd(s))] \rangle ds\\
		&	-\int_0^{t\wedge \tau_k}\Phi(s)\left(\lvert \nabla (\rA_1\bd(s))-f(\bd(s)) \rvert^2_{\elb^2}+ \lvert \rA \v(s) \rvert^2 - \lvert S(\v(s)) \rvert^2_{\mathcal{T}_2(\rK_1; \ve)} \right)ds.
		\end{split}
		\end{equation*}
		By Assumption \ref{HYPO-ST}, \eqref{Eq:Hypo-ST-Rem},  Lemma \ref{Lem:FirstDerAppliedtoVdotNabalaD}-\ref{Lem:CouplingTermIto} and the facts $\lvert \Phi \rvert\le 1$ and
		\begin{equation*}
		\begin{split}
		\frac{d}{ds}\Phi(s) \Psi(\v(s), \d(s))) = &-\left[ ((\kappa_1 +\kappa_4) \lve \d(s) \rve^2_2 + \kappa_2) (1+ \lve \d(s) \rve^2_1) + \kappa_3 \lvert \v(s) \rvert^2_{\el^2}   \lvert \nabla \v(s) \rvert^2_{\el^2}   \right] \\
		& \qquad \qquad \times \Phi(s)\Psi(\v(s), \d(s)),
		\end{split}
		\end{equation*}
		we infer that there exists $\kappa_7>0$ such that for all $k \in \mathbb{N}$ and $t\ge 0$
		\begin{equation}\label{Eq:ItoresultStrongnormMart}
		\begin{split}
		&\me \sup_{s\in [0,t]} \Upsilon(s\wedge \tau_k) + \int_0^{t\wedge \tau_k} \Phi(s) \left[\frac14 \lvert \rA \v(s) \rvert^2_{\el^2}  + \frac12 \lvert\nabla (\rA_1 \d(s)  +f(\d(s))      )  \rvert^2_{\el^2}     \right] ds \\
		&  \le \Upsilon(0) + \kappa_7 T+ \kappa_7 \me \int_0^{t\wedge\tau_k} \Upsilon(s)\, ds + \kappa_5\me  \int_0^{t\wedge \tau_k} (1+ \lve \d(s) \rve^{4N}_1) \lve \d(s) \rve^2_{2} ds + \me \sup_{s\in [0,t]}\lvert \mathrm{M}(s\wedge \tau_k) \rvert.
		\end{split}
		\end{equation}
		\noindent Next, by applying the BDG inequality, taking into account Assumption \ref{HYPO-ST}, \eqref{Eq:Psi1BDGNeeded} and the fact $\lvert \Phi \rvert\le 1 $ we infer that there exists $\kappa_8>0$ such that for all $k \in \mathbb{N}$ and $t\ge 0$
		\begin{equation*}
		\begin{split}
	&	\me \sup_{s\in [0,t]}\lvert \mathrm{M}(s\wedge \tau_k) \rvert \\
	& \le \kappa_8\me \left(\int_0^{t\wedge \tau_k} \Phi(s) \Psi_2(s)[1+ \Psi_2(s)] \Phi(s)\, ds   \right)^\frac12  + \kappa_8
		\me \left(\int_0^{t\wedge \tau_k}   \Phi^2(s) \lvert \Psi_1^\prime(\d(s))[G(\d(s))]\rvert^2 ds  \right)^\frac12 \\
		& \le \frac18 \me \sup_{s\in [0,t]} [\Phi(s)\Psi_2(\v(s)) ]+ \kappa_8 T +\kappa_8 \me \left[\int_0^{t\wedge \tau_k} [\Phi(s)\Psi_1(\d(s))]^2  ds \right]^\frac12\\
		&  \qquad \qquad + \kappa_8 \me \left[\int_0^{t\wedge \tau_k} \left(1 + \lve \d(s) \rve^{8N+4}_1 + [\lve \d(s) \rve_1 \lve \d(s)\rve_2]^2    \right)\, ds \right]^\frac12 +\kappa_8 \me\int_0^{t\wedge \tau_k} \Phi(s) \Psi_2(\v(s))ds  \\
		&\le  \frac14 \me \sup_{s\in [0,t]} \Upsilon(s)+ \kappa_8 T + \kappa_8 \me\int_0^{t\wedge \tau_k} \Upsilon(s)\, ds  + \kappa_8 \me \left[\int_0^{t\wedge \tau_k} \left(1 + \lve \d(s) \rve^{8N+2}_1 + [\lve \d(s) \rve_1 \lve \d(s)\rve_2]^2    \right)\, ds \right]^\frac12.
		\end{split}
		\end{equation*}
		Using the last inequality and absorbing the term $\frac14 \me \sup_{s\in [0,t]} \Upsilon(s)$   in the LHS of  \eqref{Eq:ItoresultStrongnormMart}, and applying Gronwall inequality imply that there exist an increasing function $\psi:[0,\infty) \to (0,\infty)$ and a constant $\kappa_9>0$ such that for all $k \in \mathbb{N}$ and $T\ge 0$ 
		\begin{equation}\label{Eq:FinalStepEstinStrongNorm}
		\begin{split}
		\me \sup_{s\in [0,T]} \Upsilon(s\wedge \tau_k) +\me \int_0^{T\wedge \tau_k} \Phi(s) \left[\frac14 \lvert \rA \v(s) \rvert^2_{\el^2}  + \frac12 \lvert\nabla (\rA_1 \d(s)  +f(\d(s))      )  \rvert^2_{\el^2}     \right] ds\\
		\le
		\psi(T) \biggl(1+ \Upsilon(0)  + \me \left[\int_0^{T\wedge \tau_k} \left(1 + \lve \d(s) \rve^{8N+2}_1 + [\lve \d(s) \rve_1 \lve \d(s)\rve_2]^2    \right)\, ds \right]^\frac12\\
		+\me  \int_0^{T\wedge \tau_k} (1+ \lve \d(s) \rve^{4N}_1) \lve \d(s) \rve^2_{2} ds   \biggr)
		\end{split}
		\end{equation}
		Using the estimate \eqref{Eq:ESTofVDinWeakerNorm} we easily conclude that there exists a constant $\tilde{\kappa}_0>0$ such that
		\begin{equation*}
		\begin{split}
		& \me \left[\int_0^{t\wedge \tau_k} \left(1 + \lve \d(s) \rve^{8N+2}_1 + [\lve \d(s) \rve_1 \lve \d(s)\rve_2]^2    \right)\, ds \right]^\frac12
		+\me  \int_0^{t\wedge \tau_k} (1+ \lve \d(s) \rve^{4N}_1) \lve \d(s) \rve^2_{2} ds \\
		&  \qquad \qquad \le  \tilde{\kappa}_0 (\mathfrak{C}_0+1),
		\end{split}
		\end{equation*}
		which along with \eqref{Eq:FinalStepEstinStrongNorm} complete the proof of Proposition \ref{STRONGER-NORM}
	\end{proof}
	% % % % % % % % % % % % % % % % % % % % % %
	% % % % % % % % % % % % % % % % % % % % % % %
	% % % % % % % % % % % % % % % % % % % % % % % % % % %
	%%%%%%%%%%%%%%%%%%%%%%%%%%%%%%%
	%%%%%%%%%%%%%%%%%%%%%%%%%%%%%
	\section{Strong solution for an abstract stochastic equation}\label{ABST-STRONG}
By a fixed point method we prove in this section general results about the existence and uniqueness of maximal local solution to stochastic evolution equations (SEEs) with 
	Lipschitz coefficients. 
	\subsection{Notations and Preliminary}\label{abstract-framework}
	Let $V$,  $E$ and $H$ be separable Banach spaces such that
	$E\hookrightarrow V$. We denote the norm in $V$ by $\Vert
	\cdot \Vert$ and for $a,b\in [0,\infty)$ with $a<b$ we put
	\begin{equation}\label{eqn-X_T}
	X_{a,b}:= C([a,b];V) \cap L^2(a,b;E)
	\end{equation}
	with the norm $\lvert \cdot \rvert_{X_{a,b}}$ defined by 
	\begin{equation}\label{eqn-X_T-norm}
	\vert u\vert_{X_{a,b}}^2= \sup_{s \in [a,b]} \Vert u(s)\Vert^2+\int_a^b \vert u(s) \vert_E^2\, ds.
	\end{equation}
	If $a=0$ we simply write $X_{a,b}=X_b$. If $a=b$ then the space $X_{a,a}$ is isomorphic to $V$.

Suppose that $\delta\in [0,T]$ and  $\ba \in X_\delta$. Define
\begin{align}
\hat{X}_{\delta,T,\ba}:=\{ u \in X_T: u|_{[0,\delta]}=\ba \}.
\end{align}
 Obviously,  $\hat{X}_{\delta,T,\ba}$ is a closed subspace  $X_T$ and hence a Banach space with the norm of  $\lvert \cdot \rvert_{X_T}$.
 Note that if $\delta=0$, $\ba \in X_0$ can be identified with  $=\ba (0)\in V$ and so  $\hat{X}_{0,T,\ba}=\{ u \in X_T: u(0)=\ba(0) \} $.

	Let $F$ and $G$ be two nonlinear mappings satisfying the following
	sets of conditions.
	\begin{assum}\label{assum-F}
		Suppose that $F: E \to H$ is such that
		$F(0)=0$ and there exist $N\in\mathbb{N}$ and  $p_i \geq 1$,  $\alpha_i \in [0,1)$, $i=1,\cdots, N$,   and $C>0$
		such that 		for all $x, y\in E$.
		\begin{equation}\label{eqn-local Lipschitz-F}
		\begin{split}
		\vert F(y)-F(x) \vert_H \leq C \sum_{i=1}^N \Big[ \Vert y-x\Vert \Vert
		y\Vert^{p_i-\alpha_i} \vert y\vert_E^{\alpha_i} + \vert y-x\vert_E^{\alpha_i}
		\Vert y-x\Vert^{1-\alpha_i} \Vert x\Vert^{p_i}\Big].
%\\		+ C \Big[ \Vert y-x\Vert \Vert		y\Vert^{q-\gamma} \vert y\vert_E^\gamma + \vert y-x\vert_E^\gamma \Vert y-x\Vert^{1-\gamma} \Vert x\Vert^q\Big],
		% +C \Vert y-x\Vert,
		\end{split}
		\end{equation}
	\end{assum}
	\begin{assum}\label{assum-G}
		Assume that $G: E \to V$  such that $G(0)=0$ and there exists $k
		\geq 1$, $\beta \in [0,1)$ and $C_G>0$ such that
		\begin{equation}\label{eqn-local Lipschitz-G}
		\Vert G(y)-G(x) \Vert \leq C_G \Big[ \Vert y-x\Vert \Vert
		y\Vert^{k-\beta} \vert y\vert_E^\beta + \vert y-x\vert_E^\beta
		\Vert y-x\Vert^{1-\beta} \Vert x\Vert^k\Big],
		\end{equation}
		for all $x, y\in E$.
	\end{assum}
	Let $(\Omega,
	\mathcal{F}, \mathbb{P})$ be a complete probability space equipped
	with a filtration $\mathbb{F}=\{\mathcal{F}_t: t\geq 0\}$
	satisfying the usual hypothesis.   By $\mathscr{M}^2(X_T)$ we denote the Banach space of all $E$-valued processes $u$ that are progressively measurable and with trajectories belonging to $X_T$ $\mathbb{P}$-a.s., with the norm
	\begin{equation}\label{eqn-M^2X_T}
	\vert u \vert_{ \mathscr{M}^2(X_T)} = \left(\mathbb{E}\Big[ \sup_{s \in [0,T]}
	\Vert u(s)\Vert^2+\int_0^T \vert u(s) \vert_E^2\, ds\Big].\right)^\frac12
	\end{equation}
	Let us also formulate the following assumptions.
	\begin{assum}\label{assum-01}
		Suppose that $E\hookrightarrow V \hookrightarrow H$. Consider (for
		simplicity) a one-dimensional Wiener process $W(t)$.\\
		Assume that $S(t)$, $t\in [0,\infty)$, is a family of bounded linear operators on the space $H$ such that:   the following properties are satisfied. 
		\begin{trivlist}
				\item[(i)] For every $T>0$, the linear map $$L^2(0,T;H) \ni f \mapsto \{S\ast f(t) =\int_0^{t}  S(t-r) f(r)\, dr; t\in [0,T] \} \in X_T, $$ is continuous. 
%				, i.e., there exists $C_2>0$ such that  
%				\begin{equation}\label{ineq-dc}
%						\vert u \vert _{X_T}\leq C_1 \vert f \vert _{L^2(0,T;H)}, f\in L^2(0,T;H).
%						\end{equation}
%				\begin{equation}\label{ineq-sc}
%						\vert u \vert _{\mathscr{M}^2(X_T)}\leq C_2 \vert \xi \vert _{\mathscr{M}^2(0,T;V)},\, .
%						\end{equation}
				
\item[(ii) ]For every $T>0$, the linear map 
				$$\mathscr{M}^2(0,T; V) \ni \xi \mapsto \{ S \diamond \xi(t):=  \int_0^t S(t-r) \xi(r)\, dW(r); t \in [0,T]  \}\in \mathscr{M}^2(X_T), $$  is continuous. 
				
		\item[(iii)] For every $T>0$, the linear map $$V\ni u_0 \mapsto \{[0,T]\ni t \mapsto Su_0(t):=S(t)u_0 \} \in X_T,$$ is continuous.
%			process $u=S \diamond \xi$ defined by
%			\[ u(t)=\int_0^t S(t-r) \xi(r)\, dW(r), \;\; t \in [0,T]\]
%			belongs to $\mathscr{M}^2(X_T)$ and
%			\begin{equation}\label{ineq-sc}
%			\vert u \vert _{\mathscr{M}^2(X_T)}\leq C_2 \vert \xi \vert _{\mathscr{M}^2(0,T;V)}.
%			\end{equation}
		\end{trivlist}
%	
%%		  a function
%%		$u=S\ast f$ defined by
%%		\[ u(t)=\int_0^t S(t-r) f(r)\, dr, \;\; t \in [0,T]\]
%%		belongs to $X_T$ and
%%		\begin{equation}\label{ineq-dc}
%%		\vert u \vert _{X_T}\leq C_1 \vert f \vert _{L^2(0,T;H)}.
%%		\end{equation}
%		(ii) For every $T>0$ and every process  $\xi\in \mathscr{M}^2(0,T;V)$ a
%		process $u=S \diamond \xi$ defined by
%		\[ u(t)=\int_0^T S(t-r) \xi(r)\, dW(r), \;\; t \in [0,T]\]
%		belongs to $\mathscr{M}^2(X_T)$ and
%		\begin{equation}\label{ineq-sc}
%		\vert u \vert _{\mathscr{M}^2(X_T)}\leq C_2 \vert \xi \vert _{\mathscr{M}^2(0,T;V)}.
%		\end{equation}
%		(iii) For every $T>0$ and every $u_0\in V$, a function $u=Su_0$
%		defined by
%		\[ u(t)= S(t)u_0,  \;\; t \in [0,T]\]
%		belongs to $X_T$. Moreover, for every $T_0>0$ there exist $C_0>0$
%		such that for all $T\in (0,T_0]$,
%		\begin{equation}\label{ineq-dc-2}
%		\vert u \vert _{X_T}\leq C_0 \Vert u_0 \Vert.
%		\end{equation}
	\end{assum}
	Now let us consider a semigroup $S(t)$, $t\in [0,\infty)$ as above
	and the abstract SEE
	\begin{equation}\label{ABS-SPDE-1}
	u(t)=S(t)u_0+\int_0^t S(t-s) F(u(s))\, ds+\int_0^t S(t-s) G(u(s))
	dW(s),\;\; \mbox{ for all }t>0
	\end{equation}
	
	which is a mild version of the problem
	
	\begin{equation}\label{ABS-SPDE-strong}
	\left\{\begin{array}{rl} du(t)&= Au(t)\,dt+  F\big(u(t)\big)\, dt+
	G\big(u(t)\big)
	dW(t),\;\;t>0,\\
	u(0)&=u_0.
	\end{array}
	\right.
	\end{equation}
	\begin{Def}\label{def-local solution-2}
		Assume that a $V$-valued  $\mathcal{F}_0$ measurable random variable  $u_0$ is given. A local
		solution to problem \eqref{ABS-SPDE-strong} (with the initial time
		$0$) is a pair $(u,\tau)$ such that
		\begin{enumerate}
			\item $\tau$ is an accessible stopping time, \item
			$u: [0,\tau)\times \Omega \to V$ is an admissible\footnote{This
				also follows from  condition (3) below.} process, \item there
			exists a sequence  $(\tau_m)_{m\in \mathbb{N}}$ of
			 finite stopping times  such that $\tau_m \toup \tau$
			$\mathbb{P}$-a.s. and, for every $m\in \mathbb{N}$ and $t\ge 0$,
			we have
			\begin{eqnarray}\label{eq-locsol_00}
			&&\mathbb{E}\Big(  \sup_{s\in [0,t\wedge \tau_m]} \Vert
			u(s)\Vert^2 +\int_0^{t\wedge \tau_m} \vert u(s)\vert_E^2 \,
			ds\Big)<\infty,
			\\
			\label{eq-locsol_00-b} u(t\wedge \tau_m)&=&S(t\wedge
			\tau_m)u_0+\int_0^{t\wedge \tau_m}
			S(t\wedge\tau_m-s) F(u(s))\, ds\\
			\nonumber &+&\int_0^t
			\mathds{1}_{[0,\tau_m)}S(t-s)G(u(s\wedge\tau_m))\,  dW(s).
			\end{eqnarray}
		\end{enumerate}
		Along the lines of \cite{Brz+Elw_2000}, we said that a
		local solution $u(t)$, $t < \tau$ is called global iff
		$\tau=\infty$ $\mathbb{P}$-a.s.
	\end{Def}
	Let us first formulate the following useful result.
	\begin{prop}
		\label{prop-local solution}  Assume that a pair $(u,\tau)$ is a
		local solution to problem \eqref{ABS-SPDE-strong}.Then for
		every finite stopping time $\sigma$, a pair $(u_{\vert [0, \tau
			\wedge \sigma)\times \Omega}, \tau \wedge \sigma)$ is also a local
		mild solution to problem \eqref{ABS-SPDE-strong}.
	\end{prop}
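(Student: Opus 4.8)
The plan is to show that the pair $(v,\rho)$, with $\rho:=\tau\wedge\sigma$ and $v:=u|_{[0,\rho)\times\Omega}$, satisfies all the requirements of Definition~\ref{def-local solution-2}, simply by stopping at $\sigma$ the data already available for $(u,\tau)$. So I would start by fixing a sequence $(\tau_m)_{m\in\mathbb{N}}$ of finite stopping times with $\tau_m\toup\tau$ $\mathbb{P}$-a.s.\ as in Definition~\ref{def-local solution-2}(3) for $(u,\tau)$, fixing the finite stopping time $\sigma$, and setting $\rho_m:=\tau_m\wedge\sigma$.

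The easy observations are: each $\rho_m$ is a finite stopping time, $\rho_m\le\tau_m$, and $\rho_m\toup\rho$ $\mathbb{P}$-a.s.; moreover $\rho$ is accessible (when $\sigma$ is itself accessible with announcing sequence $(\sigma_m)$ one may take $(\tau_m\wedge\sigma_m)_m$ as an announcing sequence for $\rho$, which even gives $\rho_m<\rho$ so that all terms below are unambiguously defined; cf.\ Remark~\ref{rem-predictable stopping time}). Hence condition~(1) of Definition~\ref{def-local solution-2} holds; since $v$ is the restriction of the admissible process $u$ to the smaller open stochastic interval, it is again an admissible $V$-valued process, so~(2) holds; and for the first half of~(3) the bound \eqref{eq-locsol_00} with $\rho_m$ in place of $\tau_m$ is immediate from the one for $\tau_m$, because $\rho_m\le\tau_m$ and $v$ agrees with $u$ on $[0,\rho)$:
\[
\mathbb{E}\Bigl(\sup_{s\in[0,t\wedge\rho_m]}\Vert v(s)\Vert^2+\int_0^{t\wedge\rho_m}\vert v(s)\vert_E^2\,ds\Bigr)\le\mathbb{E}\Bigl(\sup_{s\in[0,t\wedge\tau_m]}\Vert u(s)\Vert^2+\int_0^{t\wedge\tau_m}\vert u(s)\vert_E^2\,ds\Bigr)<\infty .
\]

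The substantive step is to derive the mild identity \eqref{eq-locsol_00-b} for $\rho_m$ from the one already known for $\tau_m$, and I would do this by evaluating that identity at $t\wedge\sigma$ in place of $t$, i.e.\ by stopping the mild solution at $\sigma$. For the free term $S(\cdot)u_0$ and for the Bochner convolution $\int_0^{\cdot}S(\cdot-s)F(u(s))\,ds$ this is an elementary semigroup manipulation: $S((t\wedge\sigma)\wedge\tau_m)$ has argument $t\wedge\rho_m$, and the upper integration limit $t\wedge\tau_m$ becomes $(t\wedge\sigma)\wedge\tau_m=t\wedge\rho_m$. For the stochastic convolution term one uses $\mathds{1}_{[0,\tau_m)}\mathds{1}_{[0,\sigma)}=\mathds{1}_{[0,\rho_m)}$ and $u(s\wedge\tau_m\wedge\sigma)=v(s\wedge\rho_m)$ together with \cite[Lemma A.1]{Brz+Masl+Seidler_2005}, which is exactly the assertion that stopping a stochastic convolution at $\sigma$ is the same as replacing the integrand by the integrand stopped at $\sigma$; this produces
\[
\int_0^{t}\mathds{1}_{[0,\rho_m)}(s)\,S(t-s)\,G\bigl(v(s\wedge\rho_m)\bigr)\,dW(s)
\]
as the corresponding term. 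Combining the three pieces gives \eqref{eq-locsol_00-b} for $\rho_m$ with $v$ in place of $u$, completing the verification that $(v,\rho)$ is a local solution. The main obstacle is precisely this last, bookkeeping‑heavy step — making rigorous that the stopped mild solution is again a mild solution with correspondingly stopped noise data — which rests on the progressive measurability of $\mathds{1}_{[0,\rho_m)}$ (Remark~\ref{rem-stochastic interval}) and on the continuity of the convolution and stochastic‑convolution maps in Assumption~\ref{assum-01}(i)--(ii).
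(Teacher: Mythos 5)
The paper states Proposition \ref{prop-local solution} without giving a proof, so there is no argument to compare against line by line; judged on its own, your proof is correct and uses exactly the machinery the paper has in place for this purpose: restriction of the admissible process, stopping the approximating sequence, and converting the stopped stochastic convolution via \cite[Lemma A.1]{Brz+Masl+Seidler_2005}, i.e.\ Lemma \ref{lem-A.1} and Corollary \ref{cor-A.2} of the appendix (the same manipulations appear in Remark \ref{rem-local solution} and in Step 5 of the proof of Theorem \ref{thm_local}). Substituting the random time $t\wedge\sigma$ into \eqref{eq-locsol_00-b} is legitimate because both sides are a.s.\ continuous in $t$, and the identity $\mathds{1}_{[0,\tau_m)}\mathds{1}_{[0,\sigma)}=\mathds{1}_{[0,\rho_m)}$ combined with Corollary \ref{cor-A.2} (applied with the stopping time $\rho_m\le\tau_m$) produces precisely the stopped convolution $I_{\rho_m}(t\wedge\rho_m)$ demanded by the definition (note that the stochastic term in \eqref{eq-locsol_00-b} is meant to be evaluated at $t\wedge\tau_m$, as in \eqref{eq-locsol_01-b}; your target matches this intended form).

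The one point that deserves more care is condition (1) of Definition \ref{def-local solution-2}: you assert that $\rho=\tau\wedge\sigma$ is accessible, but your justification covers only the case where $\sigma$ is itself accessible. For a general finite stopping time $\sigma$ in a filtration satisfying merely the usual conditions, $\tau\wedge\sigma$ need not be accessible (take $\tau$ deterministic and $\sigma$ totally inaccessible), so the proposition as literally stated cannot be verified against Definition \ref{def-local solution-2}. This is a defect of the statement rather than of your argument, and your parenthetical remedy --- using $(\tau_m\wedge\sigma_m)_m$ as an announcing sequence, which also gives $\rho_m<\rho$ so that $v(t\wedge\rho_m)$ is unambiguously defined --- is the right fix under the natural additional hypothesis that $\sigma$ is accessible (in particular deterministic), which is how the proposition is used.
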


	Secondly, we state the following lemma  result which is a generalisation of   \cite[Lemmata III 6A and
	6B]{Elw_1982}.
	%\vspace{1mm}
	\begin{lem} \label{lem-amalgamation} \textbf{(The Amalgamation			Lemma) }
\begin{trivlist}
\item[(1)]
Let $\Delta$ be a  family of accessible stopping
		times taking  values in $[0, \infty ]$. Then a supremum  of $\Delta$, i.e., 
		$ \tau := \sup  \, {\Delta}  $, 
	 is an accessible stopping time with values in $[0, \infty ]$   and
		there exists an $\Delta $-valued  increasing sequence $ \{
		\alpha_{n} \}_{ n= 1 }^\infty $ such that
		$ \tau(\omega) = \lim_{n \to \infty} \alpha_n(\omega)$, for all $\omega \in \Omega$.
\item[(2)]
		Assume also that for each  $ \alpha \in \Delta $, $  I_{
			\alpha } : [ 0, \alpha ) \times \Omega \to V $ is an admissible
		process such that for all $ \alpha , \beta \in \Delta $ and
		every $t>0$,
		\begin{equation} \label{eqn-amalgamation_01} I_{ \alpha }(t)= I_{ \beta } (t) \mbox{ $\mathbb{P}$-a.s. on } \Omega_t( \alpha \wedge \beta) .
		\end{equation}
		Then, there exists an admissible process $ \mathbf{I}: [0, \tau )
		\times \Omega \to V $,
		such that every $t>0$,
		\begin{equation} \label{eqn-amalgamation_02}
		\mathbf{I}(t)= I_{ \alpha } (t) \; \mbox{$\mathbb{P}$-a.s. on} \; \Omega_t( \alpha ).
		\end{equation}
\item[(3)]		Moreover,\hdoubtz{This statement is not in Elworthy's book. Ay least I cannot find it.} %\begin{trivlist}\item[{\rm (i)}]
		if ${\tilde I} :[0,\tau) \times \Omega \to X$ is any process
		satisfying \eqref{eqn-amalgamation_02} then the process $\tilde I$
		is a version of the process $I$, i.e. for all $t\in [0,\infty) $
		\begin{equation} \label{eqn-amalgamation_03}
		\mathbb{ P}\left(\left\{\omega \in \Omega: t < \tau(\omega) , \;
		I(t,\omega)\not=  {\tilde I}(t,\omega) \right\} \right) =0 .
		\end{equation}
		In particular, if in addition ${\tilde I}$ is an admissible
		process,  then
		\begin{equation} \label{eqn-amalgamation_03'}
		\mathbf{I}=  {\tilde I}.
		\end{equation}
\end{trivlist}		
		%\item[{\rm (ii)}]\end{trivlist}
		
	\end{lem}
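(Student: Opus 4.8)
The plan is to follow the classical amalgamation argument (cf.\ \cite[Lemmata III.6A and III.6B]{Elw_1982}), adapted to \emph{accessible} stopping times with the help of Remark \ref{rem-predictable stopping time}, using throughout that $\mathcal{F}_0$ contains all $\mathbb{P}$-null sets, so that almost sure statements can be upgraded to everywhere statements by redefining processes on null sets. We may assume $\Delta\neq\emptyset$, the only case needed.

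\emph{Step 1: the stopping time $\tau$.} First I would reduce to an \emph{upward directed} family: by Remark \ref{rem-predictable stopping time}(ii) the family $\mathcal{B}$ of all finite suprema of members of $\Delta$ consists of accessible stopping times, contains $\Delta$, is closed under finite suprema, and has the same pointwise supremum as $\Delta$; replacing $\Delta$ by $\mathcal{B}$ I may assume $\Delta$ is directed. On a directed family I would use the bounded increasing functional $\alpha\mapsto\mathbb{E}[1-e^{-\alpha}]$ (with $e^{-\infty}:=0$): picking $\alpha_n\in\Delta$ with $\mathbb{E}[1-e^{-\alpha_n}]\to s:=\sup_{\alpha\in\Delta}\mathbb{E}[1-e^{-\alpha}]$ and replacing $\alpha_n$ by $\alpha_1\vee\dots\vee\alpha_n\in\Delta$ (which keeps the convergence and makes the sequence increasing), set $\tau:=\lim_n\alpha_n=\sup_n\alpha_n$, a stopping time. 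For any fixed $\alpha\in\Delta$ the sequence $\alpha\vee\alpha_n\in\Delta$ increases to $\alpha\vee\tau$, so by monotone convergence $\mathbb{E}[1-e^{-(\alpha\vee\tau)}]\le s=\mathbb{E}[1-e^{-\tau}]$, forcing $\alpha\le\tau$ a.s.; thus $\tau$ is (a version of) the supremum of $\Delta$. To see $\tau$ is accessible I would take an announcing sequence $(\alpha_{j,k})_k$ of finite stopping times for each $\alpha_j$ and put $\beta_n:=\max_{j\le n}\alpha_{j,n}$: then $(\beta_n)$ is increasing, $\beta_n<\tau$ since each $\alpha_{j,n}<\alpha_j\le\tau$, and $\lim_n\beta_n\ge\lim_n\alpha_{j,n}=\alpha_j$ for every $j$, whence $\lim_n\beta_n=\sup_j\alpha_j=\tau$; so $(\beta_n)$ announces $\tau$.

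\emph{Step 2: the process $\mathbf{I}$.} With $(\alpha_n)$ as above and $\alpha_0\equiv0$, the sets $A^t_n:=\{\alpha_{n-1}\le t<\alpha_n\}\in\mathcal{F}_t$, $n\ge1$, partition $\Omega_t(\tau)=\bigcup_n\Omega_t(\alpha_n)$. I would define $\mathbf{I}(t,\omega):=I_{\alpha_n}(t,\omega)$ whenever $(t,\omega)\in A^t_n$; this is an honest pointwise definition, and adaptedness of $\mathbf{I}$ is immediate from the decomposition. For path continuity, restricting $I_{\alpha_{n+1}}$ to $[0,\alpha_n)\times\Omega$ gives an admissible local process which, by \eqref{eqn-amalgamation_01} (using $\alpha_n\wedge\alpha_{n+1}=\alpha_n$), agrees at every time $t$ a.s.\ on $\Omega_t(\alpha_n)$ with $I_{\alpha_n}$; by the last assertion of Definition \ref{def-adapted process} the two are equivalent, so there is a null set $N_n$ off which the paths of $I_{\alpha_n}$ and $I_{\alpha_{n+1}}$ coincide on $[0,\alpha_n)$, whence (by continuity of both) the pieces of $\mathbf{I}$ glue continuously at the instants $\alpha_n$. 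Setting $N:=\bigcup_n N_n\in\mathcal{F}_0$ and redefining $\mathbf{I}\equiv0$ on $N$ makes $\mathbf{I}$ admissible. Finally \eqref{eqn-amalgamation_02} follows by writing $\Omega_t(\alpha)=\bigcup_n(\Omega_t(\alpha)\cap A^t_n)$ and noting that on the $n$-th piece $\mathbf{I}(t)=I_{\alpha_n}(t)=I_\alpha(t)$ a.s., since $\Omega_t(\alpha)\cap A^t_n\subset\Omega_t(\alpha\wedge\alpha_n)$ and \eqref{eqn-amalgamation_01} applies.

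\emph{Step 3: uniqueness, and the main obstacle.} If $\tilde I$ also satisfies \eqref{eqn-amalgamation_02}, then for each $n$ and $t$ one has $\tilde I(t)=I_{\alpha_n}(t)=\mathbf{I}(t)$ a.s.\ on $\Omega_t(\alpha_n)$; the countable union over $n$ gives $\tilde I(t)=\mathbf{I}(t)$ a.s.\ on $\Omega_t(\tau)$, which is \eqref{eqn-amalgamation_03}; and if moreover $\tilde I$ is admissible, then $\mathbf{I}$ and $\tilde I$ are admissible processes on $[0,\tau)$ agreeing a.s.\ at each $t$ on $\Omega_t(\tau)$, hence equivalent by Definition \ref{def-adapted process}, i.e.\ \eqref{eqn-amalgamation_03'} holds. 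I expect the only genuine difficulties to be measure-theoretic: turning the possibly uncountable pointwise supremum into a bona fide accessible stopping time (this is precisely why one passes to the directed family $\mathcal{B}$, extracts the increasing sequence $(\alpha_n)$, and builds the diagonal announcing sequence $(\beta_n)$), and then controlling the countably many exceptional null sets so that $\mathbf{I}$ is path-continuous on the whole stochastic interval $[0,\tau)$ rather than merely on each $[0,\alpha_n)$ — both being absorbed by the completeness of $\mathcal{F}_0$.
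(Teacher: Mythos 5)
Your argument is correct and takes essentially the same route as the paper: the paper establishes part (1) by reducing, via Remark \ref{rem-predictable stopping time}, to Elworthy's Lemma III.6A together with the fact that an increasing limit of accessible (predictable) stopping times is accessible, and it refers to Elworthy's Lemma III.6B for parts (2)--(3), while your Steps 1--3 are simply self-contained versions of exactly those cited arguments (the $\mathbb{E}[1-e^{-\alpha}]$ essential-supremum construction, the diagonal announcing sequence $\beta_n=\max_{j\le n}\alpha_{j,n}$, and the piecewise gluing of the $I_{\alpha_n}$). The only caveat --- shared with the paper, whose proof likewise invokes closure of $\Delta$ under finite suprema --- is that after replacing $\Delta$ by its finite-sup closure $\mathcal{B}$ in Step 1 you should, for Step 2, also note that a process $I_{\beta}$ for $\beta=\gamma_1\vee\dots\vee\gamma_m\in\mathcal{B}$ is obtained from the given $I_{\gamma_i}$, $\gamma_i\in\Delta$, by the obvious finite amalgamation using \eqref{eqn-amalgamation_01}.
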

	\begin{Rem}\label{rem-amalgamation-equiv}
		Let us note that because both processes $ \mathbf{I}: [0, \tau )
		\times \Omega \to V $ and $ {I}_\alpha: [0, \alpha ) \times \Omega
		\to V $ are admissible (and hence with almost sure continuous
		trajectories), and since $\alpha \leq \tau$,  condition
		\eqref{eqn-amalgamation_02} is equivalent to the following one:
		\begin{equation} \label{eqn-amalgamation_02'}
		\mathbf{I}_{\vert [0,\alpha)\times \Omega }= I_{ \alpha } .
		\end{equation}
		Similarly,  condition  \eqref{eqn-amalgamation_01} is equivalent
		to the following one
		\begin{equation} \label{eqn-amalgamation_01'} {I_{ \alpha }}_{\vert [0,\alpha \wedge \beta )\times \Omega } = {I_{ \beta }}_{\vert [0,\alpha \wedge \beta )\times \Omega }.
		\end{equation}
			\end{Rem}
\begin{proof}[Proof of Lemma \ref{lem-amalgamation}] Let $\Delta$ be the family of accessible stopping
		times with values in $[0, \infty ]$. This set satisfies the assumptions of Lemma \cite[Lemma III.6A]{Elw_1982}, where the set $\Delta$ is denoted by $A$. Indeed,
by Remark \ref{rem-predictable stopping time},  the supremum of every finite subset of $\Delta$ belongs to $\Delta$. Therefore, there exists an $\mathcal{F}$-measurable function $\tau:\Omega\to [0,\infty]$ such that
\begin{trivlist}
\item[(i)] if $\sigma \in \Delta$, then $\tau\geq \sigma$,  $\mathbb{P}$-a.s.;
\item[(ii)] if a random variable $\eta:\Omega\to [0,\infty]$ satisfies $\tau\geq \sigma$,  $\mathbb{P}$-a.s., for all $\sigma \in \Delta$, then
$\eta\geq \tau$,  $\mathbb{P}$-a.s..
\item[(iii)] there exists a sequence $(\alpha_n)_{n \in \mathbb{N}}$ of elements of $\Delta$ such that 	 for all $\omega \in \Omega$, $\alpha_n(\omega) \leq \alpha_{n+1}(\omega) \leq \tau(\omega)$ for all $n\in \mathbb{N}$ and
 	$ \tau(\omega) = \lim_{n\to \infty} \alpha_n(\omega)= \sup_{n\in \mathbb{N}} \alpha_n(\omega)$.
\end{trivlist}
Moreover, $\tau$ is unique in the sense that if $\hat \tau$ satisfies the above conditions (i) and (ii), then $\hat \tau \geq \tau$,  $\mathbb{P}$-a.s..
Hence, since for every  $n \in \mathbb{N}$, $\alpha_n$,   is an accessible stopping time,  by \cite[Proposition III.5B]{Elw_1982}, Remark \ref{rem-predictable stopping time} and \cite[Proposition 4.11]{Metivier_1982} we infer that $\tau$ an accessible stopping time. This proves part (1) of Lemma \ref{lem-amalgamation}.

The proof of parts (2) and (3)  is the same as the proof of \cite[Lemma III 6 B]{Elw_1982}, so we omit it.
\end{proof}
		\begin{Def}\label{Def-maxsol}
		Consider  a family  $\mathcal{ LS}$ of all local solution
		$(u,\tau)$ to  the problem \eqref{ABS-SPDE-strong}. For two
		elements $(u,\tau), (v,\sigma) \in \mathcal{ LS} $ we write that
		$(u,\tau)\preceq (v,\sigma)$ iff $\tau \leq \sigma$ $\mathbb{P}$-a.s. and
		$v_{\vert [0,\tau)\times \Omega} \sim u$. Note that if
		$(u,\tau)\preceq (v,\sigma)$ and $(v,\sigma)\preceq (u,\tau)$,
		then  $(u,\tau)\sim (v,\sigma)$. We write $(u,\tau)\prec
		(v,\sigma)$ iff $(u,\tau)\preceq (v,\sigma)$ and $(u,\tau)\not\sim
		(v,\sigma)$. Then, the pair $(\mathcal{ LS},\preceq)$ is 
		partially ordered. 
		Each maximal element $(u,\tau)$ in the set $(\mathcal{
			LS},\preceq)$
		is called a maximal local solution to  the problem  \eqref{ABS-SPDE-strong}. The existence of an upper bound of  every non-empty chain of  $(\mathcal{
			LS},\preceq)$ is justified by  Amalgamation
		Lemma \ref{lem-amalgamation}. \\
		If $(u, \tau)$ is a  maximal local solution to equation
		\eqref{ABS-SPDE-strong}, the stopping time $\tau$ is called its
		lifetime.
	\end{Def}
	
		A priori, there may be many maximal elements in $(\mathcal{
			LS},\preceq)$ and hence many maximal local solutions    to
		the problem \eqref{ABS-SPDE-strong}. However, if  the uniqueness of local solutions holds,  then the
		uniqueness of the maximal local solution will follow.
	%%%%%%%%%%%%%%%%%%%%%%%%%%%%%%%%%%%%%%%
	\begin{Def}\label{Def-uniq} A local solution $(u,\tau)$
		to problem \eqref{ABS-SPDE-strong}  is unique iff for all other
		local solution $(v,\sigma)$  to \eqref{ABS-SPDE-strong} the
		restricted processes $u_{[0, \tau\wedge \sigma)\times \Omega}$ and
		$v_{[0, \tau\wedge\sigma)\times \Omega}$ are equivalent.
	\end{Def}
	
	\begin{prop}\label{prop-loc-implies-max}
		Suppose that $u_0$ is a $V$-valued random variable and $\mathcal{F}_0$-measurable. Assume that the following two conditions are satisfies:
\begin{trivlist}
\item[(i)]  there
		exist at least one local solution $(u^0,\tau^0)$ to problem
		\eqref{ABS-SPDE-strong}
\item[(ii)]
if  $(u^1,\tau^2)$ and $(u^2,\tau^2)$  are  local solutions,  then for  every $t>0$,
		\begin{equation} \label{eqn-amalgamation_04} u^1(t)= u^2 (t) \mbox{ $\mathbb{P}$-a.s. on } \Omega_t( \tau^1 \wedge \tau^2 ).
		%\{ t< \tau^1 \wedge \tau^2\} .
		\end{equation}
\end{trivlist}
Then, problem
problem \eqref{ABS-SPDE-strong} has a unique maximal  local solution { $(\hat{u},\hat{\tau})$}  satisfying  $(u^0,\tau^0)\preceq (\hat{u},\hat{\tau}) $. 
	\end{prop}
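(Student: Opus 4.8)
The plan is to produce the maximal local solution by a Zorn's lemma argument carried out inside the set $\mathcal{LS}_0:=\{(u,\tau)\in\mathcal{LS}: (u^0,\tau^0)\preceq(u,\tau)\}$, which is non-empty by hypothesis~(i) and is partially ordered by the restriction of $\preceq$; the essential tool for producing upper bounds of chains is the Amalgamation Lemma~\ref{lem-amalgamation}. First I would fix a non-empty chain $\mathcal{C}\subset(\mathcal{LS}_0,\preceq)$ and set $\Delta:=\{\tau:(u,\tau)\in\mathcal{C}\}$, a family of accessible stopping times. Because $\mathcal{C}$ is a chain, for any $(u^1,\tau^1),(u^2,\tau^2)\in\mathcal{C}$ one has $u^1(t)=u^2(t)$ $\mathbb{P}$-a.s.\ on $\Omega_t(\tau^1\wedge\tau^2)$ for every $t$ (immediate from $\preceq$, and also guaranteed by hypothesis~(ii)). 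Hence Lemma~\ref{lem-amalgamation}(1)--(2) yields an accessible stopping time $\hat\tau:=\sup\Delta$, an increasing $\Delta$-valued sequence $(\alpha_n)$ with $\alpha_n\nearrow\hat\tau$ pointwise, and an admissible process $\hat u:[0,\hat\tau)\times\Omega\to V$ such that $\hat u(t)=u(t)$ $\mathbb{P}$-a.s.\ on $\Omega_t(\tau)$ for every $(u,\tau)\in\mathcal{C}$ and every $t$.

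Next I would check that $(\hat u,\hat\tau)$ is itself a local solution. An announcing sequence of \emph{finite} stopping times for $\hat\tau$ is obtained by a diagonal argument from announcing sequences $(\alpha_{n,k})_k$ of the (accessible) $\alpha_n$: for instance $\beta_n:=\max_{i\le n}\alpha_{i,n}$ is finite, non-decreasing, strictly below $\hat\tau$ (since $\beta_n<\alpha_n\le\hat\tau$), and $\beta_n\nearrow\hat\tau$. For each $n$ pick $(u^{(n)},\alpha_n)\in\mathcal{C}$; since $\hat u$ coincides with $u^{(n)}$ on $[0,\alpha_n)\times\Omega$, since $(u^{(n)},\alpha_n)$ is a local solution and $\beta_n<\alpha_n$, the bound~\eqref{eq-locsol_00} and the mild identity~\eqref{eq-locsol_00-b} for $\hat u$ stopped at $\beta_n$ are inherited from those for $u^{(n)}$ (one may first interpose an announcing sequence of $\alpha_n$ dominating $\beta_n$ and invoke Remark~\ref{rem-local solution}). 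Thus $(\hat u,\hat\tau)\in\mathcal{LS}$, and since $(u^0,\tau^0)\preceq(u,\tau)\preceq(\hat u,\hat\tau)$ for every $(u,\tau)\in\mathcal{C}$, it is an upper bound of $\mathcal{C}$ lying in $\mathcal{LS}_0$. By Zorn's lemma $(\mathcal{LS}_0,\preceq)$ has a maximal element $(\hat u,\hat\tau)$, which is moreover maximal in $(\mathcal{LS},\preceq)$: if $(v,\sigma)\in\mathcal{LS}$ and $(\hat u,\hat\tau)\preceq(v,\sigma)$, then $(u^0,\tau^0)\preceq(v,\sigma)$, so $(v,\sigma)\in\mathcal{LS}_0$ and maximality there forces $(v,\sigma)\sim(\hat u,\hat\tau)$.

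Finally I would prove uniqueness. Let $(\hat u^1,\hat\tau^1)$ and $(\hat u^2,\hat\tau^2)$ be maximal local solutions with $(u^0,\tau^0)$ below both, and set $\sigma:=\hat\tau^1\vee\hat\tau^2$, which is accessible by Remark~\ref{rem-predictable stopping time}(ii). Define $w$ on $[0,\sigma)$ by $w:=\hat u^1$ on $[0,\hat\tau^1)$ and $w:=\hat u^2$ on $[0,\hat\tau^2)$; by hypothesis~(ii) the two prescriptions agree on $\Omega_t(\hat\tau^1\wedge\hat\tau^2)$ for every $t$, so $w$ is well defined and, using $\Omega_t(\sigma)=\Omega_t(\hat\tau^1)\cup\Omega_t(\hat\tau^2)$ together with the announcing sequence $\sigma_m:=\hat\tau^1_m\vee\hat\tau^2_m$, one verifies exactly as above that $(w,\sigma)$ is a local solution with $(u^0,\tau^0)\preceq(w,\sigma)$ and $(\hat u^i,\hat\tau^i)\preceq(w,\sigma)$ for $i=1,2$. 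Maximality of each $(\hat u^i,\hat\tau^i)$ then gives $(\hat u^i,\hat\tau^i)\sim(w,\sigma)$, whence $\hat\tau^1=\sigma=\hat\tau^2$ $\mathbb{P}$-a.s.\ and $\hat u^1\sim w\sim\hat u^2$, so the maximal local solution dominating $(u^0,\tau^0)$ is unique.

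The order-theoretic skeleton is routine; the step I expect to be the main obstacle is the two (structurally identical) verifications that an amalgamated process is again a local solution in the precise sense of Definition~\ref{def-local solution-2} --- namely producing an announcing sequence of finite stopping times for $\sup\Delta$ (respectively for $\hat\tau^1\vee\hat\tau^2$) and transferring the integrability bound~\eqref{eq-locsol_00} and the stochastic-convolution identity~\eqref{eq-locsol_00-b} from the pieces to the amalgam, where the progressive measurability of the stopped stochastic integrals and the reasoning behind Remark~\ref{rem-local solution} do the bookkeeping.
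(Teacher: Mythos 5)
Your argument is correct in substance, but it is organised differently from the paper's proof. You run a Zorn's lemma argument: restrict to $\mathcal{LS}_0=\{(u,\tau):(u^0,\tau^0)\preceq(u,\tau)\}$, use the Amalgamation Lemma \ref{lem-amalgamation} only to produce an upper bound for each chain, and then invoke Zorn. The paper instead exploits hypothesis (ii) more aggressively: since \emph{any} two local solutions agree on the intersection of their intervals, the whole family $\mathcal{LS}_0$ (not just a chain) satisfies the compatibility condition \eqref{eqn-amalgamation_01}, so the Amalgamation Lemma can be applied to it directly. This yields $\hat{\tau}=\sup\{\tau:(u,\tau)\in\mathcal{LS}_0\}$ and an amalgamated admissible $\hat{u}$ in one stroke; the paper then shows $(\hat u,\hat\tau)\in\mathcal{LS}_0$ by writing down the auxiliary process $\hat\eta$ given by the mild formula and using part (3) of Lemma \ref{lem-amalgamation} to identify $\hat\eta$ with $\hat u$, so maximality and the relation $(u^0,\tau^0)\preceq(\hat u,\hat\tau)$ are automatic and no choice-theoretic machinery is needed. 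What your route costs is exactly the extra care in the chain-amalgam verification: your announcing sequence $\beta_n=\max_{i\le n}\alpha_{i,n}$ is fine, but the bound \eqref{eq-locsol_00} and identity \eqref{eq-locsol_00-b} at $\beta_n$ are \emph{not} inherited from $(u^{[n]},\alpha_n)$ alone, since $\beta_n<\alpha_n$ need not be dominated by any single announcing time of $\alpha_n$; you must instead decompose over the events where the maximum is realised by $\alpha_{i,n}$ and use the bound of $u^{(i)}$ at its own announcing time $\alpha_{i,n}$ (together with Corollary \ref{cor-A.2} applied to $\beta_n\wedge\alpha_{i,n}$ for the stochastic convolution) -- a fixable but nontrivial piece of bookkeeping that the paper's direct construction sidesteps by working with $\hat\eta$. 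Your uniqueness step is essentially the paper's: the join construction you use for $(w,\hat\tau^1\vee\hat\tau^2)$ is precisely Corollary \ref{cor-max of two local solutions}, the paper merely phrasing the conclusion as a contradiction on $\{\tau^1\neq\tau^2\}$ rather than, as you do, reading off equivalence from maximality.
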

	
	\begin{Rem}\label{rem--loc-implies-max-equiv}
		Let us note that similarly to Remark \ref{rem-amalgamation-equiv},
		because both the local solutions $u^1$ and $u^2$ are admissible
		(hence with almost sure continuous trajectories),   condition
		\eqref{eqn-amalgamation_04} is equivalent to 
		\begin{equation} \label{eqn-amalgamation_04'}
		u^1_{\vert [0,\tau^1 \wedge \tau^2)\times \Omega }= u^2_{\vert
			[0,\tau^1 \wedge \tau^2)\times \Omega } .
		\end{equation}
	\end{Rem}
	\begin{proof}[Proof of Proposition \ref{prop-loc-implies-max}]
		Let us choose and fix a local solution $(u^0,\tau^0)$ to problem
		\eqref{ABS-SPDE-strong} and let us consider  the family  $\mathcal{ LS}$
of all local solution
		$(u,\tau)$ to  the problem \eqref{ABS-SPDE-strong} such that $(u^0,\tau^0)\preceq (u,\tau) $.
By assumptions this set is non-empty. Due to the assumptions (i) and (ii) of Proposition \ref{prop-loc-implies-max}, by the Amalgamation Lemma \ref{lem-amalgamation} we infer that there exists an accessible stopping time
			\[
		\hat{\tau} := \sup\left\{ \tau: (u,\tau)\in \mathcal{ LS}\right\}
		\]
		and an admissible process $\hat{u}: [0, \hat{\tau} )
		\times \Omega \to V $,
		such that  for all $(u,\tau)\in \mathcal{LS}$ and for $t>0$,
		\begin{equation} \label{eqn-amalgamation_02-a}
		\hat{u}(t)= u (t) \; \mbox{$\mathbb{P}$-a.s. on} \; \Omega_t( \tau).
		\end{equation}
Moreover, there exists  an increasing sequence $(\tau_n)$  of  accessible stopping times such that  $ \tau(\omega) = \lim_{n \to \infty} \tau_n(\omega)$, for all $\omega \in \Omega$.

In order to complete the proof of the existence of a maximal lcoal solution, we shall prove that $(\hat{u},\hat{\tau})\in \mathcal{LS}$. For this aim,  we closely follow the proof of \cite[Theorem 2.26]{Brz+Elw_2000}.
 Let us  define an auxiliary  process 	$\hat{\eta}=	\bigl(\hat{\eta}(t)\bigr),$ $t\in [0,\tau)$,  such that for each $n\in \mathbb{N}$ and $t\geq 0$, the following equality holds $\mathbb{P}$-a.s.
		\begin{equation}\label{eqn-proof-02}
	\begin{split}
	\hat{\eta}(t \wedge \tau_n)=S(t\wedge \tau_n)u_0 +\int_0^{t\wedge \tau_n} S(t-s) F(\hat{u}(s\wedge \tau_n) )\, ds +I_{\tau_n}(t\wedge \tau_n),
	\end{split}
		\end{equation}
where  $I_{\tau_n}$ is a continuous $V$-valued process process defined by
			\begin{equation}\label{eqn-proof-03}
			\begin{split}
I_{\tau_n}(t):= \int_0^t\mathds{1}_{[0,\tau_n)}(s){S}(t-s) G(\hat{u}(s \wedge \tau_m))\, 			d{W}(s),\;\; t\geq 0.
			\end{split}
			\end{equation}
Assume that $(u,\tau)\in \mathcal{LS}$.  	 Define  a  process $\eta=\bigl(\eta(t)\bigr)$, $t\in [0,\tau)$ by the above formulae \eqref{eqn-proof-02}-\eqref{eqn-proof-03} with
$\hat{u}$ replaced by $u$ and the announcing sequence $(\tau_n)$ of the accessible stopping time $\hat{\tau}$ replaced  by  announcing sequence of the accessible stopping time $\tau$.
Because $(u,\tau)$ is a local solution, we infer that  the process
$\eta(t)$, $t\in [0,\tau)$
 is a version of the process  $u(t)$, $t \in [0,\tau)$. Since $\hat{u}$ satisfies \eqref{eqn-amalgamation_02-a} and assumption (ii) of Proposition \ref{prop-loc-implies-max}  is satisfied,
 {we infer that
\begin{equation} \label{eqn-amalgamation_02-b}
		\hat{\eta}(t)= u (t) \; \mbox{$\mathbb{P}$-a.s. on} \; \Omega_t( \tau).
		\end{equation} }
 Hence, by the   part (3) of Lemma \ref{lem-amalgamation}, we infer that the process $\hat{\eta}(t)$, $t\in [0,\hat{\tau})$, is a version of the process $\hat{u}(t)$, $t\in [0,\hat{\tau})$ and therefore we can replace $\hat{\eta}$ by $\hat{u}$  on the LHS of \eqref{eqn-proof-02}. Therefore, we deduce that  $(\hat{u}, \hat{\tau})\in \mathcal{LS}$. This completes the existence of a local maximal solution.

As a byproduct of the above proof of the existence of   a local maximal solution  $(\hat{u}, \hat{\tau})$ we showed that $(\hat{u}, \hat{\tau})\in \mathcal{LS}$. This, in conjunction with the definition of 
	$\mathcal{LS}$ implies that  $(u^0,\tau^0)\preceq (\hat{u},\hat{\tau}) $.

It remains to prove the uniqueness of the local maximal solutions. For this aim let us suppose that $(u^1,\tau^1)$ and $(u^2,\tau^2)$ are two local maximal solutions. Let us put $\tilde{\tau} = \tau^1\vee \tau^2$.  Then, by part (ii) of Remark \ref{rem-predictable stopping time} $\tilde{\tau}$ is an accessible stopping time with announcing sequence  $(\tilde{\tau}_n:=\tau^1_n \vee \tau^2_n)_{n \in \mathbb{N}}$, where
$(\tau^i_n)_{n \in \mathbb{N}}$, $i=1,2$ is an announcing sequence of $\tau^i$.  By the uniqueness assumption (ii), we infer that
\begin{equation}\label{eqn-equivalence}
({u_1}_{\lvert _{[0,\tau^1\wedge \tau^2) }},\tau^1\wedge \tau^2) \sim ({u_2}_{\lvert _{[0,\tau^1\wedge \tau^2) } },\tau^1\wedge \tau^2).
\end{equation}
 We shall now  prove     that $\tau^1= \tau^2$ $\mathbb{P}$-a.s..  Suppose by contradiction  that
		$\mathbb{P} (\{\tau^1\neq \tau^2 \} )>0. $
		Let  $\Omega_1:=  \{\tau^1 \ge  \tau^2 \}$ and $\Omega_2:=\{ \tau^2 > \tau^1 \}$.
		We define a process $(\tilde{u}, \tilde{\tau})$ by the following formula
		\begin{equation}\label{eqn-Def-tildeu}
	\tilde{u} (t,\omega)=
		\begin{cases}
		u^1(t,\omega) \text{ if } \omega \in \Omega_1 \text{ and }  t\in [0, \tau^1(\omega) )\\
		u^2(t,\omega) \text{ if } \omega \in \Omega_2 \text{ and } t\in [0,\tau^2(\omega)). \\
		\end{cases}
		\end{equation}
We  now claim that the process  $(\tilde{u}, \tilde{\tau})$ is a local solution to Problem \eqref{ABS-SPDE-strong}. Let us fix $n \in \mathbb{N}$ and $t\ge 0$. By symmetry, we can assume that $\tau^1_n(\omega)\le \tau^2_n(\omega)$ for all  $\omega \in \Omega$ and  $n\in \mathbb{N}$.  Firstly, the proof of the admissibility of $\tilde{u}(t),\; t \in [0,\tilde{\tau})$ is very similar to the proof in \cite[Corollary 2.28]{Brz+Elw_2000}.
Secondly, let us also observe that on $ \Omega_1$ we have $\tilde{\tau}_n< \tau^1\wedge \tau^2$. Hence, we deduce from \eqref{eqn-Def-tildeu} and \eqref{eqn-equivalence} that
\begin{equation*}
\begin{split}
\tilde{u}(t\wedge\tilde{\tau}_n)=&\tilde{u}(t\wedge \tau^2_n)=u^1(t\wedge \tau^2_n)=u^2(t\wedge\tau^2_n)\\
=&S(t\wedge\tau^2_n)u_0 +\int_0^{t\wedge\tau^2_n} S(t\wedge\tau^2_n-s) F(u^2(s))\, ds
+ I^2_{\tau_n^2}(t \wedge \tau^2_n)\;\; t\geq 0,
\end{split}
\end{equation*}
where
\[
			I^2_{\tau_n^2}(t):= \int_0^t\mathds{1}_{[0,\tau_n^2)}(s){S}(t-s) G(u^2(s \wedge \tau_n^2))\, 			d{W}(s),\;\; t\geq 0.
			\]

The last equality follows from the fact that $(u^2,\tau^2)$ is a local solution. Since $\tau_n^2=\tilde{\tau}_n$, by  using \eqref{eqn-equivalence} and  \cite[Proposition 2.10]{Brz+Elw_2000} we deduce that
\begin{equation*}
\begin{split}
\tilde{u}(t\wedge\tilde{\tau}_n)
=&S(t\wedge\tilde{\tau}_n)u_0 +\int_0^{t\wedge\tilde{\tau}_n} S(t\wedge\tilde{\tau}_n-s) F(\tilde{u}(s))\, ds
+ \tilde{I}_{\tilde{\tau_n}}(t\wedge \tilde{\tau_n}), \;\; t\geq 0,
\end{split}
\end{equation*}
where
\[
\tilde{I}_{\tilde{\tau_n}}(t):=
\int_0^t 1_{[0,\tilde{\tau}_n)} S(t-s) G(\tilde{u}(s\wedge\tilde{\tau}_n)) dW(s), \;\; t\geq 0.
\]
Hence, $(\tilde{u},\tilde{\tau})$ satisfies equation \eqref{eq-locsol_00-b} on $\Omega_1$. In a similar way, we can also show that `$(\tilde{u},\tilde{\tau})$ satisfies \eqref{eq-locsol_00-b} on $\Omega_2$. Hence, $(\hat{u}, \hat{\tau})\in \mathcal{LS}$.
%\begin{equation*}
%\begin{split}
%\tilde{u}(t\wedge\tilde{\tau}_n)=&u^2(t\wedge\tau^2_n)=S(t\wedge\tau^2_n)u_0 +\int_0^{t\wedge\tau^2_n} S(t\wedge\tau^2_n-s) F(u^2(s))\, ds + I^2_{\tau_n^2}(t \wedge \tau^2_n) ,\\
%=&S(t\wedge\tilde{\tau}_n)u_0 +\int_0^{t\wedge\tilde{\tau}_n} S(t\wedge\tilde{\tau}_n-s) F(\tilde{u}(s))\, ds
%+ \tilde{I}_{\tilde{\tau_n}}(t\wedge \tilde{\tau_n}), \;\; t\geq 0.
%\end{split}
%\end{equation*}
%We infer that  $(\tilde{u},\tilde{\tau})$ satisfies \eqref{eq-locsol_00-b} on $\Omega_2$. Hence, $(\hat{u}, \hat{\tau})\in \mathcal{LS}$.

Now, by construction we have $(u^i, \tau^i) \preceq (\tilde{u}, \tilde{\tau})$ for $i\in \{1,2\}$ and there exists $i_0 \in \{1,2\}$ such that
$ (u^{i_0},\tau^{i_0}) \not \sim (\tilde{u},\tilde{\tau} )$. This contradicts the maximality of $(u^{i_0},\tau^{i_0})$ and completes the proof of Proposition \ref{prop-loc-implies-max}.

	\end{proof}
As a byproduct of the proof of the above Proposition \ref{prop-loc-implies-max}  we deduce the following general result.
\begin{cor}\label{cor-max of two local solutions} Let $(\mathbf{x},\sigma)$ and  $(\y,\tau)$
be two  local
		solution to problem \eqref{ABS-SPDE-strong}  such that   for  every $t>0$,
		\begin{equation} \label{eqn-amalgamation_05} \y(t)= \mathbf{x} (t) \mbox{ $\mathbb{P}$-a.s. on } \Omega_t( \tau \wedge \sigma ).
	\end{equation}
Then the process 		 $(\z, \sigma \vee \tau )$ defined by the following formula
		\begin{equation}\label{eqn-Def-tildeu-2}
	\z (t,\omega)=
		\begin{cases}
		\mathbf{x}(t,\omega), \text{ if }  \sigma(\omega)\geq \tau(\omega) \text{ and }  t\in [0, \sigma(\omega) ),\\
		\y(t,\omega), \text{ if }  \sigma(\omega)< \tau(\omega) \text{ and } t\in [0,\tau(\omega)),
		\end{cases}
		\end{equation}
is local
		solution to problem \eqref{ABS-SPDE-strong}. The process $(\z, \sigma \vee \tau )$ is called  supremum of $(\mathbf{x},\sigma)$ and  $(\y,\tau)$.
\end{cor}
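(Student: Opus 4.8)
The plan is to recycle the construction already carried out inside the proof of Proposition \ref{prop-loc-implies-max}: the process $(\z,\sigma\vee\tau)$ defined by \eqref{eqn-Def-tildeu-2} is the analogue of the process $(\tilde u,\tilde\tau)$ built there from \eqref{eqn-Def-tildeu} onward, and the compatibility hypothesis \eqref{eqn-amalgamation_05} is exactly the equivalence \eqref{eqn-equivalence} used there. First I would fix announcing sequences $(\sigma_n)_{n\in\mathbb{N}}$ and $(\tau_n)_{n\in\mathbb{N}}$ of the accessible stopping times $\sigma$ and $\tau$; by part (ii) of Remark \ref{rem-predictable stopping time}, $\sigma\vee\tau$ is then an accessible stopping time with announcing sequence $\tilde{\tau}_n:=\sigma_n\vee\tau_n$. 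Writing $\Omega_1:=\{\sigma\ge\tau\}$ and $\Omega_2:=\{\sigma<\tau\}$, one notes that on $\Omega_1$ one has $\sigma_n<\sigma$ and $\tau_n<\tau\le\sigma$, hence $\tilde{\tau}_n<\sigma$ there, so that $\z(t\wedge\tilde{\tau}_n)=\mathbf{x}(t\wedge\tilde{\tau}_n)$ on $\Omega_1$; symmetrically $\z(t\wedge\tilde{\tau}_n)=\y(t\wedge\tilde{\tau}_n)$ on $\Omega_2$.

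Next I would check that $\z$ is admissible in the sense of Definition \ref{def-adapted process}. Its trajectories are almost surely continuous, since on each $\Omega_i$ they coincide, on the relevant interval, with those of the continuous processes $\mathbf{x}$ or $\y$. For the adaptedness, one observes that, by \eqref{eqn-amalgamation_05} and \eqref{eqn-Def-tildeu-2}, $\z(t)=\mathbf{x}(t)$ $\mathbb{P}$-a.s. on $\Omega_t(\sigma)$ and $\z(t)=\y(t)$ on $\Omega_t(\tau)\setminus\Omega_t(\sigma)$, both of which are $\mathcal{F}_t$-measurable sets; hence $\z(t)|_{\Omega_t(\sigma\vee\tau)}$ is $\mathcal{F}_t$-measurable, exactly as in \cite[Corollary 2.28]{Brz+Elw_2000}.

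The core of the argument is then the verification, for every $n\in\mathbb{N}$ and every $t\ge0$, of the integrability bound \eqref{eq-locsol_00} and the mild identity \eqref{eq-locsol_00-b} for $\z$ with the announcing sequence $(\tilde{\tau}_n)_n$. The bound \eqref{eq-locsol_00} follows by splitting the expectation over $\Omega_1$ and $\Omega_2$ and invoking the corresponding bounds for $\mathbf{x}$ and $\y$. For \eqref{eq-locsol_00-b} I would argue on $\Omega_1$ first: since $\tilde{\tau}_n<\sigma$ on $\Omega_1$, one may choose an announcing sequence of $\sigma$ that dominates $\tilde{\tau}_n$, so that, $(\mathbf{x},\sigma)$ being a local solution, the mild identity holds at time $t\wedge\tilde{\tau}_n$; one then replaces, on $\Omega_1$, the integrands $\mathbf{x}$ and $G(\mathbf{x})$ by $\z$ and $G(\z)$ with the help of \eqref{eqn-amalgamation_05} and the localisation property \cite[Proposition 2.10]{Brz+Elw_2000} of the Bochner and stochastic integrals, obtaining \eqref{eq-locsol_00-b} for $\z$ on $\Omega_1$. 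The symmetric argument on $\Omega_2$ with $(\y,\tau)$ yields \eqref{eq-locsol_00-b} on $\Omega_2$, and since $\Omega=\Omega_1\cup\Omega_2$ the identity holds $\mathbb{P}$-a.s.; hence $(\z,\sigma\vee\tau)$ is a local solution, and calling it the supremum of $(\mathbf{x},\sigma)$ and $(\y,\tau)$ is merely a definition.

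The step I expect to be the main obstacle is the identification of the stochastic convolution $\int_0^t\mathds{1}_{[0,\tilde{\tau}_n)}(s)S(t-s)G(\z(s\wedge\tilde{\tau}_n))\,dW(s)$ with the corresponding term for $\mathbf{x}$ on $\Omega_1$ (and with that for $\y$ on $\Omega_2$): this is not a restriction to an arbitrary set, but rests on the fact that the stopped integrands literally coincide on $\Omega_1$ — because $\tilde{\tau}_n<\sigma$ there and $\z=\mathbf{x}$ on $[0,\sigma)\times\Omega_1$ — together with the stopping-time localisation of the stochastic integral from \cite[Proposition 2.10]{Brz+Elw_2000}. Everything else, namely matching the deterministic convolution term and assembling the estimates over $\Omega_1$ and $\Omega_2$, is routine bookkeeping.
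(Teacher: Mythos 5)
Your proposal is correct and is essentially the paper's own argument: the corollary is obtained there as a byproduct of the gluing construction \eqref{eqn-Def-tildeu} in the uniqueness part of the proof of Proposition \ref{prop-loc-implies-max}, with the same decomposition into $\{\sigma\geq\tau\}$ and $\{\sigma<\tau\}$, the announcing sequence $\sigma_n\vee\tau_n$ from Remark \ref{rem-predictable stopping time}, admissibility as in \cite[Corollary 2.28]{Brz+Elw_2000}, and identification of the drift and stochastic convolution terms on each set via the compatibility hypothesis and \cite[Proposition 2.10]{Brz+Elw_2000}.
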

	\subsection{An abstract result}
	In this subsection we prove by a fixed point method some results about  the 
	existence and uniqueness of maximal local mild solution to \eqref{ABS-SPDE-1}. 
%	 the following abstract SPDEs
%	\begin{equation}\label{ABS-SPDE}
%	u(t)=S(t)u_0+\int_0^t S(t-s) F(u(s))\, ds+\int_0^t S(t-s) G(u(s))
%	dW(s),\;\; \mbox{ for all }t>0,
%	\end{equation}
%	where $S(t)$, $t\in [0,\infty)$ is a semigroup, $F$ and $G$ are
%	nonlinear map satisfying Assumption \ref{assum-01}, Assumption
%	\ref{assum-F}, and Assumption \ref{assum-G}, respectively.
	
	Let
	$\theta:\mathbb{R}_+\to [0,1]$ be a ${\mathcal C}^\infty_c$ non
	increasing function
	such that
	\begin{equation}\label{eqn-theta} \inf_{x\in\mathbb{R}_+}\theta^\prime(x)\geq -1, \quad \theta(x)=1\;
	\mbox{\rm  iff } x\in [0,1]\quad \mbox{\rm  and } \theta(x)=0 \;
	\mbox{\rm  iff } x\in [2,\infty).
	\end{equation}
	and for $n\geq 1$ set  $\theta_n(\cdot)=\theta(\frac{\cdot}{n})$.
	Note that if $h:\mathbb{R}_+\to\mathbb{R}_+$ is a non decreasing
	function, then
	\begin{equation}\label{ineq-theta}
	\theta_n(x)h(x) \leq h(2n),\quad
	%\\\label{ineq-Lip-theta}
	\vert \theta_n(x)-\theta_n(y)\vert \leq \frac1n |x-y|, \text{ for every $x,y\in {\mathbb R}$}. %, \;\; \mbox{ for all } x_1,x_ 2\in\mathbb{R}.
	\end{equation}

 %We claim that
%		\begin{eqnarray}\label{eqn-global Lipschitz-F-new}
%		\vert \Phi_T^n(u_1)-\Phi_T^n(u_2)\vert_{L^2(0,T;H)} &\leq & C\Big[   2n C +1\Big]
%\\
%	&&	\Big[(2n)^{p+1} (T-\delta)^{(1-\alpha)/2} +(2n)^{q+1} (T-\delta)^{(1-\gamma)/2} \Big]\vert  u_1 -u_2
%		\vert_{X_T}.
%\nonumber		\end{eqnarray}
%for all 		$u_1,u_2 \in \hat{X}_{\delta,T,\ba}$.
%
%}

	\begin{prop}\label{prop-global Lipschitz-F}
		Let $F$ be a mapping satisfying Assumption
		\ref{assum-F}. Assume that  $\delta\in [0,T]$, $\ba \in X_\delta$. Then the  map
		\[
		\Phi_{{\delta,T,\ba}}^n: \hat{X}_{\delta,T,\ba} \ni u \mapsto \theta_n( \vert u
		\vert_{X_\cdot}) F(u) \in L^2(0,T;H).
		\]
	is globally Lipschitz and moreover, for all
		$u_1,u_2 \in \hat{X}_{\delta,T,\ba}$,
		\begin{equation}\label{eqn-global Lipschitz-F}
		\vert \Phi_{\delta,T,\ba}^n(u_1)-\Phi_{\delta,T,\ba}^n(u_2)\vert_{L^2(0,T;H)} \leq   		C( C +1)
\sum_{i=1}^N (2n)^{p_i+2} (T-\delta)^{(1-\alpha_i)/2}
\vert  u_1 -u_2
		\vert_{X_T}.
%\nonumber
		\end{equation}
	
In particular, the Lipschitz constant of $\Phi_{{\delta,T,\ba}}^n$ is independent of $\ba$.
	\end{prop}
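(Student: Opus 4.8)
The plan is to obtain a pointwise (in $t$) bound for $|\Phi^n_{\delta,T,\ba}(u_1)(t)-\Phi^n_{\delta,T,\ba}(u_2)(t)|_H$ and then integrate its square over $[0,T]$. Two preliminary observations set things up. First, since $t\mapsto|u|_{X_t}$ is continuous and non-decreasing (both the running supremum of $\|u(\cdot)\|$ and $\int_0^{\cdot}|u(s)|_E^2\,ds$ are), the set $\{t:\theta_n(|u|_{X_t})\neq0\}$ is an initial subinterval of $[0,T]$ on which $|u|_{X_t}\le2n$; in particular $\|u(t)\|\le2n$ and $\int_0^t|u(s)|_E^2\,ds\le4n^2$ there. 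Combining this with $F(0)=0$ and \eqref{eqn-local Lipschitz-F} applied with $x=0$ yields $\theta_n(|u|_{X_t})|F(u(t))|_H\le C\sum_i(2n)^{p_i+1-\alpha_i}|u(t)|_E^{\alpha_i}$; after an application of H\"older's inequality in time (legitimate because $\alpha_i<1$) this already shows $\Phi^n_{\delta,T,\ba}(u)\in L^2(0,T;H)$, so the map is well defined. Second, because $u_1$ and $u_2$ coincide with $\ba$ on $[0,\delta]$, both $u_i(t)$ and $|u_i|_{X_t}$ agree for $t\le\delta$, so the difference $\Phi^n_{\delta,T,\ba}(u_1)-\Phi^n_{\delta,T,\ba}(u_2)$ vanishes on $[0,\delta]$ and the $L^2$-integral runs only over $[\delta,T]$; this is the source of the factor $(T-\delta)$ in the statement and of the independence of $\ba$.

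For the pointwise estimate write $a_i=\theta_n(|u_i|_{X_t})$ and $F_i=F(u_i(t))$, and split $a_1F_1-a_2F_2$ by always keeping ``whole'' the $F$ whose cut-off weight is the larger: if $a_1\le a_2$ write $a_1F_1-a_2F_2=a_1(F_1-F_2)+(a_1-a_2)F_2$, and symmetrically if $a_2\le a_1$. In the case $a_1\le a_2$: the term $(a_1-a_2)F_2$ is non-zero only when $a_2\ne0$, i.e.\ $|u_2|_{X_t}\le2n$, so $|F_2|_H\le C\sum_i(2n)^{p_i+1-\alpha_i}|u_2(t)|_E^{\alpha_i}$ by the previous bound, while $|a_1-a_2|\le\frac1n\big||u_1|_{X_t}-|u_2|_{X_t}\big|\le\frac1n|u_1-u_2|_{X_t}\le\frac1n|u_1-u_2|_{X_T}$ by \eqref{ineq-theta}; the term $a_1(F_1-F_2)$ is non-zero only when $a_1\ne0$, in which case $a_2\ge a_1>0$ too, both $\|u_1(t)\|,\|u_2(t)\|\le2n$, and \eqref{eqn-local Lipschitz-F} applies with these norms replaced by $2n$. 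Absorbing $\frac1n(2n)^{p_i+1-\alpha_i}\le2(2n)^{p_i}$ and doing the symmetric case the same way, one gets, for a.e.\ $t\in[\delta,T]$ (with the understanding that each $|u_j(t)|_E$ below carries the indicator of $\{|u_j|_{X_t}\le2n\}$, on which $\int_0^t|u_j(s)|_E^2\,ds\le4n^2$),
\begin{equation*}
\begin{split}
&\lvert \Phi^n_{\delta,T,\ba}(u_1)(t)-\Phi^n_{\delta,T,\ba}(u_2)(t)\rvert_H \\
&\quad\le C\sum_{i=1}^N(2n)^{p_i}\Big[\,\lVert u_1(t)-u_2(t)\rVert\,|u_1(t)|_E^{\alpha_i}+|u_1(t)-u_2(t)|_E^{\alpha_i}\lVert u_1(t)-u_2(t)\rVert^{1-\alpha_i}+|u_1-u_2|_{X_T}\big(|u_1(t)|_E^{\alpha_i}+|u_2(t)|_E^{\alpha_i}\big)\Big].
\end{split}
\end{equation*}

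Finally I would square this, integrate over $[\delta,T]$, use $(\sum_{i=1}^N c_i)^2\le N\sum_i c_i^2$, and apply H\"older's inequality in time to each term: $\int_\delta^T\!|u_j(t)|_E^{2\alpha_i}\,dt\le(4n^2)^{\alpha_i}(T-\delta)^{1-\alpha_i}$ (using the indicator bound $\int|u_j|_E^2\le4n^2$) and $\int_\delta^T\!|u_1(t)-u_2(t)|_E^{2\alpha_i}\,dt\le|u_1-u_2|_{X_T}^{2\alpha_i}(T-\delta)^{1-\alpha_i}$ (using $\int_0^T|u_1-u_2|_E^2\le|u_1-u_2|_{X_T}^2$), while the supremum-type factors $\lVert u_1(t)-u_2(t)\rVert$ and $\lVert u_1(t)-u_2(t)\rVert^{1-\alpha_i}$ are dominated by $|u_1-u_2|_{X_T}$ and $|u_1-u_2|_{X_T}^{1-\alpha_i}$. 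Collecting powers of $2n$, bounding $(2n)^{\alpha_i}\le 2n$ crudely, and folding $N$ and the various absolute constants into a single constant of the form $C(C+1)$, each of the $N$ groups contributes at most $C(C+1)(2n)^{p_i+2}(T-\delta)^{(1-\alpha_i)/2}|u_1-u_2|_{X_T}$, which is exactly \eqref{eqn-global Lipschitz-F}; since the Lipschitz constant involves only $n$, $T-\delta$, $N$, the $p_i,\alpha_i$ and $C$, it does not depend on $\ba$.

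The step I expect to be the genuine obstacle is the pivoting choice in the second paragraph: the naive split $a_1(F_1-F_2)+(a_1-a_2)F_2$ breaks down precisely when $u_2$ is ``large'' ($|u_2|_{X_t}>2n$, hence $a_2=0$), because \eqref{eqn-local Lipschitz-F} then produces an uncontrolled $\|u_2(t)\|^{p_i}$; deciding which $F$ to keep whole according to the relative size of the two cut-off weights is what makes every $F(u_j)$-contribution either controlled (by the $F(0)=0$ bound) or outright annihilated, and it is the one non-mechanical idea in the argument. A secondary technical nuisance is that $|u_i(t)|_E$ is only square-integrable and not bounded, so the ``$\le2n$'' information lives on $\int_0^t|u_i(s)|_E^2\,ds$ and must be spent through H\"older's inequality in time; this is where $\alpha_i<1$ is essential and where the exponents $(1-\alpha_i)/2$ originate.
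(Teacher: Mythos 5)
Your proof is correct and follows essentially the same route as the paper's: the same splitting of $\theta_n(\vert u_1\vert_{X_t})F(u_1)-\theta_n(\vert u_2\vert_{X_t})F(u_2)$ into a cut-off-difference term (controlled via the Lipschitz property of $\theta_n$ together with the growth bound obtained from \eqref{eqn-local Lipschitz-F} with $x=0$ and $F(0)=0$) and a cut-off-times-$F$-difference term (controlled by \eqref{eqn-local Lipschitz-F} itself), followed by H\"older's inequality in time to produce the factor $(T-\delta)^{(1-\alpha_i)/2}$ and the observation that $u_1=u_2=\ba$ on $[0,\delta]$. The only difference is cosmetic: the paper fixes the pivot once and for all by assuming without loss of generality $\tau_1\le\tau_2$ for the exit times of $\vert u_i\vert_{X_t}$ from $[0,2n)$ (and reduces to $N=1$), whereas you choose the pivot pointwise in $t$ according to which cut-off value is larger; both devices serve the same purpose, namely ensuring that the ``whole'' $F$ is only ever evaluated where the corresponding $X_t$-norm is at most $2n$.
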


	The proof is based on a proof from
	\cite{Brz+Millet_2012} which in turn was based on a proof from \cite{deBouard+Deb_1999,deBouard+Deb_2003}.
	For simplicity of notation, below we will write $\Phi_{T}$ instead of $\Phi_{\delta,T,\ba}^n$.

	\begin{proof}[Proof of Proposition \ref{prop-global Lipschitz-F}] Wlog we can assume that $N=1$ and we will use notation $\alpha=\alpha_1$ and $p=p_1$. In this case,  the inequality
\eqref{eqn-global Lipschitz-F} takes the following form. For every $n\in \mathbb{N}$ there exists $C(n)>0$ such that for all $T>\delta\geq 0$, all $\ba \in X_\delta$ and
all $u_1,u_2 \in \hat{X}_{\delta,T,\ba}$,
		\begin{equation}\label{eqn-global Lipschitz-F-simple}
		\vert \Phi_{\delta,T,\ba}^n(u_1)-\Phi_{\delta,T,\ba}^n(u_2)\vert_{L^2(0,T;H)} \leq   		C(n)  (T-\delta)^{(1-\alpha)/2}
\vert  u_1 -u_2
		\vert_{X_T}.
		\end{equation}
In what follows we will prove \eqref{eqn-global Lipschitz-F-simple}.
Let us fix $n\in \mathbb{N}$, $T>\delta\geq 0$, $\ba \in X_\delta$ and  $u_1,u_2 \in \hat{X}_{\delta,T,\ba}$,

Note that $\Phi_T(0)=0$. Assume that $u_1,u_2 \in X_T$. Denote,
		for $i=1,2$,
		\[
		\tau_i= \inf\{t \in [0,T]: \vert u_i \vert_{X_t} \geq 2n\}.
		\]
		Note that if the set on the RHS above is empty, i.e. $\vert u_i \vert_{X_t} < 2n$ for all $t\in [0,T]$,
		then $\tau_i=T$.
		
		Wlog we can assume that $\tau_1 \leq
		\tau_2$. Because for $i=1,2, \;\; \theta_n( \vert u_i
		\vert_{X_t}) =0 \mbox{ for } t \geq \tau_2$, we  have 
	\begin{eqnarray*}
		%	\vert \Phi_T(u_1)-\Phi_T(u_2)\vert_{L^2(0,T;H)} &=& \Big[ \int_0^T \vert  \theta_n( \vert u_1 \vert_{X_t}) F(u_1(t))-\theta_n( \vert u_2 \vert_{X_t}) F(u_2(t))\vert_H^2\,dt\Big]^{1/2}
		%	\\
		\vert \Phi_T(u_1)-\Phi_T(u_2)\vert_{L^2(0,T;H)} 	&=& \Big[ \int_0^{\tau_2} \vert  \theta_n( \vert u_1 \vert_{X_t})
			F(u_1(t))-\theta_n( \vert u_2 \vert_{X_t})
			F(u_2(t))\vert_H^2\,dt\Big]^{1/2}
			\\
%			&&\hspace{-5truecm}\lefteqn{= \Big[ \int_0^{\tau_2} \vert \big[
%				\theta_n( \vert u_1 \vert_{X_t}) - \theta_n( \vert u_2
%				\vert_{X_t}) \big] F(u_2(t))+ \theta_n( \vert u_1
%				\vert_{X_t})\big[  F(u_1(t))- F(u_2(t)) \Big]
%				\vert_H^2\,dt\Big]^{1/2} }
%			\\
			& &\hspace{-3truecm}\lefteqn{ \leq
				\Big[ \int_0^{\tau_2} \vert  \big[ \theta_n( \vert u_1 \vert_{X_t}) - \theta_n( \vert u_2 \vert_{X_t}) \big] F(u_2(t))\vert_H^2 \,dt \Big]^{1/2} }\\
			&&\hspace{-2truecm}\lefteqn{+ \Big[ \int_0^{\tau_2} \vert
				\theta_n( \vert u_1 \vert_{X_t})\big[  F(u_1(t))- F(u_2(t)) \big]
				\vert_H^2\,dt\Big]^{1/2} =:A+B}
		\end{eqnarray*}
		Next, since $\theta_n$ is Lipschitz with Lipschitz constant $2n$ and ${u_1}_{\lvert_{[0,\delta]}}={u_2}_{\lvert_{[0,\delta]}}=\ba$
		we have
		\begin{eqnarray*}
		%	A^2&=& \int_0^{\tau_2} \vert  \big[ \theta_n( \vert u_1 \vert_{X_t}) - \theta_n( \vert u_2 \vert_{X_t}) \big] F(u_2(t))\vert^2 \,dt\\
A^2 &=& \int_0^{\delta \wedge \tau_2} \vert  \big[ \theta_n( \vert u_1 \vert_{X_t}) - \theta_n( \vert u_2 \vert_{X_t}) \big] F(u_2(t))\vert^2 \,dt
+ \int_{\delta \wedge \tau_2}^{ \tau_2} \vert  \big[ \theta_n( \vert u_1 \vert_{X_t}) - \theta_n( \vert u_2 \vert_{X_t}) \big] F(u_2(t))\vert^2 \,dt
\\
&=& \int_{\delta \wedge \tau_2}^{ \tau_2} \vert  \big[ \theta_n( \vert u_1 \vert_{X_t}) - \theta_n( \vert u_2 \vert_{X_t}) \big] F(u_2(t))\vert^2 \,dt
\le 4n^2 C^2 \int_{\delta \wedge \tau_2}^{ \tau_2}  \big[ \vert
			\vert u_1 \vert_{X_t} -  \vert u_2 \vert_{X_t} \vert\big]^2 \vert F(u_2(t))\vert_H^2 \,dt\\
		%	&&\mbox{by Minkowski inequality}\\
			&\leq& 4n^2 C^2 \int_{\delta \wedge \tau_2}^{ \tau_2}  \vert
			u_1 -u_2 \vert_{X_t}^2 \vert F(u_2(t))\vert_H^2 \,dt \leq  4n^2 C^2\vert
			u_1 -u_2 \vert_{X_T}^2 \int_{\delta \wedge \tau_2}^{ \tau_2}   \vert F(u_2(t))\vert_H^2 \,dt.
%			&\leq & 4n^2 C^2 \vert
%			u_1 -u_2 \vert_{X_T}^2 \int_{\delta \wedge \tau_2}^{ \tau_2}    \vert F(u_2(t))\vert_H^2 \,dt
		\end{eqnarray*}
		Next, by assumptions and some elementary calculations 
		\begin{align*}
		\int_{\delta \wedge \tau_2}^{ \tau_2}   \vert F(u_2(t))\vert_H^2 \,dt  \leq & C^2  \int_{\delta \wedge \tau_2}^{ \tau_2}  \Vert  u_2(t)\Vert^{2p+2-2\alpha}  \vert  u_2(t)\vert_E^{2\alpha} \,dt\\ 
		\leq & C^2 \sup_{t \in [\delta\wedge \tau_2,\tau_2]} \Vert u_2(t)\Vert^{2p+2-2\alpha}
		\big( \int_{\delta \wedge \tau_2}^{ \tau_2}  \vert u_2(t)\vert_E^{2} \,dt\big)^\alpha
		(\tau_2-\delta\wedge \tau_2)^{1-\alpha}\\
		\leq & C^2  (T-\delta)^{1-\alpha} \vert u_2 \vert_{X_{\delta\wedge\tau_2, \tau_2}}^{2p+2} \leq  C^2  (T-\delta)^{1-\alpha}(2n)^{2p+2}.
		\end{align*}
		Therefore,
		\begin{equation*}
			A\le C  (T-\delta)^{(1-\alpha)/2}(2n)^{p+2} \vert  u_1 -u_2
			\vert_{X_T} .
		\end{equation*}
	Since $\tau_1 \leq \tau_2$, $ \theta_n( \vert u_1 \vert_{X_t}) =0$, $ t \geq
		\tau_1$,  $\theta_n( \vert u_1 \vert_{X_t}) \leq 1,\, t\in [0,\tau_1)$ and  ${u_1}_{\lvert_{[0,\delta]}}={u_2}_{\lvert_{[0,\delta]}}=\ba$, we have
		\begin{align*}
			B= \Big[ \int_0^{\tau_2} \vert   \theta_n( \vert u_1 \vert_{X_t})\big[  F(u_1(t))- F(u_2(t)) \big] \vert_H^2\,dt \Big]^{1/2}
		%	=& \Big[ \int_{\delta\wedge \tau_1}^{\tau_1} \vert   \theta_n( \vert u_1 \vert_{X_t})\big[  F(u_1(t))- F(u_2(t)) \big] \vert_H^2\,dt\Big]^{1/2}\\
			\leq& \Big[\int_{\delta\wedge \tau_1}^{\tau_1}  \vert     F(u_1(t))- F(u_2(t))  \vert_H^2\,dt\Big]^{1/2}
			\leq  \tilde{B}_{p},
		\end{align*}
		where
		\begin{equation*}
		\begin{split}
		\tilde{B}_{p}:=
		C \Big[ \int_{\delta\wedge \tau_1}^{\tau_1}  \vert u_1(t)-u_2(t)\vert_E^{2\alpha} \Vert u_1(t)-u_2(t)\Vert^{2-2\alpha} \Vert u_2(t)\Vert^{2p} \,dt\Big]^{1/2}\\
		+C \Big[ \int_{\delta\wedge \tau_1}^{\tau_1}     \Vert u_1(t)-u_2(t)\Vert^2 \Vert u_1(t)\Vert^{2p-2\alpha} \vert u_1(t)\vert_E^{2\alpha}   \,dt\Big]^{1/2}.
		\end{split}
		\end{equation*}
%		and
%		\begin{equation*}
%		\begin{split}
%		\tilde{B}_{,q}:=
%		C \Big[ \int_{\delta\wedge \tau_1}^{\tau_1}   \vert u_1(t)-u_2(t)\vert_E^{2\gamma} \Vert u_1(t)-u_2(t)\Vert^{2-2\gamma} \Vert u_2(t)\Vert^{2q} \,dt\Big]^{1/2}\\
%		+C \Big[\int_{\delta\wedge \tau_1}^{\tau_1}   \Vert u_1(t)-u_2(t)\Vert^2 \Vert u_1(t)\Vert^{2p-2\gamma} \vert u_1(t)\vert_E^{2\gamma}   \,dt\Big]^{1/2}.
%		\end{split}
%		\end{equation*}
This term can be estimated as follows
		\begin{align*}
%			\tilde{B}_{p} \leq& C \sup_{t\in [\delta\wedge \tau_1,\tau_1]} \Vert u_1(t)-u_2(t)\Vert \Vert
%			u_1(t)\Vert^{p-\alpha} \Big[ \int_{\delta\wedge \tau_1}^{\tau_1} \vert
%			u_1(t)\vert_E^{2\alpha}   \,dt\Big]^{1/2}\\
%			+& C \sup_{t\in [\delta\wedge \tau_1,\tau_1]} \Vert u_1(t)-u_2(t)\Vert^{1-\alpha}
%			\Vert u_2(t)\Vert^p \Big[ \int_{\delta\wedge \tau_1}^{\tau_1} \vert
%			u_1(t)-u_2(t)\vert_E^{2\alpha} \,dt\Big]^{1/2}
%			\\
		\tilde{B}_{p} 	\leq&  C \sup_{t\in [0,T]} \Vert u_1(t)-u_2(t)\Vert \sup_{t\in
				[\delta\wedge \tau_1,\tau_1]} \Vert u_1(t)\Vert^{p-\alpha} \Big[ \int_{\delta\wedge \tau_1}^{\tau_1}
			\vert u_1(t)\vert_E^{2}   \,dt\Big]^{\alpha/2}
			(\tau_1-\delta\wedge \tau_1)^{(1-\alpha)/2}
			\\
			+& C \sup_{t\in [0,T]} \Vert u_1(t)-u_2(t)\Vert^{1-\alpha}
			\sup_{t\in [\delta\wedge \tau_1,\tau_1]}  \Vert u_2(t)\Vert^p \Big[ \int_{\delta\wedge \tau_1}^{\tau_1}
			\vert u_1(t)-u_2(t)\vert_E^{2}   \,dt\Big]^{\alpha/2}
			(\tau_1-\delta\wedge \tau_1)^{(1-\alpha)/2}
			\\
			\leq&  C \vert u_1-u_2\vert_{X_T}  \vert u_1 \vert_{X_{\tau_1}}^p (T-\delta)^{(1-\alpha)/2} +C \vert u_1-u_2\vert_{X_T}  \vert u_2 \vert_{X_{\tau_1}}^p (T-\delta)^{(1-\alpha)/2}\\
		%	&\mbox{ because} \vert u_1 \vert_{X_{\tau_1}} \leq 2n \mbox{ and } \vert u_2 \vert_{X_{\tau_1}} \leq \vert u_2 \vert_{X_{\tau_2}}\leq 2n\\
			\leq&   C (T-\delta)^{(1-\alpha)/2} \vert u_1-u_2\vert_{X_T} \Big[
			\vert u_1 \vert_{X_{\tau_1}}^p  +   \vert u_2
			\vert_{X_{\tau_1}}^p\Big] \leq C (2n)^{p+1} (T-\delta)^{(1-\alpha)/2}
			\vert u_1-u_2\vert_{X_T}.
		\end{align*}
		Summing up,  we proved the following inequality 
		\begin{eqnarray*}
			\vert \Phi_T(u_1)-\Phi_T(u_2)\vert_{L^2(0,T;H)}\le C\Big[   2n C +1\Big] (2n)^{p+1}  \tau_2^{(1-\alpha)/2} \vert
			u_1 -u_2 \vert_{X_T},
%			&\leq & C^2[
%			\tau_2^{(1-\alpha)/2}(2n)^{p+2}+\tau_2^{(1-\gamma)/2}(2n)^{q+2}] \vert  u_1 -u_2 \vert_{X_T}\\
%			&& +C [(2n)^{p+1} \tau_1^{(1-\alpha)/2} +(2n)^{q+1} \tau_1^{(1-\gamma)/2} ]\vert u_1-u_2\vert_{X_T}\\
%			&=&
		\end{eqnarray*}
		which competes the proof of the proposition.
	\end{proof}
	The following result is a special case of Proposition
	\ref{prop-global Lipschitz-F} with $H=V$.
	\begin{cor}\label{cor-global-lip-G}
		Let $G$ be a nonlinear mapping satisfying Assumption
		\ref{assum-G}. Assume that $n\in \mathbb{N}$, $T>0$, $\delta \in [0,T]$ and  $\ba\in X_{0,\delta}$.
 Define a map $\hat{\Phi}_{{\delta,T,\ba}}^n$ by
		\begin{equation}\label{eqn-Phi_G}
		\hat{\Phi}_{{\delta,T,\ba}}^n: \hat{X}_{\delta,T,\ba} \ni u \mapsto \theta_n(
		\vert u \vert_{X_\cdot}) G(u) \in L^2(0,T;V).
		\end{equation}
		Then $\hat{\Phi}_{{\delta,T,\ba}}^n$ is globally Lipschitz and moreover, for all
		$u_1,u_2 \in \hat{X}_{\delta,T,\ba}$,
		\begin{eqnarray}\label{eqn-global LipschitzG}
		\vert \hat{\Phi}_{{\delta,T,\ba}}^n(u_1)-\hat{\Phi}_{{\delta,T,\ba}}^n(u_2)\vert_{L^2(0,T;V)} &\leq
		&  (2n)^{k+2}C_G(C_G +1)  {(T-\delta)^{(1-\beta)/2} }\vert  u_1
		-u_2 \vert_{X_T}.
		\end{eqnarray}
{In particular, the Lipschitz constant of $\hat{\Phi}_{{\delta,T,\ba}}^n$ is independent of $\ba$.}
	\end{cor}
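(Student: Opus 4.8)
The plan is to obtain this corollary directly from Proposition \ref{prop-global Lipschitz-F} by recognising Assumption \ref{assum-G} as a special instance of Assumption \ref{assum-F}. First I would set $H := V$, which is legitimate because the proof of Proposition \ref{prop-global Lipschitz-F} uses $H$ only as a normed space, through the bound \eqref{eqn-local Lipschitz-F} and the mapping properties of the spaces $X_t$; nothing there forces $H$ to be distinct from $V$. With this identification, I would match the parameters by taking $N = 1$, $p_1 := k$, $\alpha_1 := \beta \in [0,1)$ and $C := C_G$. Under these choices the local Lipschitz estimate \eqref{eqn-local Lipschitz-G} for $G$ is exactly the estimate \eqref{eqn-local Lipschitz-F} for the map $F := G : E \to H = V$, and the hypothesis $G(0) = 0$ is precisely $F(0) = 0$.

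Next I would invoke Proposition \ref{prop-global Lipschitz-F} for this $F = G$, for $\delta \in [0,T]$ and $\ba \in X_\delta$. Since the map $\Phi^n_{\delta,T,\ba}$ constructed there is literally $u \mapsto \theta_n(\vert u\vert_{X_\cdot})F(u) = \theta_n(\vert u\vert_{X_\cdot})G(u) = \hat{\Phi}^n_{\delta,T,\ba}(u)$, and the target space $L^2(0,T;H)$ coincides with $L^2(0,T;V)$, the conclusion of Proposition \ref{prop-global Lipschitz-F} yields at once that $\hat{\Phi}^n_{\delta,T,\ba}$ is globally Lipschitz, together with the quantitative bound
\begin{equation*}
\vert \hat{\Phi}^n_{\delta,T,\ba}(u_1) - \hat{\Phi}^n_{\delta,T,\ba}(u_2)\vert_{L^2(0,T;V)} \leq C_G(C_G+1)(2n)^{k+2}(T-\delta)^{(1-\beta)/2}\vert u_1 - u_2\vert_{X_T},
\end{equation*}
for all $u_1,u_2 \in \hat{X}_{\delta,T,\ba}$, which is exactly \eqref{eqn-global LipschitzG}. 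Because the constant on the right-hand side does not depend on $\ba$, the independence of the Lipschitz constant of $\hat{\Phi}^n_{\delta,T,\ba}$ on $\ba$ is inherited directly from the same property in Proposition \ref{prop-global Lipschitz-F}.

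The only point that deserves a word of care — and hence the "main obstacle", although it is a very mild one — is to confirm that the proof of Proposition \ref{prop-global Lipschitz-F} nowhere exploits an embedding $E \hookrightarrow H$ or any genuine separation between $H$ and $V$. Inspecting that proof, one sees it uses only the interpolation of the $X_t$-norm via the growth assumptions and Hölder's inequality in the time variable, so the specialisation $H = V$ is harmless. No further computation is required; the corollary is essentially a relabelling of Proposition \ref{prop-global Lipschitz-F}.
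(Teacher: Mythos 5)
Your proposal is correct and is exactly the route the paper takes: the paper states the corollary as "a special case of Proposition \ref{prop-global Lipschitz-F} with $H=V$", which is precisely your specialisation $H=V$, $N=1$, $p_1=k$, $\alpha_1=\beta$, $C=C_G$. Your extra check that the proof of the proposition never uses any separation between $H$ and $V$ is a sensible (if implicit in the paper) confirmation, and the constant you obtain matches \eqref{eqn-global LipschitzG}.
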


	\begin{prop}\label{prop-Psi_T-Lipschitz}
	Assume that Assumptions \ref{assum-F} and  \ref{assum-01} hold.  Assume that $n\in \mathbb{N}$, $T>0$, $\delta \in [0,T]$ and  $\ba\in \mathscr{M}^2(X_{0,\delta})$.
	Then the map $\Psi^n_{\delta,T,\ba}$ defined by
		\begin{equation}\label{eqn-Psi_T}
\Psi_{\delta,T,\ba}^n: \mathscr{M}^2(\hat{X}_{\delta,T, \ba}) \ni u \mapsto  [S(\cdot)](\ba(0)) + S \ast \Phi_{\delta, T,\ba}^n
		(u)+ S\diamond \hat{\Phi}_{\delta,T,\ba}^n(u) \in \mathscr{M}^2(X_{T}),
		\end{equation}
		 is globally Lipschitz and moreover, for all $u_1,u_2
		\in \mathscr{M}^2(\hat{X}_{\delta,T, \ba})$,
%\hdoubtz{ZB2102: I changed the $\frac12$ power below. I hope without a mistake. }
		\begin{eqnarray}\label{eqn-global Lipschitz}
		\vert \Psi_{\delta,T,\ba}^{n}(u_1)-\Psi_{\delta,T,\ba}^{n}(u_2)\vert_{\mathscr{M}^2(X_T)}  \leq \hat{C}(n) \Big[\max_{1\le i \le N}{(T-\delta)^{1-\alpha_i}} \vee (T-\delta)^{1-\beta} \Big]^{\frac12}\vert  u_1 -u_2
		\vert_{\mathscr{M}^2(X_T)},
\nonumber
		\end{eqnarray}
		where $\hat{C}(n)$ is dependent only on $n$ and is given by, for some $D>1$,
		\[ \hat{C}(n)= C_1 C_F (C_F +1) \sum_{i=1}^N(2n)^{p_i+2} + C_2C_G (2n)^{k+2}(  C_G +1)
\leq C_3n^{D}.
		\]
In particular, the Lipschitz constant  of $\Psi_{\delta,T,\ba}^{n}$ is independent of $\ba$.
	\end{prop}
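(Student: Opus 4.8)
The plan is to deduce the statement directly from the Lipschitz bounds already obtained for the truncated Nemytskii maps, namely \eqref{eqn-global Lipschitz-F} of Proposition \ref{prop-global Lipschitz-F} and \eqref{eqn-global LipschitzG} of Corollary \ref{cor-global-lip-G}, combined with the continuity of the convolution operators $S\ast$ and $S\diamond$ granted by Assumption \ref{assum-01}. First I would check that $\Psi_{\delta,T,\ba}^n$ does land in $\mathscr{M}^2(X_T)$: the leading term $[S(\cdot)](\ba(0))$ belongs to $\mathscr{M}^2(X_T)$ by Assumption \ref{assum-01}(iii), since $\ba(0)\in L^2(\Omega;V)$ (because $\ba\in\mathscr{M}^2(X_{0,\delta})$); the cut-off $\theta_n$ forces $\Phi_{\delta,T,\ba}^n(u)$ (resp. $\hat\Phi_{\delta,T,\ba}^n(u)$) to be bounded in $L^2(0,T;H)$ (resp. $L^2(0,T;V)$) by a deterministic constant depending only on $n$ and $T$, so that $S\ast\Phi_{\delta,T,\ba}^n(u)\in X_T$ and $S\diamond\hat\Phi_{\delta,T,\ba}^n(u)\in\mathscr{M}^2(X_T)$ by parts (i) and (ii) of Assumption \ref{assum-01}; progressive measurability of the integrands follows from the continuity of $F,G,\theta_n$ and from $t\mapsto\lvert u\rvert_{X_t}$ being an adapted, continuous, nondecreasing functional of the path. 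Since $[S(\cdot)](\ba(0))$ does not depend on $u$, it cancels in every difference $\Psi_{\delta,T,\ba}^n(u_1)-\Psi_{\delta,T,\ba}^n(u_2)$.

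Next I would estimate the two remaining pieces separately and then add them via the triangle inequality in $\mathscr{M}^2(X_T)$. For the deterministic convolution, Assumption \ref{assum-01}(i) provides $C_1=C_1(T)$ with $\lvert S\ast f\rvert_{X_T}\le C_1\lvert f\rvert_{L^2(0,T;H)}$; applying this $\omega$ by $\omega$ with $f=\Phi_{\delta,T,\ba}^n(u_1)-\Phi_{\delta,T,\ba}^n(u_2)$, then squaring, taking expectations and using the pathwise estimate \eqref{eqn-global Lipschitz-F} (valid for deterministic elements of $\hat X_{\delta,T,\ba}$, hence applicable to $u_1(\omega),u_2(\omega)$) yields
\[
\lvert S\ast\Phi_{\delta,T,\ba}^n(u_1)-S\ast\Phi_{\delta,T,\ba}^n(u_2)\rvert_{\mathscr{M}^2(X_T)}\le C_1 C_F(C_F+1)\sum_{i=1}^N(2n)^{p_i+2}(T-\delta)^{(1-\alpha_i)/2}\,\lvert u_1-u_2\rvert_{\mathscr{M}^2(X_T)},
\]
where $C_F$ is the constant of Assumption \ref{assum-F}. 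For the stochastic convolution, Assumption \ref{assum-01}(ii) gives $C_2=C_2(T)$ with $\lvert S\diamond\xi\rvert_{\mathscr{M}^2(X_T)}\le C_2\lvert\xi\rvert_{\mathscr{M}^2(0,T;V)}$; taking $\xi=\hat\Phi_{\delta,T,\ba}^n(u_1)-\hat\Phi_{\delta,T,\ba}^n(u_2)$ and integrating the pathwise bound \eqref{eqn-global LipschitzG} of Corollary \ref{cor-global-lip-G} gives
\[
\lvert S\diamond\hat\Phi_{\delta,T,\ba}^n(u_1)-S\diamond\hat\Phi_{\delta,T,\ba}^n(u_2)\rvert_{\mathscr{M}^2(X_T)}\le C_2 C_G(C_G+1)(2n)^{k+2}(T-\delta)^{(1-\beta)/2}\,\lvert u_1-u_2\rvert_{\mathscr{M}^2(X_T)}.
\]

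Finally I would combine the two displays, bound each exponent factor by $\big[\max_{1\le i\le N}(T-\delta)^{1-\alpha_i}\vee(T-\delta)^{1-\beta}\big]^{1/2}$, and absorb the remaining constants into $\hat C(n):=C_1 C_F(C_F+1)\sum_{i=1}^N(2n)^{p_i+2}+C_2 C_G(C_G+1)(2n)^{k+2}$, which is a polynomial in $n$ of degree $D:=\max\{\max_i p_i+2,\,k+2\}>1$, so that $\hat C(n)\le C_3 n^D$ for a suitable $C_3>0$. The independence of $\hat C(n)$ from $\ba$ is inherited from the corresponding statements in Proposition \ref{prop-global Lipschitz-F} and Corollary \ref{cor-global-lip-G}. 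I do not expect a genuine obstacle here: all the analytic content already sits in the earlier lemmas, and the only points deserving care are the passage from the pathwise Lipschitz estimates to the $\mathscr{M}^2$-estimates (a Fubini/Tonelli argument after squaring), the progressive-measurability bookkeeping for the integrands $\Phi_{\delta,T,\ba}^n(u)$ and $\hat\Phi_{\delta,T,\ba}^n(u)$, and the verification that $\Psi_{\delta,T,\ba}^n$ indeed takes values in $\mathscr{M}^2(X_T)$.
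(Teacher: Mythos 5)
Your proposal is correct and follows essentially the same route as the paper: the initial-value term cancels in the difference, the remaining two pieces are estimated separately through the linear continuity of $S\ast$ and $S\diamond$ from Assumption \ref{assum-01} combined (via Fubini) with the Lipschitz bounds \eqref{eqn-global Lipschitz-F} and \eqref{eqn-global LipschitzG}, and the constants are assembled into $\hat{C}(n)$ exactly as in the paper. Your additional remarks on well-definedness and progressive measurability are details the paper dispatches with a single sentence, but they do not change the argument.
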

	\begin{proof}[Proof of Proposition \ref{prop-Psi_T-Lipschitz}]

For simplicity of notation we will write
		$\Psi_{T}$ instead of $\Psi_{\delta,T,\ba}^n$. We will also write $\Phi_F$ (resp. $\hat{\Phi}_G$) instead of $\Phi^n_{\delta,T,\ba}$ (resp. $\hat{\Phi}^n_{\delta,T,\ba}$).
		Obviously in view of Assumption \ref{assum-01} the map $\Psi_T$ is
		well defined. Let us fix  $u_1,u_2 \in \mathscr{M}^2(X_{\delta,T,\ba})$. Then by the Fubini Theorem, Assumption \ref{assum-01}, Proposition \ref{prop-global Lipschitz-F} and Corollary \ref{cor-global-lip-G} we infer that
		\begin{eqnarray*}
			\vert \Psi_T(u_1)-\Psi_T(u_2)\vert_{\mathscr{M}^2(X_T)} &\leq & \vert S\ast \Phi_F(u_1)-S\ast \Phi_F(u_2)\vert_{\mathscr{M}^2(X_T)}
			+ \vert S \diamond  \Phi_G(u_1)-S\diamond \Phi_G(u_2)\vert_{\mathscr{M}^2(X_T)}\\
			&\leq& C_1 \vert  \Phi_F(u_1)- \Phi_F(u_2)\vert_{\mathscr{M}^2(0,T;H)}+C_2
			\vert  \Phi_G(u_1)-\Phi_G(u_2)\vert_{\mathscr{M}^2(X_T)}
			\\
%			&\leq&\Big[ C_1 C_F (C_F +1)\sum_{i=1}^N (2n)^{p_i+2} (T-\delta)^{(1-\alpha_i)/2}\\
%			&& \qquad + C_2C_G (2n)^{k+2}(   C_G +1) (T-\delta)^{(1-\beta)/2} \Big] \vert  u_1 -u_2
%			\vert_{\mathscr{M}^2(X_T)}
%			\\
			&\leq& \hat{C}(n)\Big[\max_{1\le i \le N}{(T-\delta)^{1-\alpha_i}} \vee (T-\delta)^{1-\beta} \Big]^{\frac12}\vert  u_1 -u_2
			\vert_{\mathscr{M}^2(X_T)}.
		\end{eqnarray*}
		The proof is complete.
	\end{proof}
Since our method is based on finding  fixed points of $\Psi_{\delta,T,\ba}^n$, the following auxiliary result is useful.
\begin{lem}\label{lem-fixed points}
Assume that $n\in \mathbb{N}$ and  $T>S>  0$ and $\bx \in V$.   Assume that $\ba\in \mathscr{M}^2(\hat{X}_{0,S,\bx})$ is a fixed point of
$\Psi_{0,S,\bx}^n$. Then  $\Psi_{S,T,\ba}^n$ maps  $\mathscr{M}^2(\hat{X}_{S,T, \ba})$ into itself.
\end{lem}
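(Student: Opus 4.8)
The plan is to verify the two defining properties of $\mathscr{M}^2(\hat{X}_{S,T,\ba})$ for the process $\Psi_{S,T,\ba}^n(u)$, namely that it is progressively measurable with trajectories in $X_T$, and — the point that needs actual work — that its restriction to $[0,S]$ equals $\ba$. By Assumption \ref{assum-01} and Propositions \ref{prop-global Lipschitz-F}, \ref{prop-Psi_T-Lipschitz} together with Corollary \ref{cor-global-lip-G}, the map $\Psi_{S,T,\ba}^n$ already sends $\mathscr{M}^2(\hat{X}_{S,T,\ba})$ into $\mathscr{M}^2(X_T)$; so the only thing remaining is to show that for $u\in \mathscr{M}^2(\hat{X}_{S,T,\ba})$ one has $\Psi_{S,T,\ba}^n(u)(t)=\ba(t)$ for all $t\in[0,S]$, $\mathbb{P}$-a.s.

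First I would fix $t\in[0,S]$ and write out $\Psi_{S,T,\ba}^n(u)(t)$ from \eqref{eqn-Psi_T}, recalling that the cutoff is the \emph{running} cutoff $\theta_n(\vert\cdot\vert_{X_\cdot})$, so for $r\le t\le S$ the value $\theta_n(\vert u\vert_{X_r})$ depends only on $u|_{[0,r]}=\ba|_{[0,r]}$ and hence equals $\theta_n(\vert \ba\vert_{X_r})$. Thus on $[0,S]$ the terms $\Phi_{S,T,\ba}^n(u)$ and $\hat{\Phi}_{S,T,\ba}^n(u)$ coincide with $\Phi_{0,S,\bx}^n(\ba)$ and $\hat{\Phi}_{0,S,\bx}^n(\ba)$ respectively (as elements of $L^2(0,S;H)$ and $L^2(0,S;V)$), simply because $F$ and $G$ are evaluated pointwise in time along $\ba$. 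Consequently, for $t\in[0,S]$,
\[
\Psi_{S,T,\ba}^n(u)(t) = [S(t)]\bx + (S\ast \Phi_{0,S,\bx}^n(\ba))(t) + (S\diamond \hat{\Phi}_{0,S,\bx}^n(\ba))(t) = \Psi_{0,S,\bx}^n(\ba)(t),
\]
where the convolution and stochastic-convolution integrals only see the integrand on $[0,t]\subset[0,S]$ and therefore agree with those defining $\Psi_{0,S,\bx}^n(\ba)$. Since $\ba$ is by hypothesis a fixed point of $\Psi_{0,S,\bx}^n$, the right-hand side equals $\ba(t)$, which gives $\Psi_{S,T,\ba}^n(u)|_{[0,S]}=\ba$, $\mathbb{P}$-a.s., as required.

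The main obstacle is purely a bookkeeping one: one must be careful that the cutoff function $\theta_n(\vert u\vert_{X_r})$ really is causal, i.e. depends on $u$ only through its restriction to $[0,r]$, which is immediate from the definition \eqref{eqn-X_T-norm} of the norm $\vert\cdot\vert_{X_r}$, and that the stochastic convolution $S\diamond(\cdot)$ restricted to $[0,t]$ depends only on the integrand on $[0,t]$ — again immediate since it is an It\^o integral $\int_0^t S(t-r)(\cdot)\,dW(r)$. Once these observations are in place the identity $\Psi_{S,T,\ba}^n(u)|_{[0,S]}=\Psi_{0,S,\bx}^n(\ba)|_{[0,S]}=\ba$ follows, and combined with $\Psi_{S,T,\ba}^n(u)\in\mathscr{M}^2(X_T)$ we conclude $\Psi_{S,T,\ba}^n(u)\in\mathscr{M}^2(\hat{X}_{S,T,\ba})$. $\hfill\square$
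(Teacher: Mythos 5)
Your proposal is correct and follows essentially the same route as the paper's own proof: membership in $\mathscr{M}^2(X_T)$ comes from Proposition \ref{prop-Psi_T-Lipschitz}, and for $t\le S$ one substitutes $u|_{[0,S]}=\ba$, $\ba(0)=\bx$ into the definition \eqref{eqn-Psi_T} (using the causality of the running cutoff and of the convolution integrals) to identify $\Psi_{S,T,\ba}^n(u)(t)$ with $[\Psi_{0,S,\bx}^n(\ba)](t)=\ba(t)$. Your explicit remark that $\theta_n(\vert u\vert_{X_r})$ depends only on $u|_{[0,r]}$ is exactly the observation the paper uses implicitly, so nothing is missing.
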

\begin{proof}[Proof of Lemma \ref{lem-fixed points}] Let us choose and fix $n\in \mathbb{N}$,  $T>S>  0$, $\bx \in V$ and  $\ba\in \mathscr{M}^2(\hat{X}_{0,S,\bx})$,  a fixed point of
$\Psi_{0,S,\bx}^n$.
We will show that $\Psi^n_{S,T,\ba }$ maps $\mathscr{M}^2(\hat{X}_{S,T,\ba})$ into itself.
Take an arbitrary   $u\in \hat{X}_{S,T,\ba}$. Since by Proposition \ref{prop-Psi_T-Lipschitz}, $v:=\bigl[\Psi_{S,T,\ba}^n \bigr](u) \in \mathscr{M}^2(X_T)$ we only need to show that
$v_{|[0,S]}=\ba$. For this aim let us observe that by Definition \ref{eqn-Psi_T}, we have for $t\in [0,S]$,
\begin{align*}
  v(t)= S(t)(\ba(0)) &+ \int_0^t S(t-r) \theta_n(\lvert u\rvert_{X_r}) F(u(r))\, dr
  + \int_0^t S(t-r) \theta_n( \lvert u \rvert_{X_r}) G(u(r))\, dW(r).
 \end{align*}
Because  $u_{|[0,S]}=\ba$, $\ba(0)=\mathbf{x}$ and,  by assumptions $\Psi_{0,S,\bx}^n(\ba)=\ba$ in $\mathscr{M}^2(\hat{X}_{0,S, \mathbf{x}})$, we infer that
\begin{align*}
  v(t)= S(t)\bx &+ \int_0^t S(t-r) \theta_n(\lvert a\rvert_{X_r}) F(a(r))\, dr
  + \int_0^t S(t-r) \theta_n( \lvert a \rvert_{X_r}) G(a(r))\, dW(r)\\
  &=[\Psi_{0,S,\bx}^n(\ba)](t)=\ba(t), \;\;\;  t\in [0,S].
 \end{align*}
This completes the proof of Lemma \ref{lem-fixed points}.
 \end{proof}

\begin{comment}
Before proceeding further, we state the following remark.
\begin{Rem}\label{Rem-Global-Lipshitz}
	\begin{trivlist}
		\item[(i)] Let $\ba_0\in V$. Then, the results in Proposition  \ref{prop-global Lipschitz-F} and Corollary \ref{cor-global-lip-G} remain true with the space $\hat{X}_{\delta,T,\ba}$ replaced by
		$\hat{X}_{0,T,\ba_0}$.
		\item[(ii)] Let $\ba_0\in L^2(\Omega,\mathcal{F}_0;V)$ and $n \in \mathbb{N}$. Then, it follows from  item (i) and the proof of Proposition \ref{prop-Psi_T-Lipschitz} that the  map  $\Psi_{0,T,\ba_0}^n$ defined by
		\begin{equation}\label{eqn-Psi_T-0}
		\Psi_{0,T,\ba_0}:=\Psi_{0,T,\ba_0}^n: \mathscr{M}^2(\hat{X}_{0,T, \ba_0}) \ni u \mapsto  S\ba_0 + S \ast \Phi_{0, T,\ba_0}^n
		(u)+ S\diamond \hat{\Phi}_{0,T,\ba_0}^n(u) \in \mathscr{M}^2(X_{T}),
		\end{equation}
		is globally Lipschitz with the Lipschitz constant being independent of  $\ba_0$.
	\end{trivlist}
	In both items (i) and (ii), inequalities \eqref{eqn-global Lipschitz-F}, \eqref{eqn-global LipschitzG}  and \eqref{eqn-global Lipschitz} hold with $T-\delta$ replaced by  $T$.
	
	\end{Rem}
\end{comment}

	%%%%%%%%%%%
	%%%%%%%%%%
	%%%%%%%%%%%%
	The 	first two main results of this subsection are  given in the following two
	theorems.
	\begin{thm}\label{Thm:LocalUniqueness}
		Suppose that Assumption \ref{assum-F}-Assumption \ref{assum-01} hold. Let $u_0$ be a	$\mathcal{F}_{0}$-measurable $V$-valued square integrable random
		variable $u_0$ and $(u,\tau) $ and $(v,\sigma)$  two local solutions of \eqref{ABS-SPDE-1}. Then,
	\begin{equation}
	(u_{\lvert_{[0,\sigma \wedge \tau)\times \Omega } }, \sigma \wedge \tau)\sim (v_{\lvert_{[0,\sigma \wedge \tau)\times \Omega}},\sigma \wedge \tau).
	\end{equation}
	\end{thm}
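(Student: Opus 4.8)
The plan is to reduce the problem to a deterministic contraction estimate on a small stochastic interval, then bootstrap. First I would fix the two local solutions $(u,\tau)$ and $(v,\sigma)$ with the common $\mathcal{F}_0$-measurable initial datum $u_0$, pick announcing sequences $(\tau_m)$ and $(\sigma_m)$ and set $\rho_m := \tau_m\wedge\sigma_m$, an announcing sequence for $\tau\wedge\sigma$. The key reformulation: on $[0,\rho_m)$ both processes satisfy the mild equation \eqref{eq-locsol_00-b} with the cut-off $\theta_n$ inserted harmlessly, because for $t\le\rho_m$ we have $|u|_{X_t}\le m < 2n$ once $n$ is chosen with $2n>m$; hence $\theta_n(|u|_{X_t})=1$ there, and similarly for $v$. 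Thus the stopped processes $u(\cdot\wedge\rho_m)$ and $v(\cdot\wedge\rho_m)$ are both (localizations of) fixed points of the globally Lipschitz map $\Psi^n_{0,T,u_0}$ from Proposition \ref{prop-Psi_T-Lipschitz}, on a possibly random time horizon.

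The main step is a Gronwall/contraction argument. For a deterministic $T_0\in(0,T]$ to be chosen, consider the difference $w := u(\cdot\wedge\rho_m) - v(\cdot\wedge\rho_m)$ in $\mathscr{M}^2(X_{T_0})$. Using Assumption \ref{assum-01}, Proposition \ref{prop-global Lipschitz-F} and Corollary \ref{cor-global-lip-G} applied with the cut-off level $n$ (valid since both stopped processes have $X_\cdot$-norm bounded by $m<2n$ up to $\rho_m$), one gets
\[
\vert w\vert_{\mathscr{M}^2(X_{T_0})} \le \hat{C}(n)\,\Big[\max_{1\le i\le N}T_0^{1-\alpha_i}\vee T_0^{1-\beta}\Big]^{1/2}\,\vert w\vert_{\mathscr{M}^2(X_{T_0})}.
\]
Choosing $T_0=T_0(n)$ small enough that the prefactor is $\le 1/2$ forces $\vert w\vert_{\mathscr{M}^2(X_{T_0})}=0$, i.e. $u=v$ on $[0,T_0\wedge\rho_m)$ a.s. One then iterates over the intervals $[kT_0,(k+1)T_0]$: on each window the same argument (now started from the already-identified common value at time $kT_0$, which is $\mathcal{F}_{kT_0}$-measurable) gives coincidence on the next window up to $\rho_m$, and after finitely many steps one covers $[0,T\wedge\rho_m)$. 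Letting $m\to\infty$ yields $u=v$ on $[0,\tau\wedge\sigma)\times\Omega$, and since both are admissible (continuous trajectories), this gives the equivalence $(u_{\lvert_{[0,\sigma\wedge\tau)}},\sigma\wedge\tau)\sim(v_{\lvert_{[0,\sigma\wedge\tau)}},\sigma\wedge\tau)$ in the sense of Definition \ref{def-adapted process}.

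The delicate point is handling the \emph{random} time horizon $\rho_m$ cleanly inside the a-priori estimate: the Lipschitz estimates in Proposition \ref{prop-Psi_T-Lipschitz} are stated for deterministic $T$, so one must either restart the contraction at the deterministic times $kT_0$ using the strong Markov-type/adaptedness structure, or insert the indicator $\mathds{1}_{[0,\rho_m)}$ into the stochastic convolution (as in \eqref{eq-locsol_00-b}) and check that the stopped stochastic integral still obeys the continuity estimate of Assumption \ref{assum-01}(ii); the latter is exactly the content of lemmata of the type \cite[Lemma A.1]{Brz+Masl+Seidler_2005} invoked in Remark \ref{rem-local solution}, so I would route the argument through that. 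A minor subtlety is that $\tau_m,\sigma_m$ need not be comparable, which is why one works with their minimum $\rho_m$ and only claims coincidence on $[0,\tau\wedge\sigma)$; the "amalgamated" larger solution is not needed here, only in the existence/maximality statements.
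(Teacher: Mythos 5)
There are two genuine gaps in your plan, and both sit exactly at the point you yourself flag as ``delicate''. First, the bound $\lvert u\rvert_{X_t}\le m<2n$ for $t\le\rho_m$ is not available: in Definition \ref{def-local solution-2} the announcing sequence $(\tau_m)$ of a local solution only guarantees finiteness of the moments \eqref{eq-locsol_00}; it gives no pathwise bound of $\lvert u\rvert_{X_\cdot}$ by $m$. Hence the claim $\theta_n(\lvert u\rvert_{X_t})=1$ on $[0,\rho_m)$ is unjustified, and you cannot choose one cutoff level $n$ per $m$. To cap the norms you must introduce additional stopping times such as $\tilde\tau_k=\inf\{t:\lvert u\rvert_{X_t}\ge k\}\wedge\tau$ and $\tilde\sigma_k$, and work with $\varrho_{n,k}=\tau_n\wedge\sigma_n\wedge\tilde\tau_k\wedge\tilde\sigma_k$ (this is precisely what the paper does). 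Second, your displayed contraction inequality on $\mathscr{M}^2(X_{T_0})$ does not follow: $u(\cdot\wedge\rho_m)$ and $v(\cdot\wedge\rho_m)$ are not fixed points of $\Psi^n_{0,T_0,u_0}$ on the whole deterministic interval --- after $\rho_m$ the stopped processes are frozen while $\Psi^n$ would keep evolving them --- and the full norm $\lvert w\rvert_{X_{T_0}}$ of the difference contains the frozen tail (in particular the $L^2(0,T_0;E)$ part of $w$ beyond $\rho_m$ is not controlled by the equations at all). The quantity one can and must estimate is $\mathbb{E}\lvert w\rvert^2_{X_{t\wedge\varrho_{n,k}}}$, i.e.\ the norm only up to the stopping time, and to write the stopped mild equations one needs the stopped stochastic convolution identities of Lemma \ref{lem-A.1} and Corollary \ref{cor-A.2}, which you mention only as a possible alternative rather than carrying out.

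For comparison, the paper's proof uses neither the cutoff $\theta_n$ nor Proposition \ref{prop-Psi_T-Lipschitz}. It first shows, via Corollary \ref{cor-A.2}, that both solutions stopped at $\varrho_{n,k}$ satisfy the mild equation up to that time; it then estimates $\mathbb{E}\lvert w\rvert^2_{X_{t\wedge\varrho_{n,k}}}$ directly from Assumptions \ref{assum-01}, \ref{assum-F} and \ref{assum-G}, splitting the nonlinear differences by the H\"older and Young inequalities so that the $E$-norm contributions are absorbed into the left-hand side and only $\int_0^{t\wedge\varrho_{n,k}}\Vert w(s)\Vert^2\,ds$ survives, with a constant depending on the cap $k$; the Gronwall lemma in the deterministic variable $t$ then yields $w\equiv 0$ up to $\varrho_{n,k}$, and letting $k\to\infty$ and using continuity of $w$ concludes. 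Your smallness-of-$T_0$ contraction with iteration over deterministic windows could be repaired along the same lines (estimate only up to the stopping time, constants depending on $k$, restart justified by the stopped-convolution lemmas), but as written the two steps above fail.
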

\begin{proof}[Proof of Theorem \ref{Thm:LocalUniqueness}]
	Wlog we can  assume  in Assumptions \ref{assum-F} and \ref{assum-G} that $N=1$ and $p_i=k$ and will use the notations $\alpha_1=\alpha$, $p_1=p=k$.
	Let $(u,\tau) $ and $(v,\sigma)$ be  two local solutions of \eqref{ABS-SPDE-1}. Let $(\tau_n)_{n \in \mathbb{N}}$ and $(\sigma_{n \in \mathbb{N}})$ be the announcing sequences of $\tau$ and $\sigma$, respectively. By  \cite[Propositions 4.3 \& 4.11 and Theorem 6.6]{Metivier_1982} the stopping time $\varrho:=\tau \wedge \sigma$ is accessible and it is easy to show that $(\varrho_n)_{n \in \mathbb{N}}:= (\tau_n \wedge \sigma_n)_{n \in \mathbb{N}}$ is an announcing sequence of $\varrho$.
	
	Hereafter we fix $n\in \mathbb{N}$. Since $(v,\sigma)$ is a local solution to  \eqref{ABS-SPDE-strong} and $\varrho_n \le \sigma_n$, by Corollary \ref{cor-A.2} we infer that for all $t\ge 0$ $\mathbb{P}$-a.s.
	\begin{align}
	v(t\wedge \varrho_n)= & S_{t\wedge \varrho_n} u_0 + \int_0^{t\wedge \varrho_n} S_{t\wedge \varrho_n-r}F(v(r)) dr + I_{\sigma_n}(t\wedge \varrho_n) \nonumber \\
	=&  S_{t\wedge \varrho_n} u_0 + \int_0^{t\wedge \varrho_n} S_{t\wedge \varrho_n-r}F(v(r)) dr + I_{\varrho_n}(t\wedge \varrho_n),\label{Eq:vstopped is a local sol}
	\end{align}
	where $$  I_{\sigma_n}(t):=\int_0^t 1_{[0,\sigma_n] }(r) S_{t-r}G(v(r))dW(r),\; t\ge 0 .$$
	The identity \eqref{Eq:vstopped is a local sol} proves that $(v,\sigma\wedge \tau)$ is a local solution to \eqref{ABS-SPDE-strong}. In a similar way, we prove that $(u, \sigma\wedge \tau)$ is a local solution  to \eqref{ABS-SPDE-strong} as well.
	
	Thirdly,  for $k \in \mathbb{N}$ we put $\varrho_{n,k}=\tau_n \wedge \sigma_n\wedge \tilde{\tau}_k \wedge \tilde{\sigma}_k$, where
	\begin{align}
\tilde{\tau}_k=\inf\{t \in [0,T]: \lvert v \rvert_{X_t} \ge k  \} \wedge \tau \text{ and  } 	\tilde{\sigma}_k=\inf\{t \in [0,T]: \lvert v \rvert_{X_t} \ge k  \} \wedge \sigma, \; k \in \mathbb{N}.\nonumber
	\end{align}
We observe that for all $k\in \mathbb{N}$ $\varrho_{n,k}\le \varrho_{n, k+1}\le \sigma_n \wedge \tau_n$ $\mathbb{P}$-a.s. and $\varrho_{n,k} \toup \sigma_n \wedge \tau_n$ $\mathbb{P}$-a.s. if  $k \to \infty$.  Let us now fix $k \in \mathbb{N}$.  Arguing as in the proof of \eqref{Eq:vstopped is a local sol} we show that for all $t\ge 0$, $\mathbb{P}$-a.s.
	\begin{align}
	v(t\wedge \varrho_{n,k})
	=&S_{t\wedge \varrho_{n,k}} u_0 + \int_0^{t\wedge \varrho_{n,k}} S_{t\wedge \varrho_{n,k}-r}F(v(r)) dr + I_{\varrho_{n,k}}(t\wedge \varrho_{n,k}).
	\end{align}
	In a similar way, we prove that  the same identity holds with $v$ replaced by $u$. Hence, setting $w=u-v$ we infer that for all $t\ge 0$, $\mathbb{P}$-a.s.
	\begin{equation}
	w(t\wedge \varrho_{n,k})
	= \int_0^{t\wedge \varrho_{n,k}} S_{t\wedge \varrho_{n,k}-r}[F(u(r))- F(v(r))] dr + \tilde{I}_{\varrho_{n,k}}(t\wedge \varrho_{n,k}).
	\end{equation}
	where
	$$  \tilde{I}_{\varrho_{n,k}}(t):=\int_0^t 1_{[0,\varrho_{n,k}] }(r) S_{t-r}[G(u(r))-G(v(r))]dW(r),\; t\ge 0 .$$
	
	Hereafter, $c>0$ denotes an universal constant (independent of $n$ and $k$) which may change from one term to the other.
Following the lines of \cite[Proof of Lemma 3.8]{Brz+Gat_99} or \cite[Page 134]{Brz+Masl+Seidler_2005} and using Assumptions \ref{assum-01},  \ref{assum-F} and \ref{assum-G} we infer that for all $t\ge 0$
\begin{align}
\mathbb{E} \lvert w \rvert^2_{X_{t\wedge \varrho_{n,k}} }\le & c \mathbb{E} \int_0^{t\wedge \varrho_{n,k}} \Big[ \lvert F(u(r)) -F(v(r))\rvert^2_H +  \lVert G(u(r)) -G(v(r))\rVert^2\Big]dr\nonumber\\
	\le & c \mathbb{E} \int_0^{t\wedge \varrho_{n,k}} \left(\Vert w(s) \Vert^2 \Vert u(s) \Vert^{2(p-\alpha)} \lvert u(s) \rvert^{2\alpha}_E  \right) ds+
	c \mathbb{E}\int_0^{t\wedge \varrho_{n,k}} \left(\vert w(s) \vert^{2\alpha}_E \Vert w(s) \Vert^{2(1-\alpha)} \lVert v(s) \rVert^{2p} \right)ds\nonumber \\
	&=:c\mathbb{E}I_1+c\mathbb{E} I_2.\label{Eq:Uniq-Step-0}
\end{align}
The H\"older inequality and the definition of the stopping time $\varrho_{n,k}$ imply that for all $t\ge 0$, $\mathbb{P}$-a.s.
\begin{align}
 I_1\le &  c \left(\int_0^{t\wedge \varrho_{n,k}} \Vert w(s)\Vert^{\frac{2}{1-\alpha}} \Vert u(s)\Vert^{\frac{2(p-\alpha)}{1-\alpha}}  ds \right)^{1-\alpha} \left(\int_0^{t\wedge \varrho_{n,k}}  \vert u(s) \vert^{2}_E  ds \right)^{\alpha} \\
%&\le c \left(\int_0^{t\wedge \varrho_{n,k}} \Vert w(s)\Vert^{\frac{2}{1-\alpha}} \Vert u(s)\Vert^{\frac{2(p-\alpha)}{1-\alpha}}  ds \right)^{1-\alpha} \lvert u \rvert^{2\alpha}_{X_{t\wedge \varrho_{n,k}}}\\
%\le & c R^{2\alpha} \left(\int_0^{t\wedge \varrho_{n,k}} \Vert w(s)\Vert^{\frac{2}{1-\alpha}} \Vert u(s)\Vert^{\frac{2(p-\alpha)}{1-\alpha}}  ds \right)^{1-\alpha}\\
\le & c R^{2\alpha}\sup_{s\in [0,t\wedge \varrho_{n,k}]} \Vert u(s) \Vert^{2(p-\alpha)}\left(\int_0^{t\wedge \varrho_{n,k}} \Vert w(s)\Vert^{\frac{2}{1-\alpha}}  ds \right)^{1-\alpha}\\
%\le & c R^{2p} \sup_{s\in [0,t\wedge \varrho_{n,k}] } \lVert w(s) \rVert^{2\alpha} \left(\int_0^{t\wedge \varrho_{n,k}} \Vert w(s)\Vert^2   ds \right)^{1-\alpha}\\
\le &c R^{2p} \lvert w \rvert_{X_{t\wedge \varrho_{n,k}}}^{2\alpha} \left(\int_0^{t\wedge \varrho_{n,k}} \Vert w(s)\Vert^2   ds \right)^{1-\alpha}
\le \frac14 \lvert w \rvert_{X_{t\wedge \varrho_{n,k}}}^{2} + c R^{\frac{2p}{1-\alpha}} \int_0^{t\wedge \varrho_{n,k}} \Vert w(s)\Vert^2   ds .\label{Eq:Uniq-I_1}
\end{align}
In a similar way one can prove that for all $t\ge 0$, $\mathbb{P}$-a.s.
\begin{align}
 I_2\le &  c \left(\int_0^{t\wedge \varrho_{n,k}} \Vert w(s)\Vert^2 \Vert v(s)\Vert^{\frac{2p}{1-\alpha}}  ds \right)^{1-\alpha} \left(\int_0^{t\wedge \varrho_{n,k}}  \vert w(s) \vert^{2}_E  ds \right)^{\alpha} \\
\le & \frac14 \lvert w \rvert_{X_{t\wedge \varrho_{n,k}}}^{2} + c R^{\frac{2p}{1-\alpha}} \int_0^{t\wedge \varrho_{n,k}} \Vert w(s)\Vert^2   ds.\label{Eq:Uniq-I_2}
%	\le & c R^{2\alpha} \left(\int_0^{t\wedge \varrho_{n,k}} \Vert w(s)\Vert^{\frac{2}{1-\alpha}} \Vert u(s)\Vert^{\frac{2(p-\alpha)}{1-\alpha}}  ds \right)^{1-\alpha}\\
%	\le & c R^{2\alpha}\sup_{s\in [0,t\wedge \varrho_{n,k}]} \Vert u(s) \Vert^{2(p-\alpha)}\left(\int_0^{t\wedge \varrho_{n,k}} \Vert w(s)\Vert^{\frac{2}{1-\alpha}}  ds \right)^{1-\alpha}\\
%	\le & c R^{2p} \sup_{s\in [0,t\wedge \varrho_{n,k}] } \lVert w(s) \rVert^{2\alpha} \left(\int_0^{t\wedge \varrho_{n,k}} \Vert w(s)\Vert^2   ds \right)^{1-\alpha}\\
%	\le & c R^{2p} \lvert w \rvert_{X_T}^{2\alpha} \left(\int_0^{t\wedge \varrho_{n,k}} \Vert w(s)\Vert^2   ds \right)^{1-\alpha}\\
%	\le & \frac18 \lvert w \rvert_{X_T}^{2} + c R^{\frac{2p}{1-\alpha}} \int_0^{t\wedge \varrho_{n,k}} \Vert w(s)\Vert^2   ds .
\end{align}
Hence, plugging \eqref{Eq:Uniq-I_1} and \eqref{Eq:Uniq-I_2} in \eqref{Eq:Uniq-Step-0} and using $ \Vert w(t\wedge \varrho_{n,k} )\Vert^2\le \lvert w\rvert^2_{X_{t\wedge \varrho_{n,k}}},\;$  we infer that
%\begin{equation}
%\frac12 \mathbb{E} \lvert w\rvert^2_{X_{t\wedge \varrho_{n,k}}} \le  c R^{\frac{2p}{1-\alpha}} \mathbb{E} \int_0^{t\wedge \varrho_{n,k}} \Vert w(s)\Vert^2   ds.
%\end{equation}
%Since  for all $t\ge 0$, $\mathbb{P}$-a.s., we infer that
\begin{equation}
\mathbb{E} \lVert w(t\wedge \varrho_{n,k} )\rVert^2 \le 2 c R^{\frac{2p}{1-\alpha}}  \int_0^{t\wedge \varrho_{n,k}} \Vert w(s\wedge \varrho_{n,k} ) \Vert^2   ds, \;\forall t\ge0.
\end{equation}
This along the Gronwall lemma implies that for all $t\ge 0$, $
\mathbb{E} \Vert w(t\wedge \varrho_{n,k}) \Vert^2=0.$
Hence, by letting $k \to \infty$ we infer that for all $t\ge 0$, $
\mathbb{E} \Vert w(t\wedge \tau_n\wedge \sigma) \Vert^2=0,$
 which along with the continuity of $w$ completes the proof of the theorem.
\end{proof}

	\begin{thm}\label{thm_local} Suppose that Assumptions \ref{assum-F}-\ref{assum-01} are satisfied. Then
\begin{trivlist}
\item[(I)]
 for every
		$\mathcal{F}_{0}$-measurable $V$-valued square integrable random
		variable $u_0$    there exits  a  local process $u=\big(u(t),
		t\in[0,T_1) \big) $  which is the
		unique local solution to   problem \eqref{ABS-SPDE-strong},
\begin{comment}\item[(II)]
 for all  $R>0$ and  $\varepsilon >0$ there exists
		$\tau(\varepsilon,R)>0$,  such that for every
		$\mathcal{F}_0$-measurable $V$-valued random variable $u_0$
		satisfying  $\mathbb{E}\Vert u_0 \Vert^{2} \leq R^{2}$, one has
		\begin{equation}\label{ineq-positive probability}
{\mathbb P}\big(T_1\geq \tau(\varepsilon,R)\big) \geq
		1-\varepsilon.\end{equation}
\end{comment}
\item[(II)]
 if  $R>0$ and  $\varepsilon >0$ then there exists a number 
		$T^\ast(\varepsilon,R)>0$,  such that for every set $\Omega_1 \in \mathcal{F}_0$ and every
		$\mathcal{F}_0$-measurable $V$-valued random variable $u_0$ such that
		  \[\Vert u_0 \Vert \leq R \mbox{ $\mathbb{P}$-a.s. on } \Omega_1,\]  one has
		\begin{equation}\label{ineq-positive probability-2}
{\mathbb P}\big(\{T_1\geq T^\ast(\varepsilon,R)\} \cap \Omega_1 \big) \geq
		(1-\varepsilon)\mathbb{P}(\Omega_1).\end{equation}

\end{trivlist}
	\end{thm}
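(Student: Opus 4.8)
The plan is to construct the local solution by a Banach fixed point argument applied to the globally Lipschitz truncated equation, and then to strip off the truncation by an optional time; the quantitative statement (II) will come out of the a priori bound on the fixed point together with Chebyshev's inequality. First I would fix $n\in\mathbb N$. Since the exponents $1-\alpha_i$ ($i=1,\dots,N$) and $1-\beta$ occurring in Proposition~\ref{prop-Psi_T-Lipschitz} are strictly positive, the Lipschitz constant $\hat C(n)\bigl[\max_{1\le i\le N}T^{1-\alpha_i}\vee T^{1-\beta}\bigr]^{1/2}$ of $\Psi_{0,T,u_0}^n$ tends to $0$ as $T\downarrow 0$, so there is $T_n>0$, depending only on $n$ (hence on nothing else), for which this constant is $\le\frac12$. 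For any $\mathcal F_0$-measurable $u_0\in L^2(\Omega;V)$ the set $\mathscr M^2(\hat X_{0,T_n,u_0})$ is a closed affine subspace of the Banach space $\mathscr M^2(X_{T_n})$, and because $S(0)=I$ we have $[\Psi_{0,T_n,u_0}^n(u)](0)=u_0$, so by Proposition~\ref{prop-Psi_T-Lipschitz} the map $\Psi_{0,T_n,u_0}^n$ is a $\frac12$-contraction of this space into itself; it therefore has a unique fixed point $u_n$. Since $F(0)=G(0)=0$ gives $\Psi_{0,T_n,u_0}^n(0)=S\,u_0$, the contraction property yields
\begin{equation*}
|u_n|_{\mathscr M^2(X_{T_n})}\le |S\,u_0|_{\mathscr M^2(X_{T_n})}+\tfrac12|u_n|_{\mathscr M^2(X_{T_n})},
\end{equation*}
whence $|u_n|_{\mathscr M^2(X_{T_n})}\le 2|S\,u_0|_{\mathscr M^2(X_{T_n})}\le 2c_{\mathrm{lin}}\bigl(\mathbb E\|u_0\|^2\bigr)^{1/2}$, where $c_{\mathrm{lin}}$ is the norm of the map in Assumption~\ref{assum-01}(iii).

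Next I would remove the truncation. Assuming first $\|u_0\|<n$ $\mathbb P$-a.s., set $\tau_n:=\inf\{t\in[0,T_n]:|u_n|_{X_t}\ge n\}\wedge T_n$. The process $t\mapsto|u_n|_{X_t}$ is adapted, continuous and non-decreasing with $|u_n|_{X_0}=\|u_0\|<n$, so $\tau_n>0$ $\mathbb P$-a.s., and $\tau_n$ is accessible with announcing sequence $\tau_n^{(j)}:=\inf\{t:|u_n|_{X_t}\ge n-\tfrac1j\}\wedge T_n(1-\tfrac1j)$; moreover $\theta_n(|u_n|_{X_s})=1$ for all $s\le\tau_n$, so on $[0,\tau_n)$ the fixed point identity becomes the untruncated mild equation \eqref{ABS-SPDE-1}. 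Combining this with the a priori bound and with the stopped-convolution identity \cite[Lemma~A.1]{Brz+Masl+Seidler_2005}, I would check that $(u_n|_{[0,\tau_n)},\tau_n)$ satisfies \eqref{eq-locsol_00}--\eqref{eq-locsol_00-b} (with $\tau_m:=\tau_n^{(j)}$), i.e.\ it is a local solution. For a general $\mathcal F_0$-measurable $u_0\in L^2(\Omega;V)$ I would decompose $\Omega=\bigsqcup_{m\ge1}\Omega_m$ with $\Omega_m:=\{m-1\le\|u_0\|<m\}\in\mathcal F_0$, run the construction on each $\Omega_m$ with cut-off level $n=m$ and datum $u_0\mathds{1}_{\Omega_m}$ (the solution vanishing on $\Omega_m^c$ by $F(0)=G(0)=0$ and uniqueness), and glue the pieces into a local solution $(u,T_1)$ with $u:=\sum_m u_m\mathds{1}_{\Omega_m}$, $T_1:=\sum_m\tau_m\mathds{1}_{\Omega_m}$; an application of Lemma~\ref{lem-fixed points} re-starts the scheme at the endpoint and extends $(u,T_1)$ as far as possible, while uniqueness in the sense of Definition~\ref{Def-uniq} is exactly Theorem~\ref{Thm:LocalUniqueness}. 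This proves part (I).

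For part (II), I would fix $R,\varepsilon>0$, a set $\Omega_1\in\mathcal F_0$ and an $\mathcal F_0$-measurable $u_0$ with $\|u_0\|\le R$ $\mathbb P$-a.s.\ on $\Omega_1$, and replace $u_0$ by $\tilde u_0:=u_0\mathds{1}_{\Omega_1}$, so that $\|\tilde u_0\|\le R$ $\mathbb P$-a.s.\ and $\mathbb E\|\tilde u_0\|^2\le R^2\mathbb P(\Omega_1)$. Choosing $n:=\lceil 2c_{\mathrm{lin}}R\varepsilon^{-1/2}\rceil$ (which depends only on $R$ and $\varepsilon$), letting $T_n$ and $\tilde u_n$ be as above for the datum $\tilde u_0$, and setting $T^\ast(\varepsilon,R):=T_n$, the a priori bound and Chebyshev's inequality give
\begin{equation*}
\mathbb P\bigl(|\tilde u_n|_{X_{T_n}}\ge n\bigr)\le n^{-2}|\tilde u_n|_{\mathscr M^2(X_{T_n})}^2\le \frac{4c_{\mathrm{lin}}^2R^2\mathbb P(\Omega_1)}{n^2}\le\varepsilon\,\mathbb P(\Omega_1).
\end{equation*}
On the event $\{|\tilde u_n|_{X_{T_n}}<n\}$ one has $\theta_n(|\tilde u_n|_{X_t})=1$ for every $t\le T_n$, so $\tilde u_n$ solves \eqref{ABS-SPDE-1} on all of $[0,T_n]$, and the (maximally extended) local solution issued from $\tilde u_0$ has lifetime $\ge T_n$ there. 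Since $\tilde u_0=u_0$ on $\Omega_1\in\mathcal F_0$, multiplying the mild formulation by $\mathds{1}_{\Omega_1}$ (which commutes with the stochastic integral because $\Omega_1\in\mathcal F_0\subset\mathcal F_s$) and invoking Theorem~\ref{Thm:LocalUniqueness} shows that on $\Omega_1$ the solution of part (I) coincides with $\tilde u_n$, so $\{|\tilde u_n|_{X_{T_n}}<n\}\cap\Omega_1\subset\{T_1\ge T^\ast(\varepsilon,R)\}\cap\Omega_1$ and
\begin{equation*}
\mathbb P\bigl(\{T_1\ge T^\ast(\varepsilon,R)\}\cap\Omega_1\bigr)\ge\mathbb P(\Omega_1)-\varepsilon\,\mathbb P(\Omega_1)=(1-\varepsilon)\,\mathbb P(\Omega_1).
\end{equation*}

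The hard part is not analytic: the only genuine estimate needed is the contraction bound of Proposition~\ref{prop-Psi_T-Lipschitz}, which is assumed. The difficulty is the bookkeeping — verifying that $\tau_n$ is accessible with the claimed announcing sequence and that the glued, de-truncated process meets every clause of Definition~\ref{def-local solution-2} (integrability and the stopped mild identity, via \cite[Lemma~A.1]{Brz+Masl+Seidler_2005}) — and, most delicately for (II), the $\mathcal F_0$-localization principle allowing one to pass to $u_0\mathds{1}_{\Omega_1}$ and then read off the lower bound on the lifetime on $\Omega_1$, which in turn requires that $T_1$ in (I) has been taken to be the lifetime of the (extended, via Lemma~\ref{lem-fixed points}) solution rather than an artificially short one.
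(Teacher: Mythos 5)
Your proposal is correct in substance and follows the same strategy as the paper: truncate the nonlinearities with $\theta_n$, run the Banach fixed point theorem using Proposition~\ref{prop-Psi_T-Lipschitz}, de-truncate with the first exit time of $\lvert u\rvert_{X_t}$ from the level $n$ (using \cite[Lemma A.1]{Brz+Masl+Seidler_2005} for the stopped convolution), and obtain \eqref{ineq-positive probability-2} from an a priori bound plus Chebyshev. The genuine differences are two. First, for each fixed $n$ the paper iterates the contraction over successive subintervals $[k\delta_n,(k+1)\delta_n]$ --- this is precisely what Lemma~\ref{lem-fixed points} is for --- so as to obtain a solution $u^n$ of the truncated equation on the \emph{whole} time axis, then proves $\tau_n<\tau_{n+1}$ and $u^n=u^{n+1}$ on $[0,\tau_n)$ and passes to the nested limit $\tau_\infty=\lim_n\tau_n$ (Steps 3--7); you instead solve on a single short interval $[0,T_n]$ per slice $\{m-1\le\Vert u_0\Vert<m\}$ and appeal to ``extension as far as possible''. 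Be aware that Lemma~\ref{lem-fixed points} alone cannot deliver that extension: restarting with the same truncation level $n$ stalls once $\lvert u\rvert_{X_t}$ reaches $n$, so you must either increase $n$ and prove consistency of the resulting solutions (the paper's Step 6), or --- as your part (II) in fact requires --- take $T_1$ to be the lifetime of the maximal solution supplied by Proposition~\ref{prop-loc-implies-max} and Corollary~\ref{cor-max of two local solutions} (both available once Theorem~\ref{Thm:LocalUniqueness} is in hand). With that reading your argument for (II) closes by the same mechanism as the paper's Step 9: the supremum of the maximal solution and the de-truncated $\tilde u_n$ (pasted along $\Omega_1$) is again a local solution, forcing $T_1\ge T_n$ on $\Omega_1\cap\{\lvert\tilde u_n\rvert_{X_{T_n}}<n\}$. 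Note, though, that the paper's $\tau_\infty$-construction is reused later for the blow-up characterization in Theorem~\ref{thm_maximal-abstract}, so your shortcut would have to be compensated there. Second, for the $\mathcal{F}_0$-localization in (II) the paper passes to the trace probability space $(\Omega_1,\mathcal{F}^1,\mathbb{P}^1)$ via Lemma~\ref{lem-localization of the probability space}, whereas you multiply the datum and the mild identity by $\mathds{1}_{\Omega_1}$, exploiting $F(0)=G(0)=0$ and the $\mathcal{F}_0$-measurability of $\Omega_1$ to commute the indicator through the stochastic convolution; this is equally valid and has the advantage of producing the factor $\mathbb{P}(\Omega_1)$ in the Chebyshev bound directly, so your constants are if anything cleaner than the paper's Step 8.

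One technical point to tighten: your a priori bound $\lvert u_n\rvert_{\mathscr{M}^2(X_{T_n})}\le 2\lvert Su_0\rvert_{\mathscr{M}^2(X_{T_n})}$ compares $u_n$ with the zero process, which does not belong to the affine space $\hat{X}_{0,T_n,u_0}$ on which $\Psi^n_{0,T_n,u_0}$ is declared Lipschitz. This is harmless --- for $\delta=0$ the proofs of Proposition~\ref{prop-global Lipschitz-F} and Corollary~\ref{cor-global-lip-G} never use the matching of initial values, and $\Phi^n_F(0)=\hat{\Phi}^n_G(0)=0$ --- but it should be said; alternatively, bound $S\ast\Phi^n_F(u_n)$ and $S\diamond\hat{\Phi}^n_G(u_n)$ directly through the truncation, as the paper does with the functions $K_n(T)$ in Step 8.
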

	\begin{proof}[Proof of Theorem \ref{thm_local}]

		Wlog we can assume that $N=1$ and we will use notation $\alpha=\alpha_1$ and $p=p_1$.
Let $u_0\in
		L^2(\Omega, \mathbb{P}; V)$. We also fix a natural number   $n\in \mathbb{N}$ in Steps \textbf{1}-\textbf{5}.
In the first part consisting of  Steps \textbf{1}-\textbf{7} we will prove the part (I) of the Theorem, i.e. the existence and uniqueness of a local solution to  problem \eqref{ABS-SPDE-strong}.
Part (II) of the Theorem will be proven in Steps \textbf{8}-\textbf{9}.

\noindent \textit{Proof of part (I)}
\begin{trivlist}
\item[\textbf{Step 1.}] Let us fix $n\in \mathbb{N}$ and $T>0$.
Let  $\Psi^n_{0,T,u_0}: \mathscr{M}^2(\hat{X}_{0,T,u_0}) \to \mathscr{M}^2(\hat{X}_{0,T,u_0}) $.
By Proposition \ref{prop-Psi_T-Lipschitz} the map  $\Psi^n_{0, T,u_0}$ is well defined
 and for sufficiently small $T=\delta_n$, and all $\ba_0$,
 it is an $\frac12$-contraction. 
			Thus, by the Banach Fixed Point Theorem,   there exists a unique $u^{[n,1]}\in \mathscr{M}^2(\hat{X}_{0,\delta_n,u_0})$ such that
			\[u^{[n,1]}=\Psi^n_{0,\delta_n,u_0}(u^{[n,1]}).\] We fix $u^{[n,1]}$ for the rest of the proof. We also put $M:=\frac{T}{\delta_n}\in \mathbb{N}$.		

\item[\textbf{Step 2.}]
By Lemma \ref{lem-fixed points} $\Psi^n_{\delta_n, 2 \delta_n, u^{[n,1]} }$ maps $\mathscr{M}^2(\hat{X}_{\delta_n, 2 \delta_n, u^{[n,1]}})$ into itself
and by Proposition \ref{prop-Psi_T-Lipschitz} and inequality \eqref{eqn-global Lipschitz}  it is an  $\frac 12$-contraction.  Therefore, we can find  a unique
 $u^{[n,2]} \in \mathscr{M}^2(\hat{X}_{\delta_n, 2\delta_n, u^{[n,1]}})$, which we fix for the rest of the proof,  such that \[u^{[n,2]}=\Psi^n_{\delta_n, 2\delta_n, u^{[n,1]}}(u^{[n,2]})\in \mathscr{M}^2(X_{\delta, 2\delta_n, u^{[n,1]}}).\]

\item[\textbf{Step 3.}]
			 By induction we can construct a sequence $(u^{[n,k]})_{k=1}^\infty$ such that
\[u^{[n,k]}=\Psi^n_{(k-1)\delta_n,k\delta_n, u^{[n,k-1]}}(u^{[n,k]})\in \mathscr{M}^2(X_{(k-1)\delta_n,k\delta_n, u^{[n,k-1]}}),\;\; k=2,\ldots .\]
			
Note that by construction, the restriction of $ u^{[n,k]} $ to interval $[0,(k-1)\delta_n]$ is equal to  $u^{[n,k-1]}$.

\item[\textbf{Step 4.}]
By \textbf{Step 3} we can define a process  $u^n\in \mathscr{M}^2(X_T)$  by $u^n(t)=u^{[n,k]}(t)$, if $t \in [0,k\delta_n]$. Moreover,  for every $t\in [0,T]$, $\mathbb{P}$-a.s.,
			\begin{equation}\label{Eq:truncated-equation}
			u^n(t)=S_t u_0+\int_0^t S_{t-r}[\theta_n (|u^n|_{X_r})F(u^n(r))]dr+\int_0^t
			S_{t-r}[\theta_n (|u^n|_{X_r})G(u^n(r))]dW(r).
			\end{equation}

\item[\textbf{Step 5.}]
Let $(\tau_n)_{n\in \mathbb{N}}$ be a  sequence of stopping
			times defined by
			\begin{equation}\label{eqn-stopping time tau_n}
			\tau_n=\inf\{t\in [0,\infty): |u^n|_{X_t}\ge n\}.
			\end{equation}

Let us fix $n\in \mathbb{N}$.
 By \cite[Lemma A.1]{Brz+Masl+Seidler_2005}, we infer from \eqref{Eq:truncated-equation} that for every $t\in [0,\infty)$, $\mathbb{P}$-a.s.
			\begin{equation}\label{Eq:truncated-equation-2}
			u^n(t \wedge \tau_n)=S_{t\wedge \tau_n} u_0+\int_0^{t\wedge \tau_n} S_{t\wedge \tau_n -r}[\theta_n (|u^n|_{X_r})F(u^n(r))]\,dr
+\tilde{I}^n_{\tau_n}(t \wedge \tau_n),
			\end{equation}
where $\tilde{I}^n_{\tau_n}$ is a continuous $V$-valued %\hdoubtz{ZB2101: Please check!}
\[
\tilde{I}^n_{\tau_n}(t):= \int_0^t \mathds{1}_{[0,\tau_n)}(s)
			S_{t-r}[\theta_n (|u^n|_{X_r})G(u^n(r))]dW(r),\;\; t\in [0,\infty).
\]
	By the definition of the function $\theta_n$ we infer that  $\theta_n(|u^n|_{X_{r}})=1$ for $r\in [0,t\wedge
			\tau_n)$. Hence
			$$\theta_n (|u^n|_{X_r})F(u^n(r)) = F(u^n(r)), r\in [0,t\wedge
			\tau_n), \; t\in [0,\infty).$$
Therefore, we deduce  that for every $t\in [0,\infty)$, $\mathbb{P}$-a.s.
\[
\tilde{I}^n_{\tau_n}(t)= \int_0^t \mathds{1}_{[0,\tau_n)}(s)
			S_{t-r}[G(u^n(r))]dW(r)=:I^n_{\tau_n}(t).
\]
Thus, we infer that
			$u^n$ satisfies, for every  $t\in [0,\infty)$, $\mathbb{P}$-a.s.
			\begin{equation}\label{Eq:truncated-equation-2-A}
			u^n(t \wedge \tau_n)=S_{t\wedge \tau_n} u_0+\int_0^{t\wedge \tau_n} S_{t\wedge \tau_n -r}[F(u^n(r))]\,dr
+I^n_{\tau_n}(t \wedge \tau_n).
			\end{equation}

\item[\textbf{Step 6.}]\label{lem-increasing stopping times}
 Arguing as in the proof of proof of \cite[Lemma 5.1]{Brz+Millet_2012} we  can  show that for every  $n \in \mathbb{N}$,
\begin{equation}
\label{eqn-increasing stopping times}
\tau_n < \tau_{n+1} \;\;\;\; \mathbb{P}\text{-a.s.}
\end{equation}
and
\begin{equation}
\label{eqn-u_n=u_n+1}
u^n(t)=u^{n+1}(t) \mbox{ if } t\in [0,\tau_n) \mbox{ and  } n \in \mathbb{N},\;\; \mathbb{P}\text{-a.s.}
\end{equation}
By taking appropriate modifications we can assume that \eqref{eqn-increasing stopping times}  is satisfied on the whole space $\Omega$.
Hence, the following limit exists

\begin{equation}\label{eqn-tau_infty=lim}
\tau_\infty(\omega)=\lim_{n\rightarrow \infty} \tau_n(\omega), \,\, \omega \in \Omega.
\end{equation}
Since our probability basis  satisfies the usual hypothesis, $\tau_\infty$ is an {accessible} stopping time, see \cite[Proposition 2.3 and Lemma 2.11]{Kar-Shr-96} with $(\tau_n)$ being the announcing sequence for $\tau_\infty$. The two claims made at the beginning of \textbf{Step 6} enable us  to define a   local process  $(u,\tau_\infty)$ in  the following way
\begin{equation}
\label{eqn-local process u}
 u(t,\omega)=u^n(t,\omega) \text{ if } t<\tau_n(\omega), \omega \in \Omega.
\end{equation}
\item[\textbf{Step 7.}]\label{Step7}  We claim that  $(u,\tau_\infty)$ is a local solution to problem \eqref{ABS-SPDE-strong}. \\
Indeed, arguing as in \textbf{Step 5}, in particular using \cite[Lemma A.1]{Brz+Masl+Seidler_2005}, we can show that
			$u^n$ satisfies, for every  $t\in [0,\infty)$, $\mathbb{P}$-a.s.
			\begin{equation}\label{eqn-local-3}
			u(t \wedge \tau_n)=S_{t\wedge \tau_n} u_0+\int_0^{t\wedge \tau_n} S_{t\wedge \tau_n -r}[F(u(r))]\,dr
+I_{\tau_n}(t \wedge \tau_n).
			\end{equation}	
where $I_{\tau_n}$ is an $V$-valued continuous process defined by
\[
I_{\tau_n}(t)= \int_0^t \mathds{1}_{[0,\tau_n)}(s)
			S_{t-r}[G(u(r))]\,dW(r), \;\; t\in [0,\infty).
\]
Since, as observed above, $\tau_\infty$ is an accessible stopping time with the announcing sequence $(\tau_n)$, by definition \ref{def-local solution} we infer that
the local			process $(u, \tau_\infty)$ is a local solution to problem \eqref{ABS-SPDE-strong}.

This also ends the proof of the first part of Theorem \ref{thm_local}.
			\end{trivlist}

%\begin{proof}[Proof of part (III)].

\noindent 	\textit{Proof of part (III)}
	
	Let us recall that $\Omega_1 \in \mathcal{F}_0$.  Let $i:\Omega_1 \embed \Omega$ be the natural embedding and $W^1$  a process given by 
	\[
	W^1(t):=W(t)\circ i,\;\; t\geq 0.
	\]
	We define $\mathcal{F}_t^1$ by
	\[
	\mathcal{F}_t^1:=\bigl\{ A\cap \Omega_1=i^{-1}(A): A \in \mathcal{F}_t \bigr\}, \;\;\; t\geq 0.
	\]
	Similarly we define $\mathcal{F}^1$.  We put $\mathbb{F}^1=\bigl(\mathcal{F}_t^1\bigr)_{t \geq 0}$.
	
	We also define a measure
	\[
	\mathbb{P}^1: \mathcal{F}^1 \ni A\cap \Omega_1 \mapsto \frac{\mathbb{P}(A\cap \Omega_1)}{\mathbb{P}(\Omega_1)} \in [0,1].
	\]
	It is easy to check that
	$
	\bigl(\Omega_1,\mathcal{F}^1,\mathbb{P}^1, \mathbb{F}^1\bigr)
	$
	is a filtered probability space satisfying the usual conditions. Moreover, since the Wiener process $W$ is independent of $\mathcal{F}_0$,
	$W^1$ is a Wiener process on $\bigl(\Omega_1,\mathcal{F}^1,\mathbb{P}^1, \mathbb{F}^1\bigr)$.
	
\noindent	Hence, it is sufficient to prove our result in the case when $\Omega_1=\Omega$. Indeed, the following holds true.
	\begin{lem}\label{lem-localization of the probability space}
		If
		a  local process $u=\big(u(t),
		t\in[0, \tau) \big) $  is a
		local solution to   problem \eqref{ABS-SPDE-strong} on  the original probability basis
		$\bigl(\Omega,\mathcal{F},\mathbb{P}, \mathbb{F}\bigr)$, then the
		process $u^1=\big(u(t),		t\in[0, \tau^1) \big) $ defined by 
		\[
		u^1(t,\omega_1):=u(t,i(\omega_1)), \;\; t \in  [0,\tau(i(\omega_1)),\;\; \omega_1 \in \Omega_1,
		\]
		and $\tau^1:=\tau\circ i$, 
		is a local solution to   problem \eqref{ABS-SPDE-strong} on  the new  probability basis
		$\bigl(\Omega_1,\mathcal{F}^1,\mathbb{P}^1, \mathbb{F}^1\bigr)$.
	\end{lem}
%	\coma{Maybe we need an argument similar to Applebaum book, p. 372-373.}
	
	Thus we assume that  $u_0$ is a $\mathcal{F}_0$-measurable $V$-valued random variable such that
	\[\Vert u_0 \Vert \leq R \mbox{ $\mathbb{P}$-a.s. on } \Omega.\]

	Our  proof  follows the lines of \cite[Theorem 5.3]{Brz+Millet_2012}. In order to simplify the notation we will write $\Psi^n_{T}$ instead of $\Psi^n_{0,T,u_0}$, We will also write $\Phi^n_F$ (resp. $\hat{\Phi}^n_G$) instead of $\Phi^n_{\delta,T,\ba}$ (resp. $\hat{\Phi}^n_{\delta,T,\ba}$).
	
	%\begin{comment}
	We modify the initial data $u_0$ by replacing it by $\tilde{u}_0=u_01_{\Omega_1}$. Then $ \Vert \tilde{u}_0 \Vert\leq R$ on $\Omega$.
	
	\begin{trivlist}
		\item[\textbf{Step 8.}] Let us  fix $\varepsilon
		>0$ and choose $M$
		such that $M\ge (C_0+1) \eps^{-\frac 12}$, where  $C_0$ is the constant appearing in inequality \eqref{ineq-C_0} below.
		Thanks to  Propositions \ref{prop-global Lipschitz-F} and \ref{prop-Psi_T-Lipschitz},  Corollary \ref{cor-global-lip-G} and Assumption
		\ref{assum-01} we can find $\tilde{D}_i(n)>0, i=1,2$, $n\in\mathbb{N}$  such that for all $u\in \mathscr{M}^2(\hat{X}_{0,T,u_0})$ we have
		\begin{align*}
		\lvert S\ast \Phi_F^n(u)\rvert_{\mathscr{M}^2(X_T)}
		\le \tilde{D}_1(n) \tilde{C}_1 \tilde{C}_2(2n \tilde{C}_2+1)(2n)^{p+2}T^{(1-\alpha)/2},\\
		\lvert S\diamond \hat{\Phi}^n_G(u)\rvert_{\mathscr{M}^2(X_T)}\le  \tilde{D}_2(n) \tilde{C}_3
		\tilde{C}_4 (2n)^{k+2} [2n \tilde{C}_4+1] T^{(1-\beta)/2}.
		\end{align*}
		Hence,		since $1-\alpha, 1-\beta >0$ we infer
		that there exists a sequence $(K_n(T))_n$ of numerical functions
		such that for all $n$,  $\lim_{T\rightarrow 0}  K_n(T)=0$ and
		\begin{equation*}
		\lvert S\ast \Phi^n_T(u)+ S\diamond
		\Phi^n_G(u)\rvert_{\mathscr{M}^2(X_T)}\le K_n(T),\;\; u\in \mathscr{M}^2(X_T).
		\end{equation*}
		Let us put $n=M R^2$ 
		and
		choose $\delta_{1}(\varepsilon,R) >0$ such that
		$K_{n}(\delta_{1}(\varepsilon,R)) \leq  R   $. Let $\Psi^n_T$ be the
		mapping defined by \eqref{eqn-Psi_T}. Since $\me \lve \tilde{u}_0\rve^2\leq
		R^2 $, we infer by the Assumption \ref{assum-01}  that
		\begin{align} \label{ineq-C_0}
		\lvert \Psi^n_T(u)\rvert_{\mathscr{M}^2(X_T)}\le & C_0 R + K_n(T)
		\le  (C_0+ 1)R ,
		\end{align}
		for all $T\le \delta_1(\eps,R)$. 
%		That is, for $T\le
%		\delta_1(\eps,R)$
%		the range of the map $\Psi_T^{n}$ is included in the ball (in the space ${M}^2(X_T)$) centered at $0$ and of radius  $(C_0+1)R $. 
		Furthermore, Propositions \ref{prop-Psi_T-Lipschitz}
		implies that there exists $C>0$ such that 
		\begin{equation*}
		\lvert \Psi^n_T(u_1)-\Psi^n_T(u_2)\rvert_{\mathscr{M}^2(X_T)}\le C_3
		M^{D}R^{D} \Big[T^{(1-\alpha)/2} \vee T^{(1-\beta)/2}\Big] \lvert
		u_1-u_2\rvert_{\mathscr{M}^2(X_T)}, \text{ for all $u_1,u_2\in
			\mathscr{M}^2(X_T)$}.
		\end{equation*}
		Since  $M\ge (C_0+1) \eps^\frac 12$  we infer that
		\begin{equation*}
		\lvert \Psi^n_T(u_1)-\Psi^n_T(u_2)\rvert_{\mathscr{M}^2(X_T)}\le C_3
		M^{D}R^{D} \Big[T^{(1-\alpha)/2} \vee T^{(1-\beta)/2}\Big] \lvert
		u_1-u_2\rvert_{\mathscr{M}^2(X_T)}.
		\end{equation*}

		Hence we can find $\delta_2(\eps,R)>0$ such that $\Psi^n_T$ is a
		strict contraction for all $T\le \delta_2(\eps,R)$. Thus if one
		puts $T^\ast(\varepsilon,R)=\delta_1(\eps,R)\wedge \delta_2(\eps,R)$, the
		mapping $\Psi^n_T$ has a unique fixed point ${\hat{u}^n}$ which satisfies
		\begin{equation}\label{ineq-moment-1}
		\me \lvert {\hat{u}^n}\rvert^2_{X_{T^\ast(\varepsilon,R)}}\le (C_0+1)^2 R^2.
		\end{equation}
		Similarly to \eqref{eqn-stopping time tau_n} we can  define a new  stopping time ${\hat{\tau}_n}$ by
		\[
		{\hat{\tau}_n}:= \inf\{t\in [0,\infty): |{\hat{u}^n}|_{X_t}\ge n\}.
		\]
		Arguing as in \textbf{Step 5}  we can show that 			${\hat{u}^n}$ satisfies, for every  $t\in [0,T]$, $\mathbb{P}$-a.s.
		\begin{equation}\label{Eq:truncated-equation-9-1}
		{\hat{u}^n}(t \wedge \tau_n)=S_{t\wedge \tau_n} u_0+\int_0^{t\wedge {\hat{\tau}_n}} S_{t\wedge {\hat{\tau}_n} -r}[F({\hat{u}^n}(r))]\,dr
		+{\hat{I}^n_{\hat{\tau}_n}}(t \wedge {\hat{\tau}_n}).
		\end{equation}
		where ${\hat{I}^n_{\hat{\tau}_n}}$ is a continuous $V$-valued process defined by
		\[
		{\hat{I}^n_{\hat{\tau}_n}}(t):=\int_0^t \mathds{1}_{[0,\tau_n)}(s)
		S_{t-r}[G({\hat{u}^n}(r))]dW(r), \;\; t\in [0,T].
		\]
		
		%\hdoubtz{ZB2102: I this that this claim will not be used. }		As in \cite[Proposition 5.1]{Brz+Millet_2012} we can show that { the local process }
		%		$(u^n(t), t\le \tau_n)$ is a local solution to problem
		%		\eqref{ABS-SPDE}.

		By the definition   of the stopping time ${\hat{\tau}_n}$,
		$\{{\hat{\tau}_n} \le T^\ast(\varepsilon,R)\} \subset \{
		\lvert u^n\rvert_{X_{T^\ast(\varepsilon,R)}}\ge n \}$. Therefore, by the Chebyshev
		inequality and inequality \eqref{ineq-moment-1} we infer that 		
		\begin{align*}
		\mathbb{P}({\hat{\tau}_n} \le T^\ast(\varepsilon,R))\le &\mathbb{P}(\lvert
		u^n\rvert^2_{X_{T^\ast(\varepsilon,R)}}\ge n )  \leq \frac{1}{n}\mathbb{E}\lvert
		u^n\rvert^2_{X_{T^\ast(\varepsilon,R)}}  \leq \frac{1}{n} (C_0+1)^2R^2.
		\end{align*}
		Since $n=NR^2$ and $N\ge (C_0+1)^2 \eps^{-\frac 12}$ we get
		\begin{align*}
		\mathbb{P}({\hat{\tau}_n} \le T^\ast(\varepsilon,R))\leq & (C_0+1)^2 N^{-2} \leq \eps.
		\end{align*}
		
		Hence, we have prove \eqref{ineq-positive probability-2}.
		
		\item[\textbf{Step 9.}]\label{Step9}
		
		To conclude the proof, we observe that in view of Remark \ref{rem-local solution}, it follows from equality \eqref{Eq:truncated-equation-9-1} that the process
		${\hat{u}^n}$ restricted to the open random interval $[0, \hat{\tau}^n)\times  \Omega$ is  a  local
		solution to problem \eqref{ABS-SPDE-strong}. On the other hand, in \textbf{Step 7}  we  proved that also   $(u,\tau_\infty)$ is a local solution to problem \eqref{ABS-SPDE-strong}.
		{Because local uniqueness holds for problem \eqref{ABS-SPDE-strong}}, see Theorem \ref{Thm:LocalUniqueness}, we infer by applying Corollary \ref{cor-max of two local solutions} that the supremum
		of $(u,\tau_\infty)$ and $({\hat{u}^n},\hat{\tau}^n)$  is another local
		solution to problem \eqref{ABS-SPDE-strong}.
		Hence,   the stopping time $T_1= \tau_\infty \vee \hat{\tau}^n$ satisfies the requirements of the theorem.
	\end{trivlist}
	This  concludes the proof of part (III) of Theorem \ref{thm_local} and thus of the whole theorem.
		
\end{proof}

 The next result is about the existence and uniqueness of a maximal solution and the characterization of its lifespan.
	\begin{thm}\label{thm_maximal-abstract}
	Let  $u_0\in L^2(\Omega,\mathcal{F}_0,V)$. Then, problem \eqref{ABS-SPDE-strong} has a unique maximal local solution $(\hat{u},\hat{\tau})$.
		Moreover,  $(u,\tau_\infty) \sim (\hat{u},\hat{\tau})$ and
	\begin{align}
	&	\lim_{t\toup \hat{\tau}}|\hat{u}|_{X_t}=\infty \;\;\; \mathbb{P}\mbox{-a.s. on
		} \{\hat{\tau}<\infty\},\label{eqn-t_infty}\\
&		\mathbb{ P}\Big(\{\omega \in\Omega :\hat{\tau} (\omega)< \infty \text{ and }
		\sup_{t\in [0, \hat{\tau} (\omega))} \lvert \hat{u}(t)(\omega)\rvert_V<\infty  \}\Big)=0. \label{eqn-t_infty-limit}
		\end{align}
		%%%%%%%%%%%%%%%%%%		
	\end{thm}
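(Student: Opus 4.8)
The plan is to deduce Theorem \ref{thm_maximal-abstract} by combining Proposition \ref{prop-loc-implies-max} (abstract existence/uniqueness of a maximal local solution) with a blow-up argument for the characterization of the lifetime. First I would verify the hypotheses of Proposition \ref{prop-loc-implies-max}: existence of at least one local solution is exactly part (I) of Theorem \ref{thm_local}, which produces the local solution $(u,\tau_\infty)$; the local uniqueness condition (ii) of that proposition is precisely the statement of Theorem \ref{Thm:LocalUniqueness} (after restricting both local solutions to the common stochastic interval and using Remark \ref{rem--loc-implies-max-equiv}). Hence Proposition \ref{prop-loc-implies-max} yields a unique maximal local solution $(\hat u,\hat\tau)$ with $(u,\tau_\infty)\preceq(\hat u,\hat\tau)$; uniqueness of the maximal element together with the local uniqueness also gives that $(u,\tau_\infty)$ and $(\hat u,\hat\tau)$ are equivalent as soon as we show $\tau_\infty=\hat\tau$ a.s. — which is the heart of the matter and is handled by the blow-up argument below.

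Next I would prove \eqref{eqn-t_infty}, i.e.\ that $\lim_{t\nearrow\hat\tau}|\hat u|_{X_t}=\infty$ a.s.\ on $\{\hat\tau<\infty\}$. The natural strategy is by contradiction: suppose that on a set of positive probability $\hat\tau<\infty$ yet $\sup_{t<\hat\tau}|\hat u|_{X_t}=:\rho<\infty$. On such an event the trajectory $\hat u$ extends continuously to $[0,\hat\tau]$ and lies in a ball of $X_{\hat\tau}$, so in particular $\hat u(\hat\tau)\in V$ with $\|\hat u(\hat\tau)\|\le\rho$. Then I would restart the equation at (a localized version of) the random time $\hat\tau$ using part (III) of Theorem \ref{thm_local}: applying it with $R\sim\rho$, $\varepsilon$ small, and $\Omega_1$ the event in question, one obtains, with positive probability, a local solution on a time interval of deterministic length $T^\ast(\varepsilon,R)$ started from $\hat u(\hat\tau)$. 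Gluing this continuation to $(\hat u,\hat\tau)$ via Corollary \ref{cor-max of two local solutions} produces a local solution strictly extending $(\hat u,\hat\tau)$ on a set of positive probability, contradicting maximality. The minor technical points here — that $\hat\tau$ is accessible with an announcing sequence, that the restart can be performed measurably at the accessible stopping time (e.g.\ by working on the stopped filtration $\mathcal F_{\hat\tau}$ and invoking the strong Markov-type formulation of part (III) on the event $\Omega_1$), and that the concatenated process is admissible — are exactly of the type already dealt with in the proof of Proposition \ref{prop-loc-implies-max} and in \textbf{Step 9} of the proof of Theorem \ref{thm_local}, so I would refer to those arguments rather than repeat them.

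Finally, \eqref{eqn-t_infty-limit} is an immediate consequence of \eqref{eqn-t_infty}: the event
\[
\{\hat\tau<\infty\}\cap\Bigl\{\sup_{t\in[0,\hat\tau)}\lvert\hat u(t)\rvert_V<\infty\Bigr\}
\]
is contained in $\{\hat\tau<\infty\}\cap\{\lim_{t\nearrow\hat\tau}|\hat u|_{X_t}<\infty\}$, since $\lvert\hat u(t)\rvert_V\le|\hat u|_{X_t}$ and the $X_t$-norm is nondecreasing in $t$; by \eqref{eqn-t_infty} the latter event is null. I would also note that once $\tau_\infty=\hat\tau$ is established, the approximating stopping times $(\tau_n)$ constructed in \textbf{Step 5}–\textbf{Step 6} of Theorem \ref{thm_local} form an announcing sequence for $\hat\tau$, and the very definition $\tau_n=\inf\{t:|u^n|_{X_t}\ge n\}$ combined with $u=u^n$ on $[0,\tau_n)$ gives directly that $|u|_{X_t}\ge n$ as $t\nearrow\tau_n$ and hence the blow-up as $t\nearrow\tau_\infty$ on $\{\tau_\infty<\infty\}$ — this is an alternative, more constructive route to \eqref{eqn-t_infty} which I would mention, though the contradiction argument via maximality is cleaner to write.

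\medskip

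\emph{Main obstacle.} The only genuinely delicate step is the restart at the accessible stopping time $\hat\tau$: one must make sense of solving \eqref{ABS-SPDE-strong} with random initial time $\hat\tau$ and random (but a.s.\ bounded) initial datum $\hat u(\hat\tau)$, measurably in $\omega$, and then splice the continuation to the original solution so that the result is again an admissible local solution in the sense of Definition \ref{def-local solution}. I expect this to be handled exactly as in \textbf{Step 9} of the proof of Theorem \ref{thm_local} (shifting the Wiener process, restricting to $\Omega_1\in\mathcal F_0$ — here $\mathcal F_{\hat\tau}$ after a time shift — and invoking part (III) together with Corollary \ref{cor-max of two local solutions}), so no new ideas are needed, only careful bookkeeping of the stochastic intervals.
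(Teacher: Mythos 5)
Your overall toolbox (Proposition \ref{prop-loc-implies-max} plus Theorems \ref{Thm:LocalUniqueness} and \ref{thm_local}, a restart-and-glue contradiction with maximality) is the right one, but the logical organization contains a genuine error: you claim that \eqref{eqn-t_infty-limit} is an immediate consequence of \eqref{eqn-t_infty} via the inclusion
\[
\{\hat{\tau}<\infty\}\cap\Bigl\{\sup_{t\in[0,\hat{\tau})}\lvert\hat{u}(t)\rvert_V<\infty\Bigr\}\subset\{\hat{\tau}<\infty\}\cap\Bigl\{\lim_{t\toup\hat{\tau}}\lvert\hat{u}\rvert_{X_t}<\infty\Bigr\},
\]
but the inequality $\lvert\hat{u}(t)\rvert_V\le\lvert\hat{u}\rvert_{X_t}$ gives exactly the reverse inclusion: boundedness of the $X_t$-norm implies boundedness of the $V$-sup, not conversely, since $\lvert\hat{u}\rvert_{X_t}^2$ also contains the term $\int_0^t\lvert\hat{u}(s)\rvert_E^2\,ds$, which may diverge while $\sup_t\Vert\hat{u}(t)\Vert$ stays bounded. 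Thus \eqref{eqn-t_infty-limit} is strictly stronger than \eqref{eqn-t_infty} and cannot be dismissed as immediate; it is precisely the statement that needs the restart argument, run under the weak hypothesis that only the $V$-sup is bounded by some $R$. This is what the paper does: it restarts not at $\hat{\tau}$ but at an announcing time $\tau_{n_0}$ chosen so that $\hat{\tau}-\tau_{n_0}<\tfrac12 T^\ast(\varepsilon,R)$, takes the $\mathcal{F}_{\tau_{n_0}}$-measurable datum $u(\tau_{n_0})$ (bounded by $R$ on the relevant event), applies part (II) of Theorem \ref{thm_local}, and glues to contradict maximality. Your version, which restarts at $\hat{\tau}$ itself from the datum $\hat{u}(\hat{\tau})$, has the additional unjustified step that $\hat{u}$ extends continuously to $t=\hat{\tau}$ in $V$; this does not follow from the definition of a local solution even under a bounded $X$-norm, and under a bounded $V$-sup (the case actually needed for \eqref{eqn-t_infty-limit}) it is simply unavailable. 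Restarting strictly before $\hat{\tau}$ removes this difficulty.

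A second, smaller gap: you assert that the identification $\tau_\infty=\hat{\tau}$ (hence $(u,\tau_\infty)\sim(\hat{u},\hat{\tau})$) is "handled by the blow-up argument below," but your proposed blow-up argument concerns the behaviour of $\hat{u}$ at $\hat{\tau}$ and does not yield this equality. The paper obtains it from the constructive blow-up $\lvert u\rvert_{X_{\tau_n}}\ge n$ (the route you only mention in passing as an alternative for \eqref{eqn-t_infty}): on $\{\tau_\infty<\hat{\tau}\}$ one would have $\lvert\hat{u}\rvert_{X_{\tau_\infty}}=\infty$ with $\tau_\infty$ strictly below some announcing time $\hat{\tau}_n$ of $\hat{\tau}$, contradicting the integrability requirement \eqref{eq-locsol_00} in the definition of a local solution. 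With this argument inserted, \eqref{eqn-t_infty} then follows directly from the monotonicity of $t\mapsto\lvert u\rvert_{X_t}$ and $\lvert u\rvert_{X_{\tau_n}}\ge n$, with no need for your contradiction argument there; the maximality/restart machinery should be reserved for \eqref{eqn-t_infty-limit}.
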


%{
	\begin{proof}
		Let us choose and fix $u_0\in L^2(\Omega,\mathcal{F}_0,V)$. Wlog we can assume that $\mathbb{P}(\{\hat{\tau}<\infty\})>0$.
		
		Firstly, we observe that it follows from the proof of  Theorem \ref{thm_local}  that the local process $(u,\tau_\infty)$ defined in  \eqref{eqn-local process u} is a local solution to \eqref{ABS-SPDE-strong}.
 In particular,  the set of local solutions to problem \eqref{ABS-SPDE-strong} is non-empty.  Since by Theorem \ref{Thm:LocalUniqueness} the local uniqueness holds for problem \eqref{ABS-SPDE-strong}, we infer by applying Proposition \ref{prop-loc-implies-max} that  there exists a unique maximal local solution $(\hat{u},\hat{\tau})$ to \eqref{ABS-SPDE-strong}. Moreover,
		$(\hat{u},\hat{\tau})$ satisfies the following
		\begin{equation}\label{eqn-similarity}
		\hat{\tau} \ge \tau_\infty \text{ $\mathbb{P}$-a.s. and } \hat{u}_{\lvert_{[0,\tau_\infty)\times \Omega} }= u.
		\end{equation}		
Secondly, suppose that $\mathbb{P} \bigl( \hat{\tau} > \tau_\infty \bigr)>0$. Let $\bigl(\hat{\tau}_n\bigr)$ be the announcing sequence of $\hat{\tau}$. Since $\hat{\tau}_n \toup \hat{\tau}$ $\mathbb{P}$-a.s., we infer that
there exists  $n \in \mathbb{N}$ such that $\mathbb{P} \bigl( \hat{\tau} > \hat{\tau}_n> \tau_\infty \bigr)>0$.  Thus we infer that there exists $t>0$ such that
$\mathbb{P} \bigl( t> \hat{\tau} > \hat{\tau}_n> \tau_\infty \bigr)>0$. \\
On the other hand,  by the definition  \eqref{eqn-stopping time tau_n} of the announcing sequence $(\tau_n)_{n \in \mathbb{N}}$ for $\tau_\infty$ and the definition, see  \eqref{eqn-local process u}, of the local process $(u,\tau_\infty)$ that the sequence $\vert u \vert_{X_{\tau_n}}$  converges to $\infty$ on $\{ \tau_\infty< \infty\}$. Hence we infer that $\lvert u\rvert_{X_{\tau_\infty}} = \infty $ $\mathbb{P}$-a.s. on
 $\bigl\{ t> \hat{\tau} > \hat{\tau}_n> \tau_\infty \bigr\}$. Since the probability of the last set is $>0$, this contradicts condition \eqref{eq-locsol_00} of Definition
  \ref{def-local solution-2} of a local solution.  This contradiction implies that $\mathbb{P} \bigl( \hat{\tau} > \tau_\infty \bigr)=0$, and  in view of \eqref{eqn-similarity}, we also have $(u,\tau_\infty) \sim (\hat{u},\hat{\tau})$.

		Thirdly,  we infer from the definition  \eqref{eqn-stopping time tau_n} of the announcing sequence $(\tau_n)_{n \in \mathbb{N}}$ for $\tau_\infty$ and the definition, see  \eqref{eqn-local process u}, of the local process $(u,\tau_\infty)$ that the sequence $\vert u \vert_{X_{\tau_n}}$  converges to $\infty$ on $\{\tau_\infty< \infty\}$.  Since
the function   $t \mapsto \vert u \vert_{X_{t}}$ is increasing, by  \eqref{eqn-similarity},  we infer that
\begin{equation}\label{eqn-blow-up similarity}
\dela{\lim_{t\toup \hat{\tau}} \lvert \hat{u} \rvert_{X_t} \ge
\lim_{t\toup \tau_\infty} \lvert \hat{u} \rvert_{X_t} =} \lim_{t\toup \tau_\infty}\lvert u \rvert_{X_t} = \lim_{n \toup \infty} \lvert u\rvert_{X_{\tau_n}} = \infty \text{ $\mathbb{P}$-a.s. on }  \{\hat{\tau}< \infty \}.
\end{equation}
This proves	\eqref{eqn-t_infty}.

Fourthly,  we shall prove  \eqref{eqn-t_infty-limit}. By contradiction, we assume that
there exists  $\eps>0$ such that
\[ {\mathbb P }\big(\{\tau_\infty <\infty\} \cap \{ \sup_{t\in [0,\tau_\infty)} |\hat{u}(t)|_{V}<\infty
\}\big)=4\eps>0.\]
Hence we can easily deduce that  there exists   $R>0$ such  that
\[ {\mathbb P }\big(  \{ |u(t)|_{V}<R \mbox{ for all } t\in [0,\tau_\infty )
\}\big) \geq 3\eps.\]
 Let us now choose
$\alpha$ such that  $\alpha= \frac 12 T^\ast(\varepsilon,R)$, where the number $T^\ast(\varepsilon,R)>0$ depending only on $\eps$ and $R$ comes from 
part (II) of  Theorem \ref{thm_local}.  By the definition of an announcing sequence 
 $\bigl(\tau_n\bigr)$  of $\tau_\infty$   we infer that
for arbitrary  $\delta>0$   there exists $n_0>0$ such that
$\mathbb{P}(\Omega_0)\geq (1-\delta)\mathbb{P}(\tilde{\Omega})$,
where $\Omega_0:=\{\omega \in \tilde{\Omega}: |u(t)|_{V}<R \mbox{ for all } t\in [0,\hat{\tau}) \mbox{ and }
\hat{\tau}-\hat{\tau}_{n_0}<\alpha\}.$ Choosing $\delta=\frac13$ we get
$\mathbb{P}(\Omega_0)\geq 2\eps$. Let
$y_0= u(\tau_{n_0})$. 
Note that $y_0$ is $\mathcal{F}_{\tau_{n_0}}$-measurable and  $\vert y_0\vert_{V}\leq R$ on $\Omega_0$.
With the previously chosen $R$,  $\eps$ and $T^\ast(\varepsilon,R)>0$, by applying part (II) of  Theorem \ref{thm_local} we find  a local solution $y(t)$, $t\in
[\tau_{n_0},\tau_{n_0} +T_1)$ to problem \eqref{ABS-SPDE-1} with the initial condition (starting at $\tau_{n_0}$) $y(\tau_{n_0})=y_0$ such that
$\mathbb{P}(T_1\geq T^\ast(\varepsilon,R))>1-\eps$.  
Also, let $\Omega_1:= \Omega_0 \cap \{ T_1\geq T^\ast(\varepsilon,R)\}.$ Since
$\mathbb{P}(\hat{\tau}-T_0< \frac 12 T^\ast(\varepsilon,R))\geq 2\eps$, we infer that
\[\mathbb{P}\big(\Omega_1  \big) \geq \eps>0.\]

By a generalization of   \cite[Corollary 2.28]{Brz+Elw_2000} to the case of SPDEs we infer that  a local stochastic process $v(t)$, $t\in [0,\tau_{n_0}(\omega) +T_1)$ defined by
\begin{equation*}
v(t,\omega)=\begin{cases}
u(t,\omega) \text{ if }  t \in [0,\tau_{n_0}(\omega)] ,\\
y(t,\omega) \text{ if } t\in [ \tau_{n_0}(\omega), \tau_{n_0}(\omega) +T_1 ).\\
\end{cases}
\end{equation*}
is a local solution to problem \eqref{ABS-SPDE-1} with the initial data $v(0)=u_0$. However, on the $\Omega_1$, we have 
\[
\tau_{n_0} +T_1-\tau_\infty =   T_1 -( \tau_\infty -\tau_{n_0} )\geq 2\alpha -\alpha=\alpha>0,
\]
what contradicts the maximality of the solution $(u,\tau_\infty)$ proved in the earlier part of the theorem.
This completes the proof of \eqref{eqn-t_infty-limit} as well as the theorem.

	\end{proof}
	\subsection*{Acknowledgments}
	This article is part of a project that is currently funded by the  European Union's Horizon 2020 research and innovation programme under the Marie Sk\l{}odowska-Curie grant agreement No. 791735 ``SELEs".
	The authors are also grateful for the support they received from the FWF-Austrian Science through the Stand-Alone project P28010.
	Z. Brze{\' z}niak presented a
	lecture based on a preliminary version of  this paper at the RIMS Symposium on Mathematical Analysis of Incompressible Flow held at Kyoto in  February 2013.
	He would like to thank Professor Toshiaki Hishida for the kind invitation. Razafimandimby is also very grateful to the organizers of the conference `` Nonlinear PDEs in Micromagnetism: Analysis, Numerics and Applications'', which was held in ICMS Edinburgh, UK, for their invitation to present a talk based on preliminary result of this paper at this meeting. He is very grateful for the financial support he received from the International Centre for Mathematical Sciences (ICMS) Edinburgh.
	Last, but not the least, the authors wish to thank Professor Guoli Zhou for pointing out some gaps in the previous version of this paper \cite{BHP-arxiv}.

\appendix
\section{On stopped stochastic convolution processes}
\label{sec-stopped sc}
Let $K$ and $E$ be two separable Hilbert spaces.  Suppose that $W$  is a  canonical cylindrical Wiener process on $\rK$ and that $\xi:[0,\infty) \to \gamma(\rK,E)$ is a progressively measurable process such that 
\begin{equation}
\label{eqn-A.1} \int_0^t \Vert \xi(s)\Vert^2_{\gamma(\rK,E)}\, ds<\infty, \mbox{for all $t\geq 0$, $\mathbb{P}$-almost surely.}
\end{equation}
Assume that $S=\bigl(S_t\bigr)_{t\geq 0}$ is a $C_0$ semigroup on $E$.
Let us define  a  process $I$ by
\begin{equation}
\label{eqn-A.2} I(t):= \int_0^t  S_{t-s} \xi(s)\,  dW(s), \;\; t\geq 0.
\end{equation}
Assume that $\tau$ is a finite stopping time. Let us define
a  process $I_\tau$ by
\begin{equation}
\label{eqn-A.3} I_\tau(t):= \int_0^t  \mathds{1}_{[0,\tau)}(s) S_{t-s} \xi(s)\,  dW(s), \;\; t\geq 0 .
\end{equation}

Let us observe that since $\tau$ is a stopping, the stochastic process $\mathds{1}_{[0,\tau)}(s)$, $s\in [0,\infty)$ is well-measurable, see \cite[Proposition 4.2]{Metivier_1982}. Therefore, since by \cite[Theorem 1.6]{Metivier_1982}, the $\sigma$-field of well measurable sets is smaller than the $\sigma$-field of progressively measurable sets, it follows
that the stochastic process $\mathds{1}_{[0,\tau)}$,  is progressively measurable. In particular, the integrand in \eqref{eq-locsol_01-c} is  progressively measurable.

If both processes $I$ and $I_\tau$ have continuous paths $\mathbb{P}$-a.s, then the next lemma  was proved in \cite{Brz+Masl+Seidler_2005}.
\begin{lem}\label{lem-A.1}
\begin{equation}
\label{eqn-A.4} S_{t-t \wedge \tau} I(t\wedge \tau) =I_\tau(t) \mbox{ for all $t\geq 0$, $\mathbb{P}$-a.s.,}
\end{equation}
and
\begin{equation}
\label{eqn-A.5}  I(t\wedge \tau) =I_\tau(t \wedge \tau) \mbox{ for all $t\geq 0$, $\mathbb{P}$-a.s.}
\end{equation}
\end{lem}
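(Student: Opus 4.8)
The statement to prove is Lemma \ref{lem-A.1}: for a finite stopping time $\tau$, a $C_0$-semigroup $S$ on $E$, and a progressively measurable integrand $\xi$ satisfying \eqref{eqn-A.1}, one has $S_{t-t\wedge\tau} I(t\wedge\tau) = I_\tau(t)$ and $I(t\wedge\tau) = I_\tau(t\wedge\tau)$, both for all $t\ge 0$ $\mathbb{P}$-a.s.

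The plan is as follows. First I would note that \eqref{eqn-A.5} is an immediate consequence of \eqref{eqn-A.4}: replacing $t$ by $t\wedge\tau$ in \eqref{eqn-A.4} gives $S_{(t\wedge\tau) - (t\wedge\tau)\wedge\tau}I((t\wedge\tau)\wedge\tau) = I_\tau(t\wedge\tau)$, and since $(t\wedge\tau)\wedge\tau = t\wedge\tau$ the left side is $S_0 I(t\wedge\tau) = I(t\wedge\tau)$. So the whole content is \eqref{eqn-A.4}.

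For \eqref{eqn-A.4}, the approach is the standard localization/approximation argument. First I would establish the identity when $\tau$ is replaced by a deterministic time, or more precisely prove the pointwise-in-$t$ statement by first reducing to the case where $\tau$ takes finitely many values (a simple function $\tau = \sum_{k} t_k \mathbf{1}_{A_k}$ with $A_k \in \mathcal{F}_{t_k}$), where the identity can be checked directly: on each atom $A_k$ one has $t\wedge\tau = t\wedge t_k$, the indicator $\mathbf{1}_{[0,\tau)}$ becomes $\mathbf{1}_{[0,t_k)}$, and the semigroup property $S_{t - t\wedge t_k}S_{(t\wedge t_k)-s} = S_{t-s}$ for $s < t\wedge t_k$ together with the defining identity for the stochastic integral $\int_0^{t\wedge t_k} S_{(t\wedge t_k)-s}\xi(s)\,dW(s) = S_{t\wedge t_k - t_k\wedge t}(\cdots)$... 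Actually the cleanest route: fix $t$, approximate the finite stopping time $\tau$ from above by stopping times $\tau_n \downarrow \tau$ taking values in a discrete grid (dyadic), handle each $\tau_n$ by the finite-valued argument, and then pass to the limit using the fact that $\mathbf{1}_{[0,\tau_n)} \to \mathbf{1}_{[0,\tau)}$ pointwise (dominated by $1$) so that $\int_0^t \|\mathbf{1}_{[0,\tau_n)}(s) - \mathbf{1}_{[0,\tau)}(s)\|^2\|S_{t-s}\xi(s)\|^2\,ds \to 0$ a.s., hence (along a subsequence, or in probability) $I_{\tau_n}(t) \to I_\tau(t)$; simultaneously $t\wedge\tau_n \downarrow t\wedge\tau$ and strong continuity of $S$ plus continuity of paths of $I$ (which holds here since $\xi \in L^2_{loc}$ a.s. and $S$ is a $C_0$-semigroup, so $I$ has a continuous version) give $S_{t - t\wedge\tau_n} I(t\wedge\tau_n) \to S_{t-t\wedge\tau} I(t\wedge\tau)$. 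Taking limits in the discrete identity yields \eqref{eqn-A.4} for each fixed $t$, $\mathbb{P}$-a.s. Finally, since both sides of \eqref{eqn-A.4} are (versions of processes with) continuous paths in $t$, the exceptional null set can be chosen independent of $t$, giving the statement "for all $t\ge 0$, $\mathbb{P}$-a.s."

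Since this lemma is attributed to \cite{Brz+Masl+Seidler_2005} and the hypotheses here match theirs (the only subtlety is that here $I$ and $I_\tau$ are asserted to have continuous paths, which the preceding text addresses via the progressive measurability discussion and \eqref{eqn-A.1}), the honest proof is simply: \emph{This is precisely} \cite[Lemma A.1]{Brz+Masl+Seidler_2005} \emph{under hypothesis} \eqref{eqn-A.1}, \emph{and} \eqref{eqn-A.5} \emph{follows from} \eqref{eqn-A.4} \emph{by substituting $t\wedge\tau$ for $t$.} I would present the short reduction of \eqref{eqn-A.5} to \eqref{eqn-A.4} explicitly and cite the reference for \eqref{eqn-A.4}, sketching the finite-valued-plus-approximation argument above only if a self-contained proof is wanted. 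The main obstacle, if one insists on self-containedness, is the measurability bookkeeping in the approximation step — ensuring the discrete-$\tau$ identity is legitimate requires care with which $\sigma$-algebra the simple-function approximants are adapted to, and the passage to the limit requires knowing the stochastic convolution has a continuous modification, which is where hypothesis \eqref{eqn-A.1} and the factorization method (or the fact that $S$ is $C_0$) enter. But given that the result is quoted from the literature, the clean move is to cite it.

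\begin{proof}[Proof of Lemma \ref{lem-A.1}]
Identity \eqref{eqn-A.4} is \cite[Lemma A.1]{Brz+Masl+Seidler_2005}: indeed, under assumption \eqref{eqn-A.1} both $I$ and $I_\tau$ admit continuous modifications (the integrand $\mathds{1}_{[0,\tau)}\xi$ being progressively measurable as noted above), so the cited result applies verbatim. To deduce \eqref{eqn-A.5}, substitute $t\wedge \tau$ in place of $t$ in \eqref{eqn-A.4}. Since $(t\wedge \tau)\wedge \tau = t\wedge \tau$, the left-hand side becomes $S_{(t\wedge \tau) - (t\wedge\tau)\wedge\tau} I((t\wedge\tau)\wedge\tau) = S_0\, I(t\wedge\tau) = I(t\wedge\tau)$, while the right-hand side becomes $I_\tau(t\wedge\tau)$. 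This holds for all $t\geq 0$, $\mathbb{P}$-a.s., which is \eqref{eqn-A.5}.
\end{proof}
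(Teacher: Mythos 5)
Your proposal is correct and takes essentially the same route as the paper, which gives no self-contained proof of \eqref{eqn-A.4} but simply invokes \cite{Brz+Masl+Seidler_2005} after noting that $I$ and $I_\tau$ have continuous paths and that $\mathds{1}_{[0,\tau)}\xi$ is progressively measurable. Your explicit deduction of \eqref{eqn-A.5} from \eqref{eqn-A.4} by substituting $t\wedge\tau$ for $t$ (legitimate since \eqref{eqn-A.4} holds for all $t\geq 0$ on a single full-measure set) is a correct, harmless addition.
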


Let us observe that the process $I_\tau$ is well defined even if the integrand $\xi$ is only defined on the random interval $[0,\tau)\times \Omega$ and that it satisfies the following mofification of the condition
\eqref{eqn-A.1}, i.e.
\begin{equation}
\label{eqn-A.1'} \int_0^{t \wedge \tau} \Vert \xi(s)\Vert^2_{\gamma(\rK,E)}\, ds<\infty, \mbox{ for all $t\geq 0$, $\mathbb{P}$-a.s.}
\end{equation}
In particular, if
$\xi$ is defined on the random closed interval $[0,\tau]\times \Omega$ and that it satisfies
\eqref{eqn-A.1}, i.e.
\begin{equation}
\label{eqn-A.1''} \int_0^{\tau} \Vert \xi(s)\Vert^2_{\gamma(\rK,E)}\, ds<\infty, \mbox{ $\mathbb{P}$-a.s.}
\end{equation}

Let us now formulate a useful corollary of the above result.
\begin{cor}\label{cor-A.2}
 Under the above assumptions, if $\xi$ is a progressively measurable process  defined on  $[0,\tau)\times \Omega$ and $\xi$ satisfies condition  \eqref{eqn-A.1'} and
 $\sigma$ is another stopping time such that $\sigma \leq \tau$, then
\begin{equation}
\label{eqn-A.6}  I_\tau(t\wedge \sigma) =I_\sigma(t\wedge \sigma) \mbox{ for all $t\geq 0$, $\mathbb{P}$-a.s.}
\end{equation}
\end{cor}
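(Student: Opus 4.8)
The plan is to deduce Corollary \ref{cor-A.2} from Lemma \ref{lem-A.1} by absorbing the indicator $\mathds{1}_{[0,\tau)}$ into the integrand. First I would set $\eta(s):=\mathds{1}_{[0,\tau)}(s)\,\xi(s)$, with the convention $\eta(s)=0$ for $s\geq\tau$, and check that $\eta$ meets the standing hypotheses imposed on the integrand in Lemma \ref{lem-A.1}: it is progressively measurable on all of $[0,\infty)\times\Omega$, because $\mathds{1}_{[0,\tau)}$ is well-measurable and hence progressively measurable (as recalled at the start of this appendix) while $\xi$ is progressively measurable on $[0,\tau)\times\Omega$; and it satisfies the global integrability condition \eqref{eqn-A.1} since $\int_0^t\Vert\eta(s)\Vert^2_{\gamma(\rK,E)}\,ds=\int_0^{t\wedge\tau}\Vert\xi(s)\Vert^2_{\gamma(\rK,E)}\,ds<\infty$ for all $t\geq0$, $\mathbb{P}$-a.s., by \eqref{eqn-A.1'}.

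Next I would observe that the stochastic convolution $J(t):=\int_0^t S_{t-s}\,\eta(s)\,dW(s)$ associated with $\eta$ coincides, by the definitions \eqref{eqn-A.2}--\eqref{eqn-A.3}, with $I_\tau$, while its stopped-integrand counterpart $J_\sigma(t):=\int_0^t\mathds{1}_{[0,\sigma)}(s)\,S_{t-s}\,\eta(s)\,dW(s)$ coincides with $I_\sigma$, using the pointwise identity $\mathds{1}_{[0,\sigma)}(s)\,\mathds{1}_{[0,\tau)}(s)=\mathds{1}_{[0,\sigma)}(s)$, which holds for all $s\geq 0$, $\mathbb{P}$-a.s., because $\sigma\leq\tau$ pathwise. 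Applying \eqref{eqn-A.5} of Lemma \ref{lem-A.1} to the integrand $\eta$ and the finite stopping time $\sigma$ then yields $J(t\wedge\sigma)=J_\sigma(t\wedge\sigma)$ for all $t\geq0$, $\mathbb{P}$-a.s.; rewriting this in terms of $I_\tau$ and $I_\sigma$ is exactly \eqref{eqn-A.6}.

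The point requiring care — rather than a genuine obstacle — is matching the hypotheses of Lemma \ref{lem-A.1} to the new integrand $\eta$: one must confirm that both $J=I_\tau$ and $J_\sigma=I_\sigma$ admit $\mathbb{P}$-a.s. continuous modifications, which is guaranteed by the framework in force in this appendix, and keep careful track of the ``$\mathbb{P}$-a.s.'' qualifiers, in particular that $\sigma\leq\tau$ holds pathwise so that the indicator identity is valid simultaneously for all $s$. Everything else is routine bookkeeping with the definitions of $I$, $I_\tau$ and $I_\sigma$.
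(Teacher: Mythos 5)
Your proposal is correct and follows essentially the same route as the paper's own proof: both define $\eta=\mathds{1}_{[0,\tau)}\xi$, note it satisfies the hypotheses of Lemma \ref{lem-A.1}, apply \eqref{eqn-A.5} with the stopping time $\sigma$, and identify $J=I_\tau$ and $J_\sigma=I_\sigma$ via $\mathds{1}_{[0,\sigma)}\mathds{1}_{[0,\tau)}=\mathds{1}_{[0,\sigma)}$. Your explicit verification of the measurability and integrability of $\eta$ is a detail the paper dismisses as obvious, but otherwise the arguments coincide.
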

\begin{proof}[Proof of Corollary \ref{cor-A.2}] Let us define a new process $\eta=\mathds{1}_{[0,\tau)}\xi$. Obviously, the process $\eta$  satisfies the assumptions of Lemma \ref{lem-A.1}. In particular, if we define
continuous processes $J$ and $J_\sigma$ by formulae \eqref{eqn-A.2}-\eqref{eqn-A.3} with $\tau$ replaced by $\sigma$ and $\xi$ replaced by $\eta$, i.e.
\begin{equation}
 J(t):= \int_0^t  S_{t-s} \eta(s)\,  dW(s) \text{ and } J_\sigma(t):= \int_0^t  \mathds{1}_{[0,\sigma)}(s) S_{t-s} \eta(s)\,  dW(s) \;\; t\geq 0, \label{eqn-A.8}
\end{equation}
%\begin{equation}
% J_\sigma(t):= \int_0^t  \mathds{1}_{[0,\sigma)}(s) S_{t-s} \eta(s)\,  dW(s), \;\; t\geq 0,
%\end{equation}
then by Lemma \ref{lem-A.1} we have
\begin{equation}
\label{eqn-A.9}  J(t\wedge \sigma) =J_\sigma(t \wedge \sigma) \mbox{ for all $t\geq 0$, $\mathbb{P}$-a.s.}
\end{equation}
On the other hand, we trivially  have the following identities,
\begin{equation}
\label{eqn-A.10}  J_\sigma(t) =I_\sigma(t ) \mbox{ for all $t\geq 0$, $\mathbb{P}$-a.s.,}
\end{equation}
\begin{equation}
\label{eqn-A.11}  J(t) =I_\tau(t ) \mbox{ for all $t\geq 0$, $\mathbb{P}$-a.s.}
\end{equation}
Therefore, by \eqref{eqn-A.10}, \eqref{eqn-A.8} and  \eqref{eqn-A.11} (in that order), we infer that
\[
I_\sigma(t \wedge \sigma ) = J_\sigma(t \wedge \sigma)=J(t\wedge \sigma)=I_\tau(t \wedge \sigma ) \mbox{ for all $t\geq 0$, $\mathbb{P}$-a.s..}
\]
what proves equality \eqref{eqn-A.6} and the corollary.
\end{proof}
\begin{Rem}
\label{rem-stopped sc} The approach from \cite{Brz+Masl+Seidler_2005} we  follow here  was used implicitly in several papers in particular, in the paper \cite{Brz+Gat_99}, but it seems to have been discussed explicitly for
the first time only in \cite{Carroll_1999}, section 4.3 (in a way different from the one presented above).
\end{Rem}

\end{document}